\newcommand{\N}{\ensuremath{\mathbb{N}}}
\newcommand{\R}{\ensuremath{\mathbb{R}}}
\newcommand{\C}{\ensuremath{\mathbb{C}}}
\newcommand{\D}{\ensuremath{\mathbb{D}}}
\newcommand{\Sone}{\ensuremath{\mathbb{S}^{1}}}
\newcommand{\la}{\ensuremath{\lambda}}
\newcommand{\eps}{\ensuremath{\varepsilon}}
\DeclareMathOperator\capacity{Cap}
\newtheorem{theorem}{Theorem}[section]
\newtheorem{lemma}[theorem]{Lemma}
\newtheorem{proposition}[theorem]{Proposition}
\newtheorem{corollary}[theorem]{Corollary}
\theoremstyle{definition}
\newtheorem{definition}[theorem]{Definition}
\newtheorem{conjecture}[theorem]{Conjecture}
\theoremstyle{remark}
\newtheorem{remark}[theorem]{Remark}
\numberwithin{equation}{section}
\begin{document}
\setcounter{page}{1}

\color{darkgray}{
\noindent 

\centerline{}

\centerline{}

\title[Flexible curves and Hausdorff dimension]{Flexible curves and Hausdorff dimension}

\author[Alex Rodriguez]{Alex Rodriguez}
\address{Department of Mathematics, Stony Brook University, New York, USA.\\
	\textsc{\newline \indent 
	   \href{https://orcid.org/0000-0001-9097-4025%
	     }{\includegraphics[width=1em,height=1em]{orcid2} {\normalfont https://orcid.org/0000-0001-9097-4025}}
	       }}}
\email{\textcolor[rgb]{0.00,0.00,0.84}{alex.rodriguez@stonybrook.edu}}

\subjclass[2020]{Primary 30C85; Secondary 30E25.}

\keywords{Quasiconformal mappings, Conformal welding, Potential theory in the plane.}

\date{January 20, 2026.
\newline \indent The author is partially supported by NSF grant DMS 2303987.} 

\begin{abstract}
We show that given a log-singular circle homeomorphism $h$ and given any $s\in[1,2]$, there is a flexible curve of Hausdorff dimension $s$ with welding $h$. We also see that there is another curve with welding $h$ and positive area. In particular, this implies that given a flexible curve $\Gamma$, there is a homeomorphism of the plane $\phi\colon\C\to\C$, conformal off $\Gamma$, so that $\phi(\Gamma)$ has positive area. This answers a particular case of the corresponding conjecture for general non-conformally removable sets, for a class of curves that is residual in the space of all Jordan curves.
\end{abstract} 

\maketitle


\section{Introduction}

Let $\gamma\subset\C_{\infty}$ be a Jordan curve and consider $\Omega, \Omega^{*}$ the two complementary components of $\gamma$ in $\C_{\infty}$, the Riemann sphere. By the Riemann mapping theorem there are conformal maps $f\colon\D\to\Omega$ and $g\colon\C_{\infty}\setminus\overline{\D}=\D^{*}\to\Omega^{*}$, as represented in Figure \ref{figure:welding_def}. By the Carath\'eodory--Torhorst theorem, these conformal maps extend to be homeomorphisms of the unit circle to $\gamma$. This yields the orientation-preserving homeomorphism $h=g^{-1}\circ f\colon\Sone\to\Sone$ which we call a \textit{conformal welding} (or \textit{welding} for short). The welding $h\colon\Sone\to\Sone$ is determined by $\gamma$ up to pre- and post- composition by automorphisms of $\D$, we denote this class of circle homeomorphisms by $[h]$. Therefore, the \textit{conformal welding correspondence} $\mathcal{W}\colon[\gamma]\to [h]$
is well-defined, where $[\gamma]$ indicates the curves that are M\"obius images of $\gamma$. This correspondence is not surjective, for instance Oikawa \cite[Example 1]{MR125956} provided a family of circle homeomorphisms that are not weldings. In this paper we investigate its non-injectivity for log-singular circle homeomorphisms $h\colon\Sone\to\Sone$, that is, there is a zero logarithmic capacity Borel set $E\subset\Sone$ so that $h(\Sone\setminus E)$ has zero logarithmic capacity. 

\begin{theorem}\label{theorem:non-injectivity}
Let $h$ be a log-singular circle homeomorphism. Then $h$ is the conformal welding of a positive area curve, and of a curve of Hausdorff dimension $s$ for any $s\in[1,2]$.
\end{theorem}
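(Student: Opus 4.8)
The plan is to build the curve explicitly, as a limit of piecewise-analytic Jordan curves, exploiting that when $h$ is log-singular the entire ``singularity'' of $h$ is carried by a set of zero logarithmic capacity --- exactly the part of the circle along which conformal parametrizations can be deformed most freely. Recall that $h$ is the conformal welding of a Jordan curve $\gamma$, with complementary components $\Omega,\Omega^{*}$ and conformal maps $f\colon\D\to\Omega$ and $g\colon\D^{*}\to\Omega^{*}$, precisely when $f=g\circ h$ on $\Sone$. For such a pair, $A:=f(E)=g(h(E))$ and $B:=f(\Sone\setminus E)=g(h(\Sone\setminus E))$ partition $\gamma$; since $\capacity(E)=0$ forces $|E|=0$, the harmonic measure of $\Omega$ is carried by $B$, and since $\capacity(h(\Sone\setminus E))=0$, that of $\Omega^{*}$ is carried by $A$. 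I would first normalize the data: replacing $E$ by $\Sone\setminus h^{-1}(\widetilde F)$ for a $G_{\delta}$ hull $\widetilde F\supseteq h(\Sone\setminus E)$ of zero capacity, we may assume $E=\bigcup_{n}K_{n}$ with $K_{n}$ compact, increasing, and $\capacity(K_{n})=0$, so that at stage $n$ of the construction below one works with the genuine complementary arcs of the closed set $K_{n}$.

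The construction is inductive, starting from the round circle $\gamma_{0}$ with $f_{0}=g_{0}=\mathrm{id}$ and welding $h_{0}=\mathrm{id}$. To pass from $\gamma_{n}$ to $\gamma_{n+1}$ one modifies $\gamma_{n}$ only inside an $\eps_{n}$-neighbourhood of $f_{n}(K_{n})$ (equivalently of $g_{n}(h(K_{n}))$), with $\eps_{n}\downarrow0$ quickly, by splicing in there a controlled amount of extra ``size'': for the positive-area statement, short boundary sub-arcs are replaced by pairwise disjoint Osgood-type arcs whose areas sum, over all stages, to a definite positive number; for the Hausdorff-dimension-$s$ statement, one splices self-similar Cantor/comb pieces whose contraction ratios are tuned so that the spliced part has dimension exactly $s$, the endpoints $s=1$ (keep the spliced part rectifiable) and $s=2$ (take positive area, which already forces dimension two) being degenerate cases. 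Along the induction two things must be preserved: a uniform lower bound on the conformal moduli of the ring domains separating consecutive generations of spliced pieces, which guarantees $\gamma_{n}\to\gamma$ in the Carath\'eodory sense to a genuine Jordan curve with $f_{n}\to f$ and $g_{n}\to g$ locally uniformly and conformally; and $h_{n}\to h$ uniformly on $\Sone$. Because the modification lives over $K_{n}\subset E$, it is capacity-negligible as seen from inside, so $f_{n+1}\approx f_{n}$ and the welding is perturbed only through the change $\rho_{n}$ of the outer uniformization --- a homeomorphism of $\Sone$ that is singular only over $K_{n}$ --- with $h_{n+1}\approx\rho_{n}^{-1}\circ h_{n}$. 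The decisive point is then that, because $h$ is log-singular, the defects $\rho_{n}$ produced by these splicings are rich enough that one may choose the splicing data at every stage so that the accumulated composition of the $\rho_{n}^{-1}$, together with corrections made over $B$ (post-compositions $h\mapsto h\circ\tau_{n}$, cheap since $\capacity(h(\Sone\setminus E))=0$), drives $h_{n}$ to the prescribed $h$.

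Passing to the limit produces a Jordan curve $\gamma$ with welding $h$, and its size is read off directly: the Osgood beads inserted at all stages are pairwise disjoint with total area bounded below, so $|\gamma|>0$ in the first case; in the second, $\dim_{H}\gamma\ge s$ follows by pushing forward a Frostman measure on the self-similar part, and the matching bound $\dim_{H}\gamma\le s$ by covering $\gamma$ with the pieces of the construction and summing $s$-dimensional contents. Finally, two Jordan curves with a common welding differ by a homeomorphism of $\C_{\infty}$ that is conformal off each of them; applying this to a flexible curve $\Gamma$ (whose welding is log-singular) together with the positive-area curve just produced gives a homeomorphism $\phi\colon\C\to\C$, conformal off $\Gamma$, with $\phi(\Gamma)$ of positive area.

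The main obstacle is the welding bookkeeping of the second paragraph carried through an infinite iteration: one must quantify the welding defect caused by a size-increasing modification over a capacity-$\eps$ set, uniformly in $n$; arrange the (two-sided, alternating) corrections so that the composed defects converge to the \emph{exact} target $h$ and not merely to some log-singular homeomorphism; and keep the nested curves non-degenerate throughout. This is exactly where one-sided smallness is not enough --- the fattening must be capacity-negligible through $f_{n}$ while the compensating corrections must be capacity-negligible through $g_{n}$ --- so both halves of the log-singular hypothesis are genuinely used.
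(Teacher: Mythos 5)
Your proposal takes a genuinely different route from the paper's, and it contains a genuine gap exactly where you flag it.

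The paper does not start from the round circle and accumulate welding defects. It fixes $h$ once and for all, starts with conformal maps $f_{0}\colon\D\to\Omega_{0}$, $g_{0}\colon\D^{*}\to\Omega_{0}^{*}$ onto Jordan domains with \emph{disjoint closures}, and then quasiconformally extends $f_{n},g_{n}$ across a slit domain (Proposition~\ref{proposition:phi}, Theorem~\ref{theorem:psi}) so that $\sup_{x\in\Sone}|f_{n+1}(x)-g_{n+1}(h(x))|\le(2/3)\sup_{x}|f_{n}(x)-g_{n}(h(x))|$ while the quasiconformality constants $K_{n}=\prod(1+\eps_{j})$ stay uniformly bounded. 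The target welding is never approximated from the identity; it is the error term $\|f_{n}-g_{n}\circ h\|_{\infty}$ that contracts geometrically. The log-singularity enters through Lemma~\ref{lemma:construct_E}: at each stage one finds a finite union of arcs $E_{n}$ of small \emph{positive} capacity with $\capacity(h(\Sone\setminus E_{n}))$ also small, and the extension step pushes $f_{n}$ over $E_{n}$ and $g_{n}$ over $h(\Sone\setminus E_{n})$. Finally, the measurable Riemann mapping theorem converts the quasiconformal limit to a conformal one (Proposition~\ref{proposition:qcw}), and the whole technical burden of Sections~\ref{section:dim_1}--\ref{section:dim_s} is precisely to control how that last quasiconformal map distorts Hausdorff dimension.

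The gap in your sketch is the claim that a local fattening of $\gamma_{n}$ near $f_{n}(K_{n})$ perturbs the welding by a defect $\rho_{n}$ that is ``a homeomorphism of $\Sone$ that is singular only over $K_{n}$.'' Write $F_{n+1}=f_{n+1}^{-1}\circ f_{n}$ and $G_{n+1}=g_{n+1}^{-1}\circ g_{n}$; then $h_{n+1}=G_{n+1}\circ h_{n}\circ F_{n+1}^{-1}$. Because Riemann maps are nonlocal, neither $F_{n+1}$ nor $G_{n+1}$ is the identity off $K_{n}$ or supported over $K_{n}$ in any useful sense: a local modification of the curve moves harmonic measure everywhere, only by a small amount off a neighborhood of the modification. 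So the defect is a two-sided conjugation by two uniformly-small-but-global circle homeomorphisms, not a one-sided composition by a singular one. Driving $h_{n}$ from $\mathrm{id}$ to the prescribed $h$ through such conjugations is an infinite-dimensional control problem with no contraction in sight, and you give no mechanism for it; you acknowledge this as ``the main obstacle'' but that obstacle is the entire theorem. You also have a secondary issue with the upper dimension bound for $s\in(1,2)$: the pieces of the curve not hit by the spliced Cantor/comb sets accumulate over infinitely many generations, so ``summing $s$-dimensional contents of the pieces'' does not obviously give $\dim_{H}\gamma\le s$ without a uniform scheme like the one enforced by (\ref{equation:Hdim_2})--(\ref{equation:Hdim_3}) in the paper. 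If the welding bookkeeping could be made precise, a purely conformal splicing would in fact spare you the dimension-distortion analysis the paper has to do; but as written the proposal does not close the loop.
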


Log-singular circle homeomorphisms are highly irregular, but are still weldings. This was first proved by Bishop \cite{ChrisWeldingAnnals} and it also follows from Theorem \ref{theorem:non-injectivity}. On the other side, well-behaved self-maps of the circle, like quasisymmetric maps, are weldings. This is the so called \textit{fundamental theorem of conformal welding} and it was first proven by Pfluger in \cite{Pfluger} by using the measurable Riemann mapping theorem. Shortly after, Lehto and Virtanen \cite{MR125962} gave a different proof, also by using quasiconformal mappings. Some work from Astala, Jones, Kupiainen and Saksman \cite{JonesActa2011} proves that regularity \textit{at most scales} is a sufficient condition for a homeomorphism to be a welding. However, we still lack a criteria that characterizes more general classes of weldings. Recently, I proved that every orientation-preserving circle homeomorphism is a composition of two conformal weldings \cite{Alex:Welding}. For a survey on conformal welding, the reader is encouraged to read Hamilton's \cite{Hamilton:WeldingSurvey}.

\begin{figure}[h]
\includegraphics[scale=1]{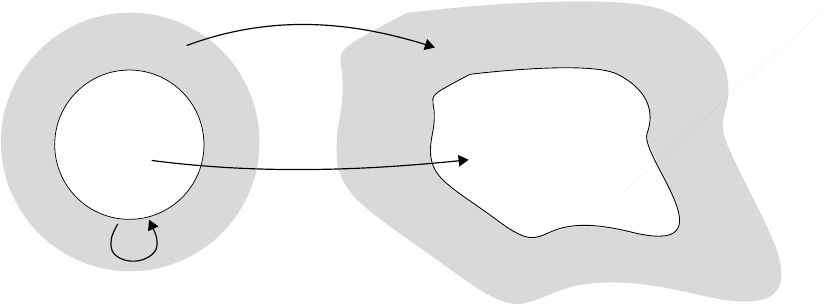}
\centering
\setlength{\unitlength}{\textwidth}
\put(-0.62,0.172){$f$}
\put(-0.62,0.31){$g$}
\put(-0.19,0.26){$\gamma$}
\put(-0.9,0.26){$\Sone$}
\put(-0.772,0.059){$h=g^{-1}\circ f$}
\caption{Definition of conformal welding. The conformal maps $f$ and $g$ map the interior and exterior of the unit circle to the two complementary components of a Jordan curve $\gamma$. The map $h\colon\Sone\to\Sone$ is an orientation-preserving homeomorphism of $\Sone$.}
\label{figure:welding_def}
\end{figure}

We proceed to explain some implications of Theorem \ref{theorem:non-injectivity}.

\begin{corollary}[\cite{Chris:Borel} Question 11]\label{corollary:uncountable-to-1}
The welding correspondence is uncountable-to-$1$ for log-singular circle homeomorphisms.
\end{corollary}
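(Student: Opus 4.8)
The plan is to read this off directly from Theorem \ref{theorem:non-injectivity} together with the fact that Hausdorff dimension is a M\"obius invariant. Fix a log-singular circle homeomorphism $h$. By Theorem \ref{theorem:non-injectivity}, for every $s\in[1,2]$ there is a Jordan curve $\Gamma_s\subset\C_\infty$ whose conformal welding is $h$; hence each M\"obius class $[\Gamma_s]$ belongs to the fiber $\mathcal{W}^{-1}([h])$. It remains to exhibit uncountably many \emph{distinct} classes in this fiber, and the natural candidate is the family $\{[\Gamma_s]\}_{s\in[1,2]}$.

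The key point is that $s\mapsto[\Gamma_s]$ is injective, which follows from M\"obius invariance of Hausdorff dimension. Every M\"obius transformation of $\C_\infty$ is a diffeomorphism of the Riemann sphere, hence bi-Lipschitz with respect to the spherical metric, and therefore preserves Hausdorff dimension of subsets of $\C_\infty$; since the spherical and Euclidean metrics are locally bi-Lipschitz on $\C$, the quantity $\dim_H\Gamma_s=s$ depends only on the class $[\Gamma_s]$. Consequently $[\Gamma_s]\neq[\Gamma_t]$ whenever $s\neq t$, so $\mathcal{W}^{-1}([h])$ contains the uncountable set $\{[\Gamma_s]:s\in[1,2]\}$, and since $h$ was an arbitrary log-singular homeomorphism, $\mathcal{W}$ is uncountable-to-$1$ on log-singular circle homeomorphisms.

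There is essentially no obstacle here beyond Theorem \ref{theorem:non-injectivity} itself; the only point needing a word of care is the behavior of a M\"obius map at the point sent to $\infty$ (in case $\Gamma_s$ or its image passes through $\infty$), which is precisely why one should phrase the bi-Lipschitz argument in terms of the spherical metric rather than the Euclidean one. One may additionally note that the positive-area curve furnished by Theorem \ref{theorem:non-injectivity} has Hausdorff dimension $2$, so it need not be a new class beyond $[\Gamma_2]$; this is harmless, as the continuum $\{[\Gamma_s]:s\in[1,2)\}$ already suffices.
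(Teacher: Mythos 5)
Your proof is correct and takes the approach the paper implicitly intends: the paper states this corollary without proof, clearly treating it as an immediate consequence of Theorem \ref{theorem:non-injectivity}, and the missing details are exactly what you supplied, namely that the family $\{\Gamma_s\}_{s\in[1,2]}$ of curves with welding $h$ produced by the theorem falls into pairwise distinct M\"obius classes because Hausdorff dimension is a M\"obius invariant (bi-Lipschitz in the spherical metric, and locally bi-Lipschitz comparable to the Euclidean one away from $\infty$). Your care about the point sent to $\infty$ and the observation that the positive-area curve may coincide with the $s=2$ case are both sound and harmless.
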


The curves that arise from log-singular circle homeomorphisms, which will be discussed more extensively later, are called \textit{flexible} and they were first introduced by Bishop \cite{MR1274085}. We say that a Jordan curve $\gamma\subset\C$ is flexible if:
\begin{enumerate}[label=(\roman*)]
	\item Given any other Jordan curve $\tilde{\gamma}$ and any $\eps>0$, there exists a homeomorphism $\phi\colon\C_{\infty}\to\C_{\infty}$, conformal off $\gamma$, so that the Hausdorff distance between $\phi(\gamma)$ and $\tilde{\gamma}$ is less than $\eps$.
	\item Given $z_{1},z_{2}$ in each component of $\C_{\infty}\setminus\gamma$ and $w_{1},w_{2}$ in each component of $\C_\infty\setminus\tilde{\gamma}$, the previous $\phi$ can be taken so that $\phi(z_{j})=w_{j}$ for $j=1,2$.
\end{enumerate}

It was noticed by Bishop \cite{ChrisWeldingAnnals} that an application of a theorem of Balogh and Bonk \cite{BaloghBonk} shows that the welding homeomorphism of a flexible curve is log-singular. In the same paper, Bishop proved that the converse also holds.

The set of homeomorphisms $\phi$ from (i) are in $$\textrm{CH}(\gamma)=\{\phi\colon\C_{\infty}\to\C_{\infty} \textrm{ homeomorphism}\colon\phi\textrm{ conformal off }\gamma\}.$$ If $\textrm{CH}(\gamma)$ consists only of M\"obius transformations, then we say that the curve $\gamma$ is \textit{conformally removable}. See the survey papers by Bishop \cite{Chris:Borel} and Younsi \cite{Younsi:SurveyRemovability} for more information on conformally removable sets. Characterizing conformally removable Jordan curves still remains a difficult open problem, let alone characterizing conformally removable sets. However, it is worth mentioning work of Ntalampekos \cite{Dimitrios:CNED} that constitutes a very important step in this direction. The corresponding problem for bounded analytic functions was solved by Tolsa \cite{Tolsa:Painleve} in his breakthrough work.

It follows from the definition that flexible curves are non-removable. Another proof follows by Theorem \ref{theorem:non-injectivity}, which yields the following stronger result.

\begin{corollary}\label{theorem:CHFlexible}
Let $\gamma\subset\C$ be a flexible curve, then there exists a homeomorphism $\phi\in\textrm{CH}(\gamma)$ so that $\phi(\gamma)$ has positive area.
\end{corollary}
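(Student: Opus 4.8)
The plan is to obtain Corollary \ref{theorem:CHFlexible} as a formal consequence of Theorem \ref{theorem:non-injectivity} together with the elementary rigidity of conformal welding. Let $\gamma\subset\C$ be flexible, with complementary components $\Omega,\Omega^{*}$ in $\C_\infty$ and uniformizing conformal maps $f\colon\D\to\Omega$, $g\colon\D^{*}\to\Omega^{*}$, so that $h=g^{-1}\circ f\colon\Sone\to\Sone$ is its welding. As recalled in the introduction, the argument of Bishop \cite{ChrisWeldingAnnals} via the Balogh--Bonk estimates \cite{BaloghBonk} shows that $h$ is log-singular. Hence Theorem \ref{theorem:non-injectivity} applies and produces a Jordan curve $\gamma'\subset\C$ of positive area whose welding class is $[h]$. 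Pre-composing the uniformizing maps of $\gamma'$ with suitable automorphisms of $\D$ and of $\D^{*}$ (which only changes the welding inside its class), we may choose conformal maps $f'\colon\D\to\Omega'$ and $g'\colon\D^{*}\to(\Omega')^{*}$ onto the two complementary components of $\gamma'$ with $h=(g')^{-1}\circ f'$ on $\Sone$ \emph{exactly}, i.e.\ with the same welding homeomorphism as $\gamma$.

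Next I would run the standard welding-gluing construction. Define $\phi\colon\C_\infty\to\C_\infty$ by $\phi=f'\circ f^{-1}$ on $\Omega$ and $\phi=g'\circ g^{-1}$ on $\Omega^{*}$. By the Carath\'eodory--Torhorst theorem each of $f,g,f',g'$ extends to a homeomorphism of the corresponding closed disk, so these two formulas extend continuously to $\overline{\Omega}$ and $\overline{\Omega^{*}}$ respectively. On $\gamma$ they agree: if $x=f(\zeta)$ with $\zeta\in\Sone$, then $g^{-1}(x)=h(\zeta)$ because $f=g\circ h$ on $\Sone$, hence both formulas give the value $f'(\zeta)=g'(h(\zeta))$. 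The pasting lemma then shows that $\phi$ is a continuous bijection of the sphere, hence a homeomorphism, and it is conformal on $\Omega$ and on $\Omega^{*}$ as a composition of conformal maps (including at $\infty$, since $\gamma'\subset\C$); thus $\phi\in\textrm{CH}(\gamma)$. By construction $\phi(\gamma)=\gamma'$, which has positive area, proving the corollary. If one preferred a different normalization of $\phi$, post-composing with a M\"obius transformation keeps it conformal off $\gamma$ and does not affect positivity of area.

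I expect essentially all of the difficulty to be concentrated in Theorem \ref{theorem:non-injectivity} itself — the construction, from an arbitrary log-singular $h$, of an honest welding curve of prescribed size (positive area, or Hausdorff dimension $s$) — which is where the potential-theoretic input and the quasiconformal surgery live. The corollary adds only the rigidity statement that two Jordan curves share a welding class precisely when they differ by a homeomorphism of $\C_\infty$ that is conformal off the curve; the one point requiring care is the bookkeeping of the M\"obius normalizations so that the two uniformizations glue to a globally continuous map, and this is handled once the boundary extensions of $f,g,f',g'$ are invoked.
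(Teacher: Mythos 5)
Your proof is correct and follows exactly the route the paper takes: it deduces that the welding $h$ of the flexible curve is log-singular (Balogh–Bonk via Bishop), applies Theorem \ref{theorem:non-injectivity} to get a positive-area curve $\gamma'$ with the same welding, and glues the four uniformizing maps as $\phi=f'\circ f^{-1}$ on $\Omega$ and $\phi=g'\circ g^{-1}$ on $\Omega^{*}$ to obtain $\phi\in\textrm{CH}(\gamma)$ with $\phi(\gamma)=\gamma'$; this is precisely the construction the paper encodes in Figure \ref{figure:non-uniqueness_welding}. The only substantive content the corollary adds to Theorem \ref{theorem:non-injectivity} is this pasting step, and you carry it out carefully, including the boundary extensions and the M\"obius normalization needed so the two halves actually match on $\gamma$.
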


The proof follows by considering the homeomorphism constructed in Figure \ref{figure:non-uniqueness_welding} together with Theorem \ref{theorem:non-injectivity}.

\begin{figure}[h]
\includegraphics[scale=1]{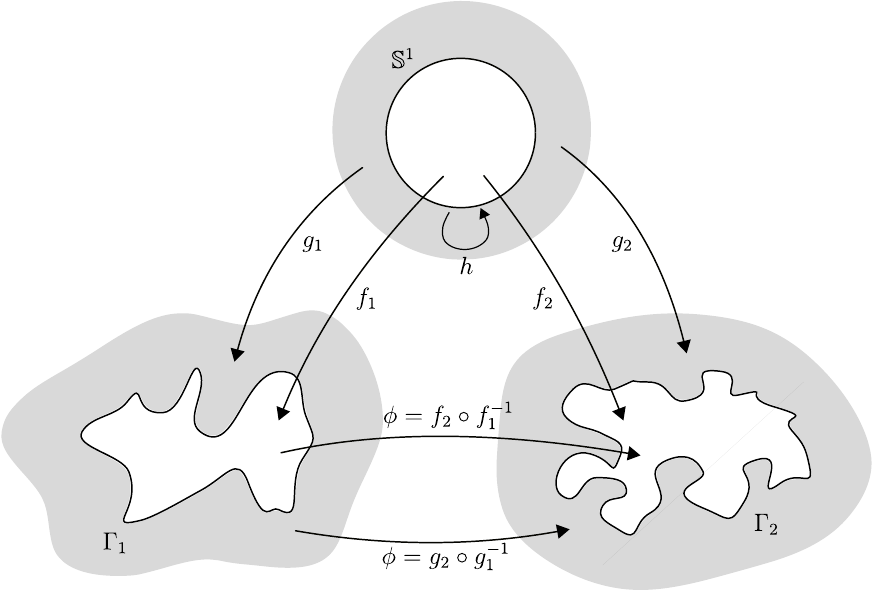}
\centering
\caption{If two non-conformally equivalent curves $\Gamma_{1}, \Gamma_{2}$ yield the same welding $h$, then there exists a homeomorphism $\phi\in\textrm{CH}(\Gamma_{1})$ so that $\phi(\Gamma_{1})=\Gamma_{2}$.}
\label{figure:non-uniqueness_welding}
\end{figure}

In fact, Theorem \ref{theorem:non-injectivity} yields the following even stronger result.

\begin{corollary}\label{theorem:CHFlexible_Hdim}
Let $\gamma\subset\C$ be a flexible curve and $s\in[1,2]$, then there exists a homeomorphism $\phi\in\textrm{CH}(\gamma)$ so that $\phi(\gamma)$ has Hausdorff dimension $s$.
\end{corollary}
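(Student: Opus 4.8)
The plan is to obtain this as a direct consequence of Theorem \ref{theorem:non-injectivity} together with the gluing construction indicated in Figure \ref{figure:non-uniqueness_welding}. First I would recall that the welding homeomorphism $h$ of a flexible curve $\gamma$ is log-singular; as Bishop observed, this is what the Balogh--Bonk theorem yields once combined with property (i) in the definition of flexibility. Fix conformal maps $f\colon\D\to\Omega$ and $g\colon\D^{*}\to\Omega^{*}$ onto the two complementary components of $\gamma$, normalized so that $h=g^{-1}\circ f$ on $\Sone$. Applying Theorem \ref{theorem:non-injectivity} to $h$ produces a Jordan curve $\Gamma\subset\C$ of Hausdorff dimension $s$ whose welding is again $h$; let $F\colon\D\to\Omega_{1}$ and $G\colon\D^{*}\to\Omega_{1}^{*}$ be conformal maps onto the complementary components of $\Gamma$, which, after post-composing with a common automorphism of $\D$ and of $\D^{*}$ if necessary (this is permitted because the welding is only well defined on the class $[h]$), may be taken to satisfy $G^{-1}\circ F=h$ on $\Sone$ as well.

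Next I would assemble the homeomorphism. By the Carath\'eodory--Torhorst theorem the maps $f,g,F,G$ extend to homeomorphisms of the closed disks, so one may define $\phi$ to equal $F\circ f^{-1}$ on $\overline{\Omega}$ and $G\circ g^{-1}$ on $\overline{\Omega^{*}}$. These two prescriptions agree on $\overline{\Omega}\cap\overline{\Omega^{*}}=\gamma$: for $\zeta\in\gamma$ one has $f^{-1}(\zeta)\in\Sone$, and using $h=g^{-1}\circ f$ and $h=G^{-1}\circ F$ on $\Sone$,
$$F\bigl(f^{-1}(\zeta)\bigr)=\bigl(G\circ G^{-1}\circ F\bigr)\bigl(f^{-1}(\zeta)\bigr)=\bigl(G\circ h\bigr)\bigl(f^{-1}(\zeta)\bigr)=G\bigl(g^{-1}(\zeta)\bigr).$$
Hence $\phi$ is a well-defined continuous bijection of the compact space $\C_{\infty}$ onto itself, hence a homeomorphism, and it is conformal on $\C_{\infty}\setminus\gamma=\Omega\cup\Omega^{*}$ since on each component it is a composition of conformal maps. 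Therefore $\phi\in\textrm{CH}(\gamma)$, while $\phi(\gamma)=F(\Sone)=\Gamma$ has Hausdorff dimension $s$, as desired. (Corollary \ref{theorem:CHFlexible} is literally the same argument, with the positive-area curve from Theorem \ref{theorem:non-injectivity} in place of $\Gamma$.)

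All the real content is in Theorem \ref{theorem:non-injectivity}; the step above is soft, and I expect no genuine obstacle in it. The one point that needs a word of justification is the normalization making the two halves of $\phi$ coincide along $\gamma$ on the nose, rather than only up to a M\"obius transformation — but this is exactly the well-definedness of the welding correspondence on $[h]$, and is harmless. If in addition one wants the prescribed-point version matching property (ii) of flexibility, one first pre- and post-composes $f,g,F,G$ with suitable automorphisms so that the chosen base points correspond, which again changes nothing.
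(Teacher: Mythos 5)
Your proof is correct and takes exactly the paper's approach: the paper's argument for Corollaries \ref{theorem:CHFlexible} and \ref{theorem:CHFlexible_Hdim} is precisely the gluing construction of Figure \ref{figure:non-uniqueness_welding} applied to the curve of dimension $s$ supplied by Theorem \ref{theorem:non-injectivity}, which you have spelled out in detail. (One minor terminology slip: to normalize the welding of $\Gamma$ you \emph{pre}-compose $F$ and $G$ with automorphisms of $\D$ and $\D^{*}$, not post-compose, but this does not affect the argument.)
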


Corollary \ref{theorem:CHFlexible} is a particular case of a conjecture that has been open for over 30 years.

\begin{conjecture}[\cite{MR1274085} Question 2]\label{conjecture:positive_area}
Let $\gamma$ be a non-removable curve, then there exists $\phi\in\textrm{CH}(\gamma)$ so that $\phi(\gamma)$ has positive area.
\end{conjecture}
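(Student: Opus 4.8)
Since Conjecture \ref{conjecture:positive_area} is open, I would describe a program rather than a complete argument.

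\textbf{Step 1: reduce to a statement about weldings.} The first observation is that \emph{non-removability depends only on the welding class}. Indeed, if $\gamma_{1},\gamma_{2}$ are Jordan curves with a common welding $h$, the standard gluing of conformal maps (Figure \ref{figure:non-uniqueness_welding}) produces $\psi\in\textrm{CH}(\gamma_{1})$ with $\psi(\gamma_{1})=\gamma_{2}$; hence if $\gamma_{1}$ is removable then $\psi$ is M\"obius and $\gamma_{2}$ is a M\"obius image of $\gamma_{1}$, so $\gamma_{2}$ is also removable. Conversely, if two non-M\"obius-equivalent curves share a welding, each is non-removable. Thus Conjecture \ref{conjecture:positive_area} is equivalent to the assertion that \emph{every welding realized by more than one curve up to M\"obius transformations is realized by a curve of positive area}. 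Once such a positive-area curve $\Gamma$ with the welding $h$ of the given $\gamma$ is produced, gluing the conformal maps of $\Gamma$ to those of $\gamma$ yields $\phi\in\textrm{CH}(\gamma)$ with $\phi(\gamma)=\Gamma$, exactly as in Corollary \ref{theorem:CHFlexible} (note $\phi$ is necessarily non-M\"obius, since a M\"obius image of the zero-area curve $\gamma$ has zero area).

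\textbf{Step 2: the log-singular case is done; localize the rest.} By Theorem \ref{theorem:non-injectivity} the equivalent statement holds whenever $h$ is log-singular, i.e. whenever $\gamma$ is flexible. For a general non-removable $\gamma$ --- which we may assume has zero area, else take $\phi=\mathrm{id}$ --- I would attempt a localize-and-surgery scheme. Since closed subsets of removable sets are removable, $\gamma$ contains a non-removable closed arc $\alpha$. The goal would be to show that in any neighborhood of $\alpha$ there is a sub-arc along which the harmonic measures $\omega,\omega^{*}$ of the two complementary domains are mutually singular --- a "local log-singularity" --- and then to run a positive-area modification of $\gamma$ supported in a small neighborhood of that sub-arc, leaving $\gamma$ (hence the welding) unchanged elsewhere, following the construction behind Theorem \ref{theorem:non-injectivity}. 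A natural testing ground is the range of dimensions not already excluded: curves of $\sigma$-finite length and John-domain boundaries are removable, so the open cases live among curves of Hausdorff dimension in $(1,2]$ with non-$\sigma$-finite length.

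\textbf{Step 3: the main obstacle.} The real difficulty is the passage from the \emph{qualitative} hypothesis "non-removable" to any \emph{quantitative} structure. Non-removability only asserts the existence of a single non-M\"obius $\phi\in\textrm{CH}(\gamma)$, and it is not known --- this being essentially the heart of the removability problem --- that this forces $\omega\perp\omega^{*}$ on some sub-arc, or any weaker local singularity; so Step 2 as stated may demand an advance comparable to the conjecture itself. A fallback is a more robust construction using only the deformation $\phi$: one could try to iterate $\phi$, or the non-M\"obius elements of $\textrm{CH}(\Gamma)$ available for the various curves $\Gamma$ sharing the welding $h$ (a family that is large whenever it is non-trivial, cf. Corollary \ref{corollary:uncountable-to-1}), across a Cantor set of scales, accumulating positive area while preserving $h$ exactly in the limit; or to argue by a weak-$*$ limiting or Baire-category argument over this family that some member has positive area. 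I expect this scale-by-scale accumulation, controlled so that the welding survives the limit, to be the genuinely hard step, with the Balogh--Bonk and Bishop analysis of log-singular weldings \cite{BaloghBonk, ChrisWeldingAnnals} serving as the model for the potential-theoretic estimates required.
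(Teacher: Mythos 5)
The statement you were asked about is a conjecture, not a theorem: the paper does not prove it, records it as open for over thirty years, and establishes only the special case of flexible curves (Corollary \ref{theorem:CHFlexible}, obtained from Theorem \ref{theorem:non-injectivity} together with the gluing of Figure \ref{figure:non-uniqueness_welding}). You correctly recognized this, and your Steps 1--2 reproduce exactly the paper's route to that special case, so there is nothing to fault in presenting a program rather than a proof.

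One caution about Step 1. The claimed \emph{equivalence} of the conjecture with ``every welding realized by more than one curve up to M\"obius transformations is realized by a positive-area curve'' is only an implication in one direction. Non-removability of $\gamma$ yields a non-M\"obius $\phi\in\textrm{CH}(\gamma)$, but it is not known that $\phi(\gamma)$ must then fail to be a M\"obius image of $\gamma$; this is precisely the content of Lehto's conjecture discussed in the paper, and Younsi's example of a flexible curve with a non-M\"obius $\phi\in\textrm{CH}(\gamma)$ satisfying $\phi(\gamma)=\gamma$ shows that the inference from ``non-removable'' to ``the welding has a second, non-equivalent realization'' is itself open in general. So your reduction quietly presupposes one open conjecture in order to attack another. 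You essentially flag this in Step 3, which correctly locates the genuine obstruction: passing from the purely qualitative hypothesis of non-removability to any quantitative, local potential-theoretic structure (such as mutual singularity of the two harmonic measures on a sub-arc) on which a surgery like the paper's could be run. As a program your sketch is consistent with the state of the art; as a proof it is, as you acknowledge, incomplete --- and necessarily so, since the statement remains unproved.
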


It follows by a result of Pugh and Wu \cite{PughWu} and a result of Beurling \cite{Beurling:exceptionnels} that flexible curves are residual in the space of Jordan curves. Hence Corollary \ref{theorem:CHFlexible} proves Conjecture \ref{conjecture:positive_area} for a \textit{generic} class of Jordan curves.

If Conjecture \ref{conjecture:positive_area} holds, then the following conjecture that has been open for over 50 years also holds.

\begin{conjecture}
The conformal welding correspondence is injective if and only if the corresponding Jordan curve is conformally removable. That is, a Jordan curve is uniquely determined by its welding homeomorphism (modulo M\"obius transformations) if and only if it is conformally removable.
\end{conjecture}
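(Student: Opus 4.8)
Since this statement is recorded as a \emph{conjecture}, the realistic aim of a proof plan is to isolate the single genuinely open ingredient, so I would split the biconditional into its two implications, which are of very different character.

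The direction ``conformally removable $\Rightarrow$ the welding is injective'' is classical and short, and I would dispatch it first. Suppose $\gamma$ is conformally removable and let $\gamma'$ be any Jordan curve with $\mathcal{W}([\gamma'])=\mathcal{W}([\gamma])=[h]$. Writing $f,g$ and $f',g'$ for the associated Riemann maps as in Figure \ref{figure:welding_def}, the equalities $h=g^{-1}\circ f=(g')^{-1}\circ f'$ say exactly that $f'\circ f^{-1}\colon\Omega\to\Omega'$ and $g'\circ g^{-1}\colon\Omega^{*}\to(\Omega')^{*}$ have matching boundary values along $\gamma$ (the point being $g'\circ h=f'$ on $\Sone$), so they paste to a homeomorphism $\Phi\colon\C_{\infty}\to\C_{\infty}$ which is conformal off $\gamma$, i.e.\ $\Phi\in\textrm{CH}(\gamma)$. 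Removability forces $\Phi$ to be a M\"obius transformation, and since $\gamma'=\Phi(\gamma)$ we get $[\gamma']=[\gamma]$, so $\mathcal{W}$ is injective at $[\gamma]$. (One should also note that $\mathcal{W}$ descends to M\"obius classes, so ``injective'' is the correct notion.)

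For the converse I would argue the contrapositive: given a \emph{non}-removable $\gamma$, produce a Jordan curve $\gamma'$ with $\mathcal{W}([\gamma'])=[h]$ but $[\gamma']\neq[\gamma]$. Non-removability supplies a non-M\"obius $\phi\in\textrm{CH}(\gamma)$, and the natural candidate is $\gamma'=\phi(\gamma)$. That $\phi(\gamma)$ has the same welding is an immediate computation: $\phi$ is conformal on each of $\Omega,\Omega^{*}$, so $\phi\circ f$ and $\phi\circ g$ are Riemann maps onto the two complementary components of $\phi(\gamma)$, whence $(\phi\circ g)^{-1}\circ(\phi\circ f)=g^{-1}\circ f=h$ — this is precisely the mechanism depicted in Figure \ref{figure:non-uniqueness_welding}. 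Thus the entire content lies in arranging $\phi(\gamma)\notin[\gamma]$, and this is the step I expect to be the genuine obstacle, because it is Conjecture \ref{conjecture:positive_area} in disguise: if one can choose $\phi$ with $|\phi(\gamma)|>0$ then, M\"obius maps being smooth and hence preserving area-null sets, $\phi(\gamma)\notin[\gamma]$ as soon as $|\gamma|=0$, while if $|\gamma|>0$ one looks instead for $\phi$ making $\phi(\gamma)$ strictly smaller (say of Hausdorff dimension $1$) in order to separate the two curves. For the residual class of flexible curves this is now in hand: Theorem \ref{theorem:non-injectivity} together with Corollaries \ref{theorem:CHFlexible} and \ref{theorem:CHFlexible_Hdim} furnishes, for flexible $\gamma$, homeomorphisms $\phi\in\textrm{CH}(\gamma)$ with $\phi(\gamma)$ of positive area and of every dimension $s\in[1,2]$, so $\mathcal{W}$ is even uncountably non-injective at $[\gamma]$, as in Corollary \ref{corollary:uncountable-to-1}. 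The general statement, however, stays open exactly because Conjecture \ref{conjecture:positive_area} does: a proof would require such a construction at an \emph{arbitrary} non-removable curve rather than on a residual subset, and I do not see how to remove the flexibility hypothesis with the present techniques.
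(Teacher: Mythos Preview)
Your reading is correct: this is an open conjecture and the paper does not prove it. Your argument for the easy direction (removable $\Rightarrow$ injective) via the gluing in Figure~\ref{figure:non-uniqueness_welding} is the standard one and is fine.

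One refinement to your reduction of the hard direction. You split into $|\gamma|=0$ and $|\gamma|>0$ and, for the latter, propose to find $\phi$ with $\phi(\gamma)$ of strictly smaller dimension; that route would invoke Conjecture~\ref{conjecture:positive-area_to_zero-area}, which is itself open (and conjectured false by Kaufman--Wu). In fact no second conjecture is needed: as the paper notes immediately after the statement, when $|\gamma|>0$ the measurable Riemann mapping theorem already gives non-injectivity. The space of Beltrami coefficients supported on $\gamma$ is infinite-dimensional, whereas the normalized $\phi\in\mathrm{CH}(\gamma)$ with $\phi(\gamma)\in[\gamma]$ lie in a finite-parameter family (each such $\phi$ is a M\"obius map composed with an element of $\mathrm{Aut}(\Omega)$ that happens to extend across $\gamma$), so a generic $\phi^{\mu}$ must send $\gamma$ outside its M\"obius class. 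Thus the \emph{only} genuinely open case is a non-removable $\gamma$ of zero area, and that case is exactly Conjecture~\ref{conjecture:positive_area}; this is why the paper says Conjecture~\ref{conjecture:positive_area} alone implies the welding conjecture, rather than the pair of conjectures you appeal to. Your conclusion---that the statement is settled for flexible curves via Theorem~\ref{theorem:non-injectivity} and Corollaries~\ref{theorem:CHFlexible}--\ref{theorem:CHFlexible_Hdim}, but remains open in general---is accurate.
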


The first instance that I can recall of this conjecture appears in Lehto's \cite{Lehto:Weldings}. This conjecture has been claimed as true by many authors, but either without giving a proof or with an incorrect argument, as it was noticed by Fortier-Bourque. See Younsi's \cite{Younsi:SurveyRemovability}. 

Later, Younsi \cite{Younsi:Welding_Ex} suggested that injectivity of the conformal welding correspondence might not be completely characterized by whether or not $\gamma$ is conformally removable. In the sense that even when $\gamma$ is non-removable and $\phi\in\textrm{CH}(\gamma)$, it could be the case that $\phi(\gamma)$ is a M\"obius image of $\gamma$. More precisely, Younsi \cite[Theorem 1.3]{Younsi:Welding_Ex} proved that there is a flexible curve of zero area and $\phi\in\textrm{CH}(\gamma)$ so that $\phi(\gamma)=\gamma$. However, Corollary \ref{theorem:CHFlexible_Hdim} shows that for flexible curves there is some other $\varphi\in\textrm{CH}(\gamma)$ for which $\varphi(\gamma)$ is not a M\"obius image of $\gamma$.

Notice that if $\gamma$ is assumed to have positive area, then the conformal welding correspondence $\mathcal{W}$ is not injective by the measurable Riemann mapping theorem.

Corollary \ref{theorem:CHFlexible_Hdim} also yields the following.

\begin{corollary}
Let $\gamma$ be a flexible curve, then there is $\phi\in\mathrm{CH}(\gamma)$ so that $\phi(\gamma)$ has zero area.
\end{corollary}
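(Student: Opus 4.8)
The plan is to deduce this immediately from Corollary~\ref{theorem:CHFlexible_Hdim}. First I would apply that corollary with the choice $s=1$, obtaining a homeomorphism $\phi\colon\C_{\infty}\to\C_{\infty}$, conformal off $\gamma$, for which $\phi(\gamma)$ has Hausdorff dimension exactly $1$. Any value $s\in[1,2)$ would serve equally well; the only thing that matters is landing strictly below dimension $2$.

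It then remains to record the elementary fact that a subset $A\subset\C$ with $\dim_{H}A<2$ is Lebesgue-null. Indeed, by the definition of Hausdorff dimension, $\dim_{H}A<2$ forces $\mathcal{H}^{2}(A)=0$, and the $2$-dimensional Hausdorff measure $\mathcal{H}^{2}$ coincides with planar Lebesgue measure up to a fixed normalizing constant; hence the area of $\phi(\gamma)$ is zero, which is exactly the assertion.

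I do not anticipate any genuine obstacle here, since all of the difficulty is already contained in Corollary~\ref{theorem:CHFlexible_Hdim}. The one point worth flagging is that one cannot simply take $\phi=\mathrm{id}$: a general flexible curve need not have zero area, as only particular examples (such as Younsi's) are known to be null. The role of this statement is precisely to guarantee that, whatever the area of $\gamma$ happens to be, there is always a homeomorphism in $\mathrm{CH}(\gamma)$ that collapses the curve onto a set of zero area, complementing the positive-area homeomorphism furnished by Corollary~\ref{theorem:CHFlexible}.
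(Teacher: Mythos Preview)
Your proposal is correct and follows exactly the paper's approach: the paper simply states that ``Corollary~\ref{theorem:CHFlexible_Hdim} also yields the following'' without further argument, so deducing zero area from $\dim_{H}\phi(\gamma)=s<2$ is precisely what is intended. Your added remark that one cannot simply take $\phi=\mathrm{id}$ is also in the spirit of the surrounding discussion, since the paper immediately points out that flexible curves of positive area exist.
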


Since by Theorem \ref{theorem:non-injectivity} there are flexible curves of positive area, this also answers a particular case of the following open conjecture.

\begin{conjecture}\label{conjecture:positive-area_to_zero-area}
Let $E$ be a positive area set with no interior, then there exists $\phi\in\mathrm{CH}(E)$ so that $\phi(E)$ has zero area.
\end{conjecture}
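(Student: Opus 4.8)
The natural plan is to realize the sought map as a controlled limit of quasiconformal homeomorphisms. Note first that no quasiconformal $\phi$ can work: a quasiconformal homeomorphism has Jacobian $J_\phi = |\partial\phi|^2(1-|\mu|^2) > 0$ a.e., where $\mu$ is its Beltrami coefficient, so $|\phi(E)| = \int_E J_\phi\,dA > 0$ whenever $|E| > 0$. Hence $\phi$ must be a genuinely non-quasiconformal homeomorphism, conformal off $E$, obtained by solving a \emph{degenerate} Beltrami equation whose coefficient is supported on $E$ and whose ellipticity breaks down on a large part of $E$. Before attempting this I would make two harmless reductions. Writing (for a Lebesgue measurable $E$) $E = E_0\cup N$ with $N$ null and $E_0 = \bigcup_j K_j$ an increasing union of compacta, any homeomorphism $\phi$ conformal off $E_0$ is in particular conformal off the larger set $E$, hence lies in $\mathrm{CH}(E)$, and being conformal on the open set $\C\setminus E_0\supseteq N$ it automatically sends $N$ to a null set; so it suffices to produce such a $\phi$ with $|\phi(K_j)| = 0$ for every $j$. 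Since $E$ has empty interior each $K_j$ is nowhere dense, which is precisely the regime in which this collapsing can be hoped for.

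The model case, settled in this paper, is when $E$ is itself a flexible curve of positive area (such curves exist by Theorem \ref{theorem:non-injectivity}). Its welding $h$ is then log-singular, and Theorem \ref{theorem:non-injectivity} applied with $s = 1$ produces another flexible curve $\gamma'$ with the same welding $h$ and Hausdorff dimension $1$, so $|\gamma'| = 0$; the welding-surgery homeomorphism attached to the pair $(E,\gamma')$ (Figure \ref{figure:non-uniqueness_welding}) then lies in $\mathrm{CH}(E)$ and carries $E$ onto $\gamma'$ (up to a M\"obius map, which preserves nullity), so its image has zero area. The strategy for the general conjecture is to reproduce this phenomenon locally: inside small disks, thicken the measure-theoretic core of the $K_j$ into flexible-curve-type pieces, swap each for a welding-equivalent piece of zero area, and reassemble the pieces by a homeomorphism conformal on their common complement; or, more directly, choose Beltrami coefficients $\mu_n$ with $\supp\mu_n\subseteq E_0$ and $|\mu_n|\nearrow 1$ on each $K_j$, normalize the solutions $\phi^{\mu_n}$, and show they converge locally uniformly to a homeomorphism $\phi$ whose Jacobian vanishes a.e.\ on $E_0$, so that $|\phi(K_j)| = \lim_n\int_{K_j} J_{\phi^{\mu_n}}\,dA = 0$.

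The main obstacle is the tension between the two requirements on the $\mu_n$. Forcing $\lim_n\int_{K_j} J_{\phi^{\mu_n}}\,dA = 0$ demands a strong degeneration of the dilatation on $K_j$, but to keep $\phi = \lim_n\phi^{\mu_n}$ injective — to prevent it from crushing a nondegenerate continuum to a point — one needs quantitative control of the degeneration, of David type (bounds $|\mu_n|\le 1 - c_n/\log(\cdots)$, or an a priori modulus-of-continuity estimate). It is not known whether any admissible degeneration of this kind is violent enough to annihilate positive area for an arbitrary compact $K_j$ of positive area and empty interior (for instance a planar fat Cantor set that is neither contained in, nor measure-equivalent to, any flexible curve). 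Reconciling ``degenerate enough to kill area'' with ``controlled enough to remain a homeomorphism'', or replacing this quasiconformal mechanism by a purely potential-theoretic one that uses the empty-interior hypothesis more efficiently, is exactly where the argument stalls; this is why the conjecture remains open beyond the flexible-curve case treated here.
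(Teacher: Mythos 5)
The statement you were asked to prove is not a theorem of the paper but an open conjecture that the paper merely records (it goes back to a question of Bishop, and Kaufman--Wu have in fact conjectured that the answer is \emph{negative}, i.e.\ that the statement is false in general). The paper offers no proof; its contribution is only the special case in which $E$ is a flexible curve of positive area, obtained by combining Theorem \ref{theorem:non-injectivity} (existence of positive-area flexible curves, and of a dimension-$1$ curve with the same welding) with the welding-surgery homeomorphism of Figure \ref{figure:non-uniqueness_welding}. Your proposal correctly refrains from claiming a proof, and your treatment of the model case is exactly the paper's argument: take the zero-area welding-equivalent curve from Theorem \ref{theorem:non-injectivity} with $s=1$ and transport $E$ onto it by the map that is $f_2\circ f_1^{-1}$ on one side and $g_2\circ g_1^{-1}$ on the other. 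Your preliminary observations (no quasiconformal map can work since quasiconformal maps preserve null sets; reduction to an exhaustion by compacta) are also sound, modulo the usual care needed to interpret ``conformal off $E$'' when $E_0=\bigcup_j K_j$ is only $F_\sigma$.

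Your diagnosis of where a general argument stalls is accurate and is the genuine obstruction: a degenerate Beltrami coefficient supported on $E$ must degenerate fast enough to annihilate the area of $E$ yet remain controlled enough (in the David/Lehto sense) for the normalized solutions to converge to an injective limit, and no known class of admissible degenerations is known to achieve both for an arbitrary nowhere dense compactum of positive area. Since the experts' expectation (Kaufman--Wu) is that the conjecture fails, you should be aware that the missing step may not exist at all; any honest attempt should also keep the possibility of a counterexample in view. In short: nothing in your proposal is wrong, but it proves only what the paper proves, and the conjecture itself remains open.
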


This conjecture first appears as a question in Bishop's \cite[Question 3]{MR1274085}. Later, Kaufman and Wu \cite[Theorem 3]{Kaufman-Wu:Removability} proved that there is always $F\subset E$ of positive area and $\phi\in\mathrm{CH}(E)$ so that $\phi(F)$ has zero area, and conjectured that the answer to Bishop's question is negative. That is, that Conjecture \ref{conjecture:positive-area_to_zero-area} is false.

To the best of our knowledge Theorem \ref{theorem:non-injectivity} (and Corollary \ref{corollary:uncountable-to-1}), is the first instance in which a zero area non-removable Jordan curve corresponds to uncountably many non-conformally equivalent Jordan curves via CH homeomorphisms. See Figure \ref{figure:non-uniqueness_welding}. Similarly, our construction in the proof of Theorem \ref{theorem:non-injectivity} also implies.

\begin{corollary}[\cite{Chris:Borel} Question 14]
The set of $CH$-images of a flexible curve is not totally disconnected.
\end{corollary}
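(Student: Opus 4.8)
The plan is to observe that the curves produced in the proof of Theorem~\ref{theorem:non-injectivity} can be organized into a continuous one‑parameter family, all with the same welding, and then to use the elementary fact that the image of a non‑constant path in a metric space is a connected subset containing more than one point, which is exactly what is needed to rule out total disconnectedness.

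First I would set up the dictionary between $\mathrm{CH}$‑homeomorphisms of $\gamma$ and Jordan curves with the welding $h$ of $\gamma$. As in Figure~\ref{figure:non-uniqueness_welding}, if $\Gamma$ is a Jordan curve whose welding equals $h$, then gluing $f_{\Gamma}\circ f_{\gamma}^{-1}$ on the bounded component of $\C_{\infty}\setminus\gamma$ to $g_{\Gamma}\circ g_{\gamma}^{-1}$ on the unbounded one gives a $\phi\in\mathrm{CH}(\gamma)$ with $\phi(\gamma)=\Gamma$; conversely, since welding is invariant under pre‑ and post‑composition by conformal maps, every $\phi\in\mathrm{CH}(\gamma)$ produces a curve $\phi(\gamma)$ with welding $h$. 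Using property (ii) of flexibility I would normalize all curves to pass through three fixed marked points and equip this space of Jordan curves with the Hausdorff metric $d_{\mathrm H}$; the set of $\mathrm{CH}$‑images of $\gamma$ is then identified with the set $\mathcal{C}_{h}$ of so‑normalized curves with welding $h$. It therefore suffices to construct a continuous, non‑constant map $\Phi\colon[0,1]\to\mathcal{C}_{h}$.

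The second and main step is to extract $\Phi$ from the construction behind Theorem~\ref{theorem:non-injectivity}. There, for each target $s\in[1,2]$ one obtains a curve with welding $h$ and Hausdorff dimension $s$ as a Hausdorff limit $\Gamma=\lim_{n}\Gamma_{n}$ of a multiscale refinement whose fine geometry at scale $n$ is governed by a real parameter $t$ (the knob interpolating between dimension $1$ and dimension $2$, and between zero and positive area). I would verify two properties: (a) for every admissible $t$ the limit curve has welding \emph{exactly} $h$ — this is built into the refinement scheme; and (b) the approximants $\Gamma_{n}(t)$ and the increments $d_{\mathrm H}(\Gamma_{n}(t),\Gamma_{n+1}(t))$ are controlled uniformly for $t$ in a compact interval, so that $t\mapsto\Gamma_{t}$ is uniformly Cauchy and hence continuous into $(\text{Jordan curves},d_{\mathrm H})$. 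Granting (a) and (b), $\Phi(t)=\Gamma_{t}$ is the desired path; it is non‑constant since the endpoints may be chosen with $\dim_{H}\Gamma_{0}=1\neq 2=\dim_{H}\Gamma_{1}$ (any two distinct curves would do). Its image is then a connected subset of $\mathcal{C}_{h}$, i.e. of the $\mathrm{CH}$‑images of $\gamma$, with more than one point, so that set is not totally disconnected; and the hypothesis of Theorem~\ref{theorem:non-injectivity} is met because the welding of a flexible curve is log‑singular by the Balogh and Bonk argument recalled above.

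The hard part will be item (b): arranging the construction of Theorem~\ref{theorem:non-injectivity} so that a single real parameter drives it, the curve varies continuously with that parameter, and yet the welding stays pinned to $h$ rather than merely nearby. Concretely this requires that the tail of the refinement converge uniformly across the parameter range, which should follow from the quantitative per‑scale estimates already used for Theorem~\ref{theorem:non-injectivity} being uniform in $t$ — for instance by letting the ``intensity'' of the refinement at each scale range over an interval instead of a two‑point set, while keeping the per‑scale distortion and diameter bounds independent of those intensities. Once this uniformity is secured, continuity of $\Phi$ is a routine Hausdorff‑metric estimate and the corollary follows.
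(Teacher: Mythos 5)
Your high-level strategy is right: after the dictionary identifying $\mathrm{CH}$-images of $\gamma$ with Jordan curves sharing the welding $h$, it suffices to produce a non-constant continuous path of such curves, and the image of such a path is a connected set with more than one point. The gap is exactly where you flag it. You propose driving the iterative construction of Theorem~\ref{theorem:non-injectivity} by a single real parameter and argue that uniform convergence ``should follow,'' but you do not verify the uniform-in-$t$ per-scale bounds, and that verification is the substance of the step. The construction mixes discrete data (the compact sets $E$ from Lemma~\ref{lemma:construct_E}, the points $x_k$) with continuous data (the admissible shapes $E_k$), and one must hold the former fixed while deforming the latter so that the estimates in Theorem~\ref{theorem:log-singular_welding}(i)--(ii) remain uniform along the deformation; this is plausible but is not automatic, and you do not carry it out.

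There is a considerably shorter route that uses Theorem~\ref{theorem:non-injectivity} only as a black box. It gives $\phi_0\in\mathrm{CH}(\gamma)$ with $\Gamma_0\coloneqq\phi_0(\gamma)$ of positive area. Pick $\mu\not\equiv0$ supported on $\Gamma_0$ with $\|\mu\|_\infty<1$, and for $t\in[0,1]$ let $H^{t\mu}$ solve the Beltrami equation (Theorem~\ref{theorem:MRMT}), normalized to fix two interior points of the bounded complementary component $\Omega$ of $\Gamma_0$ and the point $\infty$ in the unbounded one. Since $t\mu$ is supported on $\Gamma_0$, each $H^{t\mu}$ is conformal off $\Gamma_0$, hence $H^{t\mu}\circ\phi_0\in\mathrm{CH}(\gamma)$; and by Theorem~\ref{theorem:Ahlfors_formula} the map $t\mapsto H^{t\mu}(\Gamma_0)$ is continuous in the Hausdorff metric. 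It is non-constant: if $H^{\mu}(\Gamma_0)=\Gamma_0$, then $H^{\mu}|_{\Omega}$ is a conformal automorphism of $\Omega$ fixing two interior points, hence the identity; by continuity $H^{\mu}$ fixes $\Gamma_0$ pointwise, and a conformal automorphism of the exterior domain that fixes $\infty$ and its whole boundary is the identity, so $H^{\mu}=\mathrm{id}$ and $\mu\equiv0$, a contradiction. The image of this path is then a non-trivial connected subset of the $\mathrm{CH}$-images of $\gamma$. This argument makes clear that the positive-area conclusion of Theorem~\ref{theorem:non-injectivity} is precisely what renders the corollary immediate, and it avoids re-running the iterative construction with a parameter.
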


Conformal welding has been used in many other areas of mathematics. For example, Sheffield \cite{Sheffield:Welding} and Duplantier, Miller and Sheffield \cite{Sheffield:LQG} used conformal welding to relate Liouville quantum gravity and Schramm-Loewner evolution curves. Ang, Holden and Sun \cite{Ang:SLE_Welding} prove that SLE$_{\kappa}$ measure, for $\kappa\in(0,4)$, arises naturally from the conformal welding of two $\sqrt{\kappa}$-Liouville quantum gravity disks. For some recent work in this direction see Fan and Sung \cite{Jinwoo_Welding}. Sharon and Mumford \cite{Mumford:Welding} used conformal welding in their applications to computer vision. Conformal welding also is the basis of the construction of universal Teichm\"uller space (see for instance Lehto's book \cite{Lehto:Teichmuller}). McMullen \cite{McMullen:QuestionMark} has recently used conformal welding in his applications to complex dynamics.


\subsection{Outline of the paper}\label{section:outline}

We briefly outline now how Theorem \ref{theorem:non-injectivity} is proved by recalling Bishop's original construction \cite[Theorem 25]{ChrisWeldingAnnals}, and by explaining how to sharpen it to produce flexible curves of any possible Hausdorff dimension (and even positive area). Our construction uses quasiconformal mappings. Observe that we can also define \textit{quasiconformal weldings} by taking the maps in the definition of conformal welding to be quasiconformal instead of conformal. However, by the measurable Riemann mapping theorem we obtain:

\begin{proposition}\label{proposition:qcw}
Let $\gamma$ be a Jordan curve with quasiconformal welding $h$, then there exists a quasiconformal map $H\colon\C\to\C$ so that $H(\gamma)$ has conformal welding $h$. That is, quasiconformal weldings are the same as conformal weldings. 
\end{proposition}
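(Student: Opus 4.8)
The plan is to transport the complex dilatations of the quasiconformal welding maps to the plane via a single application of the measurable Riemann mapping theorem. Write $\Omega$ and $\Omega^{*}$ for the two complementary components of $\gamma$, and let $f\colon\D\to\Omega$ and $g\colon\D^{*}\to\Omega^{*}$ be the quasiconformal homeomorphisms realizing the quasiconformal welding, so that $h=g^{-1}\circ f|_{\Sone}$. Since $f$ and $g$ are quasiconformal, so are their inverses $f^{-1}\colon\Omega\to\D$ and $g^{-1}\colon\Omega^{*}\to\D^{*}$, and hence their Beltrami coefficients satisfy $\mu_{f^{-1}}\in L^{\infty}(\Omega)$, $\mu_{g^{-1}}\in L^{\infty}(\Omega^{*})$ with $\|\mu_{f^{-1}}\|_{\infty}<1$ and $\|\mu_{g^{-1}}\|_{\infty}<1$. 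I would then define a Beltrami coefficient $\nu$ on $\C_{\infty}$ by setting $\nu=\mu_{f^{-1}}$ on $\Omega$, $\nu=\mu_{g^{-1}}$ on $\Omega^{*}$, and $\nu=0$ on $\gamma$ (the last set may have positive area, but this causes no difficulty). By construction $\nu$ is measurable and $\|\nu\|_{\infty}<1$.

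By the measurable Riemann mapping theorem there is a quasiconformal homeomorphism $H\colon\C_{\infty}\to\C_{\infty}$ with complex dilatation $\nu$; composing with a M\"obius transformation we may assume $H$ fixes the point $\infty$ (which lies in one of $\Omega,\Omega^{*}$), so that $H$ restricts to a quasiconformal map $\C\to\C$. Then $H(\gamma)$ is a Jordan curve with complementary components $H(\Omega)$ and $H(\Omega^{*})$. The key observation is that $H|_{\Omega}$ and $f^{-1}$ are two quasiconformal maps on $\Omega$ with the same Beltrami coefficient, so the uniqueness in the measurable Riemann mapping theorem shows that $F:=(H|_{\Omega})\circ(f^{-1})^{-1}=H\circ f$ is a conformal map of $\D$ onto $H(\Omega)$; likewise $G:=H\circ g$ is a conformal map of $\D^{*}$ onto $H(\Omega^{*})$. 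By the Carath\'eodory--Torhorst theorem, $F$ and $G$ extend to homeomorphisms of the closed disks, so $G^{-1}\circ F|_{\Sone}$ is precisely the conformal welding of $H(\gamma)$. Since
$$G^{-1}\circ F=(H\circ g)^{-1}\circ(H\circ f)=g^{-1}\circ f=h\qquad\text{on }\Sone,$$
the Jordan curve $H(\gamma)$ has conformal welding $h$, which is what we want. Conversely, a conformal map is quasiconformal, so every conformal welding is a quasiconformal welding; hence the two notions coincide.

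I expect the only genuine obstacle to be checking that $\nu$ satisfies $\|\nu\|_{\infty}<1$, i.e.\ that we are truly in the hypotheses of the measurable Riemann mapping theorem; this is exactly the point at which one uses that $f$ and $g$ are quasiconformal rather than merely orientation-preserving homeomorphisms. Beyond that, the argument is a bookkeeping exercise with the transformation law for Beltrami coefficients under composition — equivalently, the fact that two quasiconformal maps with a common complex dilatation differ by post-composition with a conformal map — for which one may consult standard references such as Ahlfors's \emph{Lectures on Quasiconformal Mappings}.
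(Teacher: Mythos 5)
Your argument is correct and is exactly the application of the measurable Riemann mapping theorem that the paper has in mind (the paper states the proposition without proof, simply citing MRMT); the device of gluing the Beltrami coefficients of $f^{-1}$ and $g^{-1}$ across $\gamma$ and invoking the fact that two quasiconformal maps with the same dilatation differ by a post-composed conformal map is the standard and intended argument. One cosmetic remark: the step showing $H\circ f$ is conformal is really a consequence of Weyl's lemma (a $1$-quasiconformal map is conformal) via the composition rule for Beltrami coefficients, rather than the normalization-uniqueness clause of MRMT, but the substance is right.
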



In \textbf{Section \ref{section:log_capacity}} we introduce all the preliminaries, which include logarithmic capacity, Hausdorff measures and dimension, flexible curves, extremal length, quasiconformal mappings and harmonic measure.

 Let $h\colon\Sone\to\Sone$ be a log-singular circle homeomorphism, and suppose that we have two quasiconformal mappings $f_{1}\colon\D\to\Omega_{1}$ and $g_{1}\colon\C\setminus\overline{\D}\to\Omega_{1}^{*}$, where $\Omega_{1}, \Omega_{1}^{*}$ are Jordan domains with disjoint closures. The goal is to obtain new quasiconformal mappings $f_{2}\colon\D\to\Omega_{2}$ and $g_{2}\colon\C\setminus\overline{\D}\to\Omega_{2}^{*}$, with $\Omega_{2}$ and $\Omega_{2}^{*}$ Jordan domains with disjoint closures, so that loosely speaking
$$ ||(f_{2})_{|\Sone}-(g_{2}\circ h)_{|\Sone}||_{\infty} \leq c ||(f_{1})_{|\Sone}-(g_{1}\circ h)_{|\Sone}||_{\infty},$$
where $c<1$. The quasiconformal maps $f_{2}, g_{2}$ will be constructed, loosely speaking, as \textit{quasiconformal extensions} of $f_{1}, g_{1}$. This is illustrated in Figure \ref{figure:strategy_paper}. As long as we can iterate this process while keeping the quasiconformality constant of our mappings uniformly bounded, it follows by Proposition \ref{proposition:qcw} that $h$ is a conformal welding. The aforementioned maps will not exist in general, but Bishop \cite{Chris:boundary_interpolation, ChrisWeldingAnnals} astutely observed that this can be done on a set of zero logarithmic capacity. Since $h$ is log-singular such a sequence will exist and $h$ will be a conformal welding.

\begin{figure}[h]
\includegraphics[scale=1]{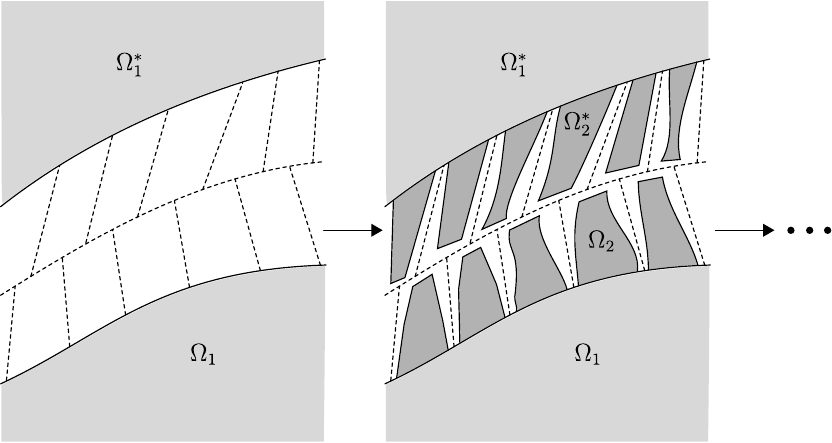}
\centering
\caption{The curve with welding $h$ is constructed via an iterative procedure, where at each step we quasiconformally extend maps $f_{1}\colon\D\to\Omega_{1}$, $g_{1}\colon\C\setminus\overline{D}\to\Omega_{1}^{*}$ in such a way that the infinity norm between $f_{1}(\xi)$ and $g_{1}(h(\xi))$ is decreased by a fixed ratio.}
\label{figure:strategy_paper}
\end{figure}

Given the maps $f_{1}, g_{1}$ we construct the maps $f_{2},g_{2}$ in the following way. First, given a closed set $E\subset\Sone$, consisting of a finite union of closed arcs and with small logarithmic capacity, we construct a conformal map $\phi\colon\D\to W$, where $W$ is roughly speaking a large disk $\D(0,R)$ of radius $R$ (where $R$ is large if the logarithmic capacity of $E$ is small) with a finite number of radial slits removed, which all intersect the unit circle $\Sone$. This map is constructed in \textbf{Section \ref{section:construction1}} and represented in Figure \ref{figure:phi_n}. Notice that $W\setminus\D$ consists of a finite union of quadrilaterals $W_{k}$ (we specify later the quad-vertices and sides).

Second, we construct a quasiconformal extension of $f_{1}$ (and in an analogous way of $g_{2}$). We emphasize that controlling the quasiconformality constant of a quasiconformal extension requires having plenty of geometric control over where it is constructed. Observe that we can embed the quadrilaterals $W_{k}$ (obtained when constructing $\phi$) into the annulus $\C\setminus(\Omega_{1}\cup\Omega_{1}^{*})$, by mapping the $W_{k}$ to the dark colored quadrilaterals in Figure \ref{figure:strategy_paper}. These embeddings can be taken to be conformal. However, in general they will not extend our mapping $f_{1}$. To address this, we introduce in \textbf{Section \ref{section:shapes}} the notion of \textit{admissible shapes}, which is the main novel technique of this paper. In short, we quantify when we will be able to define an extension that interpolates between our mapping $f_{1}$ and these conformal embeddings, and that has quasiconformality constant as close to the one of $f_{1}$ as necessary. See Figure \ref{figure:definition_shapes_intro} for an example.

In \textbf{Section \ref{section:construction2}} we use admissible shapes to construct $\psi$, the aforementioned quasiconformal extension of $f_{1}$ (and similarly we can define it for $g_{1}$).

In \textbf{Section \ref{section:logsingular}} we define $f_{2}$. Roughly speaking $f_{2}=\psi\circ\phi$. We complete the proof of Bishop's result \cite[Theorem 25]{ChrisWeldingAnnals}.

As we will see in Section \ref{section:shapes}, one of the main advantage of admissible shapes is that they offer plenty of \textit{flexibility} when choosing the embeddings into the annulus $\C\setminus(\Omega_{1}\cup\Omega_{1}^{*})$. In \textbf{Section \ref{section:positive_area}} we use this to prove that there is a flexible curve of positive area (thus Hausdorff dimension $2$) with welding $h$.

Recall that when using Proposition \ref{proposition:qcw} to recover a Jordan curve with conformal welding $h$, the corresponding quasiconformal map could change Hausdorff dimension. This topic has a very rich history, see for instance Astala's \cite{Astala-distortion}, Gehring's \cite{MR324028} and Smirnov's \cite{Smirnov_quasicircles}. Since quasiconformal maps preserve null-sets, this is not a problem to prove that there are flexible curves of positive area. However, it poses a problem when proving that there are flexible curves of Hausdorff dimension $s\in[1,2)$.

In \textbf{Section \ref{section:dim_1}} we use shapes, together with some bounds on how a planar quasiconformal map with small quasiconformality constant differs from a linear transformation, to prove Theorem \ref{theorem:non-injectivity} for $s=1$.

Finally, in \textbf{Section \ref{section:dim_s}} we use an explicit construction, together with the results from Section \ref{section:shapes} to prove Theorem \ref{theorem:non-injectivity} for $s\in(1,2)$.

\begin{figure}[h]
\includegraphics[scale=1]{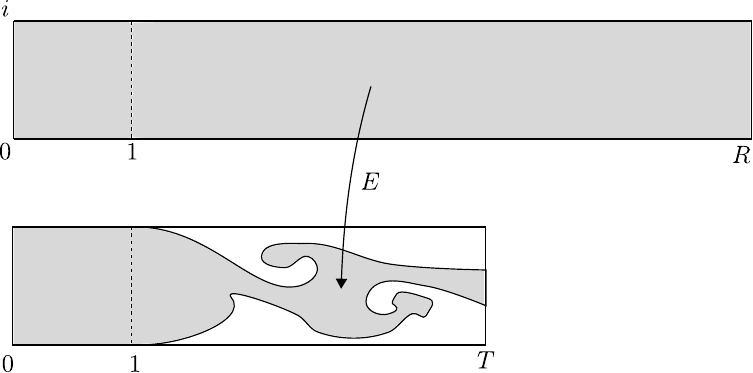}
\centering
\caption{Shapes encode conformal embeddings of a quadrilateral into another quadrilateral. The map $E$ from the rectangle $[0,R]\times[0,1]$ into the rectangle $[0,T]\times[0,1]$ is conformal.}
\label{figure:definition_shapes_intro}
\end{figure}

 \subsection*{Acknowledgements} 

Part of the work in this paper was done during the author's visit to the Hausdorff Research Institute of Mathematics. I would specially like to thank Chris Bishop for many conversations, continuous feedback and corrections. I also thank Kari Astala, Stathis Chrontsios, Hrant Hakobyan, Randall Kayser, Willie Lim, Misha Lyubich, Dimitrios Ntalampekos, Eero Saksman, Raanan Schul, Jinwoo Sung, Malik Younsi and Catherine Wolfram for conversations regarding the results in this paper and their comments.


\subsection*{Notation} 

We write $a\lesssim b$ to denote that there is some $C<\infty$ so that $a\leq C b$. We write $a\simeq b$ if $a\lesssim b$ and $b\lesssim a$. When introducing a new definition in the middle of result statement or a proof we will use $\coloneqq$.

As usual, $\D=\{|z|<1\}$ denotes the unit disk and $\Sone=\{|z|=1\}$ the unit circle. Similarly, $\D(a,R)=\{|z-a|<R\}$, $\D^{*}=\{|z|>1\}$ and $\C_{\infty}=\C\cup\{\infty\}$ is the Riemann sphere. $[a,b]$ denotes the line segment joining $a,b$, but when it is clear from the context it will denote shortest circular arc in $\Sone$ joining $a,b$. We will denote by $|[a,b]|$ the length of such interval or arc.

Unless we specify the opposite (by saying either \textit{into} or \textit{onto}) the maps we consider will be homeomorphisms.


\section{Preliminaries}\label{section:log_capacity}

In this section we summarize known results regarding (logarithmic) capacity that we will use in the following sections. These results are all contained in the books of Carleson \cite{CarlesonBook}, Garnett and Marshall \cite{HarmonicMeasure}, and Pommerenke \cite{PommerenkeBook}. We also recall the definition of Hausdorff measures and dimension, some previously known results about flexible curves, extremal length and quasiconformal mappings, and harmonic measure.


\subsection{Logarithmic capacity}

Let $\mu$ be a finite compactly supported signed (Borel) measure. The \textit{logarithmic potential} of $\mu$ is the functional $$ U_{\mu}(z)=\int \log\frac{1}{|\xi-z|}d\mu(\xi).$$

By Fubini's theorem, $U_{\mu}(z)$ is finite a.e. with respect to Lebesgue measure. If $$ \iint\left| \log\frac{1}{|z-\xi|}\right| d|\mu|(\xi)d|\mu|(z)<\infty,$$ we say that $\mu$ has \textit{finite energy} and define the \textit{energy integral} $I(\mu)$ by
$$ I(\mu)=\iint \log\frac{1}{|z-\xi|} d\mu(\xi)d\mu(z)=\int U_{\mu}(z)d\mu(z).$$

Let $K$ be compact and denote by $P(K)$ the set of all Borel probability measures on $K$. Define the \textit{Robin's constant} of $K$ by 
$$ \gamma(K)=\inf\{I(\mu)\colon \mu\in P(K)\},$$
and the (logarithmic) \textit{capacity} of $K$ by 
$$ \capacity(K)=e^{-\gamma(K)}.$$
It follows that $\gamma(K)<\infty$ if and only if $\capacity(K)>0$, which holds as long as there is a probability $\mu\in P(K)$ with finite energy $I(\mu)<\infty$. If $K_{1}\subset K_{2}$, then any probability on $K_{1}$ is also a probability on $K_{2}$. Hence $\gamma(K_{1})\geq\gamma(K_{2})$ and $\capacity(K_{1})\leq\capacity(K_{2})$, i.e. the capacity set function is monotone with respect to set inclusion.

The \textit{capacity} of any Borel set $E$ is defined as 
$$\capacity(E)=\sup\{\capacity(K)\colon K \textrm{ compact}, K\subset E\}.$$

Capacity can be a difficult object to compute, but the capacities of some simple sets are know (see \cite[Chapter 3]{HarmonicMeasure} or \cite[Chapter 9]{PommerenkeBook}). For example, for $a,b\in\C$, $\capacity[a,b]=|a-b|/4$, and the capacity of a disk is its radius, i.e. $\capacity(\D(a,r))=r$.

We now summarize some other known properties of logarithmic capacity that we will need later.

\begin{proposition}\label{proposition:capacity_properties} Logarithmic capacity satisfies the following properties.
\begin{enumerate}[label=(\roman*)]
	\item If $\varphi$ is $L$-Lipschitz, then $\capacity(\varphi(E))\leq L\capacity(E)$.
	\item If $h(\xi)=c\xi+c_{0}+c_{1}\xi^{-1}+\cdots$ maps $\C_{\infty}\setminus\overline{\D}$ conformally onto $G$ then $\capacity(\partial G)=|c|$.
	\item Borel sets are capacitable, that is, given $E$ Borel and $\eps>0$, there is an open set $V$ with $E\subset V$ and $\capacity(V)<\capacity(E)+\eps$.
	\item If $E_{n}$ are Borel sets so that $E=\cup E_{n}$ has diameter less or equal than one, then $1/\gamma(E)\leq\sum1/\gamma(E_{n})$.
\end{enumerate}
\end{proposition}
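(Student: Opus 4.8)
The four items are classical facts from planar potential theory, all contained in \cite{CarlesonBook, HarmonicMeasure, PommerenkeBook}; the plan is to prove each by its standard mechanism, reducing to compact sets whenever the statement is phrased for Borel sets. For (i) it suffices, by inner regularity of capacity together with standard capacitability arguments (see \cite[Ch.~III]{HarmonicMeasure}), to establish $\capacity(\varphi(K))\le L\capacity(K)$ for $K$ compact. Fix such a $K$ and, given any $\nu\in P(\varphi(K))$, choose a Borel section $\sigma\colon\varphi(K)\to K$ of $\varphi|_{K}$ (which exists because $\varphi|_{K}$ is a continuous surjection of compact metric spaces) and set $\mu=\sigma_{\ast}\nu\in P(K)$. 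Since $|\sigma(y)-\sigma(y')|\ge L^{-1}|\varphi(\sigma(y))-\varphi(\sigma(y'))|=L^{-1}|y-y'|$, one has $\log\frac{1}{|\sigma(y)-\sigma(y')|}\le\log L+\log\frac{1}{|y-y'|}$, hence $I(\mu)\le\log L+I(\nu)$; taking the infimum over $\nu\in P(\varphi(K))$ gives $\gamma(\varphi(K))\ge\gamma(K)-\log L$, i.e. $\capacity(\varphi(K))\le L\capacity(K)$.

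For (ii) the plan is to identify the Green's function of $G$ with pole at $\infty$ as $g_{G}(z,\infty)=\log|h^{-1}(z)|$, which holds because $h^{-1}$ maps $G$ conformally onto $\D^{*}$ fixing $\infty$ and $\log|w|$ is the Green's function of $\D^{*}$ with pole at $\infty$. Inverting the expansion $z=h(\xi)=c\xi+c_{0}+c_{1}\xi^{-1}+\cdots$ gives $h^{-1}(z)=z/c+O(1)$, so $g_{G}(z,\infty)=\log|z|-\log|c|+o(1)$ as $z\to\infty$. Comparing with the asymptotics $g_{G}(z,\infty)=\log|z|+\gamma(\partial G)+o(1)$ that define the Robin constant, we read off $\gamma(\partial G)=-\log|c|$, hence $\capacity(\partial G)=e^{-\gamma(\partial G)}=|c|$.

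Item (iii) is the outer regularity of logarithmic capacity on Borel sets, and the plan is to invoke Choquet's capacitability theorem: after intersecting with a large fixed disk, the set function $E\mapsto 1/\gamma(E)$ (set to $0$ where $\capacity=0$) is a Choquet capacity — monotone, continuous along increasing sequences of arbitrary sets, and continuous along decreasing sequences of compacts — so every Borel (indeed analytic) set is capacitable; combining this with outer regularity on compacts, which follows from upper semicontinuity of equilibrium potentials so that a superlevel set of the potential is an open neighborhood of a near-extremal compact subset of $E$, produces the required open set $V$.

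For (iv) we may assume the $E_{n}$ are pairwise disjoint, after replacing $E_{n}$ by $E_{n}\setminus\bigcup_{k<n}E_{k}$ (which only decreases $1/\gamma(E_{n})$ and preserves the union), and by a routine approximation together with continuity of capacity along increasing unions it suffices to treat finitely many disjoint compact $E_{n}$, so that $E$ is compact and $\capacity(E)>0$. Let $\mu\in P(E)$ minimize the energy, $I(\mu)=\gamma(E)$, put $a_{n}=\mu(E_{n})$ (so $\sum_{n}a_{n}=1$) and $\mu_{n}=a_{n}^{-1}\mu|_{E_{n}}\in P(E_{n})$ whenever $a_{n}>0$. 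Because $\diam E\le 1$ the kernel $\log\frac{1}{|z-w|}$ is nonnegative on $E\times E$, so discarding the off-diagonal terms,
$$\gamma(E)=I(\mu)=\sum_{n,m}a_{n}a_{m}\iint\log\tfrac{1}{|z-w|}\,d\mu_{n}\,d\mu_{m}\ \ge\ \sum_{n}a_{n}^{2}I(\mu_{n})\ \ge\ \sum_{n}a_{n}^{2}\gamma(E_{n}),$$
and by Cauchy--Schwarz $1=\big(\sum_{n}a_{n}\big)^{2}\le\big(\sum_{n}a_{n}^{2}\gamma(E_{n})\big)\big(\sum_{n}1/\gamma(E_{n})\big)\le\gamma(E)\sum_{n}1/\gamma(E_{n})$, which is the claim. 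The genuinely non-elementary inputs are Choquet's theorem in (iii) and the Frostman theory of equilibrium measures (reached, for honestly Borel $E_{n}$, through compact exhaustion); I expect (iii) — the item resting on the heaviest external machinery — to be the main point requiring care.
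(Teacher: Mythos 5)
The paper does not actually prove Proposition \ref{proposition:capacity_properties}; it simply cites Pommerenke \cite[Ch.~9]{PommerenkeBook} for (i)--(iii) and Carleson \cite[Ch.~III, Lemma~4]{CarlesonBook} for (iv). Your arguments are the standard ones found in those references, so in substance you are on the same route as the paper: (i) via a Borel section and pushforward of measures, (ii) via the transformation law for the Green's function and the asymptotics $g_{G}(z,\infty)=\log|z|+\gamma(\partial G)+o(1)$, (iii) via Choquet capacitability, and (iv) via the equilibrium measure, nonnegativity of the kernel on a set of diameter $\le 1$, and Cauchy--Schwarz. All four arguments are correct in their essential mechanism, and for (ii) in particular the computation is airtight.

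One point worth tightening is the reduction in (iv). You say it suffices to treat finitely many disjoint \emph{compact} $E_{n}$; but since $\capacity$ is defined by inner approximation, one is a priori handed an arbitrary compact $K\subset E$, and the pieces $K\cap E_{n}$ are merely Borel. Approximating each such piece from inside by compacts and reassembling does not obviously recover $\capacity(K)$ in the limit (the leftover could conceivably be non-polar). The cleaner route is to run your Cauchy--Schwarz argument directly with the Borel decomposition $K=\bigcup_{n}(K\cap E_{n}')$ (where $E_{n}'=E_{n}\setminus\bigcup_{k<n}E_{k}$): take $\mu$ the equilibrium measure of $K$, set $a_{n}=\mu(K\cap E_{n}')$ and $\mu_{n}=a_{n}^{-1}\mu|_{K\cap E_{n}'}$, and note that $I(\mu_{n})\ge\gamma(K\cap E_{n}')\ge\gamma(E_{n})$, the first inequality following by inner approximation of $\mu_{n}$ by normalized restrictions to compacts (using $\log\frac{1}{|z-w|}\ge0$ on $K\times K$ so restriction decreases energy). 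This avoids the problematic compactification of the pieces. Also note that $\diam E\le1$ guarantees $\gamma(E_{n})\ge0$, so the quantities $\sqrt{\gamma(E_{n})}$ in the Cauchy--Schwarz step are real, a point you use implicitly. A similar (minor) caveat attends the Borel-to-compact reduction in (i): for Borel $E$ the image $\varphi(E)$ is only analytic, and passing from a compact $K'\subset\varphi(E)$ to a compact $K\subset E$ with $\varphi(K)$ exhausting the capacity of $K'$ is a short but not entirely trivial capacitability argument; since the cited references handle it, this is not a gap, just a place where ``routine'' is doing a little work.
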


The proof of (i), (ii) and (iii) can be found in \cite[Chapter 9]{PommerenkeBook},  (iv) follows from Lemma 4 in \cite[Section 3]{CarlesonBook}.


\subsection{Hausdorff measures and dimension}

We follow \cite[Chapter II]{CarlesonBook}.

A measure function is $h\colon\R_{+}\to\R_{+}$ continuous and increasing so that $h(0)=0$. $h$ is also called in the literature a \textit{gauge function}. Let $E$ be bounded and consider all possible coverings of $E$ with a countable number of disks $D_{j}$ with radii $r_{j}$, i.e. $E\subset\cup_{j}D_{j}$. We define, for $\delta\in(0,\infty]$,
\begin{equation}
	H^{\delta}_{h}(E)=\inf\left\{\sum_{j} h(r_{j})\colon E\subset\cup_{j} D_{j}, r_{j}\leq\delta\right\}.
\end{equation}
The limit,
\begin{equation}
	H_{h}(E)=\lim_{\delta\to0} H_{h}^{\delta}(E)
\end{equation}
exists (even though it can potentially be infinite) and is called the \textit{Hausdorff measure} associated to $h$. $H^{\infty}_{h}$ is called the \textit{Hausdorff content}, which is not a measure, but $H^{\infty}_{h}$ and $H_{h}$ vanish on the same sets.

If $h(r)=r^{s}$ we denote the measure $H_{h}$ by $H_{s}$ and we define
$$ \mathcal{H}\textrm{dim}(E)=\inf\{s>0\colon H_{s}(E)=0\}$$ the Hausdorff dimension of $E$. When proving that a certain set has some Hausdorff dimension we often find an upper bound by considering explicit coverings. To find a lower bound, one needs to construct a Frostman measure, which is a measure as in Theorem \ref{theorem:frostman}.

\begin{theorem}[Frostman's Lemma]\label{theorem:frostman}
Let $\mu$ be a measure so that $\mu(D)\leq h(r)$ for every disk $D$ or radius $r$, then $\mu(E)\leq H_{h}(E)$. Conversely, there is a constant $c$ such that for every compact set $F$, there is a $\mu$ satisfying $\mu(D_{r})\leq h(r)$ so that $\mu(F)\geq c H^{\infty}_{h}(F)$.
\end{theorem}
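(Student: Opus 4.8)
The plan is to prove the two halves separately: the first is immediate from the definitions, and the second is the classical dyadic renormalization construction. For the first assertion, let $E\subset\bigcup_j D_j$ be any countable cover by disks $D_j$ of radii $r_j\le\delta$. Since $h$ is continuous and $\mu$ finite, $\mu(\overline{D}(x,r))=\lim_{\eps\downarrow 0}\mu(D(x,r+\eps))\le\lim_{\eps\downarrow 0}h(r+\eps)=h(r)$, so countable subadditivity gives $\mu(E)\le\sum_j\mu(\overline{D_j})\le\sum_j h(r_j)$. Taking the infimum over all such covers yields $\mu(E)\le H^{\delta}_{h}(E)$ for every $\delta>0$, hence $\mu(E)\le H^{\infty}_{h}(E)\le H_{h}(E)$.

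For the converse, after a translation and dilation we may assume $F\subset Q_{0}:=[0,1)^{2}$. For $k\ge 0$ let $\mathcal{D}_{k}$ be the family of dyadic subsquares of $Q_{0}$ of side $2^{-k}$ (write $\ell(Q)=2^{-k}$ for $Q\in\mathcal{D}_{k}$), and fix $n\in\N$. We build auxiliary measures $\mu^{(n)},\mu^{(n-1)},\dots,\mu^{(0)}$ from the fine scale to the coarse scale. Let $\mu^{(n)}$ be, on each $Q\in\mathcal{D}_{n}$ with $Q\cap F\ne\emptyset$, the constant multiple of Lebesgue measure on $Q$ of total mass $h(2^{-n})$, and $0$ on the remaining squares of $\mathcal{D}_{n}$. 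Given $\mu^{(k+1)}$, set on each $Q\in\mathcal{D}_{k}$
\[
\mu^{(k)}\big|_{Q}\;=\;\min\!\left(1,\ \frac{h(2^{-k})}{\mu^{(k+1)}(Q)}\right)\mu^{(k+1)}\big|_{Q}
\]
(the factor being $1$ when $\mu^{(k+1)}(Q)=0$), and let $\mu_{n}:=\mu^{(0)}$. Each step only decreases mass, so $\mu_{n}(Q)\le h(2^{-k})$ for all $Q\in\mathcal{D}_{k}$, $0\le k\le n$; since any disk $D(x,r)$ meets at most a universally bounded number of dyadic squares of side $2^{-k}\le r<2^{-k+1}$ and $h$ is increasing, this gives $\mu_{n}(D(x,r))\le C_{0}\,h(r)$ whenever $r\ge 2^{-n}$, with a universal constant $C_{0}$. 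Moreover $\supp\mu_{n}$ lies in the $2^{-n}$-neighborhood of $F$.

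The heart of the matter is a lower bound on the total mass $\|\mu_{n}\|$. For $x\in F$, follow its dyadic chain $Q_{0}\supset Q^{(1)}(x)\supset\cdots\supset Q^{(n)}(x)$ and let $j$ be the smallest index for which the renormalization factor applied to $Q^{(j)}(x)$ was strictly less than $1$; since all factors at coarser levels are then $1$, a short computation gives $\mu_{n}(Q^{(j)}(x))=h(2^{-j})$, and if no such $j$ exists then $\mu_{n}(Q^{(n)}(x))=h(2^{-n})$. Thus every $x\in F$ lies in a dyadic square $Q$ with $\mu_{n}(Q)=h(\ell(Q))$; call these \emph{saturated}. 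Only finitely many dyadic squares are in play, so the maximal saturated ones form a finite, pairwise disjoint family $\{Q_{i}\}$ covering $F$ (two dyadic squares are nested or disjoint). Each $Q_{i}$ sits inside a disk of radius $<\ell(Q_{i})$, so these disks cover $F$ and $H^{\infty}_{h}(F)\le\sum_{i}h(\ell(Q_{i}))=\sum_{i}\mu_{n}(Q_{i})\le\|\mu_{n}\|$. Finally, let $\mu$ be a weak-$*$ limit of a subsequence of $(\mu_{n})$; all $\mu_{n}$ are supported in a fixed compact set and $\supp\mu\subset\overline{F}=F$, the bound $\mu(D(x,r))\le C_{0}h(r)$ passes to the limit for each fixed $r$ (lower semicontinuity on open disks, then continuity of $h$), and testing against a fixed continuous cutoff equal to $1$ on a neighborhood of $F$ gives $\mu(F)=\|\mu\|=\lim\|\mu_{n}\|\ge H^{\infty}_{h}(F)$. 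Replacing $\mu$ by $\mu/C_{0}$ proves the converse with $c=1/C_{0}$ universal.

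The main obstacle is this converse direction, and within it the need to meet two competing demands simultaneously: the renormalization is designed precisely to enforce the upper bound $\mu(Q)\le h(\ell(Q))$ on every dyadic square, while the real content is the saturated-square argument showing that enough mass survives so that $\|\mu_{n}\|$ — hence $\mu(F)$ in the limit — stays comparable to $H^{\infty}_{h}(F)$. The remaining ingredients (disk-versus-square comparisons, weak-$*$ compactness, lower semicontinuity of mass on open sets) are routine.
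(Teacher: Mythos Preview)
The paper does not give its own proof of Frostman's lemma; it simply refers the reader to Carleson's book \cite{CarlesonBook}. Your argument is the classical dyadic mass-redistribution proof (Frostman's original idea, as presented in Carleson or Mattila), so there is nothing substantive to compare: you have supplied exactly the standard argument the paper is pointing to.

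One small slip worth fixing: the reduction ``after a translation and dilation we may assume $F\subset[0,1)^2$'' is not innocent for a general gauge $h$. A dilation by $\lambda$ turns the Frostman condition $\mu(D_r)\le h(r)$ into $\mu(D_r)\le h(\lambda r)$, and likewise rescales $H^\infty_h$ in a way you cannot undo unless $h$ is homogeneous (e.g.\ $h(r)=r^s$). The repair is trivial and costs nothing: translate only, so that $F\subset[0,2^N)^2$ for some $N\in\N$, and start the dyadic decomposition from that square. The constant $C_0$ (the number of dyadic squares of a fixed generation meeting a given disk of comparable radius) is still universal, and the rest of your proof goes through verbatim with the same universal $c=1/C_0$.
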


A very elegant proof of Theorem \ref{theorem:frostman} can be found in Carleson's book \cite{CarlesonBook}, even though the first proof appeared in Frostman's PhD thesis. Observe that Theorem \ref{theorem:frostman} only provides useful information if $H^{\infty}_{h}(E)>0$, i.e. when the measure $\mu$ is non-zero. When applied to $h(r)=r^{s}$ and $\mu$ as in Theorem \ref{theorem:frostman}, it follows that $\mathcal{H}\textrm{dim}(E)\geq s$.

\subsection{Flexible curves and log-singular homeomorphisms}

In \cite{MR1274085} Bishop introduced and proved the existence of flexible curves. We say that a Jordan curve $\gamma\subset\C$ is \textit{flexible} if the following conditions are satisfied:
\begin{enumerate}[label=(\alph*)]
	\item Given any other Jordan curve $\tilde{\gamma}$ and any $\eps>0$, there exists a homeomorphism $\phi\colon\C_{\infty}\to\C_{\infty}$, conformal off $\gamma$, so that the Hausdorff distance between $\phi(\gamma)$ and $\tilde{\gamma}$ is less than $\epsilon$.
	\item Given $z_{1},z_{2}$ in each component of $\C_{\infty}\setminus\gamma$ and $w_{1},w_{2}$ in each component of $\C_\infty\setminus\tilde{\gamma}$, the previous $\phi$ can be taken so that $\phi(z_{j})=w_{j}$ for $j=1,2$.
\end{enumerate}

Flexible curves are conformally non-removable in a very strong sense (as we will also see in the proof of Theorem \ref{theorem:CHFlexible_Hdim}). Later in \cite{ChrisWeldingAnnals} Bishop proved that $h\colon\Sone\to\Sone$ is the conformal welding of a flexible curve if and only if $h$ is log-singular. The specific result that he proved is Theorem \ref{theorem:log-singular_to_flexible}, which will be stated later in Section \ref{section:logsingular}.

We have the following characterization of log-singular homeomorphisms.

\begin{lemma}[\cite{ChrisWeldingAnnals} Lemma 11]\label{lemma:equivalence_logsingular}
Suppose $h\colon\Sone\to\Sone$ is an orientation-preserving circle homeomorphism. Then the following are equivalent.
\begin{enumerate}[label=(\roman*)]
	\item For any $\eps>0$ there is a finite union of closed intervals $E\subset\Sone$ such that $\capacity(E)+\capacity(h(\Sone\setminus E))<\eps$. 
	\item For any $n\in\N$ there is a compact set $E_{n}\subset\Sone$ so that $\capacity(E_{n})\leq 1/n$ and $\capacity(h(\Sone\setminus E_{n}))\leq 1/n$.
	\item There is a Borel set $E$ so that both $E$ and $h(\Sone\setminus E)$ have zero logarithmic capacity, i.e. $h$ is log-singular.
\end{enumerate}
\end{lemma}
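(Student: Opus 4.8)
The plan is to prove the cycle of implications (i) $\Rightarrow$ (ii) $\Rightarrow$ (iii) $\Rightarrow$ (i).

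The implication (i) $\Rightarrow$ (ii) is immediate: given $n$, apply (i) with $\eps=1/n$; the resulting finite union of closed arcs $E$ is compact, and since the two capacities appearing in (i) are nonnegative and sum to less than $1/n$, each of them is at most $1/n$, so $E_n:=E$ works.

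For (ii) $\Rightarrow$ (iii) the idea is to pass to a rapidly shrinking subsequence and take a $\limsup$. Using the freedom in (ii), for each $k$ choose a compact $F_k\subset\Sone$ with $\capacity(F_k)\le 2^{-k^2}$ and $\capacity(h(\Sone\setminus F_k))\le 2^{-k^2}$, so that $\gamma(F_k),\gamma(h(\Sone\setminus F_k))\ge (\log 2)\,k^2$ and hence $\sum_k 1/\gamma(F_k)<\infty$. Set $E:=\limsup_k F_k=\bigcap_N\bigcup_{k\ge N}F_k$, which is Borel. Since $E\subseteq\bigcup_{k\ge N}F_k$ for every $N$, Proposition \ref{proposition:capacity_properties}(iv) — applied after rescaling the circle so the union has diameter at most one, to meet the hypothesis — gives $1/\gamma(\bigcup_{k\ge N}F_k)\le\sum_{k\ge N}1/\gamma(F_k)\to 0$, hence $\capacity(\bigcup_{k\ge N}F_k)\to 0$ and therefore $\capacity(E)=0$. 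On the complement, $\Sone\setminus E=\bigcup_N\bigcap_{k\ge N}(\Sone\setminus F_k)$, and for each fixed $N$ the set $\bigcap_{k\ge N}(\Sone\setminus F_k)$ is contained in $\Sone\setminus F_m$ for every $m\ge N$, so its image under $h$ has capacity $\le 2^{-m^2}$ for all $m\ge N$, i.e. zero capacity; as a countable union of zero-capacity sets, $h(\Sone\setminus E)$ has zero capacity as well. Thus $h$ is log-singular.

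The main work is (iii) $\Rightarrow$ (i), where an arbitrary Borel set of zero capacity must be replaced by a finite union of closed arcs; this is where outer regularity and compactness do the heavy lifting. Fix $\eps>0$ and let $E_0$ be a Borel set with $\capacity(E_0)=\capacity(h(\Sone\setminus E_0))=0$. By capacitability (Proposition \ref{proposition:capacity_properties}(iii)) choose an open $U\supseteq h(\Sone\setminus E_0)$ with $\capacity(U)<\eps/2$; then $h^{-1}(U)$ is open and contains $\Sone\setminus E_0$, so $K:=\Sone\setminus h^{-1}(U)$ is a compact subset of $E_0$ and hence $\capacity(K)=0$. Apply capacitability again to get an open $V\supseteq K$ with $\capacity(V)<\eps/2$; write $V$ as a countable union of open arcs and use compactness of $K$ to extract finitely many of them, $J_1,\dots,J_m$, still covering $K$. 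Put $E:=\bigcup_{j=1}^m\overline{J_j}$, a finite union of closed arcs with $K\subseteq E$. Since $E$ differs from $\bigcup_{j\le m}J_j$ only by the finitely many endpoints, and no measure of finite energy can charge a point, $\capacity(E)=\capacity(\bigcup_{j\le m}J_j)\le\capacity(V)<\eps/2$. Finally $K\subseteq E$ forces $\Sone\setminus E\subseteq h^{-1}(U)$, hence $h(\Sone\setminus E)\subseteq U$ and $\capacity(h(\Sone\setminus E))\le\capacity(U)<\eps/2$; adding the two bounds gives (i). The only points needing care are the two appeals to capacitability, the diameter normalization required to invoke Proposition \ref{proposition:capacity_properties}(iv), and the elementary fact that adjoining finitely many points leaves logarithmic capacity unchanged; I expect none of these to be a genuine obstacle.
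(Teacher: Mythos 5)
The paper does not prove Lemma~\ref{lemma:equivalence_logsingular}; it is quoted from Bishop's paper \cite{ChrisWeldingAnnals} (Lemma~11), so there is no in-text proof to compare against. Your argument is correct: (i)$\Rightarrow$(ii) is immediate; (ii)$\Rightarrow$(iii) via a rapidly shrinking subsequence, a $\limsup$, the subadditivity $1/\gamma(\cup E_n)\le\sum 1/\gamma(E_n)$ from Proposition~\ref{proposition:capacity_properties}(iv) (after a rescaling so the diameter is at most one), and the fact that a countable union of zero-capacity sets has zero capacity; and (iii)$\Rightarrow$(i) via two applications of outer regularity (Proposition~\ref{proposition:capacity_properties}(iii)), the observation that $h^{-1}(U)$ is open so its complement $K$ is a zero-capacity compact set, a finite subcover, and the standard fact that adjoining a finite set of points does not change logarithmic capacity. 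The only point you flagged as needing care --- the diameter normalization before invoking Proposition~\ref{proposition:capacity_properties}(iv) --- is indeed needed in both (ii)$\Rightarrow$(iii) and in the ``finitely many points'' step of (iii)$\Rightarrow$(i), and it is handled correctly since the rescaling $z\mapsto z/2$ is bi-Lipschitz and hence preserves zero capacity by Proposition~\ref{proposition:capacity_properties}(i).
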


Pugh and Wu \cite[Theorem 3]{PughWu} constructed some special Jordan curves that arise as funnel sections of some ODEs. The Jordan curves $\gamma$ they construct satisfy the following property: any rectifiable curve $\sigma$ that has one endpoint in each one of the complementary components of $\gamma$, crosses $\gamma$ in more than one point (they call this condition being \textit{non-smoothly pierceable}). In the same paper they also proved that such curves are \textit{generic} \cite[Theorem 15]{PughWu}, i.e. they are residual set in the space of Jordan curves with the topology they define. These curves are also flexible; a result of Beurling \cite{Beurling:exceptionnels} says that the hyperbolic geodesics ending at a point of the boundary of a simply connected domain all have finite length, for except the image of a set of zero logarithmic capacity on $\Sone$.

Flexible curves can also exhibit very pathological behavior. For instance, motivated by the results from Pugh and Wu, in \cite{Burkart:JordanCurve} Burkart constructs a flexible curve that cannot be crossed by a rectifiable arc on a set of zero length (in particular such curve has positive area).

Log-singular circle homeomorphisms have also found applications. For example, in \cite{Alex:Welding} I show that every circle homeomorphism can be written as the composition of two log-singular circle homeomorphisms.


\subsection{Extremal length and modulus of path families}

Let $\Gamma$ be a family of curves in $\C$, where each $\gamma\in\Gamma$ is a countable union of open arcs, closed arcs or closed curves, and every closed arc is rectifiable. A non-negative Borel function $\rho\colon\C\to[0,\infty)$ is admissible for $\Gamma$ if 
$$ l(\Gamma)=l_{\rho}(\Gamma)=\inf_{\gamma\in\Gamma}\int_{\gamma}\rho ds\geq1.$$
We define the modulus $M(\Gamma)$ of this path family as 
$$ M(\Gamma)=\inf_{\rho} \iint{\rho^{2}}dm$$
and the extremal length $\lambda(\Gamma)=1/M(\Gamma)$.

By using Cauchy-Schwarz we can see that given the rectangle $[0,R]\times[0,1]$ and the path family that joins the vertical sides, then $M(\Gamma)=1/R$. Also, we can use this to prove that if $A_{r,R}(z_{0})=\{r<|z-z_{0}|<R\}$ and $\Gamma_{A}$ is the path family that separates the two components in the annulus, then 
$$ M(\Gamma_{A})=\frac{1}{2\pi}\log(R/r).$$

By a Jordan quadrilateral $Q=Q(v_{1},v_{2},v_{3},v_{4})\subset\C$, we mean a bounded Jordan domain with four marked points, which we call \textit{quad-vertices} (and we suppose that they are oriented counter-clockwise). To abbreviate, we will refer to them just as quadrilaterals. We denote the Jordan arcs joining these points by $a_{1}, b_{1}, a_{2}$ and $b_{2}$ as it is indicated in Figure \ref{figure:modulusQuad}, which we call the \textit{sides} of the quadrilateral. We refer to each pair of non-intersecting sides as opposite sides, and we call $a_{1}, a_{2}$ the $a$-sides of $Q$ and $b_{1},b_{2}$ the $b$-sides of $Q$.

\begin{figure}[h]
	\includegraphics[scale=1]{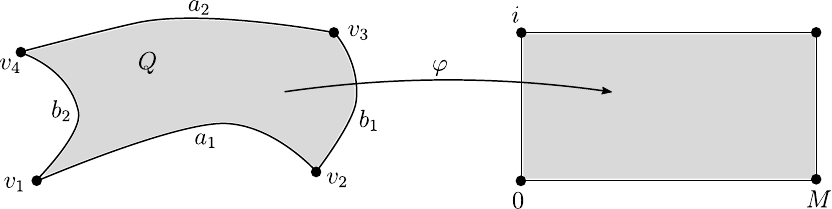}
	\centering
	\caption{Representation of the quad-vertices, $a$-sides and $b$-sides of a quadrilateral Q, together with its representation via conformal mapping $\varphi$ to a rectangle $[0,M]\times[0,1]$, which maps quad-vertices to vertices. The value $M$ is a conformal invariant.}
	\label{figure:modulusQuad}
\end{figure}

A quadrilateral $Q$ can be conformally mapped to a rectangle so that the quad-vertices of the quadrilateral are mapped to the vertices of the rectangle (see 6.2.3 in \cite{AhlforsCA} or Chapter I, Section 2.4 in \cite{LehtoVirtanen}). The ratio between the length of the $a$-side and the $b$-side of this rectangle is a conformal invariant. The quantity $M=M(Q)=M(Q(v_{1},v_{2},v_{3},v_{4}))$, as in Figure \ref{figure:modulusQuad}, is called the \textit{modulus} of $Q$.

The modulus is a conformal invariant. If we define the path family $\Gamma$ to consist of all those locally rectifiable paths joining opposite $a$-sides of a quadrilateral $Q$, then the modulus of this path family agrees with the modulus of the quadrilateral, i.e. $M(\Gamma)=M(Q)$.

We summarize some of the main properties.

\begin{proposition}[Properties of modulus and extremal length]\label{proposition:modulus_properties}
Let $\Gamma_{k}, \Gamma, \Gamma'$ be path families.
\begin{enumerate}[label=(\roman*)]
	\item (Monotonicity) If every $\gamma'\in\Gamma'$ contains some $\gamma\in\Gamma$, then $\la(\Gamma')\geq\la(\Gamma)$ and $M(\Gamma')\leq M(\Gamma)$.
	\item (Serial Rule) If $\Gamma\supset\Gamma_{1}+\Gamma_{2}$, then $\la(\Gamma)\geq\la(\Gamma_{1})+\la(\Gamma_{2})$.
	\item (Parallel Rule) If $\Gamma_{1},\Gamma_{2}$ lie in disjoint Borel sets and $\Gamma=\Gamma_{1}\cup\Gamma_{2}$, then $M(\Gamma)\geq M(\Gamma_{1})+M(\Gamma_{2})$.
	\item (Monotonicity+Countable subbaditivity) If $\Gamma\subset\Gamma'$ or if for every $\gamma\in\Gamma$ there is $\gamma'\in\Gamma'$ with $\gamma'\subset\gamma$, then $M(\Gamma)\leq M(\Gamma')$. Furthermore,
	$$ M\left(\bigcup_{k}\Gamma_{k}\right)\leq \sum_{k}M(\Gamma_{k}).$$
	\item If $\cup_{k}\Gamma_{k}\subset\Gamma$ and the supports of $\Gamma_{k}$ live in disjoint Borel sets, then $\sum_{k}M(\Gamma_{k})\leq M(\Gamma)$.
	\item If for each $\gamma\in\Gamma$ there is $\gamma_{k}\in\Gamma_{k}$, then $\la(\Gamma)\geq\sum_{k}\la(\Gamma_{k})$.
\end{enumerate}
\end{proposition}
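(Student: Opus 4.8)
The plan is to derive all six items straight from the definitions; they are classical facts (see e.g. Ahlfors' \emph{Conformal Invariants} or V\"ais\"al\"a), recorded here for repeated later use. The whole argument rests on two standard devices. The first is comparison of admissible metrics: if $\gamma'\subset\gamma$ and $\rho\geq0$ then $\int_{\gamma'}\rho\,ds\leq\int_{\gamma}\rho\,ds$, so a metric that is admissible for a family of ``longer'' curves is automatically admissible for a family of ``shorter'' ones. The second is the construction of explicit test metrics by combining near-extremal metrics $\rho_k$ for the pieces $\Gamma_k$, either by ``stacking'' them as $\bigl(\sum_k\rho_k^2\bigr)^{1/2}$, or by forming a weighted sum $\sum_k a_k\rho_k$ with cleverly chosen weights.

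For (i), I would observe that under the stated hypothesis every $\rho$ admissible for $\Gamma$ is also admissible for $\Gamma'$ --- if $\gamma'\in\Gamma'$ and $\gamma\subset\gamma'$ lies in $\Gamma$, then $l_\rho(\gamma')\geq l_\rho(\gamma)\geq1$ --- so $M$ is an infimum over a larger class for $\Gamma'$, giving $M(\Gamma')\leq M(\Gamma)$ and $\la(\Gamma')\geq\la(\Gamma)$ at once. The monotonicity clause of (iv) is the same comparison with the roles reversed (either $\Gamma\subset\Gamma'$ directly, or each $\gamma\in\Gamma$ contains some $\gamma'\in\Gamma'$), yielding $M(\Gamma)\leq M(\Gamma')$. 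For the countable subadditivity clause of (iv), I would fix $\eps>0$, pick $\rho_k$ admissible for $\Gamma_k$ with $\iint\rho_k^2\,dm\leq M(\Gamma_k)+\eps2^{-k}$, set $\rho=\bigl(\sum_k\rho_k^2\bigr)^{1/2}$, note $\rho\geq\rho_k$ pointwise so $\rho$ is admissible for $\bigcup_k\Gamma_k$, and compute $\iint\rho^2\,dm=\sum_k\iint\rho_k^2\,dm\leq\sum_kM(\Gamma_k)+\eps$, then let $\eps\to0$. For (iii) and (v), where the $\Gamma_k$ are supported in pairwise disjoint Borel sets $E_k$ and $\bigcup_k\Gamma_k\subset\Gamma$, I would take any $\rho$ admissible for $\Gamma$, note that $\rho\,\mathbf 1_{E_k}$ is admissible for $\Gamma_k$ since its curves lie in $E_k$, hence $\iint_{E_k}\rho^2\,dm\geq M(\Gamma_k)$, and sum over the disjoint $E_k$ to get $\iint\rho^2\,dm\geq\sum_kM(\Gamma_k)$; taking the infimum over $\rho$ finishes these, with (iii) the two-term case.

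The one step that requires genuine care --- the only real obstacle --- is the serial rule, items (ii) and (vi): the obvious test metric $\mathbf 1_{E_1}\rho_1+\mathbf 1_{E_2}\rho_2$ does \emph{not} produce the sharp additive constant, and one must resist it. Instead I would take, for each $k$, a metric $\rho_k$ supported in $E_k$ that is $\eps$-almost extremal for $\Gamma_k$ (so $l_{\rho_k}(\Gamma_k)\geq1$ and $\iint\rho_k^2\,dm\leq M(\Gamma_k)+\eps$), and form $\rho=\sum_k a_k\rho_k$ with free weights $a_k>0$. Each $\gamma\in\Gamma$ contains, for every $k$, a subcurve $\gamma_k\in\Gamma_k$ lying in $E_k$, and these lie in disjoint sets, so $l_\rho(\gamma)\geq\sum_k a_k\,l_{\rho_k}(\gamma_k)\geq\sum_k a_k$, while $\iint\rho^2\,dm=\sum_k a_k^2\iint\rho_k^2\,dm\leq\sum_k a_k^2(M(\Gamma_k)+\eps)$ by disjointness of supports. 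This gives
$$ \la(\Gamma)\;\geq\;\frac{\bigl(\sum_k a_k\bigr)^2}{\sum_k a_k^2\,(M(\Gamma_k)+\eps)}, $$
and the point is to recognize that the correct choice is $a_k=1/(M(\Gamma_k)+\eps)$, the equality case of Cauchy--Schwarz, which makes the right-hand side exactly $\sum_k 1/(M(\Gamma_k)+\eps)$; letting $\eps\to0$ yields $\la(\Gamma)\geq\sum_k\la(\Gamma_k)$, and (ii) is again the two-term case. Apart from finding these optimal weights, everything reduces to bookkeeping with the definitions (the $\eps$-argument above lets one avoid invoking existence of exact extremal metrics).
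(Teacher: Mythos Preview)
The paper does not actually prove this proposition: it is stated in the preliminaries as a summary of classical facts, with no argument given (the surrounding text refers implicitly to standard sources such as Ahlfors' book). So there is no ``paper proof'' to compare against; your task was effectively to supply the textbook proof, and you have done so correctly.

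Your arguments for (i), (iii), (iv), (v) are the standard ones and are fine as written. For (ii) and (vi) you correctly identify the one nontrivial point --- that the naive test metric fails and one must use the Cauchy--Schwarz--optimal weights $a_k=\la(\Gamma_k)$ --- and you also correctly read in the disjoint-support hypothesis that the paper's terse statement leaves implicit (the notation $\Gamma_1+\Gamma_2$ and the phrasing of (vi) only make sense with that understood). One small remark: you assert that the near-extremal $\rho_k$ may be taken supported in $E_k$; this is true because restricting any admissible metric for $\Gamma_k$ to $E_k$ keeps it admissible (the curves lie in $E_k$) without increasing its mass, but it is worth saying explicitly since without it the cross terms in $\iint\rho^2$ would not vanish.
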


We will also need:

\begin{lemma}[\cite{AhlforsQC}]\label{lemma:ahlfors_modulus}
$m=m_{1}+m_{2}$ if and only if the dividing line is $\{x=m_{1}\}$.
\end{lemma}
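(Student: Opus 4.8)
The plan is to transport the picture to a Euclidean rectangle and extract both the inequality $m_{1}+m_{2}\le m$ and its equality case from the length--area method, with uniqueness of extremal metrics doing the rigidity part. By the conformal representation of a quadrilateral by a rectangle I may assume the quadrilateral being divided is $R=[0,m]\times[0,1]$, with $a$-sides the horizontal segments $[0,m]\times\{0\}$ and $[0,m]\times\{1\}$, and that the dividing arc $\gamma$ is a crosscut of $R$ joining a point of the lower $a$-side to a point of the upper $a$-side. Then $\gamma$ splits $R$ into two quadrilaterals $Q_{1}$ and $Q_{2}$, where $\{0\}\times[0,1]$ is a side of $Q_{1}$ and $\{m\}\times[0,1]$ a side of $Q_{2}$; the $a$-sides of $Q_{i}$ are the two subarcs cut off from the $a$-sides of $R$, and its $b$-sides are $\gamma$ together with the corresponding $b$-side of $R$. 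Write $m_{i}=M(Q_{i})$ and let $\Gamma$ (resp.\ $\Gamma_{i}$) be the family of curves in $R$ (resp.\ in $Q_{i}$) joining the two $a$-sides, so that $M(\Gamma)=m$ and $M(\Gamma_{i})=m_{i}$ by the identification of the modulus of a quadrilateral with the modulus of its $a$-side path family.

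For the inequality and the easy implication I would test with $\rho\equiv1$: every curve in $\Gamma$, and every curve in each $\Gamma_{i}$, joins $\{y=0\}$ to $\{y=1\}$ and hence has Euclidean length $\ge1$, so $\rho$ (set to $0$ outside $Q_{i}$ when needed) is admissible for $\Gamma$ and for each $\Gamma_{i}$. Therefore $m_{i}=M(\Gamma_{i})\le\iint_{Q_{i}}\rho^{2}\,dm=|Q_{i}|$, and since $Q_{1},Q_{2}$ are disjoint open subsets of $R$,
$$m_{1}+m_{2}\le|Q_{1}|+|Q_{2}|\le|R|=m.$$
If $\gamma=\{x=c\}$ for some $c\in(0,m)$, then $Q_{1}=[0,c]\times[0,1]$ and $Q_{2}=[c,m]\times[0,1]$ are honest rectangles, so $m_{1}=c$, $m_{2}=m-c$, whence $m_{1}+m_{2}=m$ and $c=m_{1}$; this gives the implication `$\Leftarrow$'.

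The rigidity direction `$\Rightarrow$' is the heart of the matter. Suppose $m_{1}+m_{2}=m$. Comparing with the displayed chain forces $m_{i}=|Q_{i}|$, so $\rho\equiv1$ is an \emph{extremal} metric for $\Gamma_{i}$. On the other hand, letting $\omega_{i}\colon Q_{i}\to[0,m_{i}]\times[0,1]$ be conformal with the $a$-sides of $Q_{i}$ sent to the horizontal sides, conformal invariance of the modulus shows that the pulled-back flat metric $|\omega_{i}'|$ is also an extremal metric for $\Gamma_{i}$. Since $\Gamma_{i}$ has finite positive modulus, the extremal metric is unique up to a null set, so $|\omega_{i}'|=1$ a.e.\ on $Q_{i}$; being holomorphic, $\omega_{i}'$ then has constant modulus, hence is constant, so $\omega_{i}(z)=c_{i}z+d_{i}$ with $|c_{i}|=1$ and $Q_{i}$ is a Euclidean rectangle congruent to $[0,m_{i}]\times[0,1]$. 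As $Q_{1}\subset R$ is a rectangle having $\{0\}\times[0,1]$ as a side and lying in the strip $0\le y\le1$, it must equal $[0,m_{1}]\times[0,1]$, and its remaining side is $\gamma$, so $\gamma=\{x=m_{1}\}$.

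I expect the main obstacle to be exactly this last step: converting the scalar equality $m_{1}+m_{2}=m$ into the geometric statement that $\gamma$ is straight. It relies on the equality case of the length--area estimate (singling out the flat metric as extremal for each $\Gamma_{i}$) together with uniqueness of extremal metrics for curve families of finite positive modulus, as in Ahlfors or Garnett--Marshall, and the elementary fact that a holomorphic map of constant modulus is constant. Two bookkeeping points are harmless: $\gamma$ is a priori only a Jordan arc, so admissibility should be phrased in terms of the (locally rectifiable) curves of the families; and if $\gamma$ happened to carry positive area this would only strengthen a strict inequality, so in the equality case $\gamma$ is automatically a null set.
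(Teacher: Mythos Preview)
Your proof is correct and is essentially the classical argument from Ahlfors' book: the serial inequality $m_{1}+m_{2}\le m$ via the flat metric $\rho\equiv 1$, and rigidity in the equality case via uniqueness of the extremal metric, forcing $|\omega_{i}'|\equiv 1$ and hence that each $Q_{i}$ is a genuine subrectangle. The paper does not supply its own proof but simply cites \cite{AhlforsQC}, so there is nothing further to compare.
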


The following result due to Balogh and Bonk (which can be proved using extremal length), will be of great use in this paper.

\begin{lemma}[\cite{BaloghBonk}, \cite{ChrisWeldingAnnals} Lemma 17]\label{lemma:capacity_far}
Suppose $f\colon\D\to\Omega$ is conformal and for $R\geq1$ we set
$$ E=\{x\in\Sone\colon |f(x)|\geq R\cdot\textrm{d}(f(0),\partial\Omega)\}.$$
Then $\capacity(E)\leq C R^{-1/2}$, with $C$ independent of $\Omega$.
\end{lemma}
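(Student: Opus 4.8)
The plan is to normalize the picture, bound $\capacity(E)$ from above by a lower bound on an extremal length, and then evaluate that extremal length by comparing it with the modulus of a round annulus. We may assume $f(0)=0$ (as is the case in all applications of the lemma) and, after a dilation of $\C$, that $d(f(0),\partial\Omega)=1$, so that $E=\{x\in\Sone:\ |f(x)|\ge R\}$. If $1\le R\le 8$ there is nothing to prove, since $\capacity(E)\le\capacity(\Sone)=1\le 2\sqrt2\,R^{-1/2}$, so assume from now on that $R>8$.

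First I would record two consequences of Koebe's theorems. By the one–quarter theorem, $|f'(0)|\le 4\,d(f(0),\partial\Omega)=4$, and then the growth theorem gives $|f(z)|=|f(z)-f(0)|\le |f'(0)|\,|z|\,(1-|z|)^{-2}\le 8$ for $|z|=1/2$; thus $f(\{|z|=1/2\})\subset\overline{\D(0,8)}$, while $f(E)\subset\{|w|\ge R\}$ by the definition of $E$. Now let $\Gamma$ be the family of curves in $\D$ joining $E$ to the circle $\{|z|=1/2\}$, with extremal length $\lambda(\Gamma)=1/M(\Gamma)$. Since $f$ is conformal, $\lambda(\Gamma)=\lambda(f\Gamma)$, where $f\Gamma$ is the family of curves in $\Omega$ joining $f(E)$ to $f(\{|z|=1/2\})$. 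Every $\gamma\in f\Gamma$ runs from $\{|w|\ge R\}$ to $\overline{\D(0,8)}$, so $|w|$ varies continuously along $\gamma$ from a value $\le 8$ to a value $\ge R$; hence $\gamma$ contains a subarc lying in the round annulus $A=\{8<|w|<R\}$ and joining its two boundary circles. By Proposition \ref{proposition:modulus_properties}(iv) this forces $M(f\Gamma)\le M(\Gamma_A)$, where $\Gamma_A$ is the family of curves in $A$ joining its boundary circles; since $\log$ carries $A$ to a cylinder one has $M(\Gamma_A)=2\pi/\log(R/8)$, and therefore $\lambda(\Gamma)=\lambda(f\Gamma)=1/M(f\Gamma)\ge \tfrac1{2\pi}\log(R/8)$.

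It remains to convert this extremal–length bound into a capacity bound, and for this I would invoke the relation between logarithmic capacity and extremal length due to Pfluger (see \cite[Chapter~III]{HarmonicMeasure} or \cite[Chapter~9]{PommerenkeBook}): there is an absolute constant $C_0\ge 1$ such that $\capacity(E)\le C_0\exp(-\pi\,\lambda(\Gamma))$ for every closed $E\subsetneq\Sone$, where $\lambda(\Gamma)$ is the extremal distance in $\D$ from $E$ to $\{|z|=1/2\}$ (replacing the usual target, the point $0$, by the fixed circle $\{|z|=1/2\}$ changes only the constant). Combining this with the previous bound gives $\capacity(E)\le C_0\exp\!\big(-\tfrac12\log(R/8)\big)=2\sqrt2\,C_0\,R^{-1/2}$, which is the assertion with $C=2\sqrt2\,C_0$. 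The step I expect to be the crux is precisely this last one: one must use the capacity/extremal–length correspondence in exactly the normalization that produces the exponent $-\tfrac12$ — this is where the factor $\pi$ in Pfluger's estimate meets the factor $2\pi$ in the modulus of a round annulus — and one must also make sure that $f|_{\Sone}$, hence $E$ and $f(E)$, are meaningful; this is automatic when $\Omega$ is a Jordan domain by Carathéodory's theorem, and in general one defines $\Gamma$ intrinsically as the curves $\sigma$ in $\D$ landing on $\Sone$ along which $\liminf|f|\ge R$ (equivalently, one works with the open set $\{z\in\D:|f(z)|>R\}$ and its cluster set on $\Sone$), or one passes to prime ends. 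This is essentially the argument of Balogh and Bonk.
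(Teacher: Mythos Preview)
The paper does not actually give a proof of this lemma; it states it as a known result with citations to \cite{BaloghBonk} and \cite{ChrisWeldingAnnals}, remarking only that it ``can be proved using extremal length.'' Your argument is correct and is precisely the standard extremal-length proof that the paper alludes to: normalize via Koebe, push the curve family through $f$ into a round annulus to get a lower bound on $\lambda(\Gamma)$, and convert that to a capacity bound via Pfluger's inequality. There is nothing to compare against in the paper itself, and your write-up matches the approach of the cited references.
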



\subsection{Quasiconformal mappings}

Quasiconformal mappings were introduced by H. Grötzsch in 1928. Since the modulus is a conformal invariant, there is no conformal map that maps a square to a rectangle (which is not a square) mapping vertices to vertices. There are several (equivalent) definitions of planar quasiconformal mappings (see \cite{AhlforsQC, LehtoVirtanen} for more details). We say that an orientation preserving homeomorphism $\phi\colon\Omega\subset\C\to\Omega^{'}\subset\C$ is $K$-quasiconformal if for every quadrilateral $Q\subset\Omega$, we have $$M(\phi(Q))\leq K M(Q).$$ We call $K\geq1$ the \textit{quasiconformality constant} of $\phi$ or its \textit{maximal dilatation}.

The previous definition, which is often referred as the geometric definition, is equivalent to; $\phi$ is an orientation preserving homeomorphism with locally integrable distributional derivatives $\phi_{z}, \phi_{\overline{z}}$ which satisfy
$ |\phi_{\overline{z}}|\leq k|\phi_{z}| \textrm{ a.e.}$, where $k=(K-1)/(K+1)$ (we call this the analytic definition). This inequality also guarantees that $\phi\in W^{1,2}_{\textrm{loc}}$. The measurable function $\mu_{\phi}=\phi_{\overline{z}}/\phi_{z}$ is called the \textit{dilatation} of $\phi$. We will also refer to $\phi_{\overline{z}}=\mu_{\phi} \phi_{z}$ as the \textit{Beltrami equation}.

For more definitions, and the corresponding extensions to the metric and higher dimensional setting, the reader is encouraged to read Heinonen and Koskela's paper \cite{Heinonen:DefinitionsQC} and the more recent work of Ntalampekos \cite{Dimitrios:metric_defQC}.

One of the cornerstones of the theory of planar quasiconformal mappings is the so-called measurable Riemann mapping theorem. 

\begin{theorem}[Ahlfors-Bers, \cite{AhlforsQC} Chapter V]\label{theorem:MRMT}
Let $\mu\colon\C\to\D$ be a measurable function so that $||\mu||_{\infty}=k<1$. Then there exists a $(1+k)/(1-k)$-quasiconformal map $\phi\colon\C\to\C$ with dilatation $\mu$. That is, $\phi$ satisfies
$$ \phi_{\overline{z}}=\mu \phi_{z}.$$
Moreover, if we normalize $\phi$ so that it fixes $0,1,\infty$ the mapping is unique.
\end{theorem}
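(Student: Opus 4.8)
The plan is to run the classical Bojarski--Ahlfors--Bers argument, handling first the case where $\mu$ has compact support. I would introduce the Cauchy transform $Pf(z)=\frac1\pi\int_{\C}\frac{f(\zeta)}{z-\zeta}\,dm(\zeta)$ and the Beurling transform $Sf(z)=-\frac1\pi\,\mathrm{p.v.}\!\int_{\C}\frac{f(\zeta)}{(z-\zeta)^{2}}\,dm(\zeta)$, which satisfy $\partial_{\overline{z}}Pf=f$ and $\partial_{z}Pf=Sf$ distributionally. The single analytic input I need is that $S$ is a Fourier multiplier with symbol of modulus one, hence an isometry of $L^{2}(\C)$, and is bounded on every $L^{p}(\C)$, $1<p<\infty$; combining these facts (interpolation and duality) yields the classical statement $\|S\|_{L^{p}\to L^{p}}\to 1$ as $p\to 2$. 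Since $k=\|\mu\|_{\infty}<1$ is fixed, I can then fix $p>2$ with $k\,\|S\|_{L^{p}\to L^{p}}<1$, so that $I-\mu S$ is invertible on $L^{p}(\C)$ with inverse the Neumann series $\sum_{n\ge 0}(\mu S)^{n}$.

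Next I would seek $\phi$ in the form $\phi(z)=z+Pf(z)$ with $f\in L^{p}(\C)$; then $\phi_{\overline{z}}=f$ and $\phi_{z}=1+Sf$, so the Beltrami equation $\phi_{\overline{z}}=\mu\phi_{z}$ is equivalent to $(I-\mu S)f=\mu$, with solution $f=(I-\mu S)^{-1}\mu\in L^{p}$. Since $f=\mu(1+Sf)$ it is supported in $\supp\mu$ and so also lies in $L^{1}$; as $p>2$, $Pf$ is then Hölder continuous of exponent $1-2/p$ with $Pf(z)=O(1/|z|)$ as $z\to\infty$. Hence $\phi$ is continuous, extends to a self-map of $\C_{\infty}$ fixing $\infty$, lies in $W^{1,p}_{\mathrm{loc}}\subset W^{1,2}_{\mathrm{loc}}$, and satisfies $|\phi_{\overline z}|=|\mu|\,|\phi_{z}|\le k|\phi_{z}|$ a.e.

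The remaining point -- that $\phi$ is a \emph{homeomorphism} -- I would prove first for $\mu\in C_{c}^{\infty}(\C)$: elliptic regularity makes $\phi$ smooth, $\phi_{z}$ is nowhere zero (so the Jacobian $(1-|\mu|^{2})|\phi_{z}|^{2}$ is strictly positive), and properness upgrades the resulting local diffeomorphism to a global one. For general compactly supported $\mu$ with $\|\mu\|_{\infty}=k$, I would approximate by $\mu_{n}\in C_{c}^{\infty}$ with $\|\mu_{n}\|_{\infty}\le k$ and $\mu_{n}\to\mu$ a.e.; then $f_{n}=(I-\mu_{n}S)^{-1}\mu_{n}\to f$ in $L^{p}$, so $\phi_{n}=z+Pf_{n}\to\phi$ locally uniformly, and since each $\phi_{n}$ is a $K$-quasiconformal homeomorphism with $K=(1+k)/(1-k)$, the standard compactness principle (a locally uniform limit of $K$-quasiconformal homeomorphisms is constant or a $K$-quasiconformal homeomorphism), together with the normalization $\phi(z)=z+O(1/|z|)$, forces $\phi$ to be a $K$-quasiconformal homeomorphism of $\C_{\infty}$. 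To remove the compact-support hypothesis I would write $\mu=\mu_{1}+\mu_{2}$ with $\mu_{1}$ supported in $\overline{\D}$ and $\mu_{2}$ in $\C\setminus\D$, solve the $\mu_{2}$-piece by conjugating the above construction with the inversion $z\mapsto 1/z$, and absorb the remaining correction -- again compactly supported -- by composition, using the chain rule for Beltrami coefficients; a final M\"obius post-composition achieves the normalization fixing $0,1,\infty$.

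For uniqueness, if $\phi_{1},\phi_{2}$ both solve the equation and fix $0,1,\infty$, the chain rule shows $\phi_{1}\circ\phi_{2}^{-1}$ has vanishing dilatation a.e.; being a $W^{1,2}_{\mathrm{loc}}$ homeomorphism it is conformal by Weyl's lemma, hence a M\"obius map fixing $0,1,\infty$, i.e.\ the identity, so $\phi_{1}=\phi_{2}$. I expect the main obstacle to be twofold: the estimate $\|S\|_{L^{p}\to L^{p}}\to 1$ as $p\to 2$, which is exactly what makes the Neumann series converge for \emph{every} $k<1$ rather than only for small $k$; and promoting the formal solution to an honest homeomorphism, where the reduction to smooth $\mu$ combined with quasiconformal compactness is the essential device, and where one must ensure the approximating maps do not degenerate -- which the normalization prevents.
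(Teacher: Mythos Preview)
The paper does not prove this theorem; it simply states the measurable Riemann mapping theorem and cites Ahlfors' book \cite{AhlforsQC}, Chapter V, as a preliminary tool. Your sketch is essentially the classical Bojarski--Ahlfors--Bers argument from that reference (Cauchy and Beurling transforms, Neumann series in $L^{p}$ for $p>2$ with $k\|S\|_{p}<1$, smooth approximation plus quasiconformal compactness, and Weyl's lemma for uniqueness), and it is correct in outline. One small point: in your removal of the compact-support hypothesis, the decomposition ``$\mu=\mu_{1}+\mu_{2}$ and solve by composition'' is a little loose, since Beltrami coefficients do not compose additively; the standard fix is to first solve for one piece, pull back $\mu$ through that solution via the chain rule to obtain a new compactly supported coefficient, and iterate---which is presumably what you intend.
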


See Astala's et al \cite{AstalaBook} for more extensions of this result for different classes of measurable functions $\mu$.

Ahlfors' book also proves the following formula regarding how the solutions of two similar Beltrami equations differ from each other.

\begin{theorem}[\cite{AhlforsQC} Theorem 5, Chapter V]\label{theorem:Ahlfors_formula}
Suppose $$\mu(t+s,z)=\mu(t,z)+s \nu(t,z)+s\eps(s,t,z),$$ where $\nu,\mu,\eps\in L^{\infty}$, $||\mu||_{\infty}<1$ and $||\eps(s,t)||_{\infty}\to0$ as $s\to0$. Then if $f^{\mu(\cdot)}$, denotes the unique solution of the corresponding Beltrami equation that fixes $0,1,\infty$, we have
$$ f^{\mu(t+s)}(\zeta)=f^{\mu(t)}+s \dot{f}(\zeta,t)+o(s)$$ uniformly on compact sets, where
$$ \dot{f}(\zeta,t)=-\frac{1}{\pi}\iint\nu(t,z) R\left(f^{\mu(t)}(z), f^{\mu(t)}(\zeta)\right) \left(f_{z}^{\mu(t)}(z)\right)^{2}dm(z)$$
and $$ R(z,\zeta)=\frac{1}{z-\zeta}-\frac{\zeta}{z-1}+\frac{\zeta-1}{z}=\frac{\zeta(\zeta-1)}{z(z-1)(z-\zeta)}.$$
Moreover, if $\nu(t,z)$ depends continuously on $t$ (in the $L^{\infty}$ sense) then $(\partial/\partial t) f(z,t)$ is a continuous function of $t$.
\end{theorem}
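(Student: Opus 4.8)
The plan is to follow Ahlfors and reduce the statement to two ingredients: a first-order expansion of the $0,1,\infty$-normalized solution operator near a fixed Beltrami coefficient, and the composition (chain) rule for Beltrami coefficients, which transfers that expansion to the point $\mu(t)$. Write $f_t\coloneqq f^{\mu(t)}$ and set $h_s\coloneqq f^{\mu(t+s)}\circ f_t^{-1}$, which is quasiconformal and again fixes $0,1,\infty$. The chain rule for Beltrami coefficients gives, a.e.,
\[
\mu_{h_s}\circ f_t=\frac{\mu(t+s)-\mu(t)}{1-\overline{\mu(t)}\,\mu(t+s)}\cdot\frac{(f_t)_z}{\overline{(f_t)_z}},
\]
and since $\mu(t+s)=\mu(t)+s\,\nu(t,\cdot)+s\,\varepsilon(s,t,\cdot)$ with $\|\varepsilon(s,t,\cdot)\|_\infty\to0$, this equals $s\,\nu(t,\cdot)\,(1-|\mu(t)|^2)^{-1}\,\tfrac{(f_t)_z}{\overline{(f_t)_z}}+o(s)$ in $L^\infty$; in particular $\|\mu_{h_s}\|_\infty=O(s)$.

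The analytic core is the near-identity expansion of the normalized solution: as $\|\sigma\|_\infty\to0$,
\[
f^{\sigma}(w)=w-\frac1\pi\iint\sigma(z)\,R(z,w)\,dm(z)+o(\|\sigma\|_\infty),
\]
uniformly on compact sets. This is obtained exactly as in the proof of Theorem~\ref{theorem:MRMT}: one represents $(f^\sigma)_z-1$ by a Neumann series in the Beurling transform $T$, sums it in $L^p(\C)$ for $p>2$ close to $2$ using that $\|T\|_{L^p}\to1$, and recovers $f^\sigma$ by the Cauchy-type operator whose kernel is $R(\cdot,w)$ --- the two extra terms of $R$ being linear in $w$, hence invisible to $\partial_{\bar w}$, and arranged so that the integral vanishes at $w=0$ and $w=1$ and grows linearly at $\infty$, i.e.\ they encode the $0,1,\infty$-normalization. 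Applying this with $\sigma=\mu_{h_s}=O(s)$ and composing,
\[
f^{\mu(t+s)}(\zeta)-f^{\mu(t)}(\zeta)=h_s\big(f_t(\zeta)\big)-f_t(\zeta)=-\frac1\pi\iint\mu_{h_s}(w)\,R\big(w,f_t(\zeta)\big)\,dm(w)+o(s),
\]
uniformly for $\zeta$ in compact sets.

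Now substitute $w=f_t(z)$, so that $dm(w)=\big(|(f_t)_z|^2-|(f_t)_{\bar z}|^2\big)\,dm(z)=|(f_t)_z|^2(1-|\mu(t)|^2)\,dm(z)$, and insert the leading term of $\mu_{h_s}\circ f_t$ computed above: the factor $1-|\mu(t)|^2$ cancels, and $\tfrac{(f_t)_z}{\overline{(f_t)_z}}\cdot|(f_t)_z|^2=\big((f_t)_z\big)^2$, which yields
\[
f^{\mu(t+s)}(\zeta)=f^{\mu(t)}(\zeta)+s\,\dot f(\zeta,t)+o(s),\qquad \dot f(\zeta,t)=-\frac1\pi\iint\nu(t,z)\,R\big(f_t(z),f_t(\zeta)\big)\,\big((f_t)_z(z)\big)^2\,dm(z),
\]
which is the asserted formula. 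For the last sentence: if $t\mapsto\nu(t,\cdot)$ is continuous into $L^\infty$ then so is $t\mapsto\mu(t,\cdot)$, hence $f_t$ varies continuously locally uniformly and $(f_t)_z$ varies continuously in $L^p_{\mathrm{loc}}$ for some $p>2$; pairing the $L^p_{\mathrm{loc}}$-bound on $((f_t)_z)^2$ against $R(f_t(\cdot),f_t(\zeta))\in L^{p'}_{\mathrm{loc}}$ (with a bound uniform in $t$ on compacts) and applying dominated convergence shows that $t\mapsto\dot f(\zeta,t)$ is continuous.

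The main obstacle is the analytic core: proving the first-order expansion of $\sigma\mapsto f^\sigma$ with a remainder that is genuinely $o(\|\sigma\|_\infty)$ \emph{uniformly on compact sets}. This rests on the $L^p$ boundedness of the Beurling transform for $p$ near $2$ (so that the Neumann series converges and depends analytically on $\sigma$) and on a careful implementation of the $0,1,\infty$-normalization --- which is precisely what forces the modified kernel $R(z,w)$ in place of the Cauchy kernel $(z-w)^{-1}$ --- together with, if one does not assume $\mu$ compactly supported, control of the solutions near $\infty$. Keeping the unimodular and Jacobian factors straight so that the kernel of $\dot f$ comes out as $((f_t)_z)^2$ rather than $|(f_t)_z|^2$ or $(\overline{(f_t)_z})^2$ is only bookkeeping, but it is the step at which one must be most careful with the conventions in the chain rule.
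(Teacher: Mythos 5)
The paper does not prove this statement; it is quoted verbatim from Ahlfors' book (\cite{AhlforsQC}, Chapter V, Theorem~5), so there is no internal proof to compare against. Your reconstruction follows Ahlfors' own strategy and the main steps are correct: the composition formula for Beltrami coefficients applied to $h_s = f^{\mu(t+s)}\circ f_t^{-1}$, the cancellation that turns the near-identity derivative at $\sigma = 0$ into the formula at $\mu(t)$ after the substitution $w=f_t(z)$, and the observation that $\tfrac{(f_t)_z}{\overline{(f_t)_z}}\,|(f_t)_z|^2 = ((f_t)_z)^2$ while the Jacobian factor $1-|\mu(t)|^2$ cancels against the denominator from the chain rule are all right.

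Two remarks. First, the item you identify as the analytic core — the near-identity expansion $f^{\sigma}(w)=w-\tfrac1\pi\iint\sigma\,R(\cdot,w)\,dm + o(\|\sigma\|_\infty)$ uniformly on compacts — is indeed what carries all the weight, and you leave it as a pointer to the Neumann-series argument underlying Theorem~\ref{theorem:MRMT}. That is the correct reference, but note that Ahlfors carries this out under a standing assumption of compact support for the dilatations (which the paper's restatement omits and you flag only in passing); without it the Cauchy-type operator with kernel $R$ is not immediately defined, so it is worth saying explicitly that this hypothesis is in force. Second, the Hölder pairing in your last paragraph does not balance as written: what one actually has is $(f_t)_z\in L^{r}_{\mathrm{loc}}$ for $r < 2K/(K-1)$, hence $((f_t)_z)^2\in L^{r/2}_{\mathrm{loc}}$, whose dual exponent exceeds $K$, whereas $R(f_t(\cdot),f_t(\zeta))$ only lies in $L^{q}_{\mathrm{loc}}$ for $q<2/K$ near $\zeta$ (by the Hölder estimate $|f_t(z)-f_t(\zeta)|\gtrsim|z-\zeta|^{K}$); these only match when $K$ is close to $1$. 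The clean way to get continuity for general $K<\infty$ is to change variables $w=f_t(z)$ first, after which the integrand is a bounded factor times $R(w,f_t(\zeta))$, and then apply dominated convergence using the locally uniform continuity of $t\mapsto f_t$, $t\mapsto f_t^{-1}$ and the $L^{\infty}$-continuity of $t\mapsto \mu(t,\cdot)$, $\nu(t,\cdot)$. With these two points tightened the argument is a complete account of Ahlfors' proof.
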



\subsection{Harmonic Measure}

In a simply connected domain $\Omega$, the harmonic measure of a subset $I\subset\Omega$ with respect to a point $z\in\Omega$ is defined as the probability measure on $\partial\Omega$ that a Brownian motion starting from $z$ first hits $\partial\Omega$ in $I$. We denote it by $\omega(z, I, \Omega)$. Since planar Brownian motion is preserved under conformal transformations \cite[Section 7.9]{Chris:fractal_book}, if $f\colon\D\to\Omega$ is a conformal map with $f(0)=z$ it follows that $\omega(z,I,\Omega)$ is the normalized arc-length of $f^{-1}(I)$. If $K\subset\C$ is a compact set with positive logarithmic capacity, $\omega(\infty, \cdot, \C\setminus K)$ corresponds to the probability measure that minimizes the energy integral. See Garnett and Marshall's \cite[Chapter III]{HarmonicMeasure} for more information. 

In this paper we will use the following harmonic measure estimate.

\begin{lemma}[\cite{HarmonicMeasure} Lemma 5.1, p.143]\label{lemma:HM_rectangle}
Let $R_{L}=\{z\colon|\mathrm{Re}z|< L, |\mathrm{Im}z|<1\}$. If $E_{L}=\{z\in\partial R_{L}\colon |\mathrm{Re}(z)|=L\}$ is the union of the vertical edges of $R_{L}$, then 
$$ e^{-\pi L/2}\leq\omega(0,E_{L},R_{L})\leq\frac{8}{\pi}e^{-\pi L/2}.$$
\end{lemma}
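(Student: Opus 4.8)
The plan is to reduce the estimate on $R_L$ to a one-dimensional Fourier series computation by conformally normalizing the rectangle and using reflection symmetry. First I would map $R_L$ conformally onto the strip $S=\{w: |\mathrm{Im}\,w|<1\}$ in such a way that $0\mapsto 0$; in fact it is cleaner to work directly with the rectangle and exploit that the harmonic measure $u(z)=\omega(z,E_L,R_L)$ is the bounded harmonic function on $R_L$ with boundary values $1$ on the two vertical edges $\{\mathrm{Re}\,z=\pm L\}$ and $0$ on the two horizontal edges $\{\mathrm{Im}\,z=\pm 1\}$. By uniqueness of the Dirichlet solution and the symmetry of the data, $u$ is even in $\mathrm{Re}\,z$ and even in $\mathrm{Im}\,z$, so it suffices to understand $u(0)=u(0,0)$.

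Next I would write down $u$ explicitly by separation of variables. Expanding the constant boundary datum on the horizontal edges in the eigenfunctions $\cos\!\big(\tfrac{(2k+1)\pi}{2}y\big)$, $k\ge 0$, on $|y|<1$, one gets
\begin{equation}
u(x,y)=\sum_{k=0}^{\infty} a_k\,\frac{\cosh\!\big(\tfrac{(2k+1)\pi}{2}x\big)}{\cosh\!\big(\tfrac{(2k+1)\pi}{2}L\big)}\cos\!\Big(\tfrac{(2k+1)\pi}{2}y\Big),
\qquad a_k=\frac{4(-1)^k}{(2k+1)\pi},
\end{equation}
where the $a_k$ are the Fourier coefficients of the function identically $1$ on $(-1,1)$ in this cosine system (these are exactly the coefficients that make $\sum a_k\cos(\tfrac{(2k+1)\pi}{2}y)=1$ for $|y|<1$). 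Setting $x=y=0$ gives the closed form
\begin{equation}
\omega(0,E_L,R_L)=u(0,0)=\frac{4}{\pi}\sum_{k=0}^{\infty}\frac{(-1)^k}{2k+1}\,\frac{1}{\cosh\!\big(\tfrac{(2k+1)\pi}{2}L\big)}.
\end{equation}

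Finally I would estimate this alternating series. The leading term ($k=0$) is $\tfrac{4}{\pi}\,\mathrm{sech}(\pi L/2)$, and $\mathrm{sech}(t)=2e^{-t}/(1+e^{-2t})$ lies between $e^{-t}$ and $2e^{-t}$ for $t\ge 0$; the remaining terms are geometrically smaller (each successive $\cosh$ grows at least like the cube of the previous exponential, and the $1/(2k+1)$ factors only help), so they are absorbed into the constant. For the lower bound one uses that the series is alternating with decreasing terms, hence bounded below by (first term) $-$ (second term), which is still $\gtrsim e^{-\pi L/2}$, and then checks, either directly for small $L$ or via the fact that $\omega(0,E_L,R_L)\downarrow$ is actually $\ge e^{-\pi L/2}$ for all $L>0$ by comparison, that the clean bound $e^{-\pi L/2}\le \omega(0,E_L,R_L)$ holds; the upper constant $\tfrac{8}{\pi}$ comes from $\tfrac{4}{\pi}\cdot 2$ after discarding the (negative) tail. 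I expect the only mildly delicate point to be pinning down the explicit constants $1$ and $\tfrac{8}{\pi}$ rather than just $\simeq e^{-\pi L/2}$: one must handle the regime of small $L$ (where many terms of the series contribute) with a little care, e.g. by monotonicity in $L$ together with the strip comparison, rather than by the crude term-by-term bound that works for large $L$. Since this is a textbook lemma (it is quoted from Garnett–Marshall), I would in practice just cite \cite[Lemma 5.1, p.143]{HarmonicMeasure} and relegate the Fourier computation above to a remark. \QED
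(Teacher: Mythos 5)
The paper itself does not prove this lemma: it is quoted verbatim from Garnett--Marshall \cite[Lemma 5.1]{HarmonicMeasure} and used as a black box (its only appearance is in the proof of Lemma \ref{lemma:HM_estimate}). So there is no ``paper proof'' to match your argument against; the relevant comparison is with the textbook one.

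Your Fourier-series/separation-of-variables computation is the standard route, and it is essentially correct: the eigenfunction expansion, the coefficients $a_k=\tfrac{4(-1)^k}{(2k+1)\pi}$, the closed form $\omega(0,E_L,R_L)=\tfrac{4}{\pi}\sum_k\tfrac{(-1)^k}{2k+1}\operatorname{sech}\!\big(\tfrac{(2k+1)\pi L}{2}\big)$, and the bracketing $e^{-t}\le\operatorname{sech}(t)\le 2e^{-t}$ are all right. The alternating-series estimate immediately gives the upper bound $\omega(0,E_L,R_L)\le\tfrac{4}{\pi}\operatorname{sech}(\pi L/2)\le\tfrac{8}{\pi}e^{-\pi L/2}$, and the first-minus-second-term bound gives $\omega(0,E_L,R_L)\ge e^{-\pi L/2}$ once $L\gtrsim 0.37$ (the threshold is where $\tfrac{4}{\pi}\big(1-\tfrac23 e^{-\pi L}\big)\ge 1$). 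You are right to flag the small-$L$ regime as the only delicate spot; the cleanest way to close it, rather than ``monotonicity plus strip comparison,'' is to pass to the complementary harmonic measure of the horizontal edges, whose eigenfunction expansion in the $x$-direction gives $1-\omega(0,E_L,R_L)=\tfrac{4}{\pi}\sum_k\tfrac{(-1)^k}{2k+1}\operatorname{sech}\!\big(\tfrac{(2k+1)\pi}{2L}\big)\le\tfrac{8}{\pi}e^{-\pi/(2L)}$, and $\tfrac{8}{\pi}e^{-\pi/(2L)}\le 1-e^{-\pi L/2}$ for small $L$, which finishes the lower bound there. In the context of this paper the lemma is only ever applied with $L=2M$ for $M\ge 2$, so the large-$L$ term-by-term estimate already suffices; nonetheless, your instinct to cite \cite[Lemma 5.1]{HarmonicMeasure} and keep the computation as a remark is exactly what the paper does.
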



\section{Construction of $\phi$}\label{section:construction1}

In this section we construct in Proposition \ref{proposition:phi} the conformal map $\phi$ that we mentioned in the introduction, which is the first step towards proving Bishop's result \cite[Theorem 25]{ChrisWeldingAnnals}. $\phi$ can be constructed in several ways, for instance one can take the map defined in \cite[Proposition 9.15]{PommerenkeBook} or in \cite[Lemma 26, Lemma 27]{ChrisWeldingAnnals}. For all intents and purposes Pommerenke's method would suffice, but we follow Bishop's approach, which we also expand and sharpen.

The mapping $\phi$ from Proposition \ref{proposition:phi} will be of the form $\exp(U+i\tilde{U})$, where $U(z)=G(z)+G(1/\bar{z})$ and $G$ is the logarithmic potential of a probability measure supported on some set $E\subset\Sone$ (which will correspond to union of closed intervals as in Lemma \ref{lemma:equivalence_logsingular}) that has small capacity. Here we rely on that, in our case, this set $E$ can be taken so that $h(\Sone\setminus E)$ also has small capacity.

\begin{lemma}\label{lemma:construct_E}
	Let $h\colon\Sone\to\Sone$ be a log-singular circle homeomorphism. For $N\in\N$ fixed, there is $A=A(N)>N$ and a set compact set $E=E(A)$ with Robin constant $A$ so that,
	\begin{enumerate}[label=(\roman*)]
		\item The compact set $E$ consists of a finite union of disjoint non-trivial closed arcs so that $ \capacity(E)+\capacity(h(\Sone\setminus E))<e^{-\gamma},$ for $\gamma=\gamma(A)$.
		\item Every arc of length $1/N$ intersects $E$ in a set of positive length, hence in a set of positive logarithmic capacity.
	\end{enumerate}
\end{lemma}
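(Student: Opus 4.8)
The plan is to build $E$ in two stages, combining the capacity-smallness available from log-singularity (Lemma~\ref{lemma:equivalence_logsingular}(i)) with a fixed fine net of tiny arcs that guarantees property (ii), and then to argue that adding the net does not destroy (i), at the cost of enlarging the Robin constant.

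First, fix $N$. Since $h$ is log-singular, part (i) of Lemma~\ref{lemma:equivalence_logsingular} gives, for any prescribed $\eps_0>0$, a finite union of closed arcs $E_0\subset\Sone$ with $\capacity(E_0)+\capacity(h(\Sone\setminus E_0))<\eps_0$. Separately, choose a finite collection of disjoint closed arcs $J_1,\dots,J_m$, one well inside each arc of a partition of $\Sone$ into arcs of length $1/N$ (or slightly finer, so that every arc of length $1/N$ fully contains at least one $J_i$), each $J_i$ having positive length; call their union $\mathcal{N}$. Note $\mathcal{N}$ is a \emph{fixed} compact set of positive length, hence positive capacity, depending only on $N$. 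Now set $E\coloneqq E_0\cup\mathcal{N}$. This is a finite union of disjoint closed arcs (after merging any that overlap, keeping them non-trivial), so the structural part of (i) and all of (ii) hold by construction: every arc of length $1/N$ contains some $J_i\subset E$, which has positive length and therefore positive logarithmic capacity.

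The substantive point is to re-establish the capacity inequality in (i) for this larger $E$, and simultaneously to identify the Robin constant $A=\gamma(E)$ and verify $A>N$. For the first summand, $\capacity(E)\le\capacity(E_0)+\capacity(\mathcal{N})$ is false in general (capacity is not subadditive), so instead I would use Proposition~\ref{proposition:capacity_properties}(iv): writing $E=E_0\cup\mathcal{N}$ (both of diameter $\le 1$ after scaling, since $\Sone$ has diameter $2$ — here one should work with the rescaled picture or invoke the $\diam\le 1$ hypothesis on a suitable subarc decomposition), one gets $1/\gamma(E)\le 1/\gamma(E_0)+1/\gamma(\mathcal{N})$, i.e. $\capacity(E)$ is controlled by a harmonic-mean-type bound; choosing $\eps_0$ small (hence $\gamma(E_0)$ large) makes $1/\gamma(E)$ as small as we like, so $\capacity(E)=e^{-\gamma(E)}$ is as small as we like. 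For the second summand, $h(\Sone\setminus E)=h\bigl((\Sone\setminus E_0)\setminus\mathcal{N}\bigr)\subset h(\Sone\setminus E_0)$ by monotonicity of $h$-images and of capacity, so $\capacity(h(\Sone\setminus E))\le\capacity(h(\Sone\setminus E_0))<\eps_0$ directly — \emph{this is the key gain from adding, rather than removing, arcs}. Hence for $\eps_0$ small enough $\capacity(E)+\capacity(h(\Sone\setminus E))$ is smaller than any prescribed threshold; in particular, since $\capacity(E)\to 0$ forces $\gamma(E)\to\infty$, we may choose $\eps_0$ so small that $A\coloneqq\gamma(E)>N$, and then set $\gamma=\gamma(A)=A$ and check $\capacity(E)+\capacity(h(\Sone\setminus E))<e^{-\gamma}=e^{-A}$ by taking $\eps_0<\tfrac12 e^{-A}$ — which requires a small fixed-point/monotonicity argument since $A$ itself depends on $\eps_0$; concretely, one first picks $\eps_0$ tiny enough that the resulting $A$ exceeds $N$, then observes that shrinking $\eps_0$ further only increases $A$, and a routine continuity/monotonicity estimate on $\eps_0\mapsto\gamma(E_0\cup\mathcal{N})$ lets one land with $\eps_0<e^{-\gamma(E)}$.

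The main obstacle is exactly this last calibration: capacity is monotone but not subadditive, so one cannot naively bound $\capacity(E)$ by $\capacity(E_0)+\capacity(\mathcal{N})$, and the constant $\mathcal{N}$ has a fixed positive capacity that does \emph{not} shrink with $\eps_0$. The resolution is to phrase everything through Robin constants and Proposition~\ref{proposition:capacity_properties}(iv), where the \emph{reciprocals} add: $1/\gamma(E)\le 1/\gamma(E_0)+1/\gamma(\mathcal{N})$, and since $1/\gamma(\mathcal{N})$ is a fixed finite number while $1/\gamma(E_0)\to 0$, we conclude $1/\gamma(E)$ can be made as small as needed, hence $\gamma(E)$ as large as needed — which is both what forces $A>N$ and what makes $\capacity(E)$ beat $e^{-\gamma(A)}$. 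A minor technical wrinkle is the $\diam\le 1$ hypothesis in (iv): $\Sone$ has diameter $2$, so one either first decomposes into two semicircular pieces and applies (iv) twice, or passes to the rescaled model $\tfrac12\Sone$ used implicitly elsewhere; either way the argument is unaffected.
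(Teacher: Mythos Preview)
There is a genuine gap. Your construction sets $E=E_0\cup\mathcal{N}$ with $\mathcal{N}$ a \emph{fixed} finite union of arcs depending only on $N$. By monotonicity of capacity, $\mathcal{N}\subset E$ forces $\capacity(E)\geq\capacity(\mathcal{N})>0$, hence $A=\gamma(E)\leq\gamma(\mathcal{N})<\infty$ is bounded above by a constant depending only on $N$. In particular neither $\capacity(E)$ can be made arbitrarily small nor $A$ arbitrarily large, no matter how small you take $\eps_0$. Your appeal to Proposition~\ref{proposition:capacity_properties}(iv) does not help: the inequality $1/\gamma(E)\leq 1/\gamma(E_0)+1/\gamma(\mathcal{N})$ is an \emph{upper} bound on $1/\gamma(E)$, and as $\gamma(E_0)\to\infty$ the right-hand side tends to the fixed positive number $1/\gamma(\mathcal{N})$; meanwhile monotonicity gives the matching \emph{lower} bound $1/\gamma(E)\geq 1/\gamma(\mathcal{N})$. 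So your claim that ``$1/\gamma(E)$ can be made as small as we like'' is false, and with it the whole calibration step collapses.

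The paper's argument avoids the net entirely and is much shorter: take $E=E_0$ directly and argue that property (ii) is \emph{automatic} once $\eps$ is small enough. Indeed, $h$ is a homeomorphism of the compact set $\Sone$, so there is $\eta=\eta(h,N)>0$ such that every arc of length $1/N$ is mapped by $h$ onto an arc of length at least $\eta$. If some arc $I$ of length $1/N$ failed to meet $E$ in positive length, then (up to endpoints) $I\subset\Sone\setminus E$, so $h(I)\subset h(\Sone\setminus E)$ is an arc of length $\geq\eta$, whence $\capacity(h(\Sone\setminus E))\gtrsim\eta$. Choosing $\eps$ below this threshold rules it out. Thus no auxiliary net is needed, and $E=E_0$ already has capacity $e^{-A}$ as small (equivalently $A$ as large) as desired.
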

\begin{proof}
	By Lemma \ref{lemma:equivalence_logsingular} given $\eps>0$ there exists a compact set $E=E(\eps)$ which consists of a finite union of disjoint non-trivial closed arcs so that $$ \capacity(E)+\capacity(h(\Sone\setminus E))<\eps.$$ Given $N\in\N$, then an arc of length $1/N$ is mapped under $h$ to an arc of length $\eta=\eta(h,N)>0$. Since the length of an arc is comparable to its logarithmic capacity, if an arc of length $1/N$ does not intersect $E$, then 
$$ \capacity(h(\Sone\setminus E))\gtrsim\eta.$$

Hence fixed $N\in\N$, there is $\eps=\eps(N)>0$ small enough so that any arc of length $1/N$ intersects $E$ in a set of positive length. Equivalently, this holds if $A>N$ the Robin constant of $E=E(\eps)$ is large enough. \end{proof}

The main goal of this section is to prove:

\begin{proposition}[Construction of $\phi$]\label{proposition:phi}
Let $h\colon\Sone\to\Sone$ be a log-singular circle homeomorphism. Suppose $N$ is a large natural number and consider $E=E(N)$ as in Lemma \ref{lemma:construct_E}, which has large Robin constant $A>N$. Let also $M\in\N$ be large and $\delta>0$ small, depending on $N$ and $A$. Then there is $\phi\colon\D\to W$ conformal that satisfies:
\begin{enumerate}[label=(\roman*)]
	\item $W$ is obtained from a Jordan domain $\tilde{W}$ from which we remove a finite number of radial slits. In fact, $\tilde{W}$ is close to a disk, in the sense that there is a universal $C>0$ so that
	$$ \D\left(0, \exp\left(\frac{A}{N}-C\frac{\log N}{N}\right)\right)\subset \tilde{W}\subset  \D\left(0, \exp\left(\frac{A}{N}+C\frac{\log N}{N}\right)\right).$$
	\item $\phi(E)=\partial\tilde{W}$, i.e. $\phi(E)$ is a Jordan curve, and each component of $\Sone\setminus E$ is mapped to a radial slit. 
	\item We can choose $M$ points $\{x_{k}\}_{1}^{M}\subset\partial W$ so that $|x_{k}-\exp(2\pi i k/M)|\leq\delta$ and each $x_{k}$ corresponds to one end of a radial slit.
\end{enumerate}
The radial slits that end at the points $\{x_{k}\}$ yield quadrilaterals $W_{k}\subset\tilde{W}\setminus\D$, where we consider the path family separating the slit corresponding to $x_{k}$ with the slit corresponding to $x_{k+1}$. The $W_{k}$ have modulus
	$$ R_{k}\coloneqq M(W_{k})\simeq\frac{\pi N |x_{k+1}-x_{k}|}{A}\simeq\frac{\pi}{A}\frac{N}{M},$$
where we can assume $M\simeq N$.

Moreover, $\phi$ can be chosen to be as close to the identity as needed in compact sets of $\D$, provided $N$ is large enough.
\end{proposition}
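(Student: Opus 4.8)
The plan is to build $\phi$ as the exponential of a harmonic function determined by the equilibrium measure of $E$, and then to extract the combinatorial data (slits, quadrilaterals, moduli) from the geometry of that harmonic function. First I would fix $E=E(N)$ from Lemma~\ref{lemma:construct_E}, with Robin constant $A=\gamma(E)$, and let $\mu$ be the equilibrium probability measure on $E$; set $G=U_\mu$ its logarithmic potential and $U(z)=G(z)+G(1/\bar z)$, which is harmonic in $\C_\infty\setminus(E\cup E^*)$ (where $E^*$ is the reflection of $E$), vanishes on $\Sone\setminus E$, and equals $-A$ on $E$ (in the appropriate normalization, so that $-U$ is the Green's function of the complement of $E\cup E^*$ with pole at $0$, up to the constant $A$). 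Then I would let $\tilde U$ be a harmonic conjugate of $U$ and define $\phi=\exp\bigl(c(U+i\tilde U)\bigr)$ for the constant $c$ that makes $\phi$ single-valued and normalized; tracking the normalization (so that $\phi(0)=0$, $\phi'(0)>0$, and $\phi$ is close to the identity on compacta for large $N$) is bookkeeping with the expansion $G(z)=-\log|z|\cdot 1 + (\text{bounded})$ near $0$ together with the fact that $\capacity(E)$ is tiny.

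Next I would identify the image $W=\phi(\D)$. On $\Sone\setminus E$ we have $U=0$, so $|\phi|=1$ is constant there — but $\tilde U$ is monotone along each complementary arc (its boundary derivative is $\partial U/\partial n>0$ by Hopf), so each arc of $\Sone\setminus E$ is mapped by $\phi$ onto a radial slit of $W$ at radius one. On $E$ we have $U\equiv -A$, hence $|\phi|\equiv\exp(-cA)$... wait, I need the orientation so that $E$ goes to the \emph{outer} boundary; concretely one takes $U$ to be (a constant minus) the Green's function so that $U$ is large and positive on $E$, giving $|\phi|=\exp(cA)$ large there, and $\phi(E)=\partial\tilde W$ where $\tilde W$ is the Jordan domain bounded by this curve. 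This proves (ii). For (i), the two-sided inclusion for $\tilde W$ comes from estimating $\mathrm{osc}(U)$ on $E$: since $U-(\text{harmonic extension of }0\text{ from }\Sone)$ is controlled, and the key point is that $E$ fills every arc of length $1/N$ (Lemma~\ref{lemma:construct_E}(ii)), the maximum principle / Harnack forces $U$ on $E$ to be within $C\log N/N$ of its average value $A/N$ times the right normalization. The normalization constant $c$ is $\simeq N/A$ up to the same multiplicative error, which is where the $\exp(A/N\pm C\log N/N)$ bounds come from. I would make this precise by comparing with the explicit harmonic measure estimate for a rectangle, Lemma~\ref{lemma:HM_rectangle}, after straightening a complementary arc.

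For (iii) and the modulus computation, I would note that $\tilde U$ (equivalently $\arg\phi$) gives a continuous increasing parametrization of $\partial\tilde W$ by $E$, and since every $1/N$-arc of $\Sone$ meets $E$, the preimages under $\phi$ of the $M\simeq N$ equally spaced candidate points $\exp(2\pi ik/M)$ on $\partial\tilde W$ can each be pinned to within $\delta$ of a point $x_k$ that is the outer endpoint of some radial slit (choosing $\delta$ small forces $M$ large but still $M\simeq N$); this uses that the radial slits are dense enough around $\partial\tilde W$, again by Lemma~\ref{lemma:construct_E}(ii). The quadrilateral $W_k$ is the region in $\tilde W\setminus\overline{\D}$ between consecutive slits at $x_k,x_{k+1}$; its modulus for the family separating those two slits is, by conformal invariance, exactly the extremal length of the corresponding family in the "radial log-polar" picture, i.e. essentially a rectangle of width $\log|x_k|=cA\simeq (N/A)\cdot A/N=1$... more carefully, height $\propto |x_{k+1}-x_k|\simeq 1/M$ and width $\log(\text{outer radius})\simeq A/N$, giving $M(W_k)\simeq \dfrac{\pi N|x_{k+1}-x_k|}{A}\simeq\dfrac{\pi}{A}\dfrac{N}{M}$ as claimed, where the $\pi$ and the factor match Lemma~\ref{lemma:HM_rectangle} (the $e^{-\pi L/2}$ harmonic-measure decay converts to modulus $\tfrac2\pi(\cdots)$, accounting for the constant) — I would get the comparison "$\simeq$" rather than equality precisely because $\partial\tilde W$ is only within $C\log N/N$ of a round circle.

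The main obstacle is the two-sided roundness estimate in (i) together with the uniformity of the slit spacing needed for (iii): everything rests on quantitatively transferring the hypothesis "$E$ meets every $1/N$-arc in positive capacity" into a modulus/Harnack bound of the stated form $C\log N/N$. I expect the clean way to do this is to cover $\Sone$ by $\simeq N$ arcs of length $\simeq 1/N$, on each of which $E$ has capacity bounded below (by a quantity depending only on $N$ through Lemma~\ref{lemma:construct_E}), use the subadditivity of $1/\gamma$ (Proposition~\ref{proposition:capacity_properties}(iv)) and monotonicity to sandwich $A=\gamma(E)$, and then run a Harnack chain of length $O(N)$ around $\Sone$ to control the oscillation of $U$ on $\partial\tilde W$, each link contributing $O(1/N)$ and the $\log N$ coming from the number of links times the per-link distortion; all remaining assertions (the explicit form of $\phi$, near-identity on compacta, the modulus asymptotics) then follow by the conformal-invariance bookkeeping sketched above.
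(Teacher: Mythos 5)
Your overall strategy — building $\phi$ by exponentiating $U+i\tilde U$ where $U$ comes from a logarithmic potential associated to $E$ — is exactly the paper's, but two of the details you rely on are backwards or would not go through. First, the paper does not use the equilibrium measure of $E$: it partitions $\Sone$ into $N$ arcs $I_k$ of length $1/N$, sets $E_k=E\cap I_k$, and takes $\mu=\sum_k\mu_k$ where each $\mu_k$ is the equilibrium measure of $E_k$ scaled to mass $1/N$. This manufactured measure is automatically uniform at scale $1/N$, which is what forces $\mu\weakstar$ normalized arclength, $G\to 0$ inside $\D$ (giving $\phi\approx\mathrm{id}$ on compacta), and the well-distributed slit endpoints needed for (iii). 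The equilibrium measure of $E$ itself carries none of these guarantees — its potential is constant $=A$ q.e.\ on $E$, but the measure can concentrate unevenly among the $1/N$-arcs, breaking both the near-identity assertion and the $M\simeq N$ spacing of the $x_k$; your proposed patch via a Harnack chain does not recover this because the paper's $C\log N/N$ oscillation bound comes from the cancellation $\int_{\Sone}\log|z-x|^{-1}|dz|=0$ applied to the far part of $G$, not from Harnack.

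Second, you have the roles of $U$ and $\tilde U$ on $\Sone\setminus E$ reversed, and the reversed version does not produce radial slits. The reflection symmetry $U(z)=U(1/\bar z)$ forces $\partial U/\partial n=0$ on $\Sone\setminus E$ (not $U=0$, and not $\partial U/\partial n>0$ by Hopf — Hopf is irrelevant here, symmetry kills the normal derivative exactly). Consequently it is $\tilde U$ (hence $\arg\phi$) that is locally constant on each component of $\Sone\setminus E$, while $U$ (hence $|\phi|$) varies there; that is what makes the image a radial slit. Your version ($|\phi|\equiv1$ constant, $\arg\phi$ monotone) would map each complementary arc to a circular arc on $\Sone$, contradicting the statement you are trying to prove. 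Relatedly, the normalizing exponent $c$ you introduce is not needed and in fact cannot be chosen freely: single-valuedness of $e^{i\tilde U}$ is a period condition, and the flux $\int_{\Sone}d\tilde U=2\pi\mu(\Sone)=2\pi$ already makes $c=1$ work and forbids other choices; the radius normalization is handled instead by a final division $\phi=\varphi/(1+\delta)$.
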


The bound on the modulus $R_{k}$ of $W_{k}$ from Proposition \ref{proposition:phi} does not appear in \cite{ChrisWeldingAnnals}, even though it follows from the arguments there.

We prove now Proposition \ref{proposition:phi}, which consists of several lemmas. 

Write $\Sone=\cup_{k=1}^{N} I_{k}$, where the $I_{k}$ are arcs of the same length $1/N$ that have disjoint interiors. Define $E_{k}=E\cap I_{k}$, for $k=1,\ldots, N$ and set $\mu_{k}$ to be the equilibrium measure of mass $1/N$ on each $E_{k}$ (which is well-defined by our choice in Lemma \ref{lemma:construct_E}). Let $\mu=\sum\mu_{k}$ and define the potential
$$ G(z)=\int\log\frac{1}{|x-z|}d\mu(x)=\sum_{k}G_{k}(x)=\sum_{k}\int\log\frac{1}{|x-z|}d\mu_{k}(z).$$

Observe that $N G_{k}(z)-A$ is the Green's function for $\C_{\infty}\setminus E_{k}$ with pole at $\infty$. Thus $G_{k}(x)=A/N$ on $E_{k}$.

\begin{lemma}[\cite{ChrisWeldingAnnals} Lemma 26]\label{lemma:construction_1}
The map $G$ is continuous on $\C$, harmonic on $\C\setminus E$ and
\begin{enumerate}[label=(\roman*)]
	\item $G(z)\to\log|z|^{-1}$ for $|z|>1$ and $G(z)\to0$ for $|z|<1$ as $N\to\infty$.
	\item For $\delta>0$ and any arc $J\subset\Sone$, $|\{x\in J\colon |G(x)|>\delta\}|/|J|\to0$ as $N\to\infty$.
\end{enumerate}
Moreover, the rate of convergence in (ii) is independent of $A$, provided that $A$ is large enough (where $A$ is the Robin constant of $E$). We also have that for $x\in E$,
\begin{equation}\label{estimate_1} \frac{A}{N}-O\left(\frac{\log N}{N}\right) \leq G(x)\leq\frac{A}{N}+O\left(\frac{\log N}{N}\right).\end{equation}
\end{lemma}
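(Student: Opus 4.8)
The plan is to analyze the potential $G(z)=\sum_k G_k(z)$, where each $G_k$ is the logarithmic potential of the mass-$1/N$ equilibrium measure $\mu_k$ on $E_k=E\cap I_k$, by exploiting the key identity noted just before the lemma: $N G_k(z)-A$ is the Green's function $g_k$ of $\C_\infty\setminus E_k$ with pole at $\infty$, so that $G_k\equiv A/N$ on $E_k$, $G_k$ is harmonic off $E_k$, and $G_k(z)=\tfrac1N\log\tfrac{1}{|z|}+O(1/N)$ near $\infty$ because $E_k$ has Robin constant $A/N$... more precisely $G_k(z)=\tfrac1N(\log\tfrac1{|z|}+A)+g_k(z)$ with $g_k\ge 0$ vanishing on $E_k$. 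Continuity of $G$ on all of $\C$ and harmonicity on $\C\setminus E$ are then immediate from the corresponding facts for each $G_k$ (a finite sum), using that logarithmic potentials of measures with bounded potential are continuous.

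For part (i): fix $z$ with $|z|>1$ (the case $|z|<1$ is symmetric via the reflection used later). For each $k$, $G_k(z)=\int_{E_k}\log\frac{1}{|x-z|}\,d\mu_k(x)$; since $\diam(E_k)\le 1/N\to 0$ and $E_k\subset\Sone$, the measure $\mu_k$ concentrates near a point $\xi_k\in\Sone$, so $G_k(z)=\tfrac1N\log\tfrac1{|\xi_k-z|}+o(1/N)$, with the error controlled by $|\log|x-z|-\log|\xi_k-z||\lesssim \tfrac1{N}\cdot\tfrac{1}{\mathrm{dist}(z,\Sone)}$ uniformly in $x\in E_k$. Summing over $k$ gives $G(z)=\tfrac1N\sum_k\log\tfrac1{|\xi_k-z|}+o(1)$, and this Riemann sum converges to $\frac{1}{2\pi}\int_{\Sone}\log\tfrac1{|\xi-z|}\,|d\xi|$, which equals $\log\tfrac1{|z|}$ for $|z|>1$ and $0$ for $|z|<1$ by the standard mean-value computation for the logarithmic potential of normalized arclength on $\Sone$. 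The rate here does depend on $z$ through $\mathrm{dist}(z,\Sone)$ but not on $A$, which is all that is claimed (convergence as $N\to\infty$).

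For part (ii), which is the crux: fix an arc $J\subset\Sone$ and $\delta>0$, and estimate $|\{x\in J: G(x)>\delta\}|$ (the lower bound being handled by splitting $G=\sum_k G_k\ge 0$ trivially, so only the upper excursions matter; actually $G\ge 0$ everywhere so $|G(x)|=G(x)$). For $x\in E$, $G(x)\approx A/N$ is \emph{large}, so the exceptional set near $E$ is controlled by $|E|$ being summable — but we must show $|E\cap J|$ is small, which is NOT given; instead the point is that for $x\in\Sone\setminus E$, $G_k(x)$ is the value at a boundary point of the Green's function, and we bound $\sum_k G_k(x)$ by separating the ``close'' index (the $k$ with $x\in I_k$ or adjacent) from the ``far'' ones. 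For far $k$, $G_k(x)\lesssim \tfrac1N(\log\tfrac1{\mathrm{dist}(x,E_k)}+A)$-type bounds sum to something $O((A+\log N)/N)\to 0$... wait, that has the wrong uniformity in $A$. The honest route, and the one I expect to be the main obstacle, is to use a Chebyshev/energy argument: integrate $G$ over $J$ (or over $\Sone$), getting $\int_J G\,|dx|=\sum_k\int_J\int_{E_k}\log\tfrac1{|x-\xi|}\,d\mu_k\,|dx|$, and bound the double integral using that $\int_J\log\tfrac1{|x-\xi|}\,|dx|$ is uniformly bounded for $\xi\in\Sone$, giving $\int_J G\,|dx|\lesssim\sum_k\mu_k(E_k)\cdot C=C$ — too crude. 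So instead one localizes: the contribution of indices $k$ with $E_k$ far from $J$ is genuinely $O(1)$ independent of $N$, while the contribution of the $O(1)$ many indices $k$ with $E_k$ within distance $\sim 1/N$ of $J$ must be shown to have small \emph{measure} of super-level set, using that each $\mu_k$ is an equilibrium measure so its potential is $\le A/N+O(\log N/N)$ everywhere (an $L^\infty$ bound) \emph{and} that on the arc $I_k$ the set where $G_k>\delta$ has measure $o(|I_k|)$ by a capacity estimate — here one invokes that $\{G_k>\delta\}\cap I_k$ has small length because $\mathrm{Cap}(E_k)$ small forces the Green's function to be small except very near $E_k$, combined with $|I_k|=1/N$. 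Summing these $O(1)$ local contributions gives $o(1/N)\cdot O(1)=o(1)$ but one needs the per-arc bound uniform in $A$; this follows because the relevant estimate is really about $g_k/N=G_k-A/N$ having small super-level sets, and $g_k$ depends on the \emph{shape} of $E_k$ (which always contains a positive-length arc by Lemma~\ref{lemma:construct_E}), not on $A$. The final inequality \eqref{estimate_1} is then just the statement that $G(x)=\sum_k G_k(x)$ with $G_k(x)=A/N$ for the (at most $O(1)$, in fact exactly one) index with $x\in E_k$ and $0\le G_{k'}(x)\le \tfrac1N(\log\tfrac1{\mathrm{dist}(x,E_{k'})}+A)$ for $k'\ne k$, whose sum is $O(\log N/N)$ since $x\in E$ keeps $x$ at distance $\gtrsim 1/N$ from the other $E_{k'}$ and the logarithmic sum $\tfrac1N\sum_{j\ge 1}\log(N/j)$ telescopes to $O(\log N/N)$; the lower bound is immediate from positivity of each $G_{k'}$ and $G_k(x)=A/N$.

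In summary: (1) record the Green's-function identity and deduce continuity/harmonicity; (2) prove (i) by a Riemann-sum argument against the known potential of arclength on $\Sone$; (3) prove (ii) by splitting each point's potential into a single dominant index and a tail, bounding the tail by a dimension-free logarithmic sum and the dominant-index super-level set by a capacity estimate that is uniform in $A$ once $A$ is large; (4) deduce \eqref{estimate_1} from the same splitting with $x\in E$. The main obstacle is step (3): extracting a rate in (ii) that is genuinely independent of the Robin constant $A$, which forces one to phrase everything in terms of the Green's functions $g_k=NG_k-A$ rather than $G_k$ itself, so that the large parameter $A$ only enters through an additive constant that is invisible to the super-level set $\{G>\delta\}$ once $\delta\ll A/N$ — and to handle the regime $\delta\gtrsim A/N$ separately by the trivial bound $G\le A/N+O(\log N/N)$.
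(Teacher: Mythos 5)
Your high-level plan (split $G$ into near and far pieces, use the Green's-function identity, try a Chebyshev argument) points in the right direction, but it has a gap that would make both part (ii) and the estimate \eqref{estimate_1} fail, and one of your preparatory claims is simply false.

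The false claim: you assert $G\ge 0$ everywhere and $G_{k'}(x)\ge 0$ for $k'\ne k$. This is not true. For $x\in\Sone$ and $\xi$ near the antipode of $x$, $|x-\xi|$ is close to $2>1$, so $\log\tfrac{1}{|x-\xi|}<0$; hence $G_{k'}(x)$ can be negative, and so can $G(x)$. Because of this, your ``only upper excursions matter'' reduction for (ii) does not hold, and the $|G(x)|>\delta$ set must be controlled on both sides.

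The gap: you bound the far tail $\sum_{k'\ne k}G_{k'}(x)$ by $\tfrac{1}{N}\sum_{j\ge1}\log(N/j)$ and claim this is $O(\log N/N)$. By Stirling, $\tfrac{1}{N}\sum_{j=1}^{N}\log(N/j)=\log N-\tfrac{1}{N}\log N!=1+O(\tfrac{\log N}{N})$, which is $O(1)$, not $O(\log N/N)$. So a term-by-term absolute-value bound on the far contributions cannot give \eqref{estimate_1}, and the same problem blocks your version of (ii): if the far part is only $O(1)$ pointwise, Chebyshev gives nothing for fixed $\delta$. The paper's proof avoids this by \emph{not} estimating $|G_k(x)|$ in absolute value for far $k$; instead it subtracts the exact identity $\int_{\Sone}\log\tfrac{1}{|x-\xi|}\,\tfrac{|d\xi|}{2\pi}=0$ and replaces $d\mu_k$ by $d\mu_k-\tfrac{|d\xi|}{2\pi}$. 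Since $\mu_k$ and normalized arclength both give mass $\tfrac1N$ to $I_k$, and the oscillation of $\log\tfrac{1}{|x-\xi|}$ over $I_k$ is $\lesssim 1/k$, each far term contributes $\lesssim \tfrac{1}{kN}$, and the sum over $k$ is genuinely $O(\tfrac{\log N}{N})$; this is the pointwise bound $|H_2|\lesssim\tfrac{\log N}{N}$ of \eqref{equation:G2}. That cancellation is the missing idea. Once you have it, (ii) follows from Chebyshev applied to the $L^1$ norm of the \emph{near} piece $H_1$ over the three adjacent arcs (the paper shows $\int_I H_1\lesssim\tfrac{\log N}{N^2}$), rather than from the per-arc capacity estimates you sketch; and \eqref{estimate_1} follows from $G_{k_0}(x)=A/N$ plus the same $|H_2|$ bound.

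One more point: your putative inequality $G_{k'}(x)\le\tfrac{1}{N}\bigl(\log\tfrac{1}{\mathrm{dist}(x,E_{k'})}+A\bigr)$ has a spurious $+A$. The mass-$\tfrac1N$ potential bound is $G_{k'}(x)\le\tfrac{1}{N}\log\tfrac{1}{\mathrm{dist}(x,E_{k'})}$ when $\mathrm{dist}<1$; the $A$ enters only through the value on $E_{k'}$ itself, not through a size bound off $E_{k'}$.
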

\begin{proof}
That $G$ is continuous and harmonic off $E$ is standard. Since $\mu$ converges weakly to normalized arclength measure on $\Sone$ then (i) holds (for $|z|\leq1$ we have $\int\log|z-x|^{-1}dx=0$ and is equal to $\log|z|^{-1}$ for $|z|>1$).

To prove (ii), fix $x\in\Sone$ and assume the $N$ intervals in the definition of $\mu$ are relabeled so that $x\in I_{0}$ and $I_{1}, I_{2}$ are adjacent to $I_{0}$, and so that $d(x, I_{k})\simeq k/N$ for $k=3,\ldots, N-1$. Let $I=I_{0}\cup I_{1}\cup I_{2}$. Then,
$$ G(x)=\int_{I}\log\frac{1}{|z-x|}d\mu(z)+\sum_{k>2}\int_{I_{k}}\log\frac{1}{|z-x|}d\mu(z)=H_{1}(x)+H_{2}(x).$$
Observe that $H_{1}>0$ on $I$ provided $N$ is large enough (say $N\geq12$). Since $I$ has diameter $\leq1$, we have
\begin{align*}
	\int_{I} H_{1}dx\leq\int_{I}\int_{I}\log\frac{1}{|z-x|}d\mu(z)|dx|\leq\mu(I)\max_{z\in I}\int_{I}\log\frac{1}{|z-x|}|dx|\leq C\frac{\log N}{N^{2}}.
\end{align*}

Therefore, by Tchebyshev's inequality, 
\begin{equation}\label{equation:G1}
 \left| \left\{ x\in I\colon H_{1}(x)\geq \frac{\la}{2}\frac{\log N}{N} \right\} \right| \leq\frac{C}{N\la}.\end{equation}
To estimate $H_{2}$, observe that for $k>2$, the variation of $\log|x-z|^{-1}$ over $I_{k}$ is at most $C/k$. Therefore,
\begin{align}\label{equation:G2}
	\nonumber |H_{2}(x)|\leq&\left| \sum_{k>2}\int_{I_{k}} \log\frac{1}{|z-x|}\left(d\mu(z)-\frac{|dz|}{2\pi}\right) \right|+\left| \sum_{k>2}\int_{I_{k}}\log\frac{1}{|z-x|}\frac{|dz|}{2\pi}\right|\leq \\ 
	\leq& \sum_{k>2}\frac{2\tilde{C}}{kN}+\int_{I}\log\frac{1}{|z-x|}\frac{|dz|}{2\pi}\leq C\frac{\log N}{N},
\end{align}
where we have used that $\int_{\Sone}\log|z-x|^{-1}|dz|=0$ for $x\in\Sone$. Hence, by (\ref{equation:G1}) for $\la=2\delta N/\log N$ and (\ref{equation:G2}), we have
$$ |\left\{ x\in I_{0}\colon |G(x)|\geq\delta \right\}|\leq \frac{C|I_{0}|\log N}{\delta N}.$$

Now, given $J$ an interval, suppose $J=\cup_{k=0}^{j}I_{k}$ with $|I_{k}|\simeq|J|/(j+1)$. Then,
\begin{align*} |\left\{ x\in J\colon |G(x)|\geq\delta \right\}|\leq&\sum_{k=0}^{j}|\left\{x\in I_{k}\colon |G(x)|\geq\delta \right\}\leq \\
\leq& \sum_{k=0}^{j}\frac{C|I_{k}|\log N}{\delta N} = \frac{C|J|\log N}{\delta N}.\end{align*}
Which implies (ii), since the estimates we have obtained are independent of $A$. Finally, since $G_{k}(x)\leq A/N$ and $G_{k}(x)=A/N$ for $x\in I_{k}$, then the estimate on $|H_{2}|$ (\ref{equation:G2}) yields (\ref{estimate_1}). \end{proof}

We can symmetrize $G$ by setting $$U(z)=G(z)+G(z/|z|^{2})=G(z)+G(1/\bar{z}).$$ Then $U$ is harmonic on $\C\setminus (E\cup\{0\})$ and equal to $2G(x)$ on $E$ (and has negative logarithmic poles at $0,\infty$).

\begin{lemma}[\cite{ChrisWeldingAnnals} Lemma 27]\label{lemma:construction_2}
We can choose a (multi-valued) harmonic conjugate $\widetilde{U}$ of $U$ on $\D$ so that $\exp(i\widetilde{U}(x))$ is single valued and $\exp(i\widetilde{U}(x))\to x$ uniformly as $N\to\infty$ (with an estimate independent of $A$).
\end{lemma}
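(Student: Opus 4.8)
We want to choose a harmonic conjugate $\widetilde U$ of $U$ on $\D$ so that $\exp(i\widetilde U)$ is single-valued on $\D$ and converges uniformly to the identity $x\mapsto x$ as $N\to\infty$, with an estimate not depending on $A$. First I would pin down the period/single-valuedness issue. On $\D$ the function $U$ is harmonic away from $0$, and near $0$ it has the singularity $G(z/|z|^2)=G(1/\bar z)$, which contributes a term $-\log|1/\bar z|^{-1}=\log|z|$... more precisely $G(1/\bar z)\to -\log|z|$ as $N\to\infty$ but for finite $N$, $U$ has a logarithmic pole at the origin of strength exactly $1$ (the total mass of $\mu$ is $1$, hence $\mu$ reflected also has mass $1$). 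Therefore $U(z)=\log|z| + (\text{harmonic on }\D)$ near $0$, and the harmonic conjugate of $U$ picks up a period of $2\pi$ around $0$, so $\widetilde U$ itself is multi-valued but $e^{i\widetilde U}$ is single-valued precisely because that period is an integer multiple of $2\pi$. That's the first step: verify the mass is $1$ so the period is $2\pi$. (Equivalently: $\phi=\exp(U+i\widetilde U)$ should behave like $z$ near $0$, so $\widetilde U$ must be, modulo an additive harmonic conjugate that is single-valued, the multivalued argument $\arg z$.)

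Next I would write $\widetilde U = \arg z + V$ where $V$ is the (genuine, single-valued) harmonic conjugate on $\D$ of the harmonic function $U(z)-\log|z|$, normalized say by $V(0)=0$. Then $e^{i\widetilde U(x)} = x\cdot e^{iV(x)}$ for $x\in\Sone$ (since $|x|=1$, $\arg x$ agrees with the actual argument), and the claim reduces to: $V\to 0$ uniformly on $\Sone$ as $N\to\infty$, with rate independent of $A$. Now $U(z)-\log|z|$ is harmonic across $\Sone$? No — $U$ is only harmonic in $\D$; but its boundary values on $\Sone$ are $2G(x)$, which by Lemma~\ref{lemma:construction_1}(i) tend to $0$ uniformly... wait, $G(x)\to A/N$ on $E$, not to $0$. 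So the boundary values of $U$ on $\Sone$ are $2G$, which is close to $0$ off $E$ but close to $2A/N$ on $E$, and by Lemma~\ref{lemma:construction_1}(ii) the set where $|G|$ is not small has small relative length in every arc. The key point is that $V$ is the conjugate function (Hilbert transform on $\Sone$) of the boundary data, and I need the conjugate function of $2G|_{\Sone}$ to be uniformly small. This does not follow from $L^\infty$ smallness of $G$ (the conjugate function is not bounded on $L^\infty$); it follows from the fact, given by Lemma~\ref{lemma:construction_1}(ii), that $G$ is small in $L^1$ and moreover small outside a set of small relative measure — i.e. $G\to 0$ in measure in a controlled way together with an $L^1$ bound $\int_J |G|\,|dx| \lesssim |J|\log N/N$. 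I would estimate the conjugate function pointwise: $V(e^{i\theta}) = \frac{1}{2\pi}\,\mathrm{p.v.}\!\int 2G(e^{it})\cot\!\big(\tfrac{\theta-t}{2}\big)\,dt$, split the integral into $|t-\theta|<1/N^{1/2}$ (say) and its complement, bound the far part by the $L^1$ estimate on $G$ against the bounded kernel $\cot$, and bound the near part using that $G$ restricted to the short arc around $\theta$ has small $L^1$ norm times a kernel of size $\lesssim N^{1/2}$ — choosing the cutoff to balance. The estimates from Lemma~\ref{lemma:construction_1} are explicitly stated to be independent of $A$, so the resulting bound on $\|V\|_{\infty,\Sone}$ will be too.

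Finally, from $\|V\|_{\infty}\to 0$ one gets $\sup_{x\in\Sone}|e^{i\widetilde U(x)}-x| = \sup_x |x||e^{iV(x)}-1| \le \|V\|_\infty \to 0$, which is the assertion. I would also note that since $V(0)=0$ and $V$ is harmonic with small boundary values, $V$ is small on compact subsets of $\D$ too, which is what feeds the last sentence of Proposition~\ref{proposition:phi} (that $\phi$ is close to the identity on compacta). \textbf{The main obstacle} is precisely the middle step: controlling the conjugate function $V$ uniformly from the measure-theoretic smallness of $G$, since one cannot simply invoke boundedness of the Hilbert transform on $L^\infty$; the decomposition of the kernel together with the sharp form of Lemma~\ref{lemma:construction_1}(ii) (relative length bound with the explicit $\log N/N$ rate) is what makes it work, and keeping the cutoff scale tuned so the near and far contributions both go to $0$ is the delicate point — but Bishop's Lemma~27 asserts exactly this, so the $L^1$-type bounds already in hand suffice.
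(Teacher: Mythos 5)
Your plan and the paper's proof diverge at the very first move, and your route leaves the heart of the argument unfinished.

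The paper's proof is short and does not touch the conjugate function/Hilbert-transform machinery at all. It exploits the symmetry $U(z)=G(z)+G(1/\bar z)$ to observe that $U$ has \emph{zero normal derivative} on $\Sone\setminus E$, so that $\widetilde U$ is locally constant on each complementary arc of $E$, and then computes the jump of $\widetilde U$ across each component $I\subset E$ exactly: $\int_I d\widetilde U/d\theta\,d\theta = 2\pi\mu(I)$. Since $\mu=\sum\mu_k$ with $\mu_k(I_k)=1/N$ on $N$ equal-length arcs $I_k$, the total increment of $\widetilde U$ over each $I_k$ is $2\pi/N$, matching the increment of $\arg$ exactly. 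Together with monotonicity of $\widetilde U$ and normalization at $N$ equidistributed points, uniform closeness of $e^{i\widetilde U}$ to the identity follows immediately, with a bound manifestly independent of $A$. None of the analysis of the conjugate operator enters.

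Your plan, by contrast, reduces the claim to showing that the conjugate function $V$ of the boundary data $2G$ is uniformly small, and you propose to prove this by a near/far splitting of the Hilbert kernel, leaning on the $L^1$ and in-measure smallness from Lemma~\ref{lemma:construction_1}(ii). This is the gap: that estimate is not carried out, and as written it is not clear it would close. The bound $\int_J|G|\lesssim|J|\log N/N$ from Lemma~\ref{lemma:construction_1} is established for arcs $J$ made of whole blocks $I_k$; on arcs of length $\ll 1/N$ lying inside $E$ (where $G\simeq A/N$ and $A$ can be arbitrarily large relative to $\log N$) the trivial bound is $\int_J|G|\simeq A|J|/N$, which does not improve on the $L^\infty$ bound. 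Thus the ``near'' part of your kernel integral, centered at a point of $E$ or just outside it, a priori picks up a contribution of size comparable to $A/N$, which is not small. You acknowledge this is ``the main obstacle,'' but resolve it only by appealing to the truth of Bishop's Lemma 27, which is circular. What actually rescues the situation is exactly the structural information you omit: on $\Sone\setminus E$ the normal derivative of $U$ vanishes, so the conjugate function is constant there, and the contributions from $E$ aggregate to $2\pi/N$ per block --- the cancellation in the p.v.\ integral is encoded in the measure $\mu$, not in crude $L^1$ bounds on $G$. Without that observation your kernel estimate is not justified. I would recommend either switching to the paper's symmetry argument, or, if you want to keep the conjugate-function framework, at least import the fact that the conjugate boundary data is piecewise constant off $E$ with prescribed jumps $2\pi\mu(I)$ on $E$, at which point the uniform bound is elementary.
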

\begin{proof}
$U$ has normal derivative zero on $\Sone\setminus E$, thus $\widetilde{U}$ is constant on each component of $\Sone\setminus E$. On each component $I$ of $E$ we have,
$$ \int_{I}\frac{d\widetilde{U}}{d\theta}d\theta = \int_{I}\frac{dU}{d n}d\theta =\frac{1}{2}\int_{I}\Delta Ud\theta=\int_{I}\Delta Gd\theta=2\pi\mu(I).$$
Therefore, we can choose $\widetilde{U}$ so that $\exp(i\widetilde{U}(x))=x$ at $N$ equidistributed points in $\Sone$. Since $\widetilde{U}$ is monotonic, this implies the result.
\end{proof}

Consider the mapping $\varphi(z)=\exp(U(z)+i\widetilde{U}(z))$. It is conformal (it is a covering and the target is simply connected) and it maps the disk onto a region $W$ which consists of a Jordan domain $\widetilde{W}$ with a finite number of radial slits removed (where $\widetilde{W}$ is a Jordan domain because $\widetilde{U}$ is monotonic on $E$, which in fact implies that $\widetilde{W}$ is star-like with respect to $0$). From the last estimate from Lemma \ref{lemma:construction_1} we see that $\widetilde{W}$ contains the disk $D(0,\exp((2A/N)-1))$ provided $N$ is large enough. Moreover, $W$ contains the disk $D(0,1-C(\log N)/N)$ for some $C<\infty$. This is because, following the notation in the proof of Lemma \ref{lemma:construction_1}, $G(x)=H_{1}(x)+H_{2}(x)$, with $H_{1}>0$ provided $N$ is large enough and $|H_{2}(x)|\leq C(\log N)/N$.

Moreover, by Tchebyshev's inequality, since $||H_{1}||_{1}\leq C(\log N)/N^{2}$, then 
\begin{equation}\label{estimate_2} \left| \left\{ x\in I\colon H_{1}(x)\leq \frac{C}{2}\frac{\log N}{N} \right\} \right| \geq 1/N>0,\end{equation}
(where $I$ is also as in the proof of Lemma \ref{lemma:construction_1}) i.e. there is $x\in I$ so that $U(x)=2G(x)\leq 2C(\log N)/N$. For such $x$, we have $|U(x)|\leq1$ for $N$ large enough and from $|e^{z}-1|\leq e|z|$ if $|z|\leq1$, we obtain
$$ |\varphi(x)|=e^{U(x)}\leq1+e\frac{2C\log N}{N}.$$

Since $\widetilde{U}$ is constant in $\Sone\setminus E$ and for $x\in E$ we have (\ref{estimate_1}), then such $x$ must be near one of the ends of the slits (which gives a bound on how long the slits are). If $A,N$ are large enough, then we can choose a collection of $M$ critical points $\{x_{k}\}\subset\Sone$ of $G$ so that
$$ |x_{k}-\exp(2\pi i k/M)|\leq\delta,$$
(where $M,\delta$ are as in the statement of Proposition \ref{proposition:phi}) and with $$1-\delta\leq\varphi(x_{k})\leq1+\delta$$ for all $k=1,\ldots, M$. Finally, we define $\phi=\varphi/(1+\delta)$.

\begin{figure}[h]
	\includegraphics[scale=0.9]{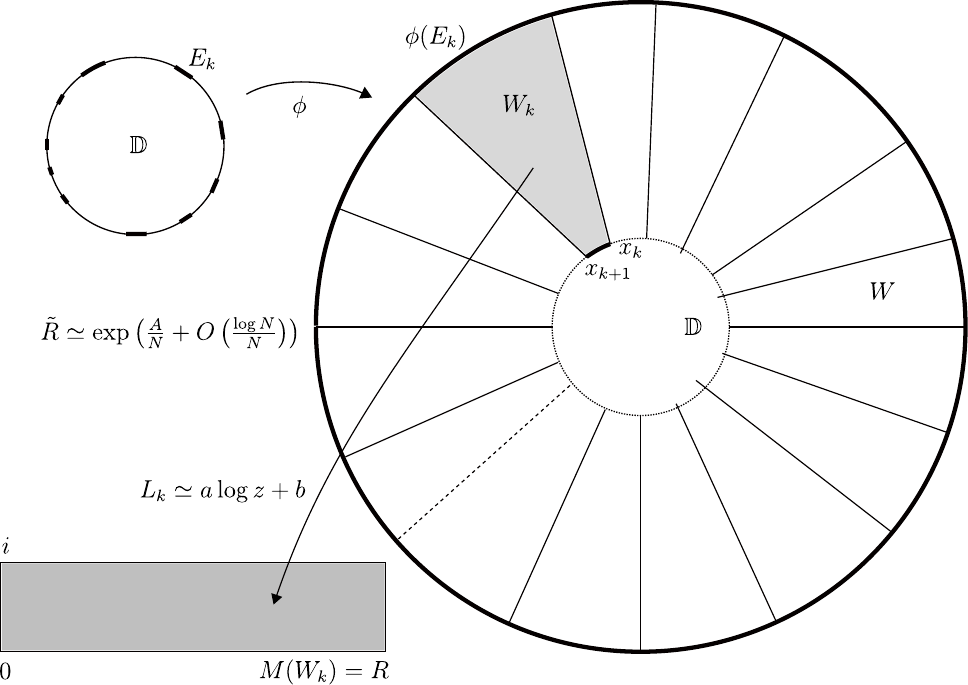}
	\centering
	\caption{Construction of the map $\phi$, as in Proposition \ref{proposition:phi}, which maps the set $E$ as in Lemma \ref{lemma:construction_1} to the boundary of $\widetilde{W}$ and maps $\Sone\setminus E$ to the slits. $W$ consists of $\widetilde{W}$ with the slits removed. The Jordan domain $\widetilde{W}$ is approximately a disk of center $0$ and radius $\tilde{R}\simeq \exp\left(\frac{A}{N}+O\left(\frac{\log N}{N}\right)\right)$.}
	\label{figure:phi_n}
\end{figure}

In fact, (\ref{estimate_1}) yields that $\widetilde{W}$ contains the disk $$ D\left(0,\exp\left(\frac{A}{N}-C\frac{\log N}{N}\right)\right).$$
The slits that end at the points $\{x_{k}\}$ yield quadrilaterals $W_{k}$, as represented in Figure \ref{figure:phi_n}, where we consider the path family off $\D$ that joins two consecutive slits ($W_{k}$ looks like a sector). Observe that we can write an explicit conformal map to a rectangle, which corresponds approximately to a suitable translate of $\log(z)$ (with an appropriate determination of the argument). Therefore it has modulus,
$$M(W_{k})\simeq \frac{2\pi |x_{k+1}-x_{k}|}{\displaystyle\log\frac{\exp\left(\frac{2A}{N}-2C\frac{\log N}{N}\right)}{\exp\left(0-2C\frac{\log N}{N}\right)}}\simeq\frac{\pi N |x_{k+1}-x_{k}|}{A}\simeq\frac{\pi}{A}\frac{N}{M}.$$

Since $|I|\simeq3/N$ it follows from (\ref{estimate_2}) that we can take $M\simeq N$. Hence, $M(W_{k})\simeq\pi/A$ (hence it will be as small as needed by choosing $E$ to have small enough logarithmic capacity).
Therefore, we have that $W_{k}$ can be conformally mapped to a $1\times R_{k}$ rectangle, with $R_{k}\simeq A/\pi$, with $x_{k}, x_{k+1}$ going to the vertices $0,i$ respectively and the radial sides of $W_{k}$ corresponding to the horizontal sides of the rectangle.

This completes the proof of Proposition \ref{proposition:phi}.\qed


\section{Admissible shapes}\label{section:shapes}

As we mentioned in the introduction, in our proof of Theorem \ref{theorem:non-injectivity} we consider embeddings between quadrilaterals, which we need to glue to a given quasiconformal mapping in such a way that the quasiconformality constant stays controlled. In this section we find geometric conditions so that this can be done. We make this explicit by introducing $\eps$-admissible shapes in Definition \ref{definition:admissible_shape}. We provide such geometric conditions in Proposition \ref{proposition:admissible_shape}.

Let $R>T$ and consider the rectangles $[0,R]\times[0,1]$ and $[0,T]\times[0,1]$. If we consider the path families joining opposite horizontal sides of the corresponding rectangles, we have $$M([0,R]\times[0,1])=R>T=M([0,T]\times[0,1]).$$ Throughout this section we will only consider quadrilaterals that have two vertical opposite sides. From now on we will also only consider, without explicit mention, the path family that joins the opposite horizontal sides of the quadrilateral (and when we consider the modulus of one such quadrilateral we will refer to the modulus of that path family). See Figure \ref{figure:embedding}.

\begin{figure}[h]
\includegraphics[scale=1]{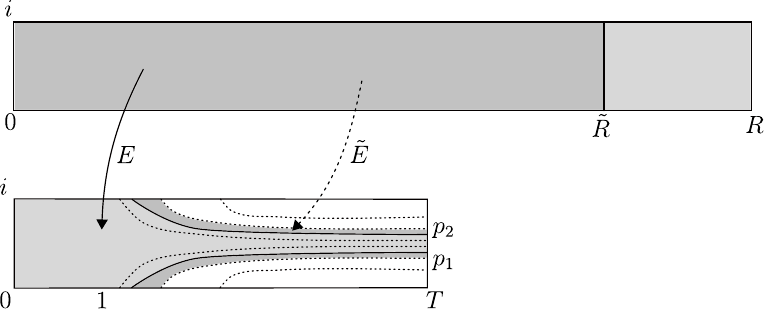}
\centering
\setlength{\unitlength}{\textwidth}
\caption{$E$ maps the rectangle $[0,R]\times[0,1]$ conformally to the lightly shaded subset of the rectangle $[0,T]\times[0,1]$, the points $p_{1}, p_{2}$ represents its two quad-vertices that are contained in $\{T\}\times[0,1]$. $\tilde{E}$ maps the rectangle $[0,\tilde{R}]\times[0,1]$ conformally to the lighter and darker shaded subsets of the rectangle $[0,T]\times[0,1]$. As the distance between the two right most points $p_{1},p_{2}$ decreases the value of $R$ needed increases.}
\label{figure:embedding}
\end{figure}

We proceed to define shapes. Within the rectangle $[0,T]\times[0,1]$ we can define quadrilaterals in the following way: take $0,i$ as vertices and two different points on $\{T\}\times[0,1]$, namely $p_{1}$ and $p_{2}$ with $\mathrm{Im}(p_{1})<\mathrm{Im}(p_{2})$. Consider two non-intersecting simple paths $\gamma_{1},\gamma_{2}\subset[0,T]\times[0,1]$, so that $\gamma_{1}$ connects $0$ to $p_{1}$ and $\gamma_{2}$ connects $i$ to $p_{2}$. This yields a quadrilateral $Q(p_{1}, p_{2},\gamma_{1},\gamma_{2})$ with quad-vertices $0,p_{1}, p_{2}, i$. See Figure \ref{figure:embedding}. The modulus of the quadrilateral $Q(p_{1}, p_{2},\gamma_{1},\gamma_{2})$ depends continuously on our choice of $(p_{1},p_{2},\gamma_{1},\gamma_{2})$, and as long as $T(p_{1}, p_{2},\gamma_{1},\gamma_{2})$ is different than $[0,T]\times[0,1]$, $p_{1}\not=1$ and $p_{2}\not=1+i$ we have $$M(Q(p_{1}, p_{2},\gamma_{1},\gamma_{2}))<M([0,T]\times[0,1]).$$ Since $p_{1}$ is one of the endpoints of $\gamma_{1}$ and $p_{2}$ is one of the endpoints of $\gamma_{2}$, we will omit the dependence on $p_{1}, p_{2}$, i.e. we write $Q(\gamma_{1},\gamma_{2})$.

Let $d(\cdot, \cdot)$ denote the Hausdorff distance between two sets. As $d(\gamma_{1},\gamma_{2})\to0$, we have $M(Q(\gamma_{1},\gamma_{2}))\to\infty$. Since $R>T$, there is a pair $\gamma_{1},\gamma_{2}$ so that $$R=M([0,R]\times[0,1])=M(Q(\gamma_{1},\gamma_{2})).$$ Therefore, there is a conformal map $E\colon [0,R]\times[0,1]\to Q(\gamma_{1},\gamma_{2})$, mapping the vertices of the rectangle to the quad-vertices of the quadrilateral. We say that such $E$ is a \textit{conformal embedding} of $[0,R]\times[0,1]$ into $[0,T]\times[0,1]$, and that $E([0,R]\times[0,1])=Q(\gamma_{1},\gamma_{2})\subset [0,T]\times[0,1]$ is the \textit{shape} of $[0,R]\times[0,1]$. Shapes are not uniquely determined, as we could have made many choices of paths joining the opposite vertical sides of the rectangle $[0,T]\times[0,1]$. This will allow us to effectively control the area of $$\left([0,T]\times[0,1]\right)\setminus E([0,R]\times[0,1])$$ and it will be instrumental when constructing a positive area flexible curve, as well as constructing flexible curves of all other possible Hausdorff dimensions.

Conversely, given a pair $\gamma_{1},\gamma_{2}$ as before, there exists $R>T$ so that $$R=M([0,R]\times[0,1])=M(Q(\gamma_{1},\gamma_{2})).$$

This discussion proves the following.

\begin{lemma}[Shapes]\label{lemma:generateR}
Let $[0,T]\times [0,1]$ be a rectangle and $\gamma_{1},\gamma_{2}$ be non-intersecting simple paths within $[0,T]\times[0,1]$ so that $\gamma_{1}$ joins $0$ to a point $p_{1}\in \{T\}\times[0,1]$, and so that $\gamma_{2}$ joins $i$ to $p_{2}\in\{T\}\times[0,1]$, with $\mathrm{Im}(p_{1})<\mathrm{Im}(p_{2})$. Let $Q(\gamma_{1},\gamma_{2})$ be the quadrilateral defined by the quad-vertices $0,p_{1},p_{2},i$ together with the sides $\gamma_{1},[p_{1},p_{2}],\gamma_{2},[0,i]$. Then there is $R>T$ and a conformal embedding $E$ so that $Q(\gamma_{1},\gamma_{2})=E([0,R]\times[0,1])$.
\end{lemma}

When constructing the quasiconformal map $\psi$ that we mentioned in the introduction in Section \ref{section:construction2} there is a difficulty we need to overcome; we needed to make sure our mapping $E$ from Lemma \ref{lemma:generateR} is close to the identity around the left side of the rectangle $[0,R]\times[0,1]$. See Proposition \ref{proposition:interpolation}. To do so, we need to introduce some quasiconformality in a controlled way.

\begin{definition}[$\eps$-Admissible shape]\label{definition:admissible_shape}
Let $[0,R]\times[0,1]$ and $[0,T]\times[0,1]$ be rectangles with $R>T$. Let $E$ be a conformal embedding of $[0,R]\times[0,1]$ into $[0,T]\times[0,1]$ so that $$[0,1]^{2}\subset E([0,R]\times[0,1]).$$ We say that this shape is $\eps$-admissible if there is a quasiconformal map $$\alpha\colon E([0,R]\times[0,1])\to E([0,R]\times[0,1])$$ fixing the four quad-vertices of the quadrilateral $E([0,R]\times[0,1])$ so that:
\begin{enumerate}[label=(\roman*)]
	\item $\alpha\circ E\colon[0,1]^{2}\to[0,1]^{2}$ is the identity.
	\item The dilatation of $\alpha$ is less than $1+\eps$.
\end{enumerate}
\end{definition}

\begin{figure}[h]
\includegraphics[scale=1]{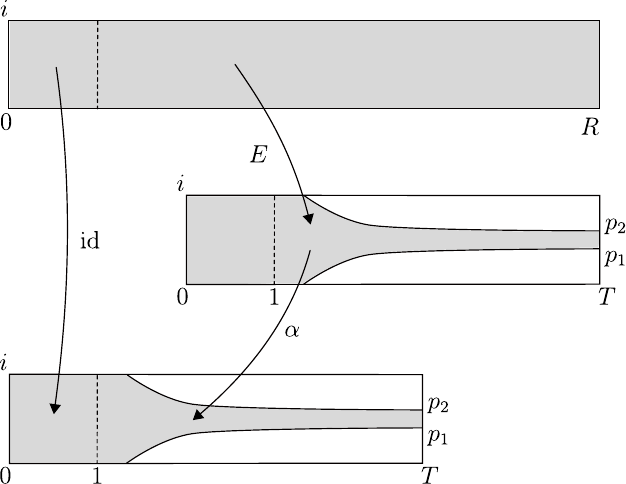}
\centering
\setlength{\unitlength}{\textwidth}
\caption{$E$ maps the rectangle $[0,R]\times[0,1]$ conformally to the lightly shaded subset of the rectangle $[0,T]\times[0,1]$, the points $p_{1}, p_{2}$ represents its two quad-vertices that are contained in $\{T\}\times[0,1]$. The shape $E([0,R]\times[0,1])$ is $\eps$-admissible if there is a quasiconformal map $\alpha\colon E([0,R]\times[0,1])\to E([0,R]\times[0,1])$ with dilatation $\leq\eps$ and so that $\alpha\colon[0,1]\to[0,1]$ is the identity.}
\label{figure:admissible_shape}
\end{figure}

We note that given a $K$-quasiconformal mapping $\alpha$, we have $$||\mu_{\alpha}||_{\infty}=||\alpha_{\bar{z}}/\alpha_{z}||_{\infty}\leq(K-1)/(K+1)$$ and if $||\mu_{\alpha}||_{\infty}\leq k$, then $\alpha$ is $(1+k)/(1-k)$-quasiconformal. If $||\mu_{\alpha}||_{\infty}\leq\eps$, with $\eps>0$ small enough, then $\alpha$ is $\simeq(1+\eps)$-quasiconformal. Conversely, if $\alpha$ is $(1+\eps)$-quasiconformal we have $|| \mu_{\alpha}||_{\infty}\lesssim\eps$. When proving that $\eps$-admissible shapes exist in Proposition \ref{proposition:admissible_shape}, it suffices to show that $||\mu_{\alpha}||_{\infty}\leq\eps$ for a mapping $\alpha$ that we will construct explicitly. See Figure \ref{figure:admissible_shape}.

\begin{proposition}\label{proposition:admissible_shape}
Let $[0,T]\times[0,1]$ be a rectangle with $T\geq6$ and $2\leq M$ a natural number so that $2M<T$. Then any shape $E([0,R]\times[0,1])$ that contains $[0,2M]\times[0,1]$ is $\simeq e^{-\pi(M-1)/2}/M$-admissible. Moreover, the dilatation of the map $\alpha\colon E([0,R]\times[0,1])\to E([0,R]\times[0,1])$ as in Definition \ref{definition:admissible_shape} can be taken to be contained in $[0,M]\times[0,1]$.
\end{proposition}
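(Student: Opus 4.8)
The plan is to build the quasiconformal map $\alpha$ explicitly in the coordinates of the rectangle $[0,R]\times[0,1]$, pulled back via $E$, so that $\alpha$ equals the identity on $[0,1]^2$ and only differs from the identity inside the region $E([0,2M]\times[0,1])$. Since $E$ is conformal, $\alpha$ being $(1+\eps)$-quasiconformal is equivalent to the pulled-back map $\beta\coloneqq E^{-1}\circ\alpha\circ E\colon [0,R]\times[0,1]\to[0,R]\times[0,1]$ having $\|\mu_\beta\|_\infty\leq\eps$ (quasiconformality constants are conformally invariant), and the requirement that $\alpha\circ E$ be the identity on $[0,1]^2$ translates into $\beta$ fixing $E^{-1}([0,1]^2)$ pointwise. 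So the real task is: find a quasiconformal self-map $\beta$ of $[0,R]\times[0,1]$, fixing the four corners, that equals $E^{-1}$ on the curvilinear quadrilateral $E^{-1}([0,1]^2)$ and has small dilatation, with support in $E^{-1}([0,2M]\times[0,1])$. By the serial decomposition of the rectangle (Lemma \ref{lemma:ahlfors_modulus}) together with monotonicity of modulus, the sub-quadrilateral $E^{-1}([0,2M]\times[0,1])$ of $[0,R]\times[0,1]$ has modulus at least $2M$ in the horizontal direction — this is where $T\geq 6$ and $2M<T$ guarantee there is genuine room.

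First I would reduce to a statement purely about the rectangle $[0,R']\times[0,1]$ where $R'=M(E^{-1}([0,2M]\times[0,1]))\geq 2M$: outside this sub-rectangle (i.e. on $E^{-1}([2M,T]\times[0,1])$, which $E$ maps conformally onto a genuine sub-rectangle of the target) we simply let $\alpha$ be the identity, and the only place we need to interpolate is across the strip $[0,R']\times[0,1]$. Here the curve $E^{-1}(\{0\}\times[0,1])\cup E^{-1}(\{1\}\times[0,1])$ is, after re-uniformizing, a pair of disjoint crosscuts of $[0,R']\times[0,1]$; I would straighten the picture so that we are looking at the genuine rectangle $[0,1]^2$ sitting inside $[0,R']\times[0,1]$, needing to define a qc map that is the identity on $[0,1]^2$, is the identity on the far vertical edge $\{R'\}\times[0,1]$, fixes all corners, and interpolates in between. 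The natural candidate is a map of the form $(x,y)\mapsto(\sigma(x),y)$ on $[1,R']\times[0,1]$ for a suitable increasing Lipschitz homeomorphism $\sigma\colon[1,R']\to[1,R']$ with $\sigma'$ close to $1$; such a shear has dilatation controlled by $\max|{\log\sigma'}|$, which can be made $O(1/R')=O(1/M)$ by spreading the correction uniformly over a strip of width $\gtrsim M$. The exponential factor $e^{-\pi(M-1)/2}$ is where the harmonic-measure estimate Lemma \ref{lemma:HM_rectangle} enters: the two crosscuts $E^{-1}(\{0\}\times[0,1])$ and $E^{-1}(\{1\}\times[0,1])$, having at least a rectangle of modulus $\geq M-1$ between either of them and the far edge $E^{-1}(\{2M\}\times[0,1])$, are by Lemma \ref{lemma:HM_rectangle} within Hausdorff distance $\lesssim e^{-\pi(M-1)/2}$ of genuine vertical segments (their deviation from verticality is measured by the harmonic measure of the far edge), so the discrepancy $\beta$ must correct is itself exponentially small, and distributing it over a region of modulus $\simeq M$ yields the claimed bound $\simeq e^{-\pi(M-1)/2}/M$.

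Concretely, the key steps in order are: (1) pull everything back by the conformal maps so the problem is about self-maps of rectangles, reducing to constructing $\beta$ supported in the sub-rectangle of modulus $R'\geq 2M$; (2) invoke Lemma \ref{lemma:ahlfors_modulus} and Proposition \ref{proposition:modulus_properties}(i) to see that $[0,2M]\times[0,1]$ and $[0,1]^2$ pull back to sub-quadrilaterals of comparable modulus with at least $M-1$ of ``buffer''; (3) apply Lemma \ref{lemma:HM_rectangle} to the buffer region to conclude that the bounding crosscuts $E^{-1}(\{0\}\times[0,1]), E^{-1}(\{1\}\times[0,1])$ are $O(e^{-\pi(M-1)/2})$-close to straight vertical segments in a suitable conformal coordinate; (4) write $\beta$ explicitly as a small perturbation of the identity — a shear/interpolation map — that is the identity near both ends and carries out the correction across a band of modulus $\gtrsim M$, so that $\|\mu_\beta\|_\infty\lesssim e^{-\pi(M-1)/2}/M$; (5) push $\beta$ forward by $E$ to obtain $\alpha$, check it fixes the four quad-vertices and equals the identity outside $E([0,M]\times[0,1])$ (for the ``moreover'' we arrange the support inside $[0,M]\times[0,1]$ rather than $[0,2M]\times[0,1]$, which is fine since $[0,1]^2\subset[0,M]\times[0,1]\subset E([0,R]\times[0,1])$ and there is still modulus $\gtrsim M$ of room there).

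The main obstacle I anticipate is step (3)–(4): quantifying precisely how close the crosscuts $E^{-1}(\{0\}\times[0,1])$ and $E^{-1}(\{1\}\times[0,1])$ are to vertical segments and, more delicately, constructing the correcting map $\beta$ so that it simultaneously (a) is the identity on the curvilinear region $E^{-1}([0,1]^2)$, which is \emph{not} a coordinate rectangle, (b) is the identity near the far edge, (c) fixes the corners, and (d) has dilatation with the sharp $1/M$ gain rather than merely $O(1)$. The trick will be to first apply a \emph{conformal} change of coordinates that straightens $E^{-1}(\{1\}\times[0,1])$ to an actual vertical segment (the map $E^{-1}$ restricted to a neighborhood does most of this for free since $E$ is conformal), after which the correction to be made is a bona fide shear of an honest rectangle of modulus comparable to $M$ minus a piece of exponentially small modulus, and the exponential smallness of that piece (from Lemma \ref{lemma:HM_rectangle}) combined with the $1/M$ from spreading it out gives exactly $e^{-\pi(M-1)/2}/M$. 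Getting the bookkeeping of these nested conformal reparametrizations right — so that ``identity on $[0,1]^2$'' survives all the coordinate changes — is the part that needs care.
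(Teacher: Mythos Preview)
Your plan is essentially the paper's: extend $E$ by reflection, use the harmonic-measure estimate of Lemma~\ref{lemma:HM_rectangle} (packaged in the paper as a separate lemma showing $|E'(1+it)-1|\lesssim e^{-\pi M}$ and $|E'(M+it)-1|\lesssim e^{-\pi M/2}$) to see that $E$ is exponentially close to the identity near $\{1\}\times[0,1]$ and $\{M\}\times[0,1]$, conformally straighten the intermediate strip, and interpolate linearly across width $\simeq M$. Two small corrections worth making before you write it up. First, your translation of the boundary condition slips: $\alpha\circ E=\mathrm{id}$ on $[0,1]^2$ gives $\beta=E^{-1}\circ\alpha\circ E=E^{-1}$ on $[0,1]^2$, not $\beta=\mathrm{id}$ on $E^{-1}([0,1]^2)$; so at the left end $\beta$ is $E^{-1}$ (which is merely $O(e^{-\pi M})$-close to the identity), and you are interpolating between $E^{-1}$ and $\mathrm{id}$, not between $\mathrm{id}$ and $\mathrm{id}$. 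Second, a purely horizontal shear $(x,y)\mapsto(\sigma(x),y)$ is too rigid --- the boundary data depend on $y$ --- and the paper instead writes down the two-variable linear interpolation
\[
G(x+iy)=i\tfrac{x-1}{M-1}G_1(y)+i\bigl(1-\tfrac{x-1}{M-1}\bigr)G_0(y)+\tfrac{\tilde M-1}{M-1}(x-1)+1
\]
between the straightened boundary parametrizations $G_0,G_1$ and computes its Beltrami coefficient directly; this is the explicit form your step~(4) should take.
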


Let $E([0,R]\times[0,1])$ be a shape so that $$[0,2M]\times[0,1]\subset E([0,R]\times[0,1]).$$ (Use Lemma \ref{lemma:generateR} to find $R$ and the embedding $E$.) To prove Proposition \ref{proposition:admissible_shape} we need to quantify how close $E$ is to the identity in terms of how large $M$ is. To do so, we need to see first how $E([1,1+i])$ differs from $[1,1+i]$ in terms of $M$, and how $E([M,M+i])$ differs from $[M,M+i]$. 

\begin{lemma}\label{lemma:HM_estimate}
Let $[0,2M]\times[0,1]\subset E([0,R]\times[0,1])$ be a shape and define 
$$ \rho_{1}=\min_{z\in[1,1+i]}\mathrm{Re}(E(z)) \quad \textrm{ and }\quad \rho_{2}=\max_{z\in[1,1+i]}\mathrm{Re}(E(z)).$$
Then there exists $C<\infty$, independent of $M$, so that $\rho_{2}-\rho_{1}\leq C e^{-\pi M}$ and $|E'(1+it)-1|\leq C e^{-\pi M}$ for $t\in[0,1]$.
\end{lemma}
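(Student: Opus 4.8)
The plan is to estimate $E'$ near the left edge of the long rectangle by a harmonic–measure (equivalently, extremal length) comparison, exploiting that $E([0,R]\times[0,1])\supset[0,2M]\times[0,1]$ and that a conformal map to such a ``fat'' quadrilateral is forced to be almost isometric far from its right-hand pinch. First I would pass to the inverse map $F=E^{-1}\colon Q(\gamma_1,\gamma_2)\to[0,R]\times[0,1]$, which is conformal and maps quad-vertices to vertices. For a point $z\in[1,1+it]$ I want to control $\operatorname{Re}F(z)$, i.e. how far into the long rectangle the vertical segment $[1,1+i]$ is carried. Since $[0,2M]\times[0,1]$ is contained in the shape, the sub-quadrilateral of $Q(\gamma_1,\gamma_2)$ cut off to the right of the vertical line $\{\operatorname{Re}=2M\}$ has modulus (for the horizontal path family) at least that of $[2M,T]\times[0,1]$ by monotonicity, hence at least $T-2M\ge?$ — wait, what I actually need is the modulus of the piece to the \emph{left}. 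The clean way: the quadrilateral $Q_1\subset Q(\gamma_1,\gamma_2)$ bounded by $[0,i]$, the relevant parts of $\gamma_1,\gamma_2$, and the segment $[2M,2M+i]$ contains $[0,2M]\times[0,1]$, so its horizontal modulus is $\ge 2M$; under $F$ it maps into $[0,R]\times[0,1]$ with its left edge fixed, so $F$ carries $[1,1+i]$ into the strip $\{\operatorname{Re}<\rho\}$ for some $\rho$ that I will bound.

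The key quantitative input is Lemma 2.11 (\texttt{lemma:HM\_rectangle}, Garnett–Marshall p.143): in the rectangle $R_L=\{|\operatorname{Re}|<L,|\operatorname{Im}|<1\}$ the harmonic measure of the two vertical edges seen from $0$ is $\asymp e^{-\pi L/2}$. Translating the left edge of $[0,R]\times[0,1]$ to sit symmetrically, a point at horizontal distance $\approx M$ from one vertical edge and $\approx R-M$ from the other sees that far edge with harmonic measure $\asymp e^{-\pi M/2}$ (up to the reflection-doubling that produces the $M-1$ versus $M$ discrepancy in the statement). Pulling back through the conformal map $F$, which is an isometry for harmonic measure, the curve $F^{-1}(\{\operatorname{Re}=\text{const}\})$ in $Q(\gamma_1,\gamma_2)$ separating $[0,i]$ from the pinched right end has the property that the segment $[1,1+i]$ lies within harmonic-measure $O(e^{-\pi M/2})$ of the level line through $F(1)$. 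Standard distortion (Koebe / the fact that a conformal map of a unit-width strip with bounded boundary distortion has derivative exponentially close to its boundary value) then upgrades this to $\rho_2-\rho_1\le Ce^{-\pi M}$ and $|E'(1+it)-1|\le Ce^{-\pi M}$; the squaring of the exponent is exactly the passage from a first-order harmonic-measure bound on a width-one strip to a second-order bound on the conformal derivative, which one gets from the Schwarz–Pick/Koebe estimate applied on the strip of width $\asymp 1$ around $[1,1+i]$ that is still a distance $\gtrsim M$ from both ends.

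Concretely the steps are: (1) set $F=E^{-1}$ and identify the sub-quadrilateral containing $[0,2M]\times[0,1]$ to get the separation $\operatorname{dist}(F([1,1+i]),\partial R\text{-right edge})\gtrsim M$ and $\operatorname{dist}(F([1,1+i]),\text{left edge})\asymp 1$; (2) apply Lemma 2.11 (after an affine normalization to $R_L$ with $L\asymp M$) to bound by $O(e^{-\pi M/2})$ the harmonic measure, from any point of $F([1,1+i])$, of the portion of $\partial([0,R]\times[0,1])$ lying to the right of $\{\operatorname{Re}=M\}$; (3) transfer this through $F$ to conclude that on $[1,1+i]$ the map $E$ differs from a Euclidean isometry of the width-one strip by a conformal map whose boundary values on the two horizontal sides are $O(e^{-\pi M/2})$-flat; (4) invoke the Schwarz lemma on the strip (or a Cauchy-integral/Poisson-kernel bound for $\log E'$ on $\{|\operatorname{Re}-1|<M/2\}\times[0,1]$ with boundary oscillation $O(e^{-\pi M/2})$) to get $|\log E'(1+it)|\le Ce^{-\pi M}$, hence $|E'(1+it)-1|\le Ce^{-\pi M}$; (5) integrate $E'$ along $[1,1+i]$ to obtain $\rho_2-\rho_1=\bigl|\operatorname{Re}\!\int_0^1 E'(1+it)\,i\,dt\bigr|\le Ce^{-\pi M}$, using that $\operatorname{Re}\!\int_0^1 i\,dt=0$ so only the deviation $E'-1$ contributes. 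The main obstacle is step (4): turning the first-order harmonic-measure smallness into the squared bound $e^{-\pi M}$ on the derivative requires care about \emph{which} width the comparison strip has — the gain of the extra factor $e^{-\pi M/2}$ comes from the fact that $[1,1+i]$ sits a distance $\gtrsim M$ from both the right pinch and from any boundary irregularity, so the harmonic function $\log|E'|$, which is $O(e^{-\pi M/2})$ on the nearby horizontal boundary, decays by a further factor $e^{-\pi M/2}$ before reaching $\operatorname{Re}=1$; making that two-stage decay rigorous (as opposed to just asserting the end bound) is where the real work lies, and I expect to do it via two applications of Lemma 2.11 / the Poisson kernel on nested sub-rectangles.
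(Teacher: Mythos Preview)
Your outline circles the right ingredients (harmonic measure in long rectangles, then a gradient estimate) but the mechanism you propose for reaching the exponent $e^{-\pi M}$ is not the one that works, and your step~(4) as written is a genuine gap. You assert that $\log|E'|$ is $O(e^{-\pi M/2})$ on the ``nearby horizontal boundary'' and then plan to gain a second factor by a further decay; neither of these claims is established by your steps~(1)--(3), and your geometric justification (``$[1,1+i]$ sits a distance $\gtrsim M$ from both the right pinch and from any boundary irregularity'') is off: the segment sits at distance $1$ from the left edge, and there is no a~priori control of $E'$ on the horizontal boundary that would seed a two-stage iteration.

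The paper's argument is both simpler and avoids this difficulty. Because $E$ maps the left edge $[0,i]$ onto itself and (since $[0,2M]\times[0,1]$ lies in the shape) maps the horizontal sides into horizontal lines at the same height, one can Schwarz--reflect $E$ to a conformal map on the symmetric rectangle $R_{2M}=\{|\operatorname{Re}z|<2M,\ |\operatorname{Im}z|<1\}$. Now set $v(z)=\operatorname{Im}(E(z)-z)$: this is harmonic on $R_{2M}$, \emph{vanishes identically on the horizontal sides}, and is bounded (by $1$) on the two vertical sides. A \emph{single} application of the harmonic-measure estimate (Lemma~\ref{lemma:HM_rectangle}) with $L=2M$, together with Harnack, gives $|v|\le C_1 e^{-\pi M}$ on $\{|\operatorname{Re}z|\le 2\}$. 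The exponent $e^{-\pi M}$ is thus produced in one step by the reflection (which doubles the effective width), not by any squaring or nested application. The derivative bound then follows from a standard interior gradient estimate (Herglotz/Poisson on a disk $\mathbb D(0,2)$) applied to $v$, giving $|\nabla v|\le \tilde C e^{-\pi M}$ and hence $|E'(z)-1|\le \tilde C e^{-\pi M}$ by Cauchy--Riemann; integrating over $[1,1+i]$ yields $\rho_2-\rho_1\le C e^{-\pi M}$.

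In short: work with $\operatorname{Im}(E(z)-z)$ rather than $\log E'$, and use reflection across the left edge and the horizontal sides to obtain a harmonic function that is \emph{zero} on the long sides of a rectangle of half-width $2M$. That single observation replaces your proposed two-stage bootstrap.
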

\begin{proof}
By reflection we can extend $E$ to a conformal map of the strip $$\{z\in\C\colon-2M\leq\textrm{Re}(z)\leq 2M\}.$$ Observe that the function $v(z)=\textrm{Im}(E(z)-z)$ is harmonic on $$R_{2M}=\{z\colon|\textrm{Re}(z)|\leq 2M, |\textrm{Im}(z)|\leq1|\},$$ is zero on the horizontal sides of $R_{2M}$ and $|\textrm{Im}(E(z)-z)|\leq1$ on the vertical edges of $R_{2M}$ (which we call $E_{2M}$). By the maximum principle, Lemma \ref{lemma:HM_rectangle} and Harnack's inequality we have,
$$ |\textrm{Im}(E(z)-z)|\leq\omega(z,E_{2M},R_{2M})\leq C \omega(0,E_{2M},R_{2M})\leq C_{1} e^{-\pi M}$$
on the rectangle $R_{2M}\cap\{|\textrm{Re}(z)|\leq2\}$. 

By using Herglotz formula for $\D(0,2)$ (see \cite[p. 52]{HarmonicMeasure}), it follows that 
$$ |\nabla v(z)|\leq\left(\int_{-\pi}^{\pi}\frac{1}{|2e^{it}-z|^{2}}\frac{dt}{\pi}\right)||v||_{\infty}\leq\frac{2C_{1} e^{-\pi M}}{(2-|z|)^{2}}\leq \tilde{C} e^{-\pi M}$$
for $|z|\leq\sqrt{2}$. Therefore, by the Cauchy-Riemann equations, $|E'(z)-1|\leq \tilde{C}e^{-\pi M}$. Hence, if $z,w$ have real part $x$,
$$ |\textrm{Re}(E(z))-x+x-\textrm{Re}(E(w))|=|\textrm{Re}(E(z)-z)+\textrm{Re}(w-E(w))|\leq 2\tilde{C} e^{-\pi M}.$$
Therefore,
$$ \rho_{2}-\rho_{1}=\max_{z\in[1,1+i]}\textrm{Re}(E(z))-\min_{z\in[1,1+i]}\textrm{Re}(E(z))\leq 2 \tilde{C} e^{-\pi M}.$$
The constant in the Lemma is $C=2\tilde{C}$. \end{proof}

Given the hypothesis of Lemma \ref{lemma:HM_estimate}, it follows that if we define 
$$ r_{1}=\min_{z\in[M,M+i]}\textrm{Re}(E(z)) \quad \textrm{ and }\quad r_{2}=\max_{z\in[M,M+i]}\textrm{Re}(E(z)),$$
then $r_{2}-r_{1}\leq C e^{-\pi M/2}$ (translate by $-M+1$ and use Lemma \ref{lemma:HM_estimate}). 

Let's make a few observations:
\begin{enumerate}[label=(\alph*)]
	\item By Monotonicity of the module (Proposition \ref{proposition:modulus_properties} (d)), we have $\rho_{1}\leq 1\leq\rho_{2}$, $r_{1}\leq M\leq r_{2}$.
	\item Similarly, $M\simeq r_{1}$.
\end{enumerate}

We can now prove Proposition \ref{proposition:admissible_shape}.

\begin{proof}[Proof of Proposition \ref{proposition:admissible_shape}]
Let's build the quasiconformal map $$\alpha\colon E([0,R]\times[0,1])\to E([0,R]\times[0,1])$$ so that $\alpha\circ E_{|[0,1]^{2}}$ is the identity, as in Figure \ref{figure:strip_argument}. For $a>1$ define the strip $$S_{a}\coloneqq\{1<\textrm{Re}(z)<a\}.$$ By reflection we can extend the map $E$ to the strip $S_{M}$. Our goal is to obtain a quasiconformal map $\alpha$ from the strip $E(S_{M})$ to the strip bounded by $\{\textrm{Re}(z)=1\}$ and $E(M+i\R)$, see Figure \ref{figure:strip_argument}.

We can conformally map this strip to a strip $S_{\tilde{M}}$ via a conformal map $\varphi$. We can build this uniformizing strip $S_{\tilde{M}}$ by uniformizing the quadrilateral within the strip bounded by $\{\textrm{Re}(z)=1\}$ and $E(M+i\R)$ that is contained in $\{|\textrm{Im}(z)|\leq1\}$, and then using reflection to extend it to the rest of the strip.

Observe that $|\tilde{M}-M|\leq 2 C e^{-\pi (M-1)/2}$ by Lemma \ref{lemma:HM_estimate}.

\begin{figure}[h]
\includegraphics[scale=0.9]{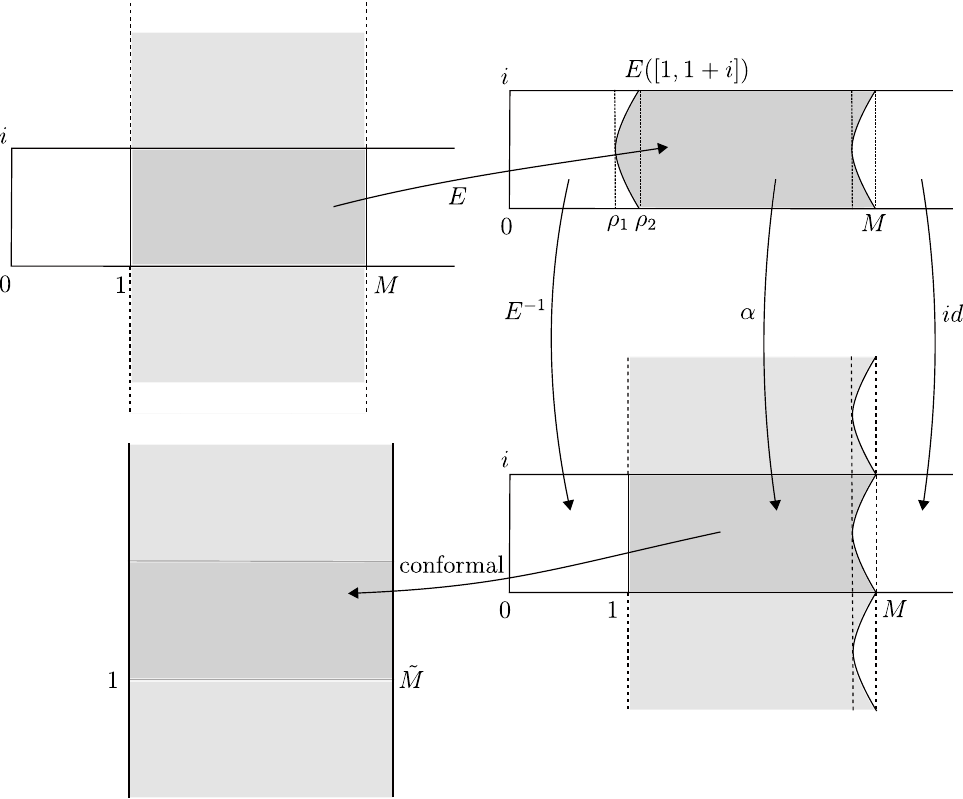}
\centering
\setlength{\unitlength}{\textwidth}
\caption{Extending the maps to a strip. Building the quasiconformal map $\alpha$ by using linear interpolation.}
\label{figure:strip_argument}
\end{figure}

This defines analytic mappings $G_{0},G_{1}$ given by $i G_{0}(t)=\varphi(1+it)-1$ and $i G_{1}(t)=\varphi(E(M+it))-\tilde{M}$. Our goal is to define a quasiconformal extension of these two maps. Define the extension $G\colon S_{M}\to S_{\tilde{M}}$ by 
$$ G(x+iy)=i\frac{x-1}{M-1}G_{1}(y)+i\left(1-\frac{x-1}{M-1}\right)G_{0}(y)+\frac{\tilde{M}-1}{M-1}(x-1)+1.$$
If we write $G=u+iv$, we have, $u_{x}=(\tilde{M}-1)/(M-1)$, $u_{y}=0$, 
$$ v_{x}=\frac{1}{M-1}\left(G_{1}(M+iy)-G_{0}(1+iy)\right)$$
and $$ v_{y}=\frac{x-1}{M-1} G_{1}'(y)+\left(1-\frac{x-1}{M-1}\right)G_{0}'(y).$$
Observe that $G_{1}(y)-G_{0}(y)$ is $1$-periodic and with absolute value bounded by $1$. By our previous estimates, we also have $G_{0}'(y)=\varphi'(1+it)\simeq1$ and $G_{1}'(y)=\varphi' \cdot E'(M+iy)\simeq E'(M+it)\simeq 1$. In fact, we have $|E'-1|\leq Ce^{-\pi M/2}$ and $|\varphi'-1|\leq Ce^{-\pi M/2}$. Hence, 
$$ \left| v_{y}-1\right|=\left| G_{0}'(y)-1+(G_{1}'(y)-G_{0}'(y))\frac{x-1}{M-1}\right|\leq C e^{-\pi M/2}+2C e^{-\pi M/2}.$$

Hence $v_{y}\simeq1$. Therefore, the dilatation of $G$ is given by
$$ \left|\frac{G_{\bar{z}}}{G_{z}}\right|\simeq\left|\frac{-1+\frac{\tilde{M}-1}{M-1}+i\frac{1}{M-1}}{\frac{\tilde{M}-1}{M-1}+1+i\frac{1}{M-1}}\right|\simeq\left|\frac{-1+\frac{\tilde{M}-1}{M-1}}{1+\frac{\tilde{M}-1}{M-1}}\right|\lesssim\left|\frac{M-\tilde{M}}{M-1}\right|\leq \frac{2C e^{-\pi(M-1)/2}}{M-1}.$$

To finish the proof we only need to consider $\alpha=\varphi^{-1}\circ G\circ E^{-1}$. This map can be extended quasiconformally to the rest of $E([0,R]\times[0,1])$, without changing the dilatation, by setting $E^{-1}$ on the left side of $E([1,1+i])$ and the identity on the right side of $E([M,M+i])$.
\end{proof}


\section{Construction of $\psi$}\label{section:construction2}

In this section we construct the map $\psi$ that we mentioned in the introduction. Let $h\colon\Sone\to\Sone$ be a log-singular circle homeomorphism and consider $E\subset\Sone$ and $\phi\colon\D\to W$ conformal as in Proposition \ref{proposition:phi}. Given a pair of $K$-quasiconformal maps $f\colon\D\to\Omega$ and $g\colon\D^{*}\to\Omega_{*}$, where $\Omega, \Omega^{*}$ are smooth Jordan domains with disjoint closures (and with $\infty\in\Omega^{*}$), we want to find a quasiconformal mapping $\psi\colon W\to\widetilde{\Omega}$ so that for $x\in E$ we have that
\begin{equation*} |(\psi\circ\phi)(x)-g(h(x))|\end{equation*} 
can be made small in a sense that will be specified in Section \ref{section:logsingular}. Moreover, we also need to guarantee that the quasiconformality constant of $\psi$ stays as close to the one of $f$ as needed. The map $\psi$ is constructed in Theorem \ref{theorem:psi}.

We will also assume that $f\colon\D\to\Omega$ extends to be quasiconformal (with the same quasiconformality constant) in the disk $D(0,1+\delta)$ for $\delta>0$ small enough.

Recall that in the introduction (Section \ref{section:outline}) we mentioned that the quasiconformal extension of $f$ will be constructed by embedding the quadrilaterals $W_{k}$ from Proposition \ref{proposition:phi} into the annulus $A=\C_{\infty}\setminus(\Omega\cup\Omega^{*})$. To do so, we find a collection of quadrilaterals $\{Q_{k}\}$ so that $Q_{k}\subset A$, and then we find embeddings from $W_{k}$ into $Q_{k}$. From our choice of $Q_{k}$ we will see later in Section \ref{section:logsingular} how bounds on $|(\psi\circ\phi)(x)-g(h(x))|$ follow.

\begin{figure}[h]
	\includegraphics[scale=1]{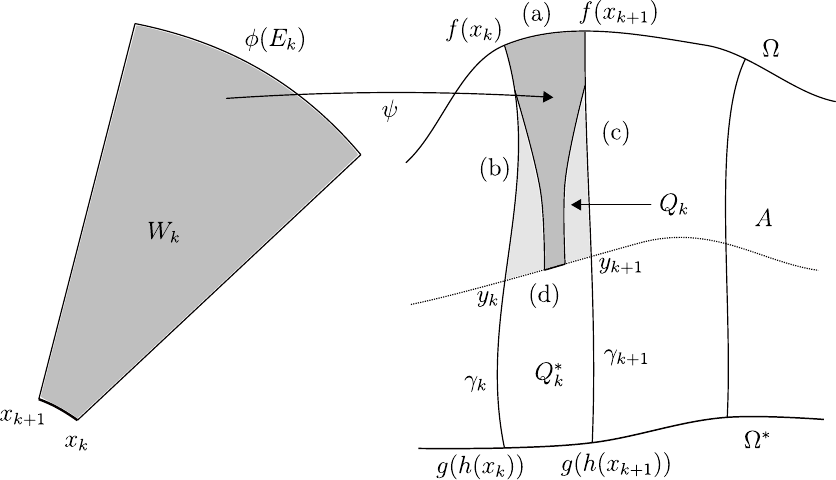}
	\centering
	\caption{Representation of the map $\psi$ from Theorem \ref{theorem:psi}, which is quasiconformal, agrees with $f$ on $[x_{k},x_{k+1}]\subset\Sone$ and satisfies $\psi(W_{k})\subset Q_{k}$. The curves $\gamma_{k}, \gamma_{k+1}$ from the foliation $\mathcal{C}$ are also represented, together with their intersection $y_{k}, y_{k+1}$ with the curve of midpoints $\Gamma$.}
	\label{figure:psi_n}
\end{figure}

Let's define the quadrilaterals $Q_{k}\subset A$. We first define a foliation of $A$. Take the points $\{x_{k}\}$ from Proposition \ref{proposition:phi}. Take the foliation $\mathcal{C}$ of $A$ so that:
\begin{enumerate}[label=(\roman*)]
	\item $\gamma\in\mathcal{C}$ are hyperbolic geodesics $\gamma\subset A$ and are indexed by $t\in[1,N]$ (we write $\gamma_{t}$). Each $\gamma$ has their endpoints on different components of $\partial A$, and it joins some $f(x)\in\partial\Omega$ to $g(h(x))\in\partial\Omega^{*}$.
	\item For $t\not=s$ the geodesics $\gamma_{t}$ and $\gamma_{s}$ are disjoint and $\cup_{t}\gamma_{t}=A$.
	\item $\gamma_{1}$ is the hyperbolic geodesic within $A$ with the shortest Euclidean length that joins $f(x_{1})$ and $g(h(x_{1}))$.
	\item For $k\in\N$ ($1\leq k\leq N$) the geodesic $\gamma_{k}$ joins $f(x_{k})$ and $g(h(x_{k}))$.
\end{enumerate}

See Figure \ref{figure:psi_n} for an illustration. For $1\leq k\leq N$ we consider the midpoint $y_{k}$, with respect to Euclidean length, of the curve $\gamma_{k}$. We now form Jordan quadrilaterals $Q_{k}=Q(x_{k},x_{k+1})$ whose four sides are:
\begin{enumerate}[label=(\alph*)]
	\item The arc on $\partial\Omega$ joining $f(x_{k})$ to $f(x_{k+1})$.
	\item The subarc of $\gamma_{k}$ joining $f(x_{k})$ to $y_{k}$.
	\item The subarc of $\gamma(x_{k+1})$ joining $f(x_{k+1})$ to $y_{k+1}$.
	\item A smooth arc $\sigma_{k}$ joining $y_{k}$ to $y_{k+1}$ so that $\sigma_{k}$ is orthogonal to $\gamma_{k}, \gamma_{k+1}$ and so that it approximates the curve of midpoints of the curves $\gamma_{t}$ for $t\in[k,k+1]$.
\end{enumerate}
The quadrilateral $Q_{k}$ is represented in Figure \ref{figure:psi_n}, which corresponds to the union of the two shaded regions within $A$. Remember that we assumed before that $f$ extends to be quasiconformal, with the same quasiconformality constant, in the $\D(0,1+\delta)$, for $\delta$ small enough. Assume $|x_{k}-x_{k+1}|$ is small enough so that the following conditions hold:
\begin{enumerate}[label=(\roman*)]
	\item $[x_{k},x_{k+1}]\subset\Sone$ is one side of a quadrilateral $\tilde{Q}_{k}$ contained in $\D(0,1+\delta)\setminus\D$ with modulus $M(\tilde{Q}_{k})$, which we will need to assume large enough later (see Figure \ref{figure:psi_n_2}). Observe that $f(\tilde{Q}_{k})$ is a quadrilateral with modulus between $K^{-1} M(\tilde{Q}_{k})$ and $K M(\tilde{Q}_{k})$.
	\item For a fixed $\eta>0$, we can assume $l(\overline{Q_{k}}\cap\sigma_{k})\leq\eta$ provided $N$ (as in Proposition \ref{proposition:phi}) is large enough. That is because $N$ is comparable to the cardinality of the points $\{x_{k}\}$ from Proposition \ref{proposition:phi}, which are approximately uniformly distributed in $\Sone$. 
\end{enumerate}

The main goal of this section is to prove the following, where we use all the notation that we have introduced in this section.

\begin{theorem}[Construction of $\psi$]\label{theorem:psi}
In the same conditions as before, if we define $$V=\D\cup\bigcup_{k}W_{k}\cup\bigcup_{k}[x_{k},x_{k+1}],$$ there is a quasiconformal mapping $$\psi\colon V\to\widetilde{\Omega},$$ so that $\psi_{|\D}=f$, i.e. $\psi$ is a quasiconformal extension of $f$. The map $\psi$ can also be taken to satisfy $\psi(W_{k})\subset Q_{k}$, with the side of $W_{k}$ opposite to $\Sone$ mapping into (but not necessarily onto) the curve $\sigma_{k}$. 

Moreover, given $\eps>0$, then $\psi$ can be taken to have quasiconformality constant $K\cdot(1+\eps)$, provided the logarithmic capacity of $E\subset\Sone$, from Proposition \ref{proposition:phi}, and $\max_{k}|x_{k+1}-x_{k}|$ are both small enough. \end{theorem}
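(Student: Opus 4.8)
The plan is to build $\psi$ piece by piece: it is already defined to be $f$ on $\D$; on each $\overline{\D(0,1+\delta)}\setminus\D$ the map $f$ is still quasiconformal with the same constant (by assumption), and the strategy is to modify it only inside the quadrilaterals $W_k$ (which live in $\widetilde W\setminus\D$ between consecutive slits) so that the modified map carries each $W_k$ into $Q_k\subset A$ while still agreeing with $f$ on the arc $[x_k,x_{k+1}]\subset\Sone$. Since the $W_k$ meet along radial slits of $\partial W$ and the image arcs $[x_k,x_{k+1}]$ tile $\Sone$, the pieces glue along sets of zero area and the resulting homeomorphism $\psi\colon V\to\widetilde\Omega$ is quasiconformal with dilatation equal to the worst of the pieces; so the whole game is to control the dilatation on a single $W_k$.

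First I would set up the model picture for one index $k$. Using Proposition \ref{proposition:phi}, $W_k$ is conformally a $1\times R_k$ rectangle with $R_k\simeq A/\pi$ large (here $A$ is the Robin constant of $E$, which is large when $\capacity(E)$ is small), with the two sides corresponding to the slits mapping to the horizontal sides and the arc $[x_k,x_{k+1}]$ corresponding to one vertical side. On the target side, $f(\tilde Q_k)$ is a quadrilateral of modulus $\simeq M(\tilde Q_k)$, which I take large by shrinking $\max|x_{k+1}-x_k|$; inside $A$, the quadrilateral $Q_k$ bounded by the arc of $\partial\Omega$, two geodesic pieces, and the midpoint arc $\sigma_k$ is, after uniformizing, a long thin rectangle as well, and the side $l(\overline{Q_k}\cap\sigma_k)\le\eta$ is short. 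The point is to view $Q_k$ (really its uniformization $[0,T]\times[0,1]$ with $T$ large) as the big rectangle and $W_k$'s uniformization $[0,R_k]\times[0,1]$ as the rectangle to be embedded, and to apply Lemma \ref{lemma:generateR} to produce a \emph{shape} $E([0,R_k]\times[0,1])\subset[0,T]\times[0,1]$ whose boundary near the left side is a slight wiggle of $\{0\}\times[0,1]$ and which contains $[0,2M]\times[0,1]$ for a large $M$; the freedom in choosing the slit-paths $\gamma_1,\gamma_2$ in Lemma \ref{lemma:generateR} is exactly what lets us also steer the side of $W_k$ opposite $\Sone$ into $\sigma_k$.

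Next I would invoke Proposition \ref{proposition:admissible_shape}: such a shape is $\simeq e^{-\pi(M-1)/2}/M$-admissible, so there is a quasiconformal $\alpha$ on the shape, supported in $[0,M]\times[0,1]$, fixing the four quad-vertices, with $\alpha\circ E=\mathrm{id}$ on $[0,1]^2$ and dilatation $\le\eps$ once $M$ is large. Composing, $\alpha\circ E$ is a quasiconformal map of $[0,R_k]\times[0,1]$ into $[0,T]\times[0,1]$ which is the identity on the unit square near the left edge and has dilatation $\le\eps$. Transporting this through the conformal uniformizations of $W_k$ and of $Q_k$, and through $f$ near $[x_k,x_{k+1}]$, produces a map $W_k\to Q_k$ that agrees with $f$ on the arc $[x_k,x_{k+1}]$ (because $f$ itself is a quasiconformal homeomorphism onto $f(\tilde Q_k)$, and we arranged the shape to be the identity on the square abutting that arc) and whose dilatation is at most $K(1+\eps)$ — the $K$ from $f$, the $(1+\eps)$ from $\alpha$. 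Doing this for every $k$ and gluing yields $\psi$; the region $\widetilde\Omega=\psi(V)$ is then a Jordan-type domain containing $\Omega$ with the $\psi(W_k)$'s filling in collar pieces inside $A$.

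The main obstacle, and where the care is really needed, is the matching along the arcs $[x_k,x_{k+1}]$: I must make the embedded shape genuinely equal to the identity on a neighborhood of the left vertical side of $W_k$'s rectangle, in the coordinates in which $f$ is already quasiconformal, so that the $W_k$-piece and the $\D(0,1+\delta)$-piece of $f$ agree not just as boundary values but as maps on an overlap collar — otherwise the glued map is only a homeomorphism and one loses control of the dilatation across $\Sone$. This is exactly the content of $\eps$-admissibility, but it forces the quantitative choreography: $M$ (hence the length of the ``identity zone'') must be large enough that $e^{-\pi(M-1)/2}/M\lesssim\eps$, $T=M(Q_k)$ and $M(\tilde Q_k)$ must exceed $2M$, and $R_k\simeq A/\pi$ must in turn exceed $T$ — which is why the hypothesis asks that $\capacity(E)$ be small (making $A$, hence $R_k$, large) and $\max_k|x_{k+1}-x_k|$ be small (making the target rectangles long and $\eta$ small). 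A secondary technical point is checking that the midpoint arcs $\sigma_k$ fit together smoothly enough, and that the geodesic foliation $\mathcal C$ of $A$ exists with the stated properties, so that the $Q_k$ are genuinely disjoint Jordan quadrilaterals tiling a collar of $\partial\Omega$ inside $A$; this is routine hyperbolic geometry but must be said.
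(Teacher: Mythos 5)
Your proposal correctly identifies the broad architecture: uniformize $W_k$ and $Q_k$ to long rectangles, use Lemma \ref{lemma:generateR} to embed the long rectangle into the shorter one as a shape, then invoke $\eps$-admissibility (Proposition \ref{proposition:admissible_shape}) to get a small-dilatation correction supported in $[0,M]\times[0,1]$, so the piece on $W_k$ glues to $f$ on $\D$ along $[x_k,x_{k+1}]$. You also correctly identify the quantitative bookkeeping ($R_k\simeq A/\pi$ must exceed $T_k$, which must exceed $2M$, with $M$ chosen so that $e^{-\pi(M-1)/2}/M\lesssim\eps$). But there is a genuine gap in how you handle the matching across $\Sone$.

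The gap is this: having $\alpha\circ E=\mathrm{id}$ on $[0,1]^2$ in model coordinates does \emph{not} by itself make the extension agree with $f$ on $[x_k,x_{k+1}]$. What must be the identity for the gluing to work is the composite
$$\iota_k^{-1}\circ e_k\circ f\circ L_k^{-1}\colon[0,1]^2\to[0,T_k]\times[0,1],$$
and this map is only $K$-quasiconformal (not conformal, not the identity), because $f$ is merely $K$-quasiconformal near $\Sone$. Moreover $f(\tilde Q_k)$ need not even lie inside $Q_k$, which is why the paper first introduces a conformal embedding $e_k\colon f(\tilde Q_k)\hookrightarrow Q_k$. The paper then needs a \emph{second} correction map, $\beta_k\colon Q_k\to Q_k$ (Proposition \ref{proposition:beta_2}), whose purpose is precisely to compensate for the non-conformality of $f$ on $\tilde Q_k$: $\beta_k$ is built so that $\iota_k^{-1}\circ\beta_k\circ e_k\circ f\circ L_k^{-1}$ is conformal on $[0,1]^2$, and then, since it also fixes the four vertices, it must equal the identity. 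Only at that point does the glued map $\psi|_{W_k}=e_k^{-1}\circ\beta_k^{-1}\circ\iota_k\circ\alpha_k\circ E_k\circ L_k$ genuinely agree with $f$ on $[x_k,x_{k+1}]$. Your dilatation accounting (``the $K$ from $f$, the $(1+\eps)$ from $\alpha$'') is also off for this reason: on $W_k$ the map $\psi$ does not contain $f$ as a factor; the factor $K$ in the bound $K(1+\eps)$ arrives through $\beta_k$ (which has constant $K(1+\eps)^{1/2}$), while $\alpha_k$ contributes another $(1+\eps)^{1/2}$, and $L_k,E_k,\iota_k,e_k$ are conformal. Without $\beta_k$ (or an equivalent device), your construction produces a homeomorphism across $\Sone$ whose two one-sided definitions do not match as maps on any overlap, so you cannot conclude quasiconformality of the glued $\psi$.
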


The maps from Proposition \ref{proposition:phi} and Theorem \ref{theorem:psi} are illustrated in Figure \ref{figure:Propositions}.

Notice that in Theorem \ref{theorem:psi} the set $V$ in which $\psi$ is defined consists of the set $W$ from Proposition \ref{proposition:phi} with some slits added back in.

Let $T_{k}$ be the modulus of $Q_{k}$ (where we consider the path family that joins the two sides that are part of the curves $\gamma_{k},\gamma_{k+1}$). Consider $\iota_{k}$ the conformal map from $[0,T_{k}]\times [0,1]$ to $Q_{k}$ that maps quad-vertices to quad-vertices. The value $T_{k}=T(x_{k},x_{k+1})$ depends continuously on the choice of $x_{k},x_{k+1}$ and tends to $\infty$ as $|x_{k}-x_{k+1}|\to0$. Similarly, we can define $T_{k}^{*}$, but for quadrilaterals with vertices $g_{n}(h(x_{k})), g_{n}(h(x_{k+1}))$ and $y_{k}, y_{k+1}$. For each $\delta>0$ we let $$T(\delta)=\max\{T_{k}+T^{*}_{k}\colon |x_{k}-x_{k+1}|\geq\delta\}.$$

We can take $A$ large enough so that $R_{k}\simeq A/\pi>T(1/2M)$, where $R_{k}=M(W_{k})$ as in Proposition \ref{proposition:phi}. $R_{k}$ is approximately the same among all the $W_{k}$, so we write $R_{k}=R$ to simplify.

Since $R>T(\delta)> T_{k}$, there is a conformal map $E_{k}$ from $[0,R]\times [0,1]$ into $[0,T_{k}]\times [0,1]$ with the left side mapping bijectively to the left side and the right side mapping into (but not necessarily onto) the right side. See Figure \ref{figure:embedding}. This map $E_{k}$ is the one constructed in Lemma \ref{lemma:generateR}, where we will always assume that $$[0,1]^{2}\subset E_{k}([0,R]\times [0,1]).$$

This yields the conformal map  $\iota_{k}\circ E_{k}\circ L_{k}$ from our $k$-th radial sector $W_{k}$ into our $k$-th quadrilateral $Q_{k}$. The goal now is to glue the map $\iota_{k}\circ E_{k}\circ L_{k}$ to $f$ along $f([x_{k},x_{k+1}])$ to define $\psi$ (see Figure \ref{figure:Propositions}). To do so, we need to introduce some quasiconformality while we make sure the dilatation is not increased by too much (so that the extension has quasiconformality constant as close to $K$ as we will need later). This procedure is illustrated in the diagram from Figure \ref{figure:diagram}, which we now proceed to explain.

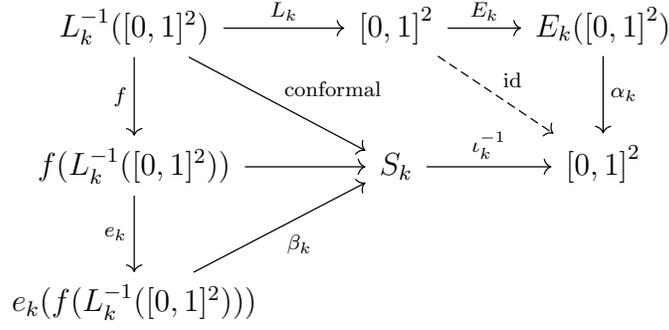
\begin{figure}[h]
\begin{tikzcd}[row sep=2.5em]
L_{k}^{-1}([0,1]^{2}) \arrow[r,"L_{k}"] \arrow[d,"f"'] \arrow[rd,"\textrm{conformal}"]&
\left[0,1\right]^{2} \arrow[r,"E_{k}"] \arrow[rd, dashed, "\textrm{id}"]&
E_{k}(\left[0,1\right]^{2}) \arrow[d,"\alpha_{k}"] \\
f(L_{k}^{-1}([0,1]^{2})) \arrow[r,""] \arrow[d,"e_{k}"'] &
S_{k} \arrow[r,"\iota_{k}^{-1}"]  &
\left[0,1\right]^{2} \\
e_{k}(f(L_{k}^{-1}([0,1]^{2}))) \arrow[ru,"\beta_{k}"']
\end{tikzcd}
\caption{Diagram in the construction of $\psi$. The quad-vertices of the quadrilaterals are mapped to the vertices of the squares. The mappings $L_{k}, E_{k}, \iota_{k}$ are conformal. The mapping $e_{k}$ is conformal, embeds $f(\tilde{Q}_{k})$ into $Q_{k}$, and extends to a conformal map that maps onto $Q_{k}$. The mapping $\beta_{k}$ is so that $\beta_{k}\circ e_{k}\circ f$ is conformal, and it's represented in Figure \ref{figure:psi_n_3}.}
\label{figure:diagram}
\end{figure}

Let $L_{k}$ be a $\C$-linear transformation of the logarithm (with an appropriate determination of the argument) that maps $W_{k}$ to a rectangle $[0,R_{k}]\times[0,1]$. Take $\tilde{Q}_{k}$ as before so that $L_{k}(\tilde{Q}_{k})=[0,M(\tilde{Q}_{k})]\times[0,1]$.

Since $E_{k}$ is conformal, the quadrilateral $E_{k}([0,1]^{2})\subset [0,T_{k}]\times[0,1]$ has modulus $1$. However, by Lemma \ref{lemma:ahlfors_modulus} we will not have $E_{k}([0,1]^{2})=[0,1]^{2}$ unless $E_{k}$ is the identity mapping, but that is not possible since by hypothesis $T_{k}<R_{k}=R$. To fix this we post-compose with a quasiconformal mapping $\alpha_{k}$, as in Section \ref{section:shapes}.

\begin{proposition}[Proposition \ref{proposition:admissible_shape}]\label{proposition:interpolation}
There is a quasiconformal self map $\alpha_{k}$ of $E_{k}([0,R_{k}]\times[0,1])$ so that $\alpha_{k}\circ E_{k}$ is the identity on $[0,1]^{2}$. The quasiconformality constant of $\alpha_{k}$ can be made as small as necessary provided $$[0,M]\times[0,1]\subset E_{k}([0,R_{k}]\times[0,1])$$ for $M$ large enough, and $\alpha_{k}$ can be assumed to have dilatation supported on $[0,M]\times[0,1]$.
In particular, if $T_{k}$ and $R_{k}$ are large enough, then the mapping $E_{k}$ can be taken so that $\alpha_{k}$ has quasiconformality constant as small as needed.
\end{proposition}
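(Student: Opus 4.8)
The plan is to deduce Proposition \ref{proposition:interpolation} directly from Proposition \ref{proposition:admissible_shape}, the only additional work being to arrange that the hypotheses of that proposition are met within the specific setup of this section. Recall that $E_{k}$ is obtained from Lemma \ref{lemma:generateR} as a conformal embedding of $[0,R]\times[0,1]$ into $[0,T_{k}]\times[0,1]$, where $R=R_{k}\simeq A/\pi$ and $T_{k}=M(Q_{k})$. The first step is to exploit the freedom in the choice of $E_{k}$: shapes are not unique (as emphasized after Lemma \ref{lemma:generateR}), and for any prescribed natural number $M$ with $2M<T_{k}$ one may choose the defining paths $\gamma_{1},\gamma_{2}\subset[0,T_{k}]\times[0,1]$ so that the resulting shape $E_{k}([0,R]\times[0,1])=Q(\gamma_{1},\gamma_{2})$ contains $[0,2M]\times[0,1]$; for instance take $\gamma_{1},\gamma_{2}$ to run along the horizontal sides of $[0,T_{k}]\times[0,1]$ until past $\mathrm{Re}=2M$ and only then pinch together, and appeal to Lemma \ref{lemma:generateR} and the continuity and range of $M(Q(\gamma_{1},\gamma_{2}))$ discussed there to match the modulus exactly to $R$. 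The constraint $2M<T_{k}$ is harmless because $T_{k}\to\infty$ as $|x_{k}-x_{k+1}|\to0$, so it holds once $\max_{k}|x_{k+1}-x_{k}|$ is small; and we also need $R>T_{k}$, which holds once $A$ is large, i.e.\ once $E$ has small enough logarithmic capacity (Proposition \ref{proposition:phi}).

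With $[0,2M]\times[0,1]\subset E_{k}([0,R]\times[0,1])$ in hand, Proposition \ref{proposition:admissible_shape} applies with $T=T_{k}\geq6$ and this $M$: it produces a quasiconformal self-map $\alpha_{k}$ of $E_{k}([0,R]\times[0,1])$ fixing the four quad-vertices, with $\alpha_{k}\circ E_{k}$ equal to the identity on $[0,1]^{2}$, with dilatation $\lesssim e^{-\pi(M-1)/2}/M$, and---this is the last clause of Proposition \ref{proposition:admissible_shape}---with this dilatation supported in $[0,M]\times[0,1]$. By the remarks following Definition \ref{definition:admissible_shape}, a map with $\|\mu_{\alpha_{k}}\|_{\infty}\leq\eps$ for $\eps$ small is $\simeq(1+\eps)$-quasiconformal, so $\alpha_{k}$ has quasiconformality constant $\simeq 1+e^{-\pi(M-1)/2}/M$; letting $M\to\infty$ makes this constant as close to $1$ as desired. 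Since $M$ can be taken large precisely when $T_{k}$ (equivalently $\max_{k}|x_{k+1}-x_{k}|$) and $R_{k}$ (equivalently $A$, i.e.\ the capacity of $E$) are controlled, this is exactly the assertion of the proposition.

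The argument contains no real obstacle beyond this bookkeeping; the one point deserving care is the compatibility of the quantifiers---we simultaneously need $R>T_{k}>2M$ with $M$ large---but this is handled in the correct order: first fix how small a quasiconformality constant is wanted (this fixes how large $M$ must be), then take $\max_{k}|x_{k+1}-x_{k}|$ small enough that $T_{k}>2M$, and finally take the capacity of $E$ small enough (so that $R\simeq A/\pi$ is large enough) that an embedding $E_{k}$ of a shape containing $[0,2M]\times[0,1]$ with modulus $R$ exists. Each of these choices is independent of and prior to the next, so the chain closes.
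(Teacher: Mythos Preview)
Your proposal is correct and matches the paper's approach exactly: the paper does not give a separate proof of Proposition~\ref{proposition:interpolation} but simply remarks that it ``is just a simplification of Proposition~\ref{proposition:admissible_shape}, which we proved in Section~\ref{section:shapes}.'' Your write-up supplies the bookkeeping (choosing the shape to contain $[0,2M]\times[0,1]$ and ordering the quantifiers $M$, $T_k$, $R$) that the paper leaves implicit.
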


Proposition \ref{proposition:interpolation} is just a simplification of Proposition \ref{proposition:admissible_shape}, which we proved in Section \ref{section:shapes}. We emphasize that Proposition \ref{proposition:admissible_shape} also quantifies how small the quasiconformality constant is in terms of $M$, and where it is supported. Controlling the support of the dilatation of the map $\alpha_{k}$ will be needed when generating flexible curves of any possible Hausdorff dimension.

\begin{figure}[h]
	\includegraphics[scale=1]{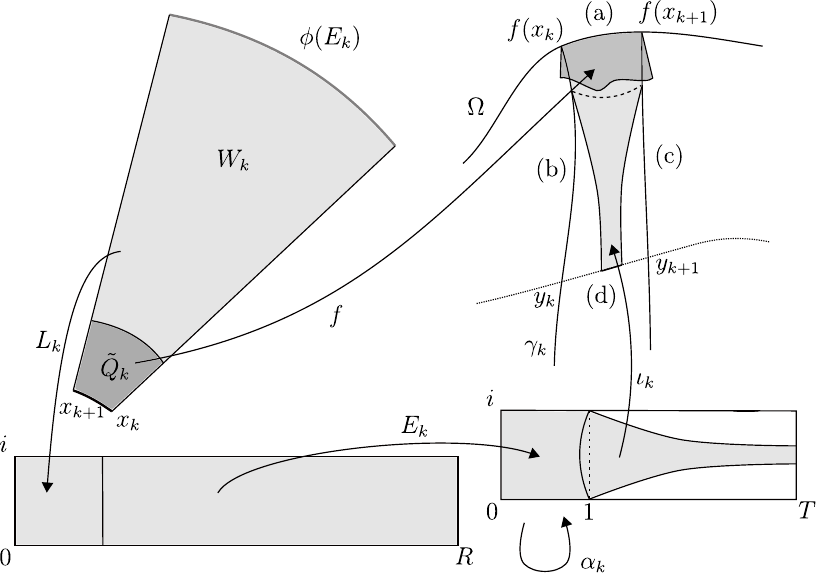}
	\centering
	\caption{The map $\psi$ from Theorem \ref{theorem:psi} is quasiconformal, agrees with $f$ on $[x_{k},x_{k+1}]\subset\Sone$ and $\psi(W_{k})\subset Q_{k}$.}
	\label{figure:psi_n_2}
\end{figure}

Since $f$ can be assumed to be $K$ quasiconformal on $\D(0,1+\delta)$ for $\delta$ small enough, the quadrilateral $f(\tilde{Q}_{k})$ has modulus between $K^{-1} M(\tilde{Q}_{k})$ and $K M(\tilde{Q}_{k})$. In general we will not have $f(\tilde{Q}_{k})\subset Q_{k}$, so we take $$e_{k}\colon f(\tilde{Q}_{k})\hookrightarrow Q_{k}$$ conformal and we write $\tilde{S}_{k}=e_{k}(f(\tilde{Q}_{k}))$. If we choose the maps $e_{k}$ appropriately, we can extend them to conformal maps 
$$ e_{k}\colon e_{k}^{-1}(Q_{k})\to Q_{k},$$
that send quad-vertices to quad-vertices, with $e_{k}^{-1}(Q_{k})\supset f(\tilde{Q}_{k})$ and so that off a neighborhood of $f(\tilde{Q}_{k})$ the quadrilateral $e_{k}^{-1}(Q_{k})$ is exactly $Q_{k}$.

\begin{figure}[h]
	\includegraphics[scale=1]{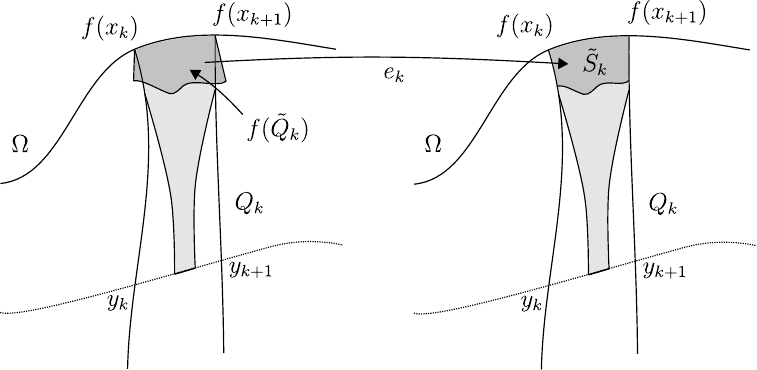}
	\centering
	\caption{Representation of the mapping $e_{k}$. We can take $\beta_{k}\colon Q_{k}\to Q_{k}$ quasiconformal so that the map $\beta_{k}\circ e_{k}\circ f$ is conformal on $\tilde{Q}_{k}$.}
	\label{figure:psi_n_3}
\end{figure}

\begin{proposition}\label{proposition:beta_2}
There exists a quasiconformal map $\beta_{k}\colon Q_{k}\to Q_{k}$ so that 
$$ \beta_{k}\circ e_{k}\circ f \colon L_{k}^{-1}([0,1]^{2})\to S_{k}\coloneqq\iota_{k}([0,1]^{2})$$
is conformal. Moreover, given $\eps>0$ we can assume that $\beta_{k}$ has quasiconformality constant $K\cdot(1+\eps)^{1/2}$ provided $M(\tilde{Q}_{k})$ is large enough, and with its dilatation supported in the quadrilateral $i_{k}([0,M(\tilde{Q}_{k})]\times[0,1]).$
\end{proposition}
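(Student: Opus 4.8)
The plan is to prescribe $\beta_k$ by the \emph{only} formula compatible with the conformality requirement on the ``controlled chunk'' $P_k:=(e_k\circ f)(L_k^{-1}([0,1]^2))$, to note that this forced piece is $K$-quasiconformal and that it displaces the relevant crosscut of $Q_k$ by a \emph{bounded} amount, and then to interpolate between this piece and the identity inside the slab $\Lambda_k:=\iota_k([0,M(\tilde Q_k)]\times[0,1])$, whose conformal width $M(\tilde Q_k)$ we are free to take large. Concretely, since $L_k^{-1}([0,1]^2)\subset\tilde Q_k$ whenever $M(\tilde Q_k)\ge 1$, and since $f$ is $K$-quasiconformal on $\D(0,1+\delta)\supset\tilde Q_k$ while $e_k,L_k,\iota_k$ are conformal, requiring $\beta_k\circ e_k\circ f$ to be a conformal map of $L_k^{-1}([0,1]^2)$ onto $S_k=\iota_k([0,1]^2)$ forces, on $P_k$,
$$\beta_k=(\iota_k\circ L_k)\circ(e_k\circ f)^{-1},$$
the map $\iota_k\circ L_k$ being the conformal chart $L_k^{-1}([0,1]^2)\to S_k$ with the boundary normalization (agreeing with $f$ on $[x_k,x_{k+1}]$) that makes the pieces of $\psi$ on consecutive $Q_{k-1},Q_k,Q_{k+1}$ fit together, exactly as demanded by the commuting diagram before the statement (recall $\alpha_k\circ E_k=\mathrm{id}$ on $[0,1]^2$ by Proposition~\ref{proposition:interpolation}).

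First I would record that this forced piece is $K$-quasiconformal: $(e_k\circ f)^{-1}=f^{-1}\circ e_k^{-1}$ is $K$-quasiconformal because $f^{-1}$ is and $e_k^{-1}$ is conformal, and precomposing with the conformal $\iota_k\circ L_k$ does not change $\|\mu\|_\infty$. The quantitative point I would stress is that $M(P_k)\le K$: the quadrilateral $L_k^{-1}([0,1]^2)$ has modulus $1$ and $e_k\circ f$ is $K$-quasiconformal, so by Proposition~\ref{proposition:modulus_properties} the chunk $P_k$ shares the side of $Q_k$ lying on $\partial\Omega$ and, in the $\iota_k$-coordinate, is contained in a slab $[0,C(K)]\times[0,1]$ with $C(K)$ depending only on $K$ (here one uses that the $K$-quasiconformal image $f(L_k^{-1}([0,1]^2))$ is a quasidisk of bounded turning and that the sides of $Q_k$ — built from hyperbolic geodesics and an orthogonal midpoint arc — are geometrically controlled). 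Consequently $\beta_k|_{P_k}$ moves the ``inner'' crosscut of $P_k$ onto the crosscut $\iota_k(\{1\}\times[0,1])$ of $S_k$ by a conformal distance bounded by an absolute multiple of $K$, \emph{independently of $M(\tilde Q_k)$}.

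Next I would extend $\beta_k$ from $P_k$ to all of $Q_k$. Conjugating by $\iota_k$, the task is to extend the prescribed map $\iota_k^{-1}\circ\beta_k\circ\iota_k$ from $\iota_k^{-1}(P_k)\subset[0,C(K)]\times[0,1]$ (where it fixes $\{0\}\times[0,1]$ pointwise and carries $\iota_k^{-1}(P_k)$ onto $[0,1]^2$) to a self-homeomorphism of $[0,T_k]\times[0,1]$ that is the identity on the horizontal sides and outside $[0,M(\tilde Q_k)]\times[0,1]$. By the previous paragraph the boundary values prescribed on the inner edge of $\iota_k^{-1}(P_k)$ differ from the identity by a displacement of size $O(K)$, while the buffer slab between that inner edge and $\{M(\tilde Q_k)\}\times[0,1]$ has conformal width $\ge M(\tilde Q_k)-O(K)$. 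Running the linear-interpolation construction of Proposition~\ref{proposition:admissible_shape} across this buffer (interpolating linearly in the long direction between the near-affine prescribed boundary map and the identity) yields a quasiconformal extension whose dilatation on the buffer is at most $1+O(K)/M(\tilde Q_k)$; one then sets $\beta_k$ to be the identity beyond $[0,M(\tilde Q_k)]\times[0,1]$ and transports back by $\iota_k$. The resulting $\beta_k\colon Q_k\to Q_k$ has dilatation $\le k=(K-1)/(K+1)$ on $P_k$, dilatation $O(K)/M(\tilde Q_k)$ on the buffer, and zero elsewhere, so it is $K\cdot(1+\eps)^{1/2}$-quasiconformal once $M(\tilde Q_k)$ is large enough, with dilatation supported in $\Lambda_k=\iota_k([0,M(\tilde Q_k)]\times[0,1])$, as claimed (the stated bound is generous — the true bound is $\max\{K,\,1+O(1/M(\tilde Q_k))\}$ — but $K(1+\eps)^{1/2}$ is exactly what the dilatation budget of Theorem~\ref{theorem:psi} requires).

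The main obstacle is not the analytic estimate — that is precisely the ``bounded distortion spread over unbounded conformal width'' mechanism already isolated in Proposition~\ref{proposition:admissible_shape} — but the bookkeeping that makes the construction globally coherent. One must choose the conformal embedding $e_k$ (using the normalization that off a neighbourhood of $f(\tilde Q_k)$ one has $e_k^{-1}(Q_k)=Q_k$, recorded just before the statement) so that: (i) $P_k$ and $S_k$ share the side of $Q_k$ on $\partial\Omega$ and $\beta_k$ restricts to the identity there, so that $\psi|_{W_k}$ agrees with $f$ on $[x_k,x_{k+1}]$; (ii) $\beta_k$ is the identity on the three sides of $Q_k$ met by $\Lambda_k$, so the maps on adjacent quadrilaterals assemble into a homeomorphism; and, most delicately, (iii) $P_k$ has no long tentacles relative to the slab, i.e.\ $P_k\subset\iota_k([0,C(K)]\times[0,1])$ with $C(K)$ independent of $M(\tilde Q_k)$ — this uniformity is what keeps the displacement bridged in step three bounded, and it is where the bound $M(P_k)\le K$ together with the controlled geometry of $Q_k$ must be used essentially rather than cosmetically.
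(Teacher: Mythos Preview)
Your strategy — prescribe $\beta_k$ on $P_k$ by the forced formula, then bridge to the identity across a wide buffer — is the right architecture, and it is essentially what the paper does. But there is a genuine gap at the interpolation step. You write that ``running the linear-interpolation construction of Proposition~\ref{proposition:admissible_shape} across this buffer'' gives dilatation $1+O(K)/M(\tilde Q_k)$. That estimate in Proposition~\ref{proposition:admissible_shape} rests entirely on Lemma~\ref{lemma:HM_estimate}: the boundary data $G_0,G_1$ there are \emph{analytic} (restrictions of a conformal $E$) with $|G_i'-1|\le Ce^{-\pi M/2}$, and it is this derivative bound that forces $v_y\simeq 1$ and hence $|\mu_G|\lesssim 1/M$. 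Your boundary datum on the inner edge $\gamma_P=\iota_k^{-1}(\partial P_k\setminus\partial Q_k)$ is the restriction of the $K$-quasiconformal map $(\iota_k\circ L_k)\circ(e_k\circ f)^{-1}$; it need not be differentiable, and even a.e.\ its derivative is only controlled by $K$, not close to $1$. So the linear interpolation does \emph{not} yield dilatation $O(K)/M(\tilde Q_k)$ near $\gamma_P$; the term $v_y$ inherits the full $K$-distortion of $f^{-1}$ there. Worse, interpolating a merely $K$-quasisymmetric boundary datum to the identity need not produce a $K$-quasiconformal map at all (Beurling--Ahlfors type extensions lose a universal factor), so even the weaker bound $K(1+\eps)^{1/2}$ is not secured by your argument as written. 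Your observation that $\iota_k^{-1}(P_k)\subset[0,C(K)]\times[0,1]$ controls only the \emph{location} of $\gamma_P$, not the parametrisation $\gamma_P\to\{1\}\times[0,1]$ that must be matched.

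The paper's proof closes exactly this gap by inserting a preliminary straightening: it first produces (via the measurable Riemann mapping theorem on the image of $[0,2M(\tilde Q_k)]\times[0,1]$) a $K(1+\eps)^{1/4}$-quasiconformal self-map $\tilde\alpha$ of $[0,T_k]\times[0,1]$ so that $E:=\tilde\alpha\circ\iota_k^{-1}\circ e_k\circ f\circ L_k^{-1}$ is \emph{conformal} on $[0,M(\tilde Q_k)]\times[0,1]$. Only then is Proposition~\ref{proposition:admissible_shape} invoked, now legitimately, to obtain a $(1+\eps)^{1/4}$-quasiconformal $\hat\alpha$ with $\hat\alpha\circ E=\mathrm{id}$ on $[0,1]^2$; the desired map is $\beta_k=\iota_k\circ\hat\alpha\circ\tilde\alpha\circ\iota_k^{-1}$. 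In short: the $K$ must be absorbed into a single global MRMT correction \emph{before} the admissible-shape interpolation, not bridged by that interpolation. Your two paragraphs can be repaired by inserting this straightening step between the definition of $\beta_k|_{P_k}$ and the appeal to Proposition~\ref{proposition:admissible_shape}.
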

\begin{proof}
It suffices to find a mapping $\alpha\colon [0,T_{k}]\times[0,1]\to[0, T_{k}]\times[0,1]$ so that:
\begin{enumerate}[label=(\alph*)]
	\item $ \alpha\circ \iota_{k}^{-1}\circ e_{k}\circ f $ is conformal on $L_{k}^{-1}([0,M(\tilde{Q}_{k})]\times[0,1])$.
	\item $ \alpha\circ \iota_{k}^{-1}\circ e_{k}\circ f(L_{k}^{-1}([0,1]^{2}))=[0,1]^{2}$.
\end{enumerate}
Once we construct such $\alpha$, the map $\beta_{k}$ that we need to find is
$$ \beta_{k}= \iota_{k}\circ \alpha\circ \iota_{k}^{-1}\colon Q_{k}\to Q_{k}.$$
In fact, we will also see that the map $\alpha$ can be taken so that,
\begin{enumerate}[label=(\alph*)]\setcounter{enumi}{2}
	\item The vertices of $[0,1]^{2}$ are fixed under $\alpha\circ \iota_{k}^{-1}\circ e_{k}\circ f \circ L_{k}^{-1}.$ Hence it is the identity map.
\end{enumerate}
First, by making $|x_{k}-x_{k+1}|$ as in the statement of Theorem \ref{theorem:psi} small enough, we can assume that the $K$-quasiconformal map $f$ extends to be $K$-quasiconformal on a quadrilateral $2\tilde{Q}_{k}\subset W_{k}$ of modulus $2M(\tilde{Q}_{k})$ (as we defined before).

There exists a $K\cdot(1+\eps)^{1/4}$-quasiconformal map $$\tilde{\alpha}\colon[0,T_{k}]\times[0,1]\to[0,T_{k}]\times[0,1],$$ so that the map 
$$ E=\tilde{\alpha}\circ \iota_{k}^{-1}\circ \beta_{k}^{1}\circ f\circ L_{k}^{-1} \colon [0, M(\tilde{Q}_{k})]\times[0,1]\to [0,T_{k}]\times[0,1]$$
is conformal, and so that the map $\tilde{\alpha}$ is conformal on $$[0,T_{k}]\times[0,1]\setminus (\iota_{k}^{-1}\circ\beta_{k}^{1}\circ f\circ L_{k}^{-1})([0, 2M(\tilde{Q}_{k})]\times[0,1]).$$
In particular, if $T_{k}$ is large enough, the shape $E([0, M(\tilde{Q}_{k})]\times[0,1])$ is $(1+\eps_{n})^{1/4}$-admissible. Hence by Proposition \ref{proposition:admissible_shape} there is a $(1+\eps_{n})^{1/4}$-quasiconformal map $$\hat{\alpha}\colon E([0, M(\tilde{Q}_{k})]\times[0,1])\to E([0, M(\tilde{Q}_{k})]\times[0,1])$$ so that $\hat{\alpha}\circ E\colon[0,1]^{2}\to[0,1]^{2}$ is the identity. The map $\alpha$ is $\alpha=\hat{\alpha}\circ\tilde{\alpha}$.
\end{proof}

We can now complete the proof of Theorem \ref{theorem:psi}.

\begin{proof}[Proof of Theorem \ref{theorem:psi}]
Take $\beta_{k}\colon Q_{k}\to Q_{k}$ quasiconformal as in Proposition \ref{proposition:beta_2}, and with $M(\tilde{Q}_{k})$ large enough so that the dilatation of $\beta_{k}$ is bounded by $$K\cdot(1+\eps)^{1/2}.$$

The map $\iota_{k}^{-1}\colon Q_{k}\to [0,T_{k}]\times[0,1]$ is conformal and maps $S_{k}$ to $[0,1]^{2}$. The diagram from Figure \ref{figure:diagram} is not commutative. However, the mapping
$$ \iota_{k}^{-1}\circ \beta_{k}\circ e_{k}\circ f\circ L_{k}^{-1}\colon[0,1]^{2}\to[0,1]^{2} $$
is conformal and the identity on the vertices. Thus it is the identity map. Hence,
$$ {f}_{|L_{k}^{-1}([0,1]^{2})}=e_{k}^{-1}\circ\beta_{k}^{-1}\circ\iota_{k}\circ {L_{k}}_{|L_{k}^{-1}([0,1]^{2})}.$$
Since $\alpha_{k}\circ E_{k}$ is the identity on $[0,1]^{2}$, the quasiconformal mapping 
$${\psi}_{|W_{k}}=e_{k}^{-1}\circ\beta_{k}^{-1}\circ \iota_{k}\circ\alpha_{k}\circ E_{k}\circ {L_{k}}_{| W_{k}}$$
defines a quasiconformal extension of $f$ on $W_{k}$ and maps the opposite side of $[x_{k},x_{k+1}]\subset\Sone$ from the quadrilateral $W_{k}$ into the opposite side of $f([x_{k},x_{k+1}])$ within $Q_{k}$. Moreover, within $\cup_{k} \psi(W_{k})$ the map $\psi$ is conformal off $\cup_{k} \psi(\tilde{Q}_{k})$.

The quasiconformality constant of $\psi$ depends on the quasiconformality constant of $f, \alpha_{k}$ and $\beta_{k}$. By Proposition \ref{proposition:interpolation} (or Proposition \ref{proposition:admissible_shape}) if we choose the set $E([0,R_{k}]\times[0,1])$ appropriately, we can make sure the quasiconformality constant of $\alpha_{k}$ is bounded by $ (1+\eps)^{1/2}.$ Thus, ${\psi}_{|W_{k}}$ has quasiconformality constant bounded by $$ K\cdot(1+\eps)^{1/2}\cdot(1+\eps)^{1/2}= K\cdot (1+\eps),$$
as we wanted to see. \end{proof}


\section{Log-singular circle homeomorphisms are weldings}\label{section:logsingular}

In this section we review the proof of \cite[Theorem 3]{ChrisWeldingAnnals}, where Bishop showed that a circle homeomorphism is log-singular if and only if it is the welding of a flexible curve. The main ingredients of the proof are the map $\phi$ from Section \ref{section:construction1} (Proposition \ref{proposition:phi}) and the map $\psi$ from Section \ref{section:construction2} (Theorem \ref{theorem:psi}). By introducing admissible shapes in Section \ref{section:shapes}, we have in fact already significantly expanded Bishop's proof.  We also sharpen his arguments, which ultimately allows us to produce positive area flexible curves, as well as flexible curves will all other possible Hausdorff dimensions.

As in Bishop's paper, we start with the easy direction, which follows from a theorem of Balogh and Bonk \cite{BaloghBonk}.

\begin{lemma}[\cite{ChrisWeldingAnnals} Lemma 24]\label{lemma:easy_flexible}
Suppose $h$ is a conformal welding associated to a flexible curve. Then there is a set $E\subset\Sone$ such that both $E$ and $h(\Sone\setminus E)$ have zero capacity, i.e. $h$ is log-singular.
\end{lemma}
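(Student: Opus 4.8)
The plan is to verify condition (ii) of Lemma~\ref{lemma:equivalence_logsingular}: for each $n\in\N$ I will exhibit a compact set $E_n\subset\Sone$ with $\capacity(E_n)\le 1/n$ and $\capacity(h(\Sone\setminus E_n))\le 1/n$, and then the implication (ii)$\Rightarrow$(iii) shows that $h$ is log-singular. The engine is the Balogh--Bonk estimate (Lemma~\ref{lemma:capacity_far}), which I want to apply on \emph{both} complementary components of $\gamma$ at once; flexibility, and crucially its point-prescription clause, is exactly what lets me move $\gamma$ by a $\mathrm{CH}$-homeomorphism into a position where this works.

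Write $h=g^{-1}\circ f$ with $f\colon\D\to\Omega$ and $g\colon\D^*\to\Omega^*$ conformal, where $\Omega^*$ is the component of $\C_\infty\setminus\gamma$ containing $\infty$ (so $g(\infty)=\infty$). Fix $n$. First choose $R$ large enough that $CR^{-1/2}<1/n$, with $C$ the constant of Lemma~\ref{lemma:capacity_far}; then choose $\eps'>0$ small enough that $R\eps'<1/16$. Applying the definition of flexible curve with target curve $\tilde\gamma=\Sone$, accuracy $\eps'$, and the two prescribed points
$$ \phi(f(0))=1-\tfrac{\eps'}{2}\in\D,\qquad \phi(\infty)=-1-\tfrac{\eps'}{2}\in\D^*, $$
I obtain $\phi\in\mathrm{CH}(\gamma)$ with Hausdorff distance $d_{\mathrm H}(\phi(\gamma),\Sone)<\eps'$. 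Set $F=\phi\circ f\colon\D\to\phi(\Omega)$ and $G=\phi\circ g\colon\D^*\to\phi(\Omega^*)$; both are conformal (the restrictions of $\phi$ being conformal off $\gamma$), they extend continuously to $\overline{\D}$, resp. $\overline{\D^*}$, since $\phi(\gamma)$ is a Jordan curve, and because the two factors $\phi$ cancel on $\Sone$ we still have $h=G^{-1}\circ F$ there, so $h(\Sone\setminus E)=G^{-1}(F(\Sone\setminus E))$ for every $E\subset\Sone$.

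Now apply Lemma~\ref{lemma:capacity_far} to $F-F(0)$: writing $d_1=d(F(0),\phi(\gamma))$, the Hausdorff bound gives $d_1\le\tfrac{3}{2}\eps'$, and $E_n:=\{x\in\Sone:|F(x)-F(0)|\ge R\,d_1\}$ is closed, hence compact, with $\capacity(E_n)\le CR^{-1/2}<1/n$; moreover $F(\Sone\setminus E_n)$ lies in $I:=\D(F(0),R d_1)\cap\phi(\gamma)$, a subset of $\phi(\gamma)$ of diameter $<\tfrac{3}{16}$ concentrated near the point $1$. It remains to control $\capacity(G^{-1}(I))$, and for this I apply Lemma~\ref{lemma:capacity_far} a second time, to the conformal map $w\mapsto G(1/w)$ of $\D$ onto $\phi(\Omega^*)$, whose value at $0$ is $G(\infty)=\phi(\infty)$, a point at distance $d_2\le\tfrac32\eps'$ from $\phi(\gamma)$; since $w\mapsto 1/w$ restricts to complex conjugation on $\Sone$, and conjugation preserves logarithmic capacity (Proposition~\ref{proposition:capacity_properties}(i)), this yields a compact set $E_n':=\{y\in\Sone:|G(y)-\phi(\infty)|\ge R\,d_2\}$ with $\capacity(E_n')\le CR^{-1/2}<1/n$. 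If $y\notin E_n'$ then $G(y)\in\D(\phi(\infty),R d_2)$, and since $R d_1+R d_2<\tfrac14<2=|F(0)-\phi(\infty)|$ the disks $\D(F(0),R d_1)$ and $\D(\phi(\infty),R d_2)$ are disjoint, so $G(y)\notin I$. Hence $G^{-1}(I)\subset E_n'$, and by monotonicity of capacity
$$ \capacity(h(\Sone\setminus E_n))=\capacity\big(G^{-1}(F(\Sone\setminus E_n))\big)\le\capacity(G^{-1}(I))\le\capacity(E_n')<1/n. $$
Together with $\capacity(E_n)<1/n$ this is exactly Lemma~\ref{lemma:equivalence_logsingular}(ii), so $h$ is log-singular.

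The main obstacle is organizational rather than computational: arranging the two applications of Lemma~\ref{lemma:capacity_far} to bite simultaneously. The key point is that the second prescribed point (the image of $\infty$) must be placed \emph{close to} the relocated curve but \emph{far from} the first prescribed point, so that the tiny arc $I$, which carries all of $F(\Sone\setminus E_n)$, falls inside the escape set of $G$; if $\phi(\infty)$ were allowed to stay deep inside $\phi(\Omega^*)$ then $d_2$ would be of order one and Lemma~\ref{lemma:capacity_far} would tell us nothing useful about $G^{-1}(I)$. A minor technical point is transporting Lemma~\ref{lemma:capacity_far} from $\D$ to $\D^*$, which is clean precisely because $w\mapsto 1/w$ acts isometrically on $\Sone$, and whether $\phi(\Omega^*)$ happens to be bounded is harmless since the constant in Lemma~\ref{lemma:capacity_far} is independent of the target domain.
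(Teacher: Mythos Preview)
Your proof is correct and uses the same engine as the paper's---two applications of the Balogh--Bonk estimate (Lemma~\ref{lemma:capacity_far}), one from each side, followed by Lemma~\ref{lemma:equivalence_logsingular}---but realizes it through a different geometry. The paper takes as target the boundary of a long thin rectangle $W_n=[-1,n^2]\times[-1,1]$, normalizes the inner Riemann map by $f(0)=0$ and the outer one by $g(\infty)=\infty$, and splits the curve at $\{x=n\}$; after an inversion for the outer map this cut is simultaneously far from both base points, so only clause~(i) of flexibility is invoked. You instead take $\tilde\gamma=\Sone$ and use clause~(ii) to pin both base points within $O(\eps')$ of the curve on diametrically opposite sides; the smallness of $d_1,d_2$ then drives both Balogh--Bonk applications, and the separation $|F(0)-\phi(\infty)|=2$ forces the two small disks $\D(F(0),Rd_1)$ and $\D(\phi(\infty),Rd_2)$ to be disjoint. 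Both routes are short; the rectangle avoids having to place base points delicately near the boundary, while your circle version makes the role of the point-prescription clause explicit and handles the disjointness of the two ``near'' sets more transparently. One technical note: since you do not control $\phi^{-1}(\infty)$, one of $\phi(\Omega),\phi(\Omega^*)$ may contain $\infty$, so $F$ or $w\mapsto G(1/w)$ may have a pole; this is harmless for Lemma~\ref{lemma:capacity_far} (its extremal-length proof only needs the base point and the boundary to lie in $\C$), and the same issue is implicit in the paper's treatment of the outer map.
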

\begin{proof} Suppose $n$ is large enough and let $\Gamma_{n}=\partial W_{n}$, where $W=[-1,n^{2}]\times[-1,1]$. Since $h$ is the welding of a flexible curve, there is a curve $\gamma_{n}$ corresponding to $h$ which lies within Hausdorff distance $1/4$ of $\Gamma_{n}$. Hence, there are conformal maps $f\colon\D\to\Omega_{n}$, $g\colon\C\setminus\overline{\D}\to\C\setminus\overline{\Omega_{n}}$, where $\partial\Omega_{n}=\gamma_{n}$ and $\Omega_{n}$ is bounded. We can further assume that $f(0)=0$.

Let $ E_{n}=f^{-1}(\{x+iy\in\Gamma_{n}\colon x\geq n\}).$ Then 
$$ h(\Sone\setminus E_{n})=g^{-1}(\{x+iy\in\Gamma_{n}\colon x<n\}).$$
By Lemma \ref{lemma:capacity_far} both $E_{n}$ and $h(\Sone\setminus E_{n})$ have small logarithmic capacity. Therefore by taking $n\to\infty$ and Lemma \ref{lemma:equivalence_logsingular}, it follows that $h$ is log-singular.
\end{proof}

The goal for the remainder of this section is to prove the opposite direction.

\begin{theorem}[\cite{ChrisWeldingAnnals} Theorem 25]\label{theorem:log-singular_to_flexible}
Suppose $h$ is an orientation-preserving log-singular homeomorphism. Suppose that there are conformal maps $f_{0}\colon\D\to\Omega$ and $g_{0}\colon\D^{*}\to\Omega^{*}$ onto Jordan domains with disjoint closures such that $\infty\in g_{0}(\D^{*})$. Then for any $r<1$ and any $\eta>0$, there are conformal maps $F$ and $G$ of $\D$ and $\D^{*}$ onto the two complementary components of a Jordan curve $\Gamma$ such that $h=G^{-1}\circ F$ on $\Sone$, $|F(z)-f_{0}(z)|\leq\eta$ for all $|z|\leq r$ and $|G(z)-g_{0}(z)|\leq\eta$ for all $|z|\geq1/r$.
\end{theorem}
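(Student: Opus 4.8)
The plan is to run an iteration producing \emph{quasiconformal} maps that weld $h$ with exponentially decaying error and dilatation as close to $1$ as we wish, pass to a limit to obtain a quasiconformal welding of $h$, and then straighten it to a conformal welding via the measurable Riemann mapping theorem. After a preliminary smoothing we may assume $\Omega,\Omega^{*}$ are smooth; set $f_{1}=f_{0}$, $g_{1}=g_{0}$, $\Omega_{1}=\Omega$, $\Omega_{1}^{*}=\Omega^{*}$, $\delta_{1}=\|(f_{1})_{|\Sone}-(g_{1}\circ h)_{|\Sone}\|_{\infty}$, and fix $\eps_{n}\downarrow 0$ with $\sum_{n}\eps_{n}$ as small as desired and $\eta_{n}\downarrow 0$ with $\sum_{n}\eta_{n}<\eta/2$. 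Given $K_{n}$-quasiconformal $f_{n}\colon\D\to\Omega_{n}$, $g_{n}\colon\D^{*}\to\Omega_{n}^{*}$ onto smooth Jordan domains with disjoint closures and $\infty\in\Omega_{n}^{*}$, with $K_{n}\le\prod_{j<n}(1+\eps_{j})$, I would: use Lemma \ref{lemma:equivalence_logsingular} to pick a finite union of closed arcs $E_{n}\subset\Sone$ with $\capacity(E_{n})+\capacity(h(\Sone\setminus E_{n}))$ as small as needed; apply Proposition \ref{proposition:phi} to $E_{n}$ to get the conformal $\phi_{n}\colon\D\to W_{n}$, chosen as close to the identity on $\{|z|\le r\}$ as I wish; apply Theorem \ref{theorem:psi} to $f_{n}$ and the sectors $W_{k}$ to obtain a $K_{n}(1+\eps_{n})$-quasiconformal extension $\psi_{n}$ of $f_{n}$ with $\psi_{n}(W_{k})\subset Q_{k}$ and the side of $W_{k}$ opposite $\Sone$ landing in the midcurve $\sigma_{k}$; and set $f_{n+1}=\psi_{n}\circ\phi_{n}$. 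Then run the \emph{same} construction on the exterior, but applied to the set $E_{n}'\coloneqq h(\Sone\setminus E_{n})$ — this is legitimate because $\capacity(E_{n}')=\capacity(h(\Sone\setminus E_{n}))$ and $\capacity(h^{-1}(\Sone\setminus E_{n}'))=\capacity(E_{n})$ are both small, and because one uses the foliation of $\C_{\infty}\setminus(\overline{\Omega_{n}}\cup\overline{\Omega_{n}^{*}})$ whose geodesic leaves join $f_{n}(x)$ to $g_{n}(h(x))$ — to get $\phi_{n}^{*},\psi_{n}^{*}$ and $g_{n+1}=\psi_{n}^{*}\circ\phi_{n}^{*}$; finally replace $f_{n+1}(\D),g_{n+1}(\D^{*})$ by smooth Jordan domains $\Omega_{n+1},\Omega_{n+1}^{*}$ with disjoint closures within Hausdorff distance $\eta_{n}$, reparametrizing $f_{n+1},g_{n+1}$ accordingly, the extra dilatation and motion being absorbed into $\eps_{n},\eta_{n}$.

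The heart of the matter is the contraction $\delta_{n+1}\coloneqq\|(f_{n+1})_{|\Sone}-(g_{n+1}\circ h)_{|\Sone}\|_{\infty}\le c\,\delta_{n}$ for a fixed $c<1$. Inductively $\partial\Omega_{n}$ and $\partial\Omega_{n}^{*}$ lie within Hausdorff distance $\lesssim\delta_{n}$, so the intervening annulus is a collar of width $\lesssim\delta_{n}$ and all geodesic leaves used have length $\lesssim\delta_{n}$. By Proposition \ref{proposition:phi}, $\phi_{n}$ carries $E_{n}$ onto the outer boundary $\partial\widetilde W_{n}$ and $\Sone\setminus E_{n}$ onto the slits; hence by Theorem \ref{theorem:psi}, for $x\in E_{n}$ the point $f_{n+1}(x)$ lies on the midcurve $\bigcup_{k}\sigma_{k}$, while for $x\notin E_{n}$ it lies in some $\overline{Q_{k}}$, and by symmetry (recalling $\Sone\setminus E_{n}'=h(E_{n})$) the roles of $E_n$ and $\Sone\setminus E_n$ are exchanged for $g_{n+1}\circ h$. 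Since the distinguished points on both sides are near roots of unity, consecutive ones cut off short boundary arcs, and the leaf through $f_{n}(x)$ runs to $g_{n}(h(x))$; matching the sector indices through $h$ one checks that $f_{n+1}(x)$ and $g_{n+1}(h(x))$ both lie within $c\,\delta_{n}$ of a common point — the midcurve sits at roughly half the collar width and the residual tangential error is $\lesssim\eta_{n}$, which we make $\lesssim\delta_{n}$ by taking $N$ large. Thus $\delta_{n}\to 0$ geometrically. Because $\phi_{n}\to\mathrm{id}$ on $\{|z|\le r\}$ and $\psi_{n}|_{\D}=f_{n}$, we get $\|f_{n+1}-f_{n}\|_{L^{\infty}(\{|z|\le r\})}\le\eta_{n}$ and likewise for $g$, so $f_{n}\to\widetilde F$ and $g_{n}\to\widetilde G$ locally uniformly; these limits are quasiconformal with dilatation $\le\prod(1+\eps_{n})$, onto the complementary components $\widetilde\Omega=\bigcup\Omega_{n}$, $\widetilde\Omega^{*}=\bigcup\Omega_{n}^{*}$ of a set $\Gamma_{0}$, with $\|\widetilde F-f_{0}\|_{L^{\infty}(\{|z|\le r\})}<\eta/2$ and similarly for $\widetilde G$. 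Since $\delta_{n}\to 0$ and the slits have diameter $\lesssim\delta_{n}$, the standard limiting argument shows $\Gamma_{0}$ is a Jordan curve and $\widetilde G^{-1}\circ\widetilde F=h$ on $\Sone$, i.e. $(\widetilde F,\widetilde G)$ is a quasiconformal welding of $h$.

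It remains to straighten. By Proposition \ref{proposition:qcw} — concretely: extend the Beltrami coefficients of $\widetilde F^{-1}$ on $\widetilde\Omega$ and $\widetilde G^{-1}$ on $\widetilde\Omega^{*}$ to a coefficient $\mu$ on $\C$, let $H$ solve $\mu_{H}=\mu$ by Theorem \ref{theorem:MRMT} with $H$ fixing $0,1,\infty$; then $F\coloneqq H\circ\widetilde F$ and $G\coloneqq H\circ\widetilde G$ are conformal — we obtain conformal maps $F,G$ of $\D,\D^{*}$ onto the complementary components of the Jordan curve $\Gamma\coloneqq H(\Gamma_{0})$ with $G^{-1}\circ F=\widetilde G^{-1}\circ\widetilde F=h$ on $\Sone$ and $\infty\in G(\D^{*})$. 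Finally $\|\mu\|_{\infty}=\|\mu_{\widetilde F}\|_{\infty}\le\prod(1+\eps_{n})-1$ is as small as we please, so $H$ is as close to the identity on compacta as needed (by the normalization in Theorem \ref{theorem:MRMT}), whence $|F-f_{0}|\le|H\circ\widetilde F-\widetilde F|+|\widetilde F-f_{0}|\le\eta$ on $\{|z|\le r\}$, and symmetrically $|G-g_{0}|\le\eta$ on $\{|z|\ge 1/r\}$, completing the proof.

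The main obstacle is the contraction $\delta_{n+1}\le c\,\delta_{n}$: one must verify that the conformal reshuffling $\phi_{n}$ (pushing $E_{n}$ to the outer boundary and $\Sone\setminus E_{n}$ to the slits) followed by the \emph{controlled} quasiconformal extension $\psi_{n}$ of Theorem \ref{theorem:psi} (keeping $\psi_{n}(W_{k})$ inside $Q_{k}$ with the outer side inside $\sigma_{k}$) genuinely places $f_{n+1}(x)$ and $g_{n+1}(h(x))$ near each other \emph{in the plane}, not merely near a common curve — this is where the careful choice of the quadrilaterals $Q_{k}$ (leaves being geodesics from $f_{n}(x)$ to $g_{n}(h(x))$, $\sigma_{k}$ orthogonal to the leaves near the curve of midpoints) and of the exterior set $E_{n}'=h(\Sone\setminus E_{n})$ conspire, together with the uniform-collar property of the annulus between $\partial\Omega_{n}$ and $\partial\Omega_{n}^{*}$. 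The secondary chores — keeping the intermediate $\Omega_{n},\Omega_{n}^{*}$ smooth Jordan domains with disjoint closures so that Theorem \ref{theorem:psi} keeps applying, and checking that $\Gamma_{0}$ is a Jordan curve — are routine but must be tracked.
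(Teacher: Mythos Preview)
Your proposal is essentially correct and follows the same architecture as the paper: iterate $\phi$ (Proposition~\ref{proposition:phi}) and $\psi$ (Theorem~\ref{theorem:psi}) on both sides, using the log-singularity to pick $E_{n}$ and $E_{n}'=h(\Sone\setminus E_{n})$, obtain a quasiconformal welding in the limit, and straighten via the measurable Riemann mapping theorem with $K$ close to $1$ to preserve the $\eta$-closeness on compacta.

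The one substantive difference is in how you produce the next pair of smooth Jordan domains. You set $f_{n+1}=\psi_{n}\circ\phi_{n}$ and then propose to ``replace $f_{n+1}(\D)$ by a smooth Jordan domain within Hausdorff distance $\eta_{n}$, reparametrizing''. But $\psi_{n}\circ\phi_{n}(\D)$ is a slit domain, and it is not clear how to reparametrize onto a nearby smooth domain while keeping the dilatation increment inside $(1+\eps_{n})$ and, crucially, while preserving the boundary correspondence well enough that the contraction survives. The paper avoids this by taking $f_{n+1}(z)=(\psi_{n}\circ\phi_{n})(tz)$ for $t<1$: the image of $t\D$ is automatically a smooth Jordan domain, no extra dilatation is introduced, and Lemma~\ref{lemma:control_approx} is exactly the statement that the hyperbolic geodesics in the new annulus $A_{n+1}(t)$ have Euclidean length within $\eps$ of the shortest paths in $\partial\Omega\cup\partial\Omega^{*}$. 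This is why the paper phrases the contraction as $l(\gamma_{n+1}(x))\le(2/3)\,l(\gamma_{n}(x))$ rather than your $\|\cdot\|_{\infty}$ bound: the geodesic-length formulation is what Lemma~\ref{lemma:control_approx} delivers directly, and the sup-norm estimate $|f_{n}(x)-g_{n}(h(x))|\le(2/3)^{n}L$ is then an immediate consequence. Your contraction sketch (midcurve for $x\in E_{n}$, quadrilateral for $x\notin E_{n}$, roles swapped on the $g$-side) captures the right geometric picture, but the clean bookkeeping goes through the $t$-rescaling and Lemma~\ref{lemma:control_approx}.
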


Which together with Lemma \ref{lemma:easy_flexible} implies \cite[Theorem 3]{ChrisWeldingAnnals}. We will actually prove the following apparently weaker statement, which turns out to be sufficient to prove Theorem \ref{theorem:log-singular_to_flexible} by Proposition \ref{proposition:qcw}.

\begin{theorem}\label{theorem:log-singular_welding}
Let $h$ be an orientation-preserving log-singular homeomorphism and $f_{0}\colon\D\to\Omega$, $g_{0}\colon\D^{*}\to\Omega^{*}$ be conformal maps onto domains with disjoint closure and smooth boundary so that $\infty\in g_{0}(\D^{*})$. Fix a sequence $\{\eps_{n}\}_{n\in\N}$ of positive real numbers so that if $K_{n}=\prod_{j=1}^{n}(1+\eps_{j})$, then
$K=\prod_{n}(1+\eps_{n})=\lim_{n} K_{n}<\infty$ (assume $\eps_{0}=0$). Then there are $K_{n}$-quasiconformal maps $f_{n}\colon\D\to\Omega_{n}$, $g_{n}\colon\D^{*}\to\Omega_{n}^{*}$ onto smooth Jordan domains with disjoint closures so that if the annulus $A_{n}=\C\setminus(\Omega_{n}\cup\Omega_{n}^{*})$ is foliated by a family $\mathcal{C}_{n}$ as in Theorem \ref{theorem:psi}, that we now index by $x\in\Sone$, we have:
\begin{enumerate}[label=(\roman*)]
	\item $l(\gamma_{n+1}(x))\leq(2/3)l(\gamma_{n}(x))$, where $l(\gamma)$ denotes the length of $\gamma$. Hence,
	$$ |f_{n}(x)-g_{n}(h(x))|\leq (2/3)^{n}L.$$
	\item $|f_{n+1}(x)-f_{n}(x)|\leq C\left(2/3\right)^{n} L$ and $|g_{n+1}(x)-g_{n}(x)|\leq C\left(2/3\right)^{n} L$.
\end{enumerate}
Therefore, by the maximum modulus principle the sequences $f_{n}, g_{n}$ converge to $K$-quasiconformal maps $f\colon\D\to\Omega_{\infty}, g\colon\D^{*}\to\Omega_{\infty}^{*}$ onto the complementary components of a Jordan curve $\Gamma$ so that $h=g^{-1}\circ f$ on $\Sone$.
\end{theorem}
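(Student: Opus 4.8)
The plan is to construct the sequences $f_n, g_n$ inductively, at each step applying Theorem \ref{theorem:psi} (and its mirror image for $g$) to upgrade $f_n$ to $f_{n+1}$ and $g_n$ to $g_{n+1}$ while shrinking the foliation lengths by a factor $2/3$. I would start by setting $f_0, g_0$ as in the hypothesis (these are conformal, so $K_0 = 1$) and $L = \max_{x} l(\gamma_0(x))$, where the $\gamma_0$ are the geodesics of the foliation $\mathcal C_0$ of $A_0$. The inductive step is where the work happens: assuming $f_n, g_n$ are $K_n$-quasiconformal onto $\Omega_n, \Omega_n^*$ with disjoint closures and smooth boundary, I would first pass to $n$-th approximations that extend quasiconformally past $\Sone$ (smoothing the boundary slightly so the extension hypothesis of Theorem \ref{theorem:psi} is met, without changing the dilatation), then invoke Proposition \ref{proposition:phi} to get a conformal $\phi_n \colon \D \to W_n$ associated to a set $E_n \subset \Sone$ with $\capacity(E_n) + \capacity(h(\Sone \setminus E_n))$ small (possible since $h$ is log-singular, using Lemma \ref{lemma:equivalence_logsingular}), with points $\{x_k^{(n)}\}$ and radial sectors $W_k^{(n)}$ of large modulus $R_k \simeq A/\pi$.

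Next I would apply Theorem \ref{theorem:psi} to $f_n$: choosing the capacity of $E_n$ small enough and $\max_k |x_{k+1}^{(n)} - x_k^{(n)}|$ small enough, this yields a quasiconformal $\psi_n \colon V_n \to \widetilde\Omega$ with $\psi_n|_\D = f_n$, quasiconformality constant $K_n (1 + \eps_{n+1})^{1/2}$, and $\psi_n(W_k^{(n)}) \subset Q_k$, where $Q_k = Q(x_k^{(n)}, x_{k+1}^{(n)})$ is the quadrilateral built from the geodesic foliation $\mathcal C_n$ of the annulus $A_n$, with its far side mapping into the midpoint curve $\sigma_k$. Running the same construction on the exterior gives $\psi_n^* \colon V_n^* \to \widetilde\Omega^*$ extending $g_n$ and sending the exterior sectors into the corresponding quadrilaterals $Q_k^*$ on the $g$-side of $A_n$. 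I would then set $f_{n+1} = \psi_n \circ \phi_n$ on $\D$ (filling in the slits: on $\phi_n(\Sone \setminus E_n)$, which is mapped to slits, the composition is still defined since $\psi_n$ is defined on $V_n$, which is $W_n$ with the slits re-attached) and $g_{n+1} = \psi_n^* \circ \phi_n^*$ on $\D^*$, so that $\Omega_{n+1} = \widetilde\Omega$, $\Omega_{n+1}^* = \widetilde\Omega^*$. These are smooth Jordan domains with disjoint closures (disjointness because both images stay inside $A_n \cup \Omega_n \cup \Omega_n^*$ and the new boundaries sit strictly inside $A_n$), and each is $K_n(1+\eps_{n+1})^{1/2} \cdot (1+\eps_{n+1})^{1/2} = K_{n+1}$-quasiconformal, composing a conformal $\phi_n$ with a $K_n(1+\eps_{n+1})$-quasiconformal $\psi_n$ — wait, I should be careful: $\psi_n$ already has constant $K_n(1+\eps_{n+1})$ and $\phi_n$ is conformal, so $f_{n+1}$ is $K_n(1+\eps_{n+1}) = K_{n+1}$-quasiconformal, matching the claim.

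**Length and convergence control.** For property (i), the key geometric point is that the midpoint curve $\sigma_k$ of $Q_k$ sits at the Euclidean midpoint of each geodesic $\gamma_n(x)$, so the new foliation $\mathcal C_{n+1}$ of $A_{n+1}$ — whose geodesics join $f_{n+1}(x) = \psi_n(\phi_n(x))$ on $\partial\Omega_{n+1}$ to $g_{n+1}(h(x))$ on $\partial\Omega_{n+1}^*$ — consists of geodesics whose endpoints both lie (up to the small error $\eta$ from Theorem \ref{theorem:psi}(ii), which one arranges to be $\ll l(\gamma_n(x))$ by taking $N$ large) on the curve of midpoints of $\mathcal C_n$. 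Since $\partial \Omega_{n+1}$ and $\partial \Omega_{n+1}^*$ each lie within a small neighborhood of the midpoint curve of $A_n$, the geodesic $\gamma_{n+1}(x)$ in $A_{n+1}$ is short: its Euclidean length is at most, say, half of $l(\gamma_n(x))$ plus $O(\eta)$, which is $\le (2/3) l(\gamma_n(x))$ once $\eta$ is small enough. Iterating, $\max_x l(\gamma_n(x)) \le (2/3)^n L$, and since $f_n(x)$ and $g_n(h(x))$ are the two endpoints of $\gamma_n(x)$, $|f_n(x) - g_n(h(x))| \le (2/3)^n L$. For property (ii), I would observe that $f_{n+1}(x)$ and $f_n(x)$ both lie in (or within $O(\eta)$ of) the quadrilateral $Q_k \cup$ its reflection, which has diameter comparable to $l(\gamma_n(x)) \le (2/3)^n L$: indeed $f_{n+1}$ moves $f_n(x)$ from $\partial\Omega_n$ across to the midpoint curve, a displacement of size $\lesssim (2/3)^n L$; the same on the $g$-side, giving the constant $C$. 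Finally, the geometric series $\sum_n (2/3)^n < \infty$ gives uniform Cauchy convergence of $f_n \to f$, $g_n \to g$ on $\overline\D$, $\overline{\D^*}$ respectively (via the maximum principle for $|f_{n+1} - f_n|$), each limit is $K = \lim K_n$-quasiconformal, and $f|_{\Sone} = g \circ h|_{\Sone}$ follows from (i). That $\Gamma = f(\Sone)$ is a Jordan curve, and that $h = g^{-1}\circ f$ realizes it as a welding, then follows; Proposition \ref{proposition:qcw} upgrades the quasiconformal $f, g$ to conformal ones on a quasiconformal image of $\Gamma$, which is Theorem \ref{theorem:log-singular_to_flexible}.

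**Main obstacle.** The delicate point is not any single estimate but the bookkeeping of the \emph{order of quantifiers} in the inductive step: at stage $n$ one must first fix $\eps_{n+1}$, then choose $N$ (equivalently $M$) large and the capacity of $E_n$ small and $\max_k|x_{k+1}^{(n)} - x_k^{(n)}|$ small, in the right order, so that simultaneously (a) $R_k \simeq A/\pi > T(1/2M)$ so the embeddings $E_k$ of Lemma \ref{lemma:generateR} exist with $[0,M]\times[0,1] \subset E_k([0,R_k]\times[0,1])$, making the admissible-shape map $\alpha_k$ of Proposition \ref{proposition:interpolation} have dilatation $\le (1+\eps_{n+1})^{1/2}$; (b) $M(\tilde Q_k)$ is large enough that $\beta_k$ of Proposition \ref{proposition:beta_2} has dilatation $\le K_n(1+\eps_{n+1})^{1/2}$; and (c) the error $\eta = l(\overline{Q_k}\cap\sigma_k)$ from Theorem \ref{theorem:psi}(ii) is small compared to $\min_x l(\gamma_n(x))$, so the length-halving survives. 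One has to check these choices are mutually compatible — they are, because (a) and (b) ask for certain parameters to be large/small \emph{independently}, and $\eta \to 0$ as $N \to \infty$ at a rate that does not conflict with the capacity requirement. I would isolate this as an explicit "choice of parameters" paragraph at the start of the inductive step, and then the rest is assembling the pieces already proved in Sections \ref{section:construction1}–\ref{section:construction2}.
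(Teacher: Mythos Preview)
Your overall architecture is right, but two genuine gaps separate your proposal from a correct proof, and both are exactly where the paper does extra work.

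\textbf{First gap: the image of $\psi_n\circ\phi_n$ is not a Jordan domain.} You set $f_{n+1}=\psi_n\circ\phi_n$ and declare $\Omega_{n+1}=\widetilde\Omega$ to be a smooth Jordan domain. It is not. The map $\phi_n$ sends $\Sone\setminus E_n$ to radial slits, and $\psi_n$ maps the two sides of each slit (which border different sectors $W_k$) to distinct arcs inside the quadrilaterals $Q_k$; so $(\psi_n\circ\phi_n)(\Sone)$ self-intersects along the slit images. Your parenthetical about $V$ being $W$ with slits re-attached only says $\psi_n$ is \emph{defined} there, not that it identifies the two sides. The paper's fix is to set $f_{n+1}(z)=(\psi_n\circ\phi_n)(tz)$ for $t<1$, which makes $\Omega_{n+1}$ a genuine Jordan domain, and then invokes Lemma~\ref{lemma:control_approx} to show that the hyperbolic geodesics of the new annulus $A_{n+1}(t)$ have lengths arbitrarily close to the shortest paths in the slit boundary. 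Without this step you have neither a Jordan domain nor a controlled foliation of $A_{n+1}$.

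\textbf{Second gap: the length estimate (i) is argued incorrectly.} You claim that both endpoints $f_{n+1}(x)$ and $g_{n+1}(h(x))$ lie near the midpoint curve of $\mathcal C_n$ for every $x\in\Sone$. This is false: $f_{n+1}(x)$ lands near the midpoint curve only when $x\in E_n$ (so that $\phi_n(x)\in\partial\widetilde W$ and $\psi_n$ sends it into some $\sigma_k$); for $x\in\Sone\setminus E_n$, $\phi_n(x)$ sits on a slit near $\Sone$ and $\psi_n(\phi_n(x))$ stays near $\partial\Omega_n$, far from the midpoint. The correct argument, which is where log-singularity is actually used, is a dichotomy: for $x\in E_n$ the $f$-side pushes to the midpoint, giving $l(\gamma_{n+1}(x))\le (1/2+\eta)l(\gamma_n(x))$; for $x\notin E_n$ one has $h(x)\in h(\Sone\setminus E_n)$, and the $g$-side construction is run with \emph{that} set (which also has small capacity, by the log-singular hypothesis) playing the role of $E_n$, so $g_{n+1}(h(x))$ reaches the midpoint and again $l(\gamma_{n+1}(x))\le (1/2+\eta)l(\gamma_n(x))$. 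Since $\Sone=E_n\cup(\Sone\setminus E_n)$ this covers all $x$. Your discussion of parameter ordering in the final paragraph is on target, but it needs to be grafted onto this corrected length argument together with the $t<1$ rescaling.
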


\begin{figure}[h]
	\includegraphics[scale=1]{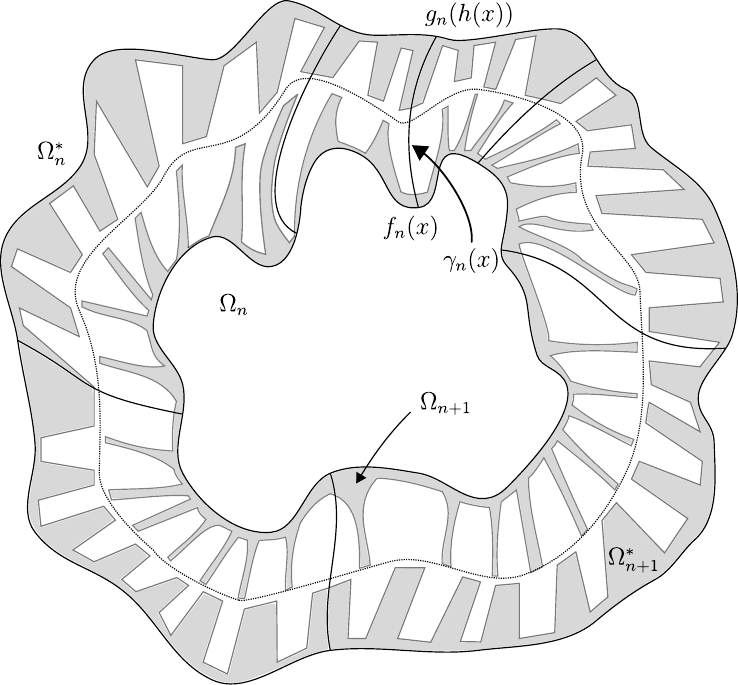}
	\centering
	\caption{Representation of the iterative procedure in Theorem \ref{theorem:log-singular_welding}. The domains $\Omega_{n+1}, \Omega_{n+1}^{*}$ are constructed from $\Omega_{n}, \Omega_{n}^{*}$ by using a quasiconformal extension of $f_{n}, g_{n}$ in such a way that the sequence $||f_{n}(x)-g_{n}(h(x))||_{\infty}$ decreases with $n$.}
	\label{figure:ChrisMainTheorem}
\end{figure}

Before we proceed to prove Theorem \ref{theorem:log-singular_welding}, let's review how it implies Theorem \ref{theorem:log-singular_to_flexible}. Take a quasiconformal map $H\colon\C\to\C$ so that $F\coloneqq H\circ f\colon\D\to\phi(\Omega_{\infty})$ and $G\coloneqq H\circ g\colon \D^{*}\to\phi(\Omega_{\infty}^{*})$ are conformal. The Jordan curve $H(\Gamma)$ has conformal welding $h$. Let $r<1$, by compactness of quasiconformal mappings (or Theorem \ref{theorem:Ahlfors_formula}), if the sequence $\{\eps_{n}\}$ in Theorem \ref{theorem:log-singular_welding} is chosen so that $K=\prod(1+\eps_{n})$ is close enough to $1$, the estimates from Theorem \ref{theorem:log-singular_to_flexible} will follow.

As we mentioned before, we now use the results from Section \ref{section:construction1} and Section \ref{section:construction2} to prove Theorem \ref{theorem:log-singular_welding}. See Figure \ref{figure:Propositions}. All the constructions we make for the sequence $f_{n}$ will analogously work for $g_{n}$.

\begin{figure}[h]
	\includegraphics[scale=0.85]{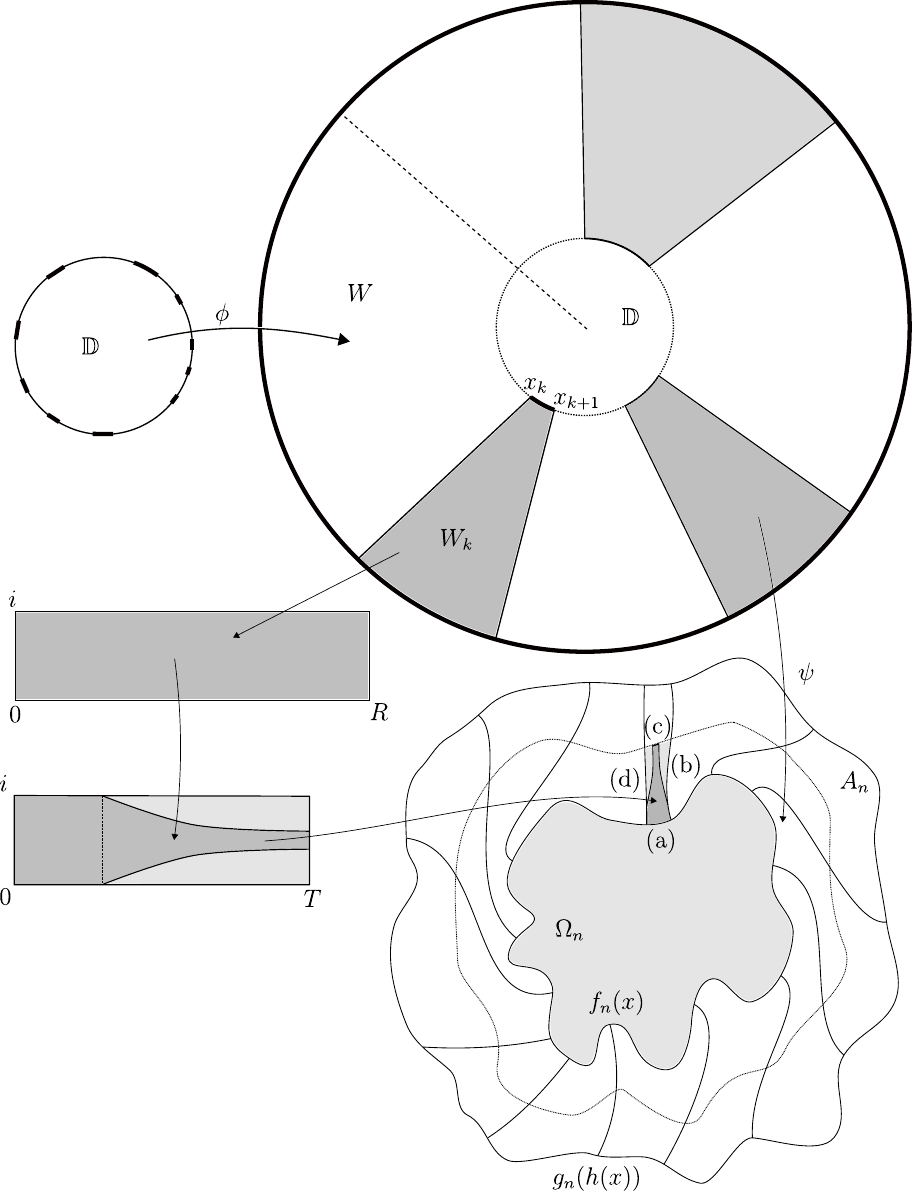}
	\centering
	\caption{Representations of the map $\phi$ from Proposition \ref{proposition:phi} and the map $\psi$ from Theorem \ref{theorem:psi}. The map $f_{n+1}$ from Theorem \ref{theorem:log-singular_welding} is roughly $\psi\circ\phi$. }
	\label{figure:Propositions}
\end{figure}

For the base case $n=0$ we can assume the that the map $f_{0}$ extends conformally to a disk $\D(0,1+\delta)$ by either assuming that $\Omega_{0}$ has analytic boundary or by considering $f_{0}(tz)$ for $t$ close enough to $1$.

Suppose we have obtained mappings $f_{0}, f_{1}, \ldots, f_{n}$ (and $g_{0}, g_{1}, \ldots, g_{n}$) that satisfy the conclusions of Theorem \ref{theorem:log-singular_welding}. We use Theorem \ref{theorem:psi} with $f_{n}$, $g_{n}$ and $\eps_{n}$, which at the same time also generates $\phi$ from Proposition \ref{proposition:phi}. The map $\psi\circ\phi$ is $K_{n}(1+\eps_{n})=K_{n+1}$-quasiconformal, close to $f_{n}$ on compact subsets of $\D$ and is closer to $g_{n}\circ h$ on a set $E_{n}\subset\Sone$ of very small logarithmic capacity so that $h(\Sone\setminus E_{n})$ also has very small logarithmic capacity (see Lemma \ref{lemma:construct_E} and Proposition \ref{proposition:phi}). However, $(\psi\circ\phi)(\D)$ is not a Jordan domain. To address this, we will define $f_{n+1}=(\psi\circ\phi)(tz)$ for $t$ close enough to $1$. 

Similarly, we can construct a mapping $g_{n+1}$ that is quasiconformal, that is close to $g_{n}$ on compact subsets of $\D^{*}$. If we take $t$ close enough to $1$, then it follows that for $x\in E_{n}$ the geodesic joining $f_{n+1}(x)$ and $g_{n+1}(h(x))$ has now length $\leq(\eps+1/2) l(\gamma(x))$ (where $\gamma$ was the geodesic joining $f_{n}(x)$ to $g_{n}(h(x))$). This is justified by the following lemma, in which both the statement and proof are the same as in Bishop's paper.

\begin{lemma}[\cite{ChrisWeldingAnnals} Lemma 28]\label{lemma:control_approx}
Suppose $\Gamma$ is a smooth closed Jordan curve and $\Omega, \Omega^{*}$ are simply connected domains obtained from the complementary components of $\Gamma$ by removing a finite number of smooth, disjoint arcs $\{\gamma_{k}\}$, each disjoint from $\Gamma$ for except exactly one endpoint on $\Gamma$. Let $f\colon\D\to\Omega$, $g\colon\D^{*}\to\Omega^{*}$ be conformal maps and let $A_{t}=\C\setminus(f(t\D)\cup g(\D^{*}/t))$. Then for any $\eps>0$ there is a $t<1$ so that the hyperbolic geodesic in $A_{t}$ connecting the boundary points $f(tx)$ and $g(y/t)$ has Euclidean length at most $L(x,y)+\eps$, where $L(x,y)$ is the length of the shortest path connecting $f(x)$ and $g(y)$ in $X=\partial\Omega\cup\partial\Omega^{*}$.
\end{lemma}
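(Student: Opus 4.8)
The plan is to show that as $t\to1$ the annulus $A_{t}$ collapses onto the core set $X=\partial\Omega\cup\partial\Omega^{*}$, and that the geodesic in question converges --- together with its Euclidean length --- to a shortest path in $X$ joining $f(x)$ to $g(y)$; the bound $L(x,y)+\eps$ then holds for every $t$ sufficiently close to $1$.

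First I would record the degeneration of $A_{t}$. Since $\Gamma$ and the slits $\{\gamma_{k}\}$ are smooth, $\partial\Omega$ and $\partial\Omega^{*}$ are locally connected, so $f$ and $g$ extend continuously to $\overline{\D}$ and $\overline{\D^{*}}$; uniform continuity gives $\sup_{|z|=1}|f(tz)-f(z)|\to0$ and likewise for $g$, and from this together with the monotone exhaustions $f(t\D)\nearrow\Omega$, $g(\D^{*}/t)\nearrow\Omega^{*}$ and a compactness argument one obtains: for every $\delta>0$ there is $t_{0}<1$ such that $A_{t}$ is contained in the $\delta$-neighbourhood of $X$ for all $t\in(t_{0},1)$, while for every fixed $t<1$ one has $X\subset\operatorname{int}A_{t}$, since $X$ is disjoint from the closures $\overline{f(t\D)}\subset\Omega$ and $\overline{g(\D^{*}/t)}\subset\Omega^{*}$. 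In particular a shortest path $\sigma\subset X$ from $f(x)$ to $g(y)$, which has length $L(x,y)$, already lies in $A_{t}$; appending the radial images $f([tx,x])$ and $g([y/t,y])$, which lie in $\overline{\Omega}\setminus f(t\D)$ and $\overline{\Omega^{*}}\setminus g(\D^{*}/t)$ respectively (hence in $\overline{A_{t}}$) and have Euclidean length $o(1)$, produces a competitor curve $\tau_{t}\subset\overline{A_{t}}$ from $f(tx)$ to $g(y/t)$ of Euclidean length $\ell(\tau_{t})=L(x,y)+o(1)$ lying in the minimal crossing homotopy class, so that the geodesic $\gamma^{*}_{t}$ (the shortest one joining these two points) is homotopic to $\tau_{t}$ and has hyperbolic length at most that of $\tau_{t}$.

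The heart of the proof is to convert this hyperbolic-length inequality into the \emph{sharp} Euclidean bound, rather than one carrying a Gehring--Hayman-type multiplicative constant. The mechanism is that near an interior point of a smooth sub-arc of $X$, once $t$ is close to $1$, the part of $A_{t}$ there is --- after a conformal change of coordinates tending to the identity --- a thin, almost straight strip of variable width $w_{t}\to0$ following that sub-arc, and along such a collar the hyperbolic density equals $(1+o(1))\pi/w_{t}$, minimised exactly along the core curve and for displacement tangent to $X$. Hence any curve traversing the collar costs at least $(1-o(1))\pi/\overline{w}$ times its length measured along $X$, while the geodesic --- running monotonically along the core --- realises $(1+o(1))\pi/\overline{w}$ times \emph{its own} Euclidean length; comparing the geodesic with the competitor $\tau_{t}$, whose along-$X$ length is $L(x,y)+o(1)$, and chaining the estimate over a finite cover of $\sigma$ by such collars, gives $\ell(\gamma^{*}_{t})\le L(x,y)+\eps$ for $t$ close enough to $1$. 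Equivalently one can uniformise $A_{t}$ onto a round annulus $\{r_{t}<|w|<1\}$ with $r_{t}\to1$, where the shortest crossing geodesic is an explicit logarithmic line segment, and control the boundary behaviour of the uniformising map as $t\to1$ by a normal-families argument.

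The step I expect to be the main obstacle is exactly this collar estimate near the \emph{non-smooth} points of $X$: the tips of the slits $\gamma_{k}$ and the points where a slit meets $\Gamma$, where $A_{t}$ does not look like a strip but wraps around a tip or branches at a triple point. There one argues separately that the portion of $\gamma^{*}_{t}$ inside a small fixed ball about such a point has bounded hyperbolic length and lies in a shrinking neighbourhood of that point, so it contributes only $o(1)$ to the Euclidean length; since there are only finitely many such points this is absorbed into the error term. A minor additional technicality is to arrange $\tau_{t}$ in the minimal homotopy class with the stated length even when $\sigma$ runs along both sides of a slit, which is handled by pushing $\sigma$ slightly off the slits into $\operatorname{int}A_{t}$, changing its length by an arbitrarily small amount.
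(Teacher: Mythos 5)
Your proposal follows essentially the same route as the paper: as $t\to1$ the annulus $A_t$ degenerates to a collar around $X$, you compare the geodesic to the shortest path in $X$ by a local "thin-strip" estimate on the hyperbolic density, and you isolate the finitely many tips and triple points where $A_t$ is not strip-like. The one step that does not quite hold as written is your treatment of those bad points: you claim the piece of the geodesic in a small \emph{fixed} ball has bounded hyperbolic length and ``lies in a shrinking neighbourhood'', but the ball is fixed, not shrinking, and bounded hyperbolic length alone does not control Euclidean length. What actually controls that piece is the Gehring--Hayman inequality: the Euclidean length of a hyperbolic geodesic between two points of $\partial B(p,\delta)\cap A_t$ is at most a universal constant times $\delta$, so choosing $\delta\simeq\eps/(CM)$ makes the total bad-point contribution $\lesssim\eps$. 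This is exactly what the paper does (cover the $M$ bad points by $\eps/(2M)$-disks and invoke Gehring--Hayman inside them), and since the multiplicative constant only appears on an $o(\eps)$-scale set, it is compatible with the sharp estimate your collar computation gives on the rest of the geodesic.
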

\begin{proof}
If $t$ is close enough to $1$ then $A_{t}$ locally looks like a strip for except at the finitely many tips of the arcs $\gamma_{k}$ and the finitely many points where they intersect $\Gamma$. There $M=2\cdot\#\{\gamma_{k}\}$ tips and intersection points. Cover each one of them by a $\eps/(2M)$ disk. Given a geodesic in $A_{t}$, then inside of these disks, by the Gehring-Hayman inequality, the distance is less than $C\eps/M$ (the diameter of the disk). Away from the disks, Euclidean length of the geodesic in $A_{t}$ is close to the length on $X$.
\end{proof}

This lemma guarantees that when going from step $n$ to step $n+1$ we can maintain control on the Euclidean length of the geodesics on our new foliation.

We now complete the proof of Theorem \ref{theorem:log-singular_welding}.

\begin{proof}[Proof of Theorem \ref{theorem:log-singular_welding}] We have already verified the base case before. Suppose we have verified the induction hypothesis up to $n\in\N$. Set $f_{n+1}$ as before, which can be taken to be $K_{n+1}$-quasiconformal by Theorem \ref{theorem:psi}. If $\eta$ is small enough in Theorem \ref{theorem:psi} we have that on $E_{n}$, as in Proposition \ref{proposition:phi}, 
$$ l(\gamma_{n+1}(x))\leq 2/3 l(\gamma_{n}(x)),$$
where $\gamma_{m}(x)$ denotes the hyperbolic geodesic joining $f_{m}(x)$ to $g_{m}(h(x))$.

Similarly, we obtain $g_{n+1}$ in such a way that for $y\in h(\Sone\setminus E_{n})$, we have
$$ l(\gamma_{n+1}(h^{-1}(y)))\leq 2/3 l(\gamma_{n}(h^{-1}(y))).$$
Since $\Sone=E_{n}\cup h^{-1}(h(\Sone\setminus E_{n}))$, we obtain (i) from Theorem \ref{theorem:log-singular_welding}.

(ii) holds for similar reasons since $\psi_{n}$ is a quasiconformal extension of $f_{n}$ (and analogously for $g_{n+1}$).
\end{proof}


\section{Positive area flexible curves}\label{section:positive_area}

In this section, we prove that there are positive area flexible curves by using $\eps$-admissible shapes. To do so, we refine the construction used to prove Theorem \ref{theorem:log-singular_welding} by controlling, when constructing the mapping $\psi_{n}$ from Theorem \ref{theorem:psi}, how much percentage of area the embedding $E_{k}$ leaves behind.

We emphasize that when proving Theorem \ref{theorem:log-singular_welding}, at each step from the inductive construction the value of $R$ (which is approximately the same across all $W_{k}$ by Proposition \ref{proposition:phi}) can be taken to be as large as needed. The value $T_{k}$ could potentially change, even though we can always assume it is larger than a fixed large value.

Let $\eps>0$ and fix a rectangle $[0,T]\times[0,1]$. We have seen in Proposition \ref{proposition:admissible_shape} that there are $\eps$-admissible shapes provided $T$ is large enough. 

\begin{definition}[Leftover percentage of a shape]
Given an $\eps$-admissible shape $E$ associated with the rectangle $[0,R]\times[0,1]$. We call the quantity $$a(E, T)=\frac{m(([0,T]\times[0,1])\setminus E([0,R]\times[0,1]))}{m([0,T]\times[0,1])}\in[0,1)$$ the \textit{leftover percentage} of the shape, as represented in Figure \ref{figure:leftover_area}, where $m$ denotes Lebesgue measure.
\end{definition}

\begin{figure}[h]
	\includegraphics[scale=1]{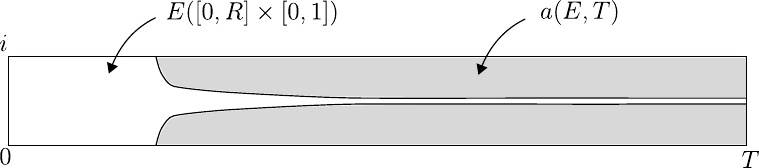}
	\centering
	\caption{Representation of the percentage of area left by a shape with respect to the total area of the rectangle $[0,T]\times[0,1]$ it is embedded into. The \textit{percentage of area left} is represented by the light shade. In this case $a(E,T)>0.75$.}
	\label{figure:leftover_area}
\end{figure}

In Theorem \ref{theorem:positive_area} we will construct a flexible curve by taking an infinite intersection of sets of positive area. If we make sure the area does not decrease by too much step by step, we will be able to guarantee that the set we obtain also has positive area (which will be a curve by Theorem \ref{theorem:log-singular_welding}). This is analogous to when we construct Cantor sets of positive length.

Observe that by the estimates in Proposition \ref{proposition:admissible_shape}, we need our $T$ to be large enough so that there is an $M$ with $1<M<2M<T$ and,
$$ \frac{e^{-\pi (M-1)/2}}{M-1}=\left(\exp(\pi (M-1)/2 +\log (M-1))\right)^{-1}<\eps.$$
That is,
\begin{equation}\label{equation:M_and_eps}
 \log(1/\eps)<\pi\frac{M-1}{2}+\log (M-1). \end{equation}
In particular, we can take $M>1+2\log(1/\eps)/\pi$.

\begin{lemma}\label{lemma:area_left}
Let $\eps>0$ and $a\in(0,1)$, then there are rectangles $[0,T]\times[0,1]$ and $[0,R]\times[0,1]$, with $R>T$, that admit an $\eps$-admissible shape $E$ so that $a(E,T)=a$.
\end{lemma}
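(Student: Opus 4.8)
The plan is to construct the shape explicitly as an L-shaped region, using a single free parameter to tune the leftover percentage to the prescribed value $a$, while keeping a large enough sub-box of width $\gg 1$ inside so that Proposition \ref{proposition:admissible_shape} applies.

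First I would fix the height parameter $M$. Using the discussion preceding the lemma (inequality (\ref{equation:M_and_eps})), choose a natural number $M\geq 3$ large enough that the quantity $e^{-\pi(M-1)/2}/M$, times the implied constant in Proposition \ref{proposition:admissible_shape}, is less than $\eps$. By that proposition, any shape inside a rectangle $[0,T]\times[0,1]$ with $T\geq 6$ and $2M<T$ which contains $[0,2M]\times[0,1]$ is then $\eps$-admissible. So it remains to produce, for a suitable $T$, such a shape whose leftover percentage is exactly $a$.

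Next I would pick $T>2M/(1-a)$; since $a>0$ and $M\geq 3$ this forces $T>2M\geq 6$, so the hypotheses of Proposition \ref{proposition:admissible_shape} hold. Set
$$ h\coloneqq\frac{(1-a)T-2M}{T-2M},$$
and observe that $h\in(0,1)$: indeed $h>0$ because $(1-a)T>2M$, and $h<1$ because $a>0$. Inside $[0,T]\times[0,1]$ consider the L-shaped Jordan domain
$$ Q_{h}\coloneqq\big([0,2M]\times[0,1]\big)\cup\big([2M,T]\times[0,h]\big),$$
regarded as a quadrilateral with quad-vertices $0$, $p_{1}=T$, $p_{2}=T+ih$, $i$: the side $\gamma_{1}$ is the bottom edge $[0,T]$, the side $\gamma_{2}$ is the polygonal arc running from $i$ along the top to $2M+i$, down to $2M+ih$, and across to $T+ih$, the third side is the segment $[T,T+ih]\subset\{T\}\times[0,1]$, and the fourth side is the left edge $[0,i]$. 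Then $\gamma_{1},\gamma_{2}$ are disjoint simple arcs of exactly the type required in Lemma \ref{lemma:generateR}, with $\mathrm{Im}(p_{1})<\mathrm{Im}(p_{2})$, and $Q_{h}\subsetneq[0,T]\times[0,1]$, so Lemma \ref{lemma:generateR} supplies $R>T$ and a conformal embedding $E$ with $E([0,R]\times[0,1])=Q_{h}$. Since $[0,1]^{2}\subset[0,2M]\times[0,1]\subset Q_{h}$, the shape is $\eps$-admissible by the choice of $M$; and since $m(Q_{h})=2M+(T-2M)h=(1-a)T$, we obtain
$$ a(E,T)=\frac{m([0,T]\times[0,1])-m(Q_{h})}{m([0,T]\times[0,1])}=\frac{T-(1-a)T}{T}=a,$$
which is the assertion.

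The only real subtlety is verifying that the L-shaped region $Q_{h}$ genuinely fits the framework of Lemma \ref{lemma:generateR}: correctly identifying $p_{1},p_{2},\gamma_{1},\gamma_{2}$, checking that $\gamma_{1}$ and $\gamma_{2}$ are disjoint simple arcs ending on $\{T\}\times[0,1]$ with $\mathrm{Im}(p_{1})<\mathrm{Im}(p_{2})$, and noting the strict inclusion $Q_{h}\subsetneq[0,T]\times[0,1]$, which is what gives $R>T$. One could also smooth the reentrant corner at $2M+ih$ so that $\gamma_{2}$ is smooth, but this is not needed since a quadrilateral is only required to be a Jordan domain with four marked boundary points. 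Everything else is a direct computation of areas.
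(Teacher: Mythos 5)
Your proof is correct and follows the same strategy as the paper: pick $M$ via inequality~\eqref{equation:M_and_eps}, pick $T$ large enough that $2M/T<1-a$, produce a shape containing $[0,2M]\times[0,1]$ with the prescribed leftover percentage, and invoke Proposition~\ref{proposition:admissible_shape} and Lemma~\ref{lemma:generateR}. The only difference is that you make explicit the step the paper leaves implicit (``we can find a shape $E$ with $a(E,T)=a$'') by exhibiting the L-shaped quadrilateral $Q_h$ and solving for the height $h=\bigl((1-a)T-2M\bigr)/(T-2M)$, which is a nice concretization but not a distinct method.
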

\begin{proof}
Given $\eps>0$, choose $M>1$ satisfying (\ref{equation:M_and_eps}). Now take $T>2M$ large enough so that $$2M/T<(1-a)/2.$$ Now we can find a shape $E$ with $a(E,T)=a$ that is $\eps$-admissible (Proposition \ref{proposition:admissible_shape}). Lemma \ref{lemma:generateR} yields $E=E([0,R]\times[0,1])$ for some $R$, which can be taken to be as large as needed by modifying the quad-vertices on the right side of $E([0,R]\times[0,1])$.
\end{proof}

Observe that in the proof of Lemma \ref{lemma:area_left} we needed $4M/(1-a)<T$. The goal now is to apply Lemma \ref{lemma:area_left} to a collection of $T_{j}$'s and obtain a consistent $R$ among them.

\begin{proposition}\label{proposition:area_left}
Let $\eps>0$, $0<a<1$ and suppose $M$ is taken so that (\ref{equation:M_and_eps}) holds. If $\{T_{j}\}_{j=1}^{n}\subset\N$ satisfies $4M/(1-a)<T_{j}$ for $j=1,\ldots,n$, then we can find $R<\infty$ and $\eps$-admissible shapes $E_{j}([0,R]\times[0,1])\subset [0,T_{j}]\times[0,1]$ so that $a(E_{j},T_{j})=a$.
\end{proposition}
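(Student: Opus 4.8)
The plan is to realize the required shapes by an explicit one-parameter family of ``staircase'' quadrilaterals inside each $[0,T_j]\times[0,1]$ whose area is pinned to $(1-a)T_j$ while the modulus can be inflated without bound, and then to select a single value $R$ attained simultaneously by all $n$ of these families via the intermediate value theorem.

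First I would fix $j$, write $T=T_j$, and for $w\in(0,1-a)$ set $\beta(w)=T(1-a-w)/(1-w)$ and
$$ Q_j(w)=\big([0,\beta(w)]\times[0,1]\big)\cup\big([\beta(w),T]\times[0,w]\big).$$
This is a quadrilateral of the type in Lemma \ref{lemma:generateR}: its $b$-sides are $[0,i]$ and $[p_1,p_2]=\{T\}\times[0,w]$, and its $a$-sides are $\gamma_1=[0,T]\times\{0\}$ and the staircase-shaped upper boundary $\gamma_2$ from $i$ to $p_2$, which are disjoint simple arcs. A one-line computation gives $m(Q_j(w))=\beta(w)+(T-\beta(w))w=(1-a)T$, so the leftover percentage of this shape is exactly $a$, independently of $w$. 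Since $\beta(0^{+})=(1-a)T>4M$, I can fix $w_0^{(j)}\in(0,1-a)$ small enough that $\beta(w)>2M$ for all $w\in(0,w_0^{(j)}]$, whence $[0,2M]\times[0,1]\subset Q_j(w)$ (in particular $[0,1]^2\subset Q_j(w)$). Because $T_j\in\N$ with $T_j>4M/(1-a)>8$ and $2M<T_j$, Proposition \ref{proposition:admissible_shape} applies to any shape containing $[0,2M]\times[0,1]$ in $[0,T_j]\times[0,1]$ and makes it $\simeq e^{-\pi(M-1)/2}/M$-admissible; since $M$ satisfies (\ref{equation:M_and_eps}) we have $e^{-\pi(M-1)/2}/M\le e^{-\pi(M-1)/2}/(M-1)<\eps$, so every $Q_j(w)$ is an $\eps$-admissible shape.

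The key point is then the behaviour of the modulus $R_j(w):=M(Q_j(w))$ of the family joining $\gamma_1$ to $\gamma_2$. The crossing paths of the thin rectangle $[\beta(w),T]\times[0,w]$ form a subfamily (its two long sides lie on $\gamma_1$ and $\gamma_2$), so by the monotonicity in Proposition \ref{proposition:modulus_properties} one gets $R_j(w)\ge(T-\beta(w))/w\to\infty$ as $w\to0^{+}$, while $R_j(\cdot)$ is continuous on $(0,w_0^{(j)}]$ since the modulus depends continuously on the quadrilateral. A continuous function on such an interval that blows up at the left end attains its infimum $c_j$ and has range exactly $[c_j,\infty)$. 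I would then put $R=\max_{1\le j\le n}c_j<\infty$; for each $j$ we have $R\in[c_j,\infty)$, so there is $w_j\in(0,w_0^{(j)}]$ with $M(Q_j(w_j))=R$, and Lemma \ref{lemma:generateR} supplies a conformal embedding $E_j$ of $[0,R]\times[0,1]$ onto $Q_j(w_j)$ taking vertices to quad-vertices. By the previous paragraph each $E_j$ is an $\eps$-admissible shape inside $[0,T_j]\times[0,1]$ with $a(E_j,T_j)=a$, which is the assertion.

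The only mildly delicate checks are that the staircase really is a legitimate family of shapes (disjointness of the two $a$-sides, and the vertex conventions of Lemma \ref{lemma:generateR}) and the continuity-and-blow-up of $R_j(w)$; conceptually the whole argument just exploits that the leftover area and the modulus of a shape are essentially decoupled, the area being insensitive to the thin pinching that sends the modulus to infinity, so that one has enough freedom to hit a prescribed common modulus across all the $T_j$ at once.
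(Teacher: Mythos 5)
Your proof is correct and follows the same overall strategy as the paper: for each $j$ produce a one-parameter family of $\eps$-admissible shapes in $[0,T_j]\times[0,1]$ whose leftover percentage is pinned at $a$ while the modulus varies continuously and blows up, then invoke the intermediate value theorem to hit a common $R$. The paper does this more tersely: it quotes Lemma~\ref{lemma:area_left} to obtain one shape of leftover percentage $a$ and then ``modifies the quad-vertices on the right side'' --- sliding $p_1,p_2$ toward each other along $\{T_j\}\times[0,1]$ to shrink the right $b$-side (hence inflate the modulus to any prescribed value) while leaving the underlying set, and therefore the leftover area, fixed. You replace that implicit move with an explicit staircase family $Q_j(w)$ whose area is algebraically locked at $(1-a)T_j$, and you extract the modulus blow-up from the thin sub-rectangle $[\beta(w),T_j]\times[0,w]$ via the subfamily-monotonicity in Proposition~\ref{proposition:modulus_properties}, giving $R_j(w)\ge (T_j-\beta(w))/w\to\infty$. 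You also verify directly that $\beta(w)>2M$ on your chosen parameter range so Proposition~\ref{proposition:admissible_shape} and the bound from~(\ref{equation:M_and_eps}) give $\eps$-admissibility. The result is a more self-contained and quantitative version of the paper's argument; the only trade-off is that it commits to a specific geometry, whereas the paper's quad-vertex-moving device works for any shape produced by Lemma~\ref{lemma:area_left}. Both routes are valid and reach the same conclusion.
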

\begin{proof}
From Lemma \ref{lemma:area_left} it follows that we can find shapes $$\tilde{E}_{j}=\tilde{E}_{j}([0,R_{j}]\times[0,1])\subset [0,T_{j}]\times[0,1]$$ that are $\eps$-admissible and so that $a(\tilde{E}_{j},T_{j})=a$. If we take $R=\max R_{j}$, then we can modify the quad-vertices on the right side of the quadrilateral $\tilde{E}_{j}$ by using Lemma \ref{lemma:generateR} so that $E_{j}=E_{j}([0,R]\times[0,1])$ and with $E_{j}$ being the same subset of $[0,T_{j}]\times[0,1]$ as $\tilde{E}_{j}$.
\end{proof}

We can now prove that there are positive area flexible curves, which proves Theorem \ref{theorem:non-injectivity} for $s=2$.

\begin{theorem}[Positive area flexible curves]\label{theorem:positive_area}
Let $h\colon\Sone\to\Sone$ be an orientation-preserving log-singular circle homeomorphism and $\{\eps_{n}\}_{\N}$ a sequence of positive real numbers so that if $K_{n}=\prod_{j=1}^{n}(1+\eps_{j})$, then $K=\lim_{n}K_{n}<\infty$. Consider $\{a_{n}\}_{n}\subset(0,1)$ so that $\prod_{n\in\N}a_{n}>0$. Then there are $K$-quasiconformal maps $f\colon\D\to\Omega$, $g\colon\D^{*}\to\Omega^{*}$ so that $\Omega,\Omega^{*}$ are disjoint domains, $J=\partial\Omega=\partial\Omega^{*}$ is a Jordan curve satisfying  $h=g^{-1}\circ f$ and so that
$$ m(J)=\prod_{n\in\N} a_{n}>0.$$ 
Therefore, there is a positive area flexible curve with welding $h$.
\end{theorem}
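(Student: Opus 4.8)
The plan is to re-run the inductive construction behind Theorem \ref{theorem:log-singular_welding}, but at every step to choose the shapes $E_k$ feeding the map $\psi_n$ of Theorem \ref{theorem:psi} so that they have a prescribed \emph{leftover percentage}, thereby controlling how much area the annulus $A_n=\C\setminus(\Omega_n\cup\Omega_n^*)$ loses from one step to the next. Since the statement imposes no condition on the initial domains, I would start from conformal $f_0\colon\D\to\Omega_0$, $g_0\colon\D^*\to\Omega_0^*$ onto smooth Jordan domains with disjoint closures. Assume inductively that $K_n$-quasiconformal maps $f_n,g_n$ have been produced satisfying conclusions (i)--(ii) of Theorem \ref{theorem:log-singular_welding}, together with $m(A_{n+1})=a_{n+1}(1+\delta_n)m(A_n)$ for errors $\delta_n$ to be controlled below. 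To construct $\psi_n$ we invoke Proposition \ref{proposition:phi} with $N=N_{n+1}$ large and $R=R_{n+1}\simeq A/\pi$ large; because the moduli $T_k$ of the quadrilaterals $Q_k$ (and the $T_k^*$ of the $Q_k^*$) tend to $\infty$ as $\max_k|x_{k+1}-x_k|\to0$, we may assume $T_k,T_k^*>4M/(1-a_{n+1})$ with $M\simeq\log(1/\eps_{n+1})$, which is exactly the hypothesis of Proposition \ref{proposition:area_left}. That proposition then supplies, with one $R$ good for all $k$, $\eps_{n+1}$-admissible shapes $E_k([0,R]\times[0,1])\subset[0,T_k]\times[0,1]$ with $a(E_k,T_k)=a_{n+1}$ (and the same on the $g$-side). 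Plugging these into the proof of Theorem \ref{theorem:psi} changes nothing about the quasiconformality bounds, so all conclusions of Theorem \ref{theorem:log-singular_welding} persist; in particular $f_n\to f$, $g_n\to g$ to $K$-quasiconformal maps onto the complementary components of a Jordan curve $J$ with $h=g^{-1}\circ f$, and $J$ is flexible, since flexibility depends only on the welding class and, by Bishop's Theorem \ref{theorem:log-singular_to_flexible}, log-singular homeomorphisms are weldings of flexible curves.

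It remains to verify the per-step area bound, which will give $m(J)=\prod_n a_n(1+\delta_{n-1})$ up to normalization. Since $\Omega_n\subset\Omega_{n+1}$ and $\Omega_n^*\subset\Omega_{n+1}^*$, the $A_n$ are nested, and as the $\partial\Omega_n$ are smooth (hence null) we have $m(J)=\lim_n m(A_n)$. Fix a column $\mathrm{col}_k=Q_k\cup Q_k^*$ of the foliation $\mathcal C_n$; the columns tile $A_n$, so $m(A_n)=\sum_k m(\mathrm{col}_k)$. Up to the area loss from replacing $\psi_n\circ\phi_n$ by $(\psi_n\circ\phi_n)(t_{n+1}\cdot)$ with $t_{n+1}\uparrow 1$ — which is $o(1)\,m(\mathrm{col}_k)$ — one has $\Omega_{n+1}\cap\mathrm{col}_k=\psi_n(W_k)=(e_k^{-1}\circ\beta_k^{-1})\bigl(\iota_k(E_k([0,R]\times[0,1]))\bigr)\subset Q_k$ and similarly on the $Q_k^*$ side, so $m(A_{n+1}\cap\mathrm{col}_k)\approx m(Q_k\setminus\psi_n(W_k))+m(Q_k^*\setminus\psi_n^*(W_k^*))$. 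Two observations reduce this to the prescribed percentage. First, by Propositions \ref{proposition:beta_2} and \ref{proposition:admissible_shape} the maps $\beta_k$ and $e_k$ are the identity off a collar $C_k\subset Q_k$ of the form $\iota_k([0,M(\tilde Q_k)]\times[0,1])$, whose area fraction $m(C_k)/m(Q_k)\simeq M(\tilde Q_k)/T_k$ is as small as we wish once $M(\tilde Q_k)/R_k$ is small (legitimate, since $R_k$ is at our disposal); hence $m(Q_k\setminus\psi_n(W_k))=\int_{[0,T_k]\times[0,1]\setminus E_k}|\iota_k'|^2\,dm+O(m(C_k))$. Second, $\iota_k$ is $C^1$-close to a linear scaling: $Q_k$ lies in the thin annulus $A_n$ (radial width $\le (2/3)^nL$), is bounded by two short, nearly straight hyperbolic geodesics, by an arc of the smooth curve $\partial\Omega_n$ of tangential length $\simeq 1/N_n$, and by the nearly straight midpoint arc $\sigma_k$, so after rescaling by $N_n$ it is $C^1$-close — as close as desired once $N_n$ is large enough to beat the curvature of $\partial\Omega_n$ at this step — to a genuine $T_k\times 1$ rectangle, making $\iota_k$ close to the scaling by $1/N_n$. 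Therefore $\int_{[0,T_k]\times[0,1]\setminus E_k}|\iota_k'|^2\,dm=(1+o(1))\,a_{n+1}\,m(Q_k)$.

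Summing over $k$ and over the $g$-side gives $m(A_{n+1})=a_{n+1}(1+\delta_n)m(A_n)$, where $\delta_n$ absorbs the $t_{n+1}$-loss, the collar terms, and the nonlinearity of $\iota_k$; all of these can be forced below any prescribed summable sequence by choosing $N_{n+1},R_{n+1},M(\tilde Q_k^{(n+1)})$ large and $t_{n+1}$ close to $1$, so $\sum_n|\delta_n|<\infty$ and $m(J)=m(A_0)\prod_n a_n(1+\delta_{n-1})>0$. Since moduli are scale-invariant, the $\delta_n$ are unchanged under a global rescaling of the whole construction; running it once normalized and then rescaling by $\bigl(\prod_n a_n\,/\,m(A_0)\prod_n(1+\delta_{n-1})\bigr)^{1/2}$ makes $m(J)=\prod_n a_n$ exactly. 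This produces a positive area (hence Hausdorff dimension $2$) flexible curve with welding $h$, proving Theorem \ref{theorem:non-injectivity} for $s=2$. The principal difficulty is precisely the area-transfer estimate above: guaranteeing, uniformly over all columns and all steps, that the combinatorially prescribed leftover percentage survives every conformal and quasiconformal change of coordinate, which is what forces one to keep the nonconformal part of $\psi_n$ confined to regions of vanishing area fraction — this is exactly where the control on the support of the dilatations in Propositions \ref{proposition:admissible_shape} and \ref{proposition:beta_2} is indispensable — and the quadrilaterals $Q_k$ close to rectangles.
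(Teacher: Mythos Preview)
Your proposal is correct and shares the paper's global strategy---rerun Theorem~\ref{theorem:log-singular_welding} with the shapes in Theorem~\ref{theorem:psi} chosen to control the leftover area of each $A_n$---but the paper handles the area-transfer step differently. You fix the rectangle-leftover $a(E_k,T_k)=a_{n+1}$ and then argue that $\iota_k$ is $C^1$-close to a linear scaling (since the rescaled $Q_k$ is nearly a $T_k\times1$ rectangle once $N_n$ beats the curvature of $\partial\Omega_n$), pushing the error into summable $\delta_n$'s. The paper instead exploits the freedom of shapes directly: since $\iota_k$ is a fixed conformal map and the shape can be chosen arbitrarily off the collar $[0,2M]\times[0,1]$, one simply picks $E_k$ so that $m(\iota_k([0,T_k]\times[0,1]\setminus E_k))/m(Q_k)=a_n$ \emph{exactly}, whatever the resulting rectangle-percentage $\tilde a_k$ may be; the only constraint is that the collar has $Q_k$-area fraction comparable to $M/T_k$, which is negligible. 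This hits the target without your linearity argument and without error accumulation. One small imprecision in your write-up: $\beta_k$ is not the identity off the collar, only conformal there (Proposition~\ref{proposition:beta_2} bounds where the \emph{dilatation} lives, not where $\beta_k$ moves points); nevertheless, since $\beta_k$ is a self-map of $Q_k$ fixing the quad-vertices with dilatation supported on a set of vanishing area fraction, it is close to the identity by compactness of normalized quasiconformal maps, so your area estimate still goes through.
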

\begin{proof}
We construct a sequence of annuli $A_{n}$ as in Proposition \ref{proposition:phi} and Theorem \ref{theorem:psi} so that
$$ \liminf_{n}m(A_{n})\geq m(A_{0})\prod_{n\in\N}a_{n},$$
where $m$ denotes Lebesgue measure. To do so, it suffices to see that sequence of annulus $\{A_{n}\}_{n}$ can be taken to satisfy $m(A_{n})=a_{n} m(A_{n-1})$ for $n\in\N$. See Figure \ref{figure:positive_area_curve} for an illustration.

\begin{figure}[h]
	\includegraphics[scale=1]{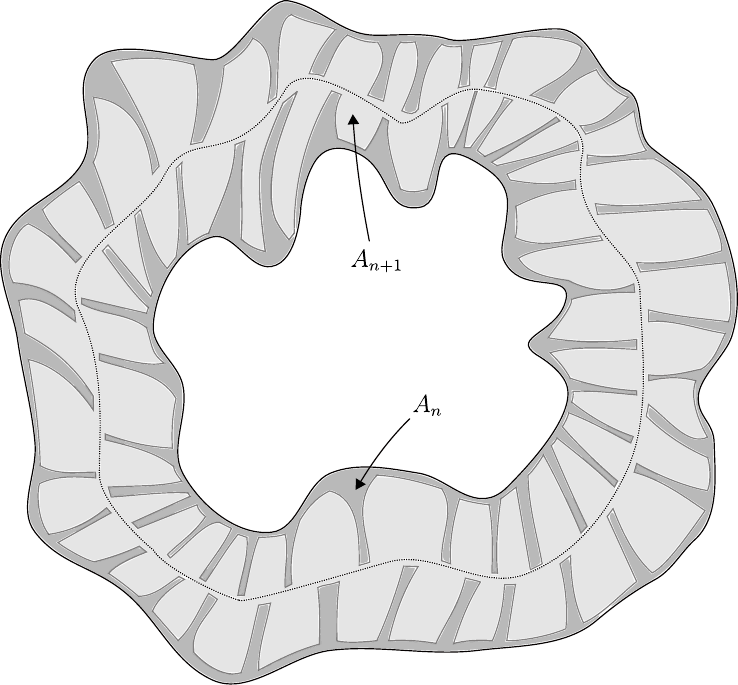}
	\centering
	\caption{Construction of a positive area flexible curve with welding $h\colon\Sone\to\Sone$, a fixed log-singular homeomorphism. $A_{n+1}$ takes a big percentage of the area of $A_{n}$. $A_{n+1}$ corresponds to the annulus that has a lighter color. $A_{n}$ is $\overline{A_{n+1}}$ together with the two annulus of darker color.}
	\label{figure:positive_area_curve}
\end{figure}

For then, when we apply Lemma \ref{lemma:control_approx} in the proof of Theorem \ref{theorem:log-singular_welding}, if we make the $\eps$ there smaller at each step in a suitable way, the resulting Jordan curve $J$ will have area $$ m(A_{0})\geq m(J)\geq\frac{m(A_{0})}{2}\prod_{n\in\N}a_{n}>0.$$ 
By re-scaling by a suitable conformal map of $\C$ we can obtain $m(J)=\prod_{n}a_{n}>0$. We find now a quasiconformal map $H\colon\C\to\C$ with zero dilatation over $J$ so that $H^{-1}\circ f$ and $H^{-1}\circ g$ are conformal. Then,
$$ h=g^{-1}\circ f_{|\Sone}=(H^{-1}\circ g)^{-1}\circ (H^{-1}\circ f)_{|\Sone}.$$
Since quasiconformal maps preserve sets of zero area, it follows that $H^{-1}(J)$ is a positive area flexible curve.

We proceed to explain how to construct such sequence of annuli, which is illustrated in Figure \ref{figure:positive_area_curve}. When building the quasiconformal mapping $f_{n}$ at step $n>1$ in the proof of Theorem \ref{theorem:log-singular_to_flexible}, we had to make $|x_{k}-x_{k+1}|$ small for a number of reasons, when doing so the value of the modulus of the quadrilaterals $Q_{k}$ (see Figure \ref{figure:psi_n}, Figure \ref{figure:diagram} and Figure \ref{figure:psi_n_2}), which we call $T_{k}$, increases. We take $T_{k}$ to be large enough so that the hypothesis from Proposition \ref{proposition:area_left} are satisfied for $\eps_{n}$ and $\tilde{a}_{n}$ (which we specify later).

\begin{figure}[h]
	\includegraphics[scale=1]{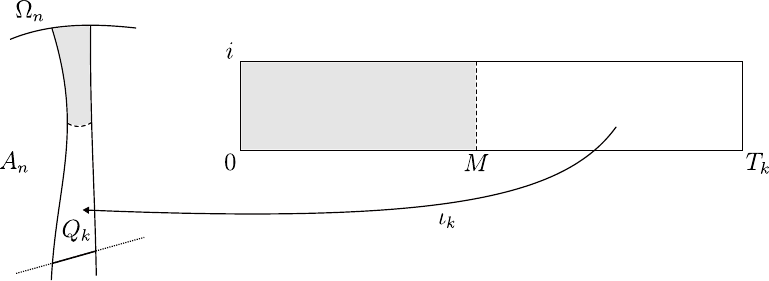}
	\centering
	\caption{Representation of $\iota_{k}\colon[0,T_{k}]\times[0,1]\to Q_{k}$, which is conformal and sends vertices to quad-vertices, with the vertices $0,i$ corresponding to the quad-vertices of $Q_{k}$ contained in $\partial\Omega_{n}$. The dashed segment $[M,M+i]$ is mapped to the dashed curve in $Q_{k}$ via $\iota_{k}$. The percentage $m(\iota_{k}([0,M]\times[0,1]))/m(Q_{k})$ is comparable to $M/T_{k}$.}
	\label{figure:positive_area_proof}
\end{figure}

Consider the conformal map $\iota_{k}$ so that $$\iota_{k}([0,T_{k}]\times[0,1])=Q_{k},$$ mapping vertices to quad-vertices, where $Q_{k}$ is defined in Theorem \ref{theorem:psi}. See Figure \ref{figure:diagram}, Figure \ref{figure:psi_n_2} and Figure \ref{figure:positive_area_proof}. The map $\iota_{k}$ changes area, but the $M$ as in Proposition \ref{proposition:area_left} needed to obtain $\eps_{n}$-admissible shapes only depends on $\eps_{n}$ (see (\ref{equation:M_and_eps})). Modulus estimates justify that $M/T_{k}$ is comparable to $$m(\iota_{k}([0,M]\times[0,1]))/m(Q_{k})$$ by a factor $L$ that only depends on $M$ (and not on $T_{k}$). 

It follows that we can find $\tilde{a}_{k}\in(0,1)$, in terms of $a_{k}$ and $L$, and apply Proposition \ref{proposition:area_left} with $\eps_{n},\tilde{a}_{n}$ to obtain an $\eps_{n}$-admissible shape $E_{k}\subset[0,T_{k}]\times[0,1]$ so that $a(E_{j}, T_{k})=\tilde{a}_{n}$, and satisfying that 
$$ m(\iota_{k}([0,T_{k}]\times[0,1]\setminus E_{k}))/m(Q_{k})=a_{n}.$$

Therefore, $m(A_{n})=a_{n} m(A_{n-1})$, which completes the proof. \end{proof}

\begin{remark}
Throughout Section \ref{section:construction2}, Section \ref{section:shapes} and Section \ref{section:positive_area} we have assumed, to simplify the notation, that the $R_{k}$ provided by Proposition \ref{proposition:phi} at step $n$ in the proof of Theorem \ref{theorem:log-singular_welding} are the same. As we have emphasized, this is not true and changing the arguments accordingly requires some obvious modifications. However, when proving Proposition \ref{proposition:area_left}, we could have also seen that we can match each $R_{k}$ and obtain the desired leftover area (which follows from the arguments there). Therefore, this simplification of the notation does not alter the validity of the result.
\end{remark}


\section{Flexible curves of dimension $1$}\label{section:dim_1}

In this section we prove Theorem \ref{theorem:non-injectivity} for $s=1$. More precisely:

\begin{theorem}[$1$-dimensional flexible curves]\label{theorem:dim_1}
Let $h\colon\Sone\to\Sone$ be an orientation-preserving log-singular circle homeomorphism. Then there is a flexible curve $\Gamma$ with welding $h$ that has Hausdorff dimension $1$.
\end{theorem}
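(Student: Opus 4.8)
The plan is to run the iterative construction from Theorem \ref{theorem:log-singular_welding} and Theorem \ref{theorem:positive_area}, but instead of \emph{growing} area at each step we want to keep the resulting curve as close to a rectifiable/$1$-dimensional curve as possible. The key new input is quantitative: a planar $(1+\eps)$-quasiconformal map distorts Hausdorff dimension by at most a factor that tends to $1$ as $\eps\to 0$ (in fact the classical bound $\dim H(\Gamma) \le 1 + O(\eps)$ for quasicircles, going back to the Astala distortion theorem), and more concretely, such a map differs from a $\C$-linear map by an error controlled by $\eps$ on compact sets. So the strategy is: first build, by the method of Section \ref{section:positive_area} run ``in reverse,'' a sequence of quasiconformal maps $f_n, g_n$ with $K = \prod(1+\eps_n)$ as close to $1$ as we like, producing a Jordan curve $\Gamma$ with welding $h$ that is a $K$-quasicircle; then arrange the construction of $\psi_n$ (via the admissible shapes of Section \ref{section:shapes}, with the dilatation of each $\alpha_k$, $\beta_k$ supported on small controlled subquadrilaterals $[0,M]\times[0,1]$) so that the pieces added at stage $n$ form a curve whose Hausdorff content at the relevant scale stays controlled. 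Since $\dim H \ge 1$ automatically (it is a curve), the work is entirely in the upper bound $\dim H(\Gamma) \le 1$.

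First I would set up the bookkeeping. At step $n$ we have $f_n\colon\D\to\Omega_n$, $g_n\colon\D^*\to\Omega_n^*$, both $K_n$-quasiconformal and conformal off the thin annulus $A_n = \C\setminus(\Omega_n\cup\Omega_n^*)$ (more precisely, off the small quadrilaterals $\psi(\tilde Q_k)$, whose total measure and diameter we can make as small as we want by choosing $\max_k|x_{k+1}-x_k|$ small, i.e. $N$ large, in Proposition \ref{proposition:phi}). The curve $\Gamma$ is the Hausdorff limit of $\partial\Omega_n$; it equals $f(\Sone)$ where $f = \lim f_n$ is $K$-quasiconformal. Now here is the point: rather than invoke the global quasicircle dimension bound directly (which already gives $\dim H(\Gamma)\le 1 + C(K-1)$, and letting $K\to 1$ does not by itself give exactly $1$ for a fixed curve), I would track the covering of $\Gamma$ stage by stage. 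At stage $n$, $\Gamma$ is covered by $\partial\Omega_n$ fattened by $(2/3)^n L$ (by estimate (i) of Theorem \ref{theorem:log-singular_welding}); and $\partial\Omega_n$ is a curve built from $f_n$ of the unit circle, so it has a natural covering by $\asymp$ (number of $W_k$'s at stages $\le n$) boxes. The linear-distortion estimate for $(1+\eps_n)$-quasiconformal maps (the analogue of Lemma \ref{lemma:HM_estimate}, or Theorem \ref{theorem:Ahlfors_formula}) lets me bound the diameters of the images of these boxes: each $\psi_n$ is, off the tiny bad set, conformal, and the bad set contributes only a small, summably-controlled increment to the $1$-dimensional Hausdorff content. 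Choosing $\eps_n\to 0$ fast enough (so that $\sum_n (\text{distortion at step } n) < \infty$ and in fact the tail contributions are negligible), the $H^\infty_1$-content of $\Gamma$ computed from these coverings stays bounded, uniformly in the scale, which forces $\dim H(\Gamma)\le 1$.

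More concretely, the workflow is: (1) fix a sequence $\eps_n\downarrow 0$ with $\prod(1+\eps_n)=K<\infty$ but also $\sum_n n\,\eps_n^{1/2}<\infty$ or similar (the precise rate to be tuned); (2) run the induction of Theorem \ref{theorem:log-singular_welding}, at each step choosing $N$ (hence $M\simeq N$) large enough that the bad quadrilaterals $\tilde Q_k$ have total diameter $\le \eps_n\cdot(2/3)^n$, and invoking Proposition \ref{proposition:admissible_shape} with $M$ large so the $\alpha_k$ are $(1+\eps_n)$-quasiconformal with dilatation on $[0,M]\times[0,1]$; (3) at each step estimate $\mathrm{diam}(f_n(I))$ for dyadic-type arcs $I\subset\Sone$, using that $f_n = f_{n-1}$ composed with a map that is conformal off a negligible set and $(1+\eps_n)$-close to linear there, to get $\mathrm{diam}(f_n(I)) \le (1+C\eps_n)\mathrm{diam}(f_{n-1}(I)) + (\text{small additive error})$; (4) conclude that $\sum_I \mathrm{diam}(f_n(I))$ over a covering of $\Sone$ at scale $\delta$ stays bounded as $n\to\infty$ and $\delta\to 0$, giving $H^\infty_1(\Gamma)<\infty$ hence $\dim H(\Gamma)=1$. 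The main obstacle will be step (3)--(4): controlling how the \emph{additive} errors from the bad sets accumulate across infinitely many stages at all scales simultaneously --- one must be careful that a box introduced at stage $n$ is only ever modified at later stages by maps that are very nearly conformal on it, so its contribution to the length does not blow up; this is exactly where the support control on the dilatations of $\alpha_k, \beta_k$ (and the freedom in choosing the shapes, Lemma \ref{lemma:generateR}) is essential, and it is the heart of the argument that is genuinely new compared to Bishop's original construction.
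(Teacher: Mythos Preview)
Your plan has the right ingredients but leaves open precisely the step you yourself flag as the ``main obstacle,'' and that gap is real. You keep the $f_n$ genuinely $K_n$-quasiconformal and then try to track $\mathrm{diam}(f_n(I))$ multiplicatively through the tower via estimates of the form $\mathrm{diam}(f_n(I))\le(1+C\eps_n)\,\mathrm{diam}(f_{n-1}(I))+(\text{small})$. But a $(1+\eps)$-quasiconformal map is only $1/(1+\eps)$-H\"older, not $(1+C\eps)$-Lipschitz, and the distortion of a small arc is a global quantity, not controlled by the dilatation near the arc alone; so step~(3) as written does not follow from the support control on $\mu_{\alpha_k},\mu_{\beta_k}$. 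Moreover you are aiming at $H_1^\infty(\Gamma)<\infty$, i.e.\ finite length, which is strictly stronger than $\dim=1$ and is probably false for these curves.

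The paper sidesteps all of this with one extra idea: \emph{correct to conformal at every step}. After building the $(1+\eps_n)$-quasiconformal extension, it post-composes with the global solution $H_n$ of the Beltrami equation so that the new $f_n,g_n$ are actually conformal; since $\|\mu_{H_n}\|_\infty\lesssim\eps_n$, Corollary~\ref{corollary:small_Beltrami} (the simplest consequence of your Theorem~\ref{theorem:Ahlfors_formula}) gives $|H_n(z)-z|\le C\eps_n$. Now the covering argument is almost trivial: at stage $n$ one chooses the shapes so that the set between the two images is a finite union of \emph{smooth arcs} (this is the real use of the freedom in Lemma~\ref{lemma:generateR}), covers it by finitely many disks with $\sum_j (r_j^n)^{1+1/n}\le 2^{-n}$, and takes $\eps_n$ small enough and $t$ close enough to $1$ that both $A_n$ and $H_n(A_n)$ remain inside this cover. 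Nesting the covers gives $\Gamma\subset\bigcup_j D_j^n$ for every $n$, hence $\mathcal{H}\mathrm{dim}(\Gamma)\le 1+1/n$ for all $n$. The point is that straightening at each stage replaces your infinite multiplicative bookkeeping by a single additive $C^0$-perturbation per step, which a finite cover survives.
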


\begin{remark}
	In fact, our arguments in this section will justify that if $m$ is a gauge function with $m(t)=o(t)$ and $h\colon\Sone\to\Sone$ is a log-singular circle homemorphism, then there is a flexible curve $\Gamma$ with welding $h$ so that $H_{m}(\Gamma)=0$.
\end{remark}

We will proceed as in the proof of Theorem \ref{theorem:log-singular_welding}, that is, we consider conformal maps $f_{0}\colon\D\to\Omega_{0}$ and $g_{0}\colon\D^{*}\to\Omega_{0}^{*}$, where $\Omega_{0},\Omega_{0}^{*}$ are smooth Jordan domains with disjoint closures. However, after we construct the maps $\tilde{f}_{1}\colon\D\to\tilde{\Omega}_{1}$ and $\tilde{g}_{1}\colon\D\to\tilde{\Omega}_{1}^{*}$ from Theorem \ref{theorem:log-singular_welding}, we consider the quasiconformal map $H_{1}\colon\C\to\C$ fixing $0,1,\infty$ so that $f_{1}=H_{1}\circ \tilde{f}_{1}$, $g_{1}=H_{1}\circ\tilde{g}_{1}$ are conformal and with $\mu_{H_{1}}=\mu_{1}$ having zero dilatation on $\C\setminus(\tilde{\Omega}_{1}\cup\tilde{\Omega}_{1}^{*})$.

From Theorem \ref{theorem:Ahlfors_formula} we obtain:

\begin{corollary}\label{corollary:small_Beltrami}
	Let $\mu\colon\C\to\D$ be a Beltrami coefficient, that is $\mu$ is a measurable function with $||\mu||_{\infty}\leq k<1$, and let $H\colon\C\to\C$ be the unique quasiconformal map with $\mu_{H}=\mu$ that fixes $0,1,\infty$. If $\mu$ has compact support contained in $\D(0,R)$, then for $z\in \D(0,R)$ we have
	$$ |H(z)-z|\leq C||\mu||_{\infty}.$$
\end{corollary}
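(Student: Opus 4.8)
The plan is to deduce this directly from Ahlfors' variational formula (Theorem \ref{theorem:Ahlfors_formula}) applied along the linear path $t\mapsto t\mu$ connecting the zero Beltrami coefficient to $\mu$. First I would set $\mu(t,z)=t\mu(z)$, so that in the notation of Theorem \ref{theorem:Ahlfors_formula} we have $\nu(t,z)=\mu(z)$ and the error term $\varepsilon\equiv 0$; this is certainly an admissible family since $\|t\mu\|_\infty\le \|\mu\|_\infty<1$ for $t\in[0,1]$, and $\nu$ is constant, hence continuous, in $t$. Writing $H^{t}=f^{t\mu}$ for the normalized solution, Theorem \ref{theorem:Ahlfors_formula} gives $\tfrac{\partial}{\partial t}H^{t}(\zeta)=\dot f(\zeta,t)$ with
$$ \dot f(\zeta,t)=-\frac{1}{\pi}\iint \mu(z)\, R\!\left(H^{t}(z),H^{t}(\zeta)\right)\left((H^{t})_{z}(z)\right)^{2}dm(z), $$
and since $H^{0}=\mathrm{id}$ we have the representation $H(\zeta)-\zeta=H^{1}(\zeta)-\zeta=\int_{0}^{1}\dot f(\zeta,t)\,dt$.

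The core estimate is then an $L^\infty$ bound on $\dot f(\zeta,t)$ for $\zeta\in\D(0,R)$, uniform in $t\in[0,1]$. Here I would use that $\mu$ is supported in $\D(0,R)$, so the integral is over a fixed compact set; that $\|\mu\|_\infty$ factors out as a multiplicative constant by Hölder; that the kernel $R(w,\zeta)=\frac{\zeta(\zeta-1)}{w(w-1)(w-\zeta)}$ is, for $\zeta$ ranging over a compact set, in $L^{1}(dm(w))$ over compact sets with a bound depending only on $R$ (the only singularities are integrable poles at $w=0,1,\zeta$, and the growth at infinity is harmless after a change of variables or because $H^{t}$ maps $\D(0,R)$ into a bounded region); and that the $W^{1,2}_{\mathrm{loc}}$ derivative $(H^{t})_{z}$ has an $L^{2}$ norm over $\D(0,R')$ bounded uniformly in $t$ — this follows from compactness of the family of $K$-quasiconformal maps fixing $0,1,\infty$ with $K=(1+\|\mu\|_\infty)/(1-\|\mu\|_\infty)$, together with standard $L^p$-bounds on the Beurling transform (as in \cite{AstalaBook}), since $(H^{t})_{z}=\mathrm{id}+$ (Beurling transform of $(H^{t})_{\bar z}$) and $\|(H^{t})_{\bar z}\|_\infty\le\|\mu\|_\infty\|(H^{t})_z\|$. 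Combining, $\|\dot f(\cdot,t)\|_{L^\infty(\D(0,R))}\le C(R)\,\|\mu\|_\infty$ uniformly in $t$, and integrating in $t$ over $[0,1]$ yields $|H(z)-z|\le C(R)\|\mu\|_\infty$ on $\D(0,R)$.

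One point that needs care: the bound should genuinely be uniform in $t\in[0,1]$, not merely pointwise; since $t\mu$ is a continuous (indeed affine) path in $L^\infty$ with $\|t\mu\|_\infty\le\|\mu\|_\infty$, all the constants above — the quasiconformality constant, the distortion estimates, the $L^2$ bound on $(H^t)_z$, and the size of the image $H^t(\D(0,R))$ — are controlled by $\|\mu\|_\infty$ and $R$ alone, so this is fine. A second point: the constant $C$ in the statement is asserted to be absolute, but as written the estimate really depends on $R$ through the size of the support and the domain; I expect the intended reading is that $R$ is regarded as fixed (or absorbed), and in the application in Section \ref{section:dim_1} the relevant $R$ is indeed bounded. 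The main obstacle in executing this is bookkeeping the uniform-in-$t$ $L^p$ control of $(H^t)_z$ via the Beurling transform and verifying the $L^1$ kernel bound with the correct dependence on $R$; everything else is a direct substitution into Theorem \ref{theorem:Ahlfors_formula}.
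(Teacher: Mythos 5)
Your proof follows the paper's intended route exactly: the paper states the corollary immediately after Theorem \ref{theorem:Ahlfors_formula} (offering Ahlfors' Chapter V as an alternative), so integrating the variational derivative $\dot f(\zeta,t)$ along the path $t\mapsto t\mu$ from $H^0=\mathrm{id}$ to $H^1=H$ is precisely the expected argument.

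One small bookkeeping remark that tightens the estimate of $\dot f$: the factor $\bigl((H^t)_z(z)\bigr)^2\,dm(z)$ is best handled not by an $L^2$ bound on $(H^t)_z$ (note $(H^t)_z^2$ is only $L^1_{\mathrm{loc}}$, and it must be paired with a kernel that has poles), but by the change of variables $w=H^t(z)$. The Jacobian $J_{H^t}=|(H^t)_z|^2-|(H^t)_{\bar z}|^2\geq(1-\|\mu\|_\infty^2)|(H^t)_z|^2$ converts $|(H^t)_z|^2\,dm(z)$ into at most $(1-\|\mu\|_\infty^2)^{-1}\,dm(w)$, so the whole integral reduces to $\|\mu\|_\infty(1-\|\mu\|_\infty^2)^{-1}\int_{B}|R(w,H^t(\zeta))|\,dm(w)$ over a bounded set $B\supset H^t(\D(0,R))$, which is uniformly controlled (for $t\in[0,1]$ and $\zeta\in\D(0,R)$) by compactness of the normalized $K$-quasiconformal family and the fact that $R(w,\zeta')$ has only integrable poles at $w=0,1,\zeta'$. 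This replaces the somewhat delicate $L^p$ control of $(H^t)_z$ via the Beurling transform that you flag as the main obstacle. Your observation that $C$ genuinely depends on $R$ (and on $k$ through the quasiconformality constant) is also correct, and is consistent with how the corollary is used in Section \ref{section:dim_1} where $\|\mu\|_\infty$ is taken small.
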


Corollary \ref{corollary:small_Beltrami} can also be proved by using the results from Section A in \cite[Chapter V]{AhlforsQC} together with \cite[Chapter V.B, Theorem 1]{AhlforsQC}.

We will use Corollary \ref{corollary:small_Beltrami} applied to the Beltrami coefficient $\mu_{1}$ given as before. It follows from Proposition \ref{proposition:interpolation} (Proposition \ref{proposition:admissible_shape}) and Proposition \ref{proposition:beta_2}, which we used to prove Theorem \ref{theorem:psi}, that we can guarantee we have $||\mu_{1}||_{\infty}$ as small as needed, as well as $\mu_{1}$ having support of arbitrarily small measure.

\begin{figure}[h]
	\includegraphics[scale=1]{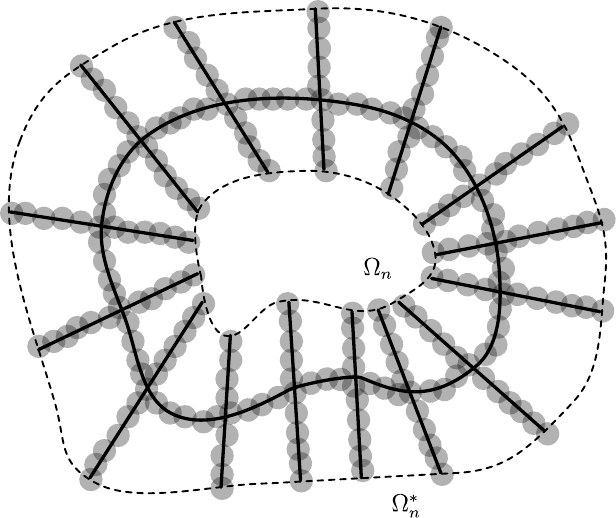}
	\centering
	\caption{Construction in the proof of Theorem \ref{theorem:dim_1}. We choose shapes so that $\C\setminus((\psi_{n}\circ\phi_{n})(\D)\cup(\psi_{n}^{*}\circ\phi_{n}^{*})(\D^{*}))$ consists of a finite union of smooth curves, which is represented by the dark thick curves. We can cover those curves with a finite union of disks (represented in light gray).}
	\label{figure:dim1}
\end{figure}

To prove Theorem \ref{theorem:dim_1} we need to repeatedly use Corollary \ref{corollary:small_Beltrami} to make sure that we can obtain the estimates from Theorem \ref{theorem:log-singular_welding}, while also having control of coverings of the corresponding sequence of annuli (which generate the curve).

\begin{proof}
	Let $f_{0},g_{0}$ be conformal as before. Consider the mappings $\phi_{1},\phi_{1}^{*}$ (Proposition \ref{proposition:phi}) and $\psi_{1},\psi_{1}^{*}$ (Theorem \ref{theorem:psi}), where $\phi_{1},\psi_{1}$ correspond to $f_{0}$ and $\phi_{1}^{*},\psi_{1}^{*}$ correspond to $g_{0}$. We choose $\psi_{1},\psi_{1}^{*}$ so that the compact set
	$$ \C\setminus\left( (\psi_{1}\circ\phi_{1})(\D)\cup (\psi_{1}^{*}\circ\phi_{1}^{*})(\D^{*})  \right)$$
	consists of a finite union of smooth curves. See Figure \ref{figure:dim1}. Since smooth curves have Hausdorff dimension $1$, there is a covering by finite number of open disks $\{D^{1}_{j}\}$ of radius $\{r^{1}_{j}\}$ so that $$ \sum_{j} (r^{1}_{j})^{1+1}\leq 1/2.$$ 
	If the dilatation of the mapping $H_{1}$ is small enough, it follows from Corollary \ref{corollary:small_Beltrami} that we can consider $t$ close enough to $1$ in Lemma \ref{lemma:control_approx} so that the annulus
	$$ A_{1}=A_{1}(t)=\C\setminus\left( (\psi_{1}\circ\phi_{1})(t\D)\cup (\psi_{1}^{*}\circ\phi_{1}^{*})(\D^{*}/t)  \right)$$
	satisfies:
	\begin{enumerate}[label=(\alph*)]
		\item $A_{1}\subset\cup_{j} D^{1}_{j}$.
		\item $H_{1}(A_{1})\subset\cup_{j} D^{1}_{j}$.
		\item The mappings $f_{1}(z)=(H_{1}\circ\psi_{1}\circ\phi_{1})(tz)$ and $g_{1}(z)=(H_{1}\circ\psi_{1}^{*}\circ\phi_{1}^{*})(z/t)$ satisfy the estimates (i) and (ii) from Theorem \ref{theorem:log-singular_welding}.
	\end{enumerate}
	Define $\Omega_{1}=f_{1}(\D)$ and $\Omega_{1}^{*}=g_{1}(\D^{*})$. 
	
	In an analogous way, for $n>1$ we can define conformal maps $f_{n}\colon\D\to\Omega_{n}$, $g_{n}\colon\D^{*}\to\Omega_{n}^{*}$ that satisfy with uniform constants:
	\begin{enumerate}[label=(\roman*)]
		\item $|f_{n}(x)-g_{n}(h(x))|\lesssim (2/3)^{n}$ for $x\in\Sone$.
		\item $|f_{n}(x)-f_{n-1}(x)|\lesssim (2/3)^{n}$, $|g_{n}(x)-g_{n-1}(x)|\lesssim (2/3)^{n}$ for $x\in\Sone$.
	\end{enumerate}
	And so that there is a finite number of disks $D_{j}^{n}$, each of radius $r_{j}^{n}$, that satisfies
	\begin{enumerate}[label=(\alph*)]
		\item $A_{n}=\C\setminus\left( f_{n}(\D)\cup g_{n}(\D^{*})\right) \subset\cup_{j} D^{n}_{j}.$
		\item $ \sum_{j} (r_{j}^{n})^{1+1/n}\leq 2^{-n}$.
		\item $\cup D_{j}^{n}\subset \cup D_{j}^{n-1}$.
	\end{enumerate}
	It follows that the sequence of conformal maps $\{f_{n}\}$, $\{g_{n}\}$ converge to conformal maps $f\colon\D\to\Omega_{\infty}$ and $g\colon\D^{*}\to\Omega_{\infty}^{*}$, where $\Gamma=\partial\Omega_{\infty}=\partial\Omega_{\infty}^{*}$ is a Jordan curve with conformal welding $h$ and Hausdorff dimension $1$.
\end{proof}


\section{Flexible curves of dimension $s\in(1,2)$}\label{section:dim_s}

In this section we complete the proof of Theorem \ref{theorem:non-injectivity}, which we write again for the reader's convenience.

\begin{theorem}\label{theorem:dim_alpha}
Let $h\colon\Sone\to\Sone$ be an orientation-preserving log-singular circle homeomorphism and $s\in[1,2]$. Then there is a flexible curve $\gamma$ with welding $h$ so that $\mathcal{H}dim(\gamma)=s$.
\end{theorem}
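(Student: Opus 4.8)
The cases $s=1$ (Theorem \ref{theorem:dim_1}) and $s=2$ (Theorem \ref{theorem:positive_area}) are already established, so the plan is to treat $s\in(1,2)$ by interpolating between the two mechanisms. The guiding idea is to run the iterative construction of Theorem \ref{theorem:log-singular_welding}, but at each step choose the admissible shapes $E_k\subset[0,T_k]\times[0,1]$ and the embeddings $\iota_k$ so that the resulting annuli $A_n$ admit efficient coverings \emph{and} occupy a definite, controlled fraction of area — enough to force the Frostman lower bound (Theorem \ref{theorem:frostman}) but not so much as to push the dimension above $s$. Concretely, one wants the curve $\Gamma=\bigcap_n A_n$ to behave at scale $2^{-n}$ like a self-similar set of dimension $s$: at each stage the pieces of $A_n$ should be coverable by roughly $N_n$ disks of radius $\rho_n$ with $N_n \rho_n^{\,s}\simeq 1$, giving the upper bound $\mathcal{H}\mathrm{dim}(\Gamma)\le s$ via explicit coverings, while a measure $\mu$ distributed uniformly across the pieces satisfies $\mu(D_\rho)\lesssim \rho^{\,s}$, giving $\mathcal{H}\mathrm{dim}(\Gamma)\ge s$.

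First I would fix parameters: write $s=1+\theta$ with $\theta\in(0,1)$ and design the construction so that the annulus $A_n$ is a union of $\simeq m_n$ quadrilateral ``channels'', each of diameter $\simeq\rho_n$, chosen so that $\log m_n / \log(1/\rho_n)\to\theta$ in an averaged sense; the branching is produced precisely by the slits of $\phi$ from Proposition \ref{proposition:phi} (there are $\simeq N$ of them) and by the freedom in placing the shapes inside the $Q_k$. Because the shape $E_k([0,R]\times[0,1])$ need only contain $[0,2M]\times[0,1]$ to be $\eps$-admissible (Proposition \ref{proposition:admissible_shape}), and $M$ depends only on $\eps_n$, there is enormous latitude in how the leftover region $([0,T_k]\times[0,1])\setminus E_k(\cdots)$ is shaped; I would use this, exactly as in Section \ref{section:positive_area} and Section \ref{section:dim_1}, to prescribe both the number and the size of the complementary channels. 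The key quantitative input is that, via the conformal maps $\iota_k$, ratios of the form $M/T_k$ control area fractions up to a factor $L=L(M)$ independent of $T_k$ (this is the modulus estimate used in the proof of Theorem \ref{theorem:positive_area}), and Corollary \ref{corollary:small_Beltrami} guarantees that the normalizing quasiconformal maps $H_n$ — which we must compose with to return to conformal maps, and which can change Hausdorff dimension — move points by at most $C\|\mu_n\|_\infty$ on compact sets, with $\|\mu_n\|_\infty$ and the measure of $\mathrm{supp}\,\mu_n$ as small as we wish. Thus the dimension-distorting effect of the $H_n$ can be made negligible at every scale, which is exactly the obstacle flagged in the outline as the reason $s\in[1,2)$ is harder than $s=2$.

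The construction then proceeds inductively, just as in the proof of Theorem \ref{theorem:dim_1}: having built conformal $f_n\colon\D\to\Omega_n$, $g_n\colon\D^*\to\Omega_n^*$ with $|f_n-g_n\circ h|\lesssim(2/3)^n$ on $\Sone$ and with $A_n$ covered by finitely many disks $\{D_j^n\}$ of radii $\{r_j^n\}$ satisfying $\sum_j (r_j^n)^{s}\le 2^{-n}$ (for the upper bound) together with a lower mass bound $\#\{D_j^n\} \cdot (\min_j r_j^n)^{s}\gtrsim 1$ propagated through a Frostman measure $\mu$ (for the lower bound), we apply Proposition \ref{proposition:phi} and Theorem \ref{theorem:psi} with $\eps_{n+1}$, choose the admissible shapes via a version of Proposition \ref{proposition:area_left} adapted to prescribe channel counts and sizes rather than a single leftover percentage, then apply $H_{n+1}$ with $\|\mu_{n+1}\|_\infty$ small enough (Corollary \ref{corollary:small_Beltrami}, Lemma \ref{lemma:control_approx}) that the geodesic-length contraction (i) of Theorem \ref{theorem:log-singular_welding}, the covering bound, and the mass bound all survive. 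Passing to the limit gives a Jordan curve $\Gamma$ with welding $h$; the covering estimates give $H_s(\Gamma)<\infty$ hence $\mathcal{H}\mathrm{dim}(\Gamma)\le s$, and the Frostman measure gives $H_t(\Gamma)>0$ for every $t<s$ hence $\mathcal{H}\mathrm{dim}(\Gamma)\ge s$; flexibility follows from Lemma \ref{lemma:easy_flexible} and Theorem \ref{theorem:log-singular_to_flexible} as before. The main obstacle, and where most of the work will go, is the simultaneous bookkeeping: arranging that a single value of $R$ serves all the $W_k$ at a given stage (the remark after Theorem \ref{theorem:positive_area}), that the branching ratios compound to give exactly dimension $s$ rather than some nearby value, and that the accumulated distortion from the $H_n$ — controlled only on compact subsets — does not degrade the delicate two-sided estimate $N_n\rho_n^{\,s}\simeq 1$ over infinitely many scales.
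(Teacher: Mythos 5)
Your high-level plan does match the paper's broad strategy — using the freedom in admissible shapes to prescribe the geometry of the annuli $A_n$, proving an upper bound by explicit coverings, and needing a Frostman-type lower bound. But two essential mechanisms are missing, and both are exactly the places where the proof is hardest.

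First, your lower-bound argument is not complete. Knowing $N_n\rho_n^{\,s}\simeq 1$ at each scale is not enough to produce a measure with $\mu(D_\rho)\lesssim\rho^{\,s}$: you also need a uniform separation / bounded-overlap condition at every intermediate scale, which is condition (iv) of the Mattila-type criterion the paper uses (Proposition \ref{proposition:mattila}). The paper achieves this with a concrete combinatorial device: ``$s$-additive squares'' $Q_j^m$ with pairwise distance $\simeq x^{s/2}$ (Lemma \ref{lemma:split_s}, Lemma \ref{lemma:separation_s}), placed inside the quadrilaterals via Proposition \ref{proposition:connecting_s}, so that disks of any radius $r\geq x$ meet $\lesssim r^2/x^s$ of the squares. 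Without that quantitative separation your measure can pile up on thin connecting channels (which must exist, since $\Gamma$ is a Jordan curve) and violate the Frostman bound; you flag the ``simultaneous bookkeeping'' as the main obstacle but do not supply the mechanism that controls it.

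Second, your handling of the normalizing quasiconformal maps $H_n$ conflates displacement control with dimension control. Corollary \ref{corollary:small_Beltrami} bounds $|H(z)-z|$ by $C\|\mu\|_\infty$, which is enough to push a cover of the un-normalized annulus to a cover of the normalized one and hence controls the \emph{upper} bound at the scale $\rho_n$; but it says nothing about derivative distortion, and Astala's theorem (inequality (\ref{equation:dimension_distortion})) shows a quasiconformal map with small but nonzero total dilatation can still change Hausdorff dimension by a factor $1+C\|\mu\|_\infty$. The accumulated dilatation $K-1\simeq\sum\eps_n$ is a fixed positive constant, so ``making it negligible at every scale'' is not available. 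The paper's actual fix is different and crucial: it tracks \emph{where} the dilatation is added — the sets $B_{n+1}$ — and then builds the dimension-$s$ Cantor set $E\subset\Gamma$ entirely inside $A_{n+1}\setminus B_{n+1}$ for all $n$, while keeping $\Gamma\setminus E$ of dimension $1$ via coverings at exponents $1+(s-1)/k\to1$. Lemma \ref{lemma:same_hdim} then gives $\mathcal{H}\mathrm{dim}(E)=\mathcal{H}\mathrm{dim}(H(E))$ \emph{exactly}, and Corollary \ref{corollary:same_dim_curve} finishes because the dimension-$1$ piece cannot be pushed above $s$ when $K$ is close enough to $1$. This $B_{n+1}$-avoidance idea, together with the explicit splitting of $\Gamma$ into a dimension-$s$ part and a dimension-$1$ part, is the heart of the $s\in(1,2)$ case, and it is absent from your plan.
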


Theorem \ref{theorem:positive_area} and Theorem \ref{theorem:dim_1} proves Theorem \ref{theorem:dim_alpha} for $s=2$ and $s=1$. Hence it remains to prove the case $s\in(1,2)$.

The constructions that we made over the past sections use quasiconformal mappings. It is well-known that quasiconformal maps are bi-Holder (see \cite[page 30]{AhlforsQC} or \cite[Theorem 3.10.2]{AstalaBook}) and not in general bi-Lipschitz (it is easy to generate a quasiconformal map of the plane so that in a dense set of $\C$ the derivative exists and vanishes). It is also well-known that quasiconformal maps change Hausdorff dimension (see \cite{Astala-distortion, MR324028, MR1101223, Smirnov_quasicircles}). In \cite[Corollary 1.3]{Astala-distortion} Astala proved that if $f\colon\Omega\to f(\Omega)$ is $K$-quasiconformal and $E\subset\Omega$ is compact, then
$$ \mathcal{H}\mathrm{dim}(f(E))\leq\frac{2K \mathcal{H}\mathrm{dim}(E)}{2+(K-1)\mathcal{H}\mathrm{dim}(E)}.$$ 
If $K\simeq 1$, then $||\mu_{f}||_{\infty}\simeq0$ and $K\leq(1+C||\mu_{f}||_{\infty})$ for $C<\infty$. Hence,
\begin{equation}\label{equation:dimension_distortion}
\frac{\mathcal{H}\mathrm{dim}(E)}{1+C||\mu_{f}||_{\infty}}\leq\mathcal{H}\mathrm{dim}(f(E))\leq(1+C||\mu_{f}||_{\infty})\mathcal{H}\mathrm{dim}(E).
\end{equation}

From (\ref{equation:dimension_distortion}) we obtain.

\begin{lemma}\label{lemma:same_hdim}
Let $E\subset\Omega$ be compact and $f\colon\Omega\to f(\Omega)$ be $K$-quasiconformal. If for every $\eps>0$ there exists a neighborhood $U_{\eps}\supset E$ contained in $\Omega$ so that $f_{\eps}=f_{| U_{\eps}}$ has dilatation $||\mu_{f_{\eps}}||_{\infty}\leq\eps$, then  $\mathcal{H}\mathrm{dim}(E)=\mathcal{H}\mathrm{dim}(f(E))$.
\end{lemma}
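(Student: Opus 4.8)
The plan is to deduce the equality of Hausdorff dimensions by applying the distortion bound \eqref{equation:dimension_distortion} on a shrinking family of neighborhoods and letting the dilatation tend to zero. First I would fix $\eps>0$ and use the hypothesis to choose a neighborhood $U_\eps\supset E$ with $U_\eps\subset\Omega$ such that the restriction $f_\eps=f_{|U_\eps}$ is $K_\eps$-quasiconformal with $\|\mu_{f_\eps}\|_\infty\leq\eps$; since $E$ is compact and contained in the open set $U_\eps$, I may further shrink $U_\eps$ (or pass to a compact neighborhood) so that $E$ sits in the interior and \eqref{equation:dimension_distortion} applies to the compact set $E\subset U_\eps$ with the map $f_\eps$. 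This gives
\[
\frac{\mathcal{H}\mathrm{dim}(E)}{1+C\eps}\leq\mathcal{H}\mathrm{dim}(f_\eps(E))=\mathcal{H}\mathrm{dim}(f(E))\leq(1+C\eps)\,\mathcal{H}\mathrm{dim}(E),
\]
where the middle equality holds because $f_\eps$ agrees with $f$ on $U_\eps\supset E$, so $f_\eps(E)=f(E)$ as sets and Hausdorff dimension depends only on the set.

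Next I would let $\eps\to0$. The outer inequalities persist for every $\eps>0$ with the \emph{same} left-hand and right-hand quantities $\mathcal{H}\mathrm{dim}(E)$ and $\mathcal{H}\mathrm{dim}(f(E))$, and with $C<\infty$ the universal constant from \eqref{equation:dimension_distortion} (independent of $\eps$). Taking the limit $\eps\to0^+$ in
\[
\frac{\mathcal{H}\mathrm{dim}(E)}{1+C\eps}\leq\mathcal{H}\mathrm{dim}(f(E))\leq(1+C\eps)\,\mathcal{H}\mathrm{dim}(E)
\]
forces $\mathcal{H}\mathrm{dim}(f(E))=\mathcal{H}\mathrm{dim}(E)$, which is the claim.

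The only genuinely delicate point is making sure that \eqref{equation:dimension_distortion}, which is stated (via Astala's theorem) for a $K$-quasiconformal map $f\colon\Omega\to f(\Omega)$ and a compact $E\subset\Omega$, can legitimately be applied to the restriction $f_\eps$ on the smaller domain $U_\eps$: one needs that Astala's dimension-distortion bound is local, i.e.\ that it only depends on the dilatation of $f$ on a neighborhood of $E$ and not on a global bound. This is standard — Hausdorff dimension and Hausdorff content are determined by arbitrarily fine coverings, so one may replace $f$ by $f_\eps$ and $\Omega$ by $U_\eps$ — but it is the step that should be spelled out, since the whole point of the lemma is that a globally $K$-quasiconformal map with $K$ possibly large does not distort dimension at all, provided it is \emph{locally conformal-like} near $E$. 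Once that localization is granted, the argument is just the two displays above.
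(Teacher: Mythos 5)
Your proof is correct and is exactly the argument the paper intends: the paper states Lemma \ref{lemma:same_hdim} with no explicit proof, presenting it as an immediate consequence of \eqref{equation:dimension_distortion}, and your two displays are precisely that deduction. The one place you describe as ``genuinely delicate'' is not actually delicate: Astala's bound \cite[Corollary 1.3]{Astala-distortion} (and hence \eqref{equation:dimension_distortion}) is already stated for an arbitrary domain and a compact subset, so applying it to $f_\eps\colon U_\eps\to f(U_\eps)$ with $E\subset U_\eps$ compact is a direct invocation, not a localization that needs justifying --- there is no appeal to the global constant $K$ once you have restricted to $U_\eps$.
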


In our case, in Theorem \ref{theorem:log-singular_welding} we proved that given a log-singular circle homeomorphism $h\colon\Sone\to\Sone$ and a sequence $\{\eps_{n}\}_{n}\in(0,\infty)$ so that if we set $K_{n}=\prod_{j=1}^{n}(1+\eps_{j})$, then $K=\lim_{n} K_{n}=\prod_{n}(1+\eps_{n})<\infty$. There exists sequences $f_{n}\colon\D\to\Omega_{n}$ and $g_{n}\colon\D^{*}\to\Omega_{n}^{*}$ of $K_{n}$-quasiconformal mappings, where $\Omega_{n},\Omega_{n}^{*}$ are Jordan domains with disjoint closures (with $\infty\in\Omega_{n}^{*}$). 

The sequences of maps $f_{n},g_{n}$ converge respectively to $K$-quasiconformal mappings $f\colon\D\to\Omega_{\infty}$, $g\colon\D^{*}\to\Omega_{\infty}^{*}$, that map onto the complementary components of a Jordan curve $\Gamma$ so that $h=g^{-1}\circ f$. If we define the annulus $$A_{n}=\C\setminus\overline{\Omega_{n}\cup\Omega_{n}^{*}},$$ then $\Gamma=\limsup_{n}A_{n}$.

By the measurable Riemann mapping theorem (Theorem \ref{theorem:MRMT}) there exists a $K$-quasiconformal map $H\colon\C\to\C$ so that $H\circ f$ and $H\circ g$ are conformal (where we set the dilatation over $\Gamma$ to be $0$). Hence the Jordan curve $H(\Gamma)$ has conformal welding $h\colon\Sone\to\Sone$. Our goal is to use (\ref{equation:dimension_distortion}) together with Lemma \ref{lemma:same_hdim} to prove that, under some conditions, the curves $\Gamma$ (\textit{model} flexible curve) and $H(\Gamma)$ (actual flexible curve) have the same Hausdorff dimension when $s\in(1,2)$.

To do so, we rely on using that Proposition \ref{proposition:interpolation} (Proposition \ref{proposition:admissible_shape}) and Proposition \ref{proposition:beta_2} also quantify where we \textit{add dilatation} at each step of our inductive construction. We recall the notation that we need. In Theorem \ref{theorem:psi} we have quadrilaterals $Q_{k}^{n}$ (we emphasize the step $n$ here), and a conformal map $$ (\iota_{k}^{n})^{-1}\colon Q_{k}^{n}\to[0,T_{k}^{n}]\times[0,1]. $$ By Proposition \ref{proposition:admissible_shape}, there are $\eps_{n}$-admissible shapes provided there is $M_{n}=M(\eps_{n})$ so that $2 M_{n}\leq T_{k}^{n}$. After we choose a shape that satisfies the conditions from Proposition \ref{proposition:admissible_shape}, the dilatation of the map $\alpha_{k}^{n}$ is contained in $[0,M_{n}]\times[0,1]$. Similarly, the dilatation of the map $\beta_{k}^{n}$ is contained in $i_{k}^{n}([0,M_{n}]\times[0,1])$.

Hence, following the arguments in the proof of Theorem \ref{theorem:psi} (see also Figure \ref{figure:diagram}), the map $\psi_{n}$ has non-zero dilatation within $A_{n}$ in 
$$ \cup_{k}\iota_{k}^{n}([0,M_{n}]\times[0,1]).$$ 
Note that the quotient $M_{n}/T_{k}^{n}$ can be made as small as necessary.
Observe that $\psi_{n}(\Sone)\cap\partial A_{n}$ consists of a finite number of points. Consider also the maps $\phi_{n}^{*},\psi_{n}^{*}$ when using Proposition \ref{proposition:phi} and Theorem \ref{theorem:psi} with $g_{n}$.

When we use Lemma \ref{lemma:control_approx} for $t<1$, we obtain an annulus $A_{n+1}(t)$ given by
$$ A_{n+1}(t)\coloneqq\C\setminus\overline{\left((\psi_{n}\circ\phi_{n})(t\D)\cup \psi_{n}^{*}\circ(\phi_{n}^{*})(\D/t)\right)}.$$ Within $A_{n+1}(t)$ the mapping $f_{n+1}^{-1}$ has dilatation supported in the compact set $$ B_{n+1}(t)= \overline{A_{n+1}(t)}\cap\left(\overline{\Omega_{n}}\cup\bigcup_{k}\iota_{k}^{n}([0,M_{n}]\times[0,1])\right).$$  We define $A_{n+1}=A_{n+1}(t_{n+1})$ for some $t_{n+1}$ close $1$ (so that the conditions in Theorem \ref{theorem:log-singular_welding} are satisfied). We also define $B_{n+1}=B_{n+1}(t_{n+1})$. From Lemma \ref{lemma:same_hdim} we obtain:

\begin{corollary}\label{corollary:same_dim}
In the same conditions as before, if $H\colon\C\to\C$ is the $K$-quasiconformal map so that $H\circ f$ and $H\circ g$ are conformal with $(\mu_{H})_{|\Gamma}=0$ a.e. Then, if $E$ compact satisfies 
$$E\subset A_{n+1}\setminus B_{n+1}$$ for all $n$, we have  $\mathcal{H}\mathrm{dim}(E)=\mathcal{H}\mathrm{dim}(H(E))$.
\end{corollary}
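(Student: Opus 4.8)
The plan is to verify the hypotheses of Lemma \ref{lemma:same_hdim} for the $K$-quasiconformal map $H$ and the compact set $E$: it suffices to produce, for every $\eps>0$, an open neighborhood $U_{\eps}\supset E$ with $\|\mu_H|_{U_{\eps}}\|_{\infty}\leq\eps$, and then (\ref{equation:dimension_distortion}) together with Lemma \ref{lemma:same_hdim} gives $\mathcal{H}\mathrm{dim}(E)=\mathcal{H}\mathrm{dim}(H(E))$ at once.

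First I would localise the dilatation of $H$. Since $H\circ f$ is conformal on $\D$, $H\circ g$ is conformal on $\D^{*}$, and $(\mu_H)_{|\Gamma}=0$ a.e. by hypothesis, the chain rule for Beltrami coefficients shows that, up to a null set, $\mu_H$ is supported in $\Omega_{\infty}\cup\Omega_{\infty}^{*}$, with $|\mu_H(f(z))|=|\mu_f(z)|$ on $\Omega_{\infty}$ and $|\mu_H(g(w))|=|\mu_g(w)|$ on $\Omega_{\infty}^{*}$. Next I would feed in the bookkeeping recalled just before the Corollary: in the inductive construction of Theorem \ref{theorem:log-singular_welding} the maps $f_m,g_m$ stabilise and are conformal on an exhaustion of the core of $\Omega_{\infty}$, resp. $\Omega_{\infty}^{*}$, while inside the collar $\overline{A_{n+1}}$ the only dilatation they carry sits in $B_{n+1}$; moreover, by Proposition \ref{proposition:interpolation} (Proposition \ref{proposition:admissible_shape}) and Proposition \ref{proposition:beta_2} — which is exactly why those statements keep track of the support of the auxiliary maps $\alpha^{n}_{k},\beta^{n}_{k}$ — the dilatation introduced at step $n$ has sup-norm $\lesssim\eps_n$. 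Passing to the limit $f_m\to f$, $g_m\to g$ and solving the Beltrami equation for $H$ with $(\mu_H)_{|\Gamma}=0$, one gets $\mathrm{supp}(\mu_H)\subset\bigcup_{n}B_n$ modulo a null set, with $\|\mu_H\,\mathbf{1}_{B_n}\|_{\infty}\lesssim\eps_n$ for every $n$.

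With these two facts the neighborhood is routine to build. Given $\eps>0$, pick $N$ with $C\sup_{n>N}\eps_n<\eps$ (possible since $\prod_n(1+\eps_n)<\infty$ forces $\eps_n\to0$). The set $\bigcup_{1\leq n\leq N}B_n$ is a finite union of compact sets, hence compact, and it is disjoint from $E$ because $E\cap B_n=\emptyset$ for all $n$ — this is precisely the hypothesis $E\subset A_{n+1}\setminus B_{n+1}$ for all $n$ — so $d:=\mathrm{dist}\bigl(E,\bigcup_{n\leq N}B_n\bigr)>0$. Taking $U_{\eps}=\{z:\mathrm{dist}(z,E)<d\}$ yields an open neighborhood of $E$ disjoint from $B_1,\dots,B_N$, hence $\mathrm{supp}(\mu_H)\cap U_{\eps}\subset\bigcup_{n>N}B_n$ and $\|\mu_H|_{U_{\eps}}\|_{\infty}\lesssim C\sup_{n>N}\eps_n<\eps$. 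This is the family of neighborhoods required by Lemma \ref{lemma:same_hdim}, and the Corollary follows.

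The step I expect to be the genuine obstacle is the localisation together with the size estimate, i.e. $\mathrm{supp}(\mu_H)\subset\bigcup_n B_n$ and $\|\mu_H\,\mathbf 1_{B_n}\|_{\infty}\lesssim\eps_n$: this means tracing, through the proof of Theorem \ref{theorem:psi} and the diagram of Figure \ref{figure:diagram}, exactly where and of what size each of $\alpha^{n}_{k}$ and $\beta^{n}_{k}$ contributes dilatation, and then checking that nothing leaks outside the $B_n$'s when one passes to the limit and produces $H$. The other ingredients — the reduction to Lemma \ref{lemma:same_hdim}, the chain rule for $\mu_H$, and the elementary "finitely many compacta disjoint from $E$" argument — are straightforward; note that the hypothesis $(\mu_H)_{|\Gamma}=0$ is used precisely to handle the accumulation set $\bigcap_n\overline{A_n}=\Gamma\supset E$, onto which the layers $B_n$ pile up.
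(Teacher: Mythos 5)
The overall architecture of your proof — reduce to Lemma \ref{lemma:same_hdim}, localise $\mu_H$ to $\bigcup_n B_n$ using the chain rule and the bookkeeping before the Corollary, then produce small neighborhoods $U_\eps$ — matches what the paper intends (the paper's proof is literally ``From Lemma \ref{lemma:same_hdim} we obtain''). The localisation $\mathrm{supp}(\mu_H)\subset\bigcup_n B_n$ modulo a null set, via $H\circ f$ and $H\circ g$ being conformal and $(\mu_H)|_\Gamma=0$, is right.

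However, the quantitative step $\|\mu_H\,\mathbf{1}_{B_n}\|_\infty\lesssim\eps_n$ is, as far as I can tell, not correct, and this is where the proposal breaks. You attribute it to ``the dilatation introduced at step $n$ has sup-norm $\lesssim\eps_n$''. That is true for the $\alpha_k^n$ (Proposition \ref{proposition:admissible_shape} gives dilatation $\lesssim\eps_n$), but not for $\beta_k^n$: Proposition \ref{proposition:beta_2} gives $\beta_k^n$ maximal dilatation $K_n\cdot(1+\eps_n)^{1/2}$, so $\|\mu_{\beta_k^n}\|_\infty$ is of size $(K_n-1)/(K_n+1)$, not of size $\eps_n$. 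The whole point of $\beta_k^n$ is to \emph{undo} the accumulated dilatation of $f_n$ on the overlap $f_n(\tilde Q_k)$ so that the glued map $\psi_n$ extends $f_n$; it is therefore designed to be as far from conformal as $f_n$ is, and its dilatation lands inside $B_{n+1}$ (this is exactly why $\overline{\Omega_n}$ appears in the definition of $B_{n+1}$). Consequently the dilatation of $f^{-1}$, hence of $H$, on $B_{n+1}$ can be of order $(K-1)/(K+1)$ uniformly in $n$, and picking $N$ with $\sup_{n>N}\eps_n$ small does not make $\|\mu_H|_{U_\eps}\|_\infty$ small if $U_\eps$ still meets $B_m$ for some $m>N$.

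The argument that actually works is geometric rather than decaying-in-$\eps_n$: one must arrange that some single open neighborhood of $E$ misses \emph{all} the $B_n$ (equivalently, $E$ is disjoint from $\overline{\bigcup_n B_n}$), in which case $\mu_H$ vanishes a.e.\ on that neighborhood and $H$ is conformal there. This is what the compactness of $E$ and the disjointness $E\cap B_n=\emptyset$ are meant to secure together with the way the construction in Section \ref{section:dim_s} places the $B_n$ (recall the Remark that $\limsup_n B_n$ can be made to have dimension $1$, so it has room to miss a set of dimension $s>1$). As literally stated, ``$E\cap B_n=\emptyset$ for every $n$'' only gives $\mathrm{dist}(E,B_n)>0$ for each $n$, not uniformly, so the step from ``disjoint from each $B_n$'' to ``a neighborhood disjoint from all but finitely many $B_n$'' needs the accumulation set $B$ kept away from $E$; your write-up implicitly assumes instead that the dilatation on far-out $B_n$'s is small, which is the part that fails.
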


\begin{remark}
	Observe that as $t\to1$, $B_{n+1}(t)$ converges, in the Hausdorff metric, to a finite union of smooth curves. Hence there exists a sequence $\{t_{n}\}_{n}\subset(0,1)$ so that the set $B=\limsup_{n} B_{n}$ has Hausdorff dimension $1$.
\end{remark}

We conclude the following.

\begin{corollary}\label{corollary:same_dim_curve}
In the same conditions as before, if there is $E\subset\Gamma$ compact so that $E\subset A_{n+1}\setminus B_{n+1}$, $\mathcal{H}\mathrm{dim}(\Gamma\setminus E)=1$ and $\mathcal{H}\mathrm{dim}(E)>1$. Then if $K=\lim_{n} K_{n}$ is close enough to $1$, we have $$ \mathcal{H}\mathrm{dim}(\Gamma)=\mathcal{H}\mathrm{dim}(H(\Gamma)).$$ \end{corollary}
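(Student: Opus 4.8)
The plan is to decompose $\Gamma = E\cup(\Gamma\setminus E)$ and follow the two pieces under the straightening map $H$ separately, since the two behave quite differently. On the compact set $E$ the hypotheses of Corollary \ref{corollary:same_dim} are met verbatim — by assumption $E$ is compact and $E\subset A_{n+1}\setminus B_{n+1}$ for every $n$ — so that corollary already gives $\mathcal{H}\mathrm{dim}(H(E)) = \mathcal{H}\mathrm{dim}(E)$. On the complementary set $\Gamma\setminus E$, which has Hausdorff dimension $1$, I would only use the crude quasiconformal distortion estimate (\ref{equation:dimension_distortion}): since $H$ is $K$-quasiconformal, $\mathcal{H}\mathrm{dim}(H(\Gamma\setminus E))\leq 1 + C||\mu_{H}||_{\infty}$. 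Here $\Gamma\setminus E$ is only $\sigma$-compact (it is open in the compact set $\Gamma$ because $E$ is closed), so one applies (\ref{equation:dimension_distortion}) to an exhausting sequence of compact subsets and passes to the supremum, using countable stability of Hausdorff dimension.

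With these two inputs the rest is bookkeeping. First, countable stability of Hausdorff dimension together with $\mathcal{H}\mathrm{dim}(E)>1$ and $\mathcal{H}\mathrm{dim}(\Gamma\setminus E)=1$ gives $\mathcal{H}\mathrm{dim}(\Gamma)=\max\{\mathcal{H}\mathrm{dim}(E),1\}=\mathcal{H}\mathrm{dim}(E)$. Next, since $H$ is $K$-quasiconformal, $||\mu_{H}||_{\infty}\leq (K-1)/(K+1)\to 0$ as $K\to 1$; hence, choosing $K$ close enough to $1$ that $1+C||\mu_{H}||_{\infty}<\mathcal{H}\mathrm{dim}(E)$, the estimate above yields $\mathcal{H}\mathrm{dim}(H(\Gamma\setminus E))<\mathcal{H}\mathrm{dim}(E)$. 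Finally, writing $H(\Gamma)=H(E)\cup H(\Gamma\setminus E)$ and using countable stability once more,
$$ \mathcal{H}\mathrm{dim}(H(\Gamma)) = \max\{\mathcal{H}\mathrm{dim}(H(E)),\,\mathcal{H}\mathrm{dim}(H(\Gamma\setminus E))\} = \mathcal{H}\mathrm{dim}(H(E)) = \mathcal{H}\mathrm{dim}(E) = \mathcal{H}\mathrm{dim}(\Gamma). $$

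The step I expect to be the main obstacle is the self-consistency of "$K$ close enough to $1$": the required closeness depends on $\mathcal{H}\mathrm{dim}(E)$, but $E$, $\Gamma$, and therefore $\mathcal{H}\mathrm{dim}(E)$ are produced by the iterative construction, whose parameters include the sequence $\{\eps_{n}\}$ that determines $K$. The way around this — to be arranged when Corollary \ref{corollary:same_dim_curve} is invoked in the dimension-$s$ construction of Section \ref{section:dim_s} — is that the lower bound $\mathcal{H}\mathrm{dim}(E)\geq s$ will be obtained from a Frostman measure carried by a geometric, shape-based, Cantor-like object whose dimension does not degrade as the total dilatation $K-1$ is shrunk; so one fixes the target $s\in(1,2)$ first, then picks $\{\eps_{n}\}$ with $K=\prod_{n}(1+\eps_{n})$ small enough that $C||\mu_{H}||_{\infty}<s-1$, and only afterwards runs the iteration with these parameters. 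Everything else — the two appeals to countable stability and the reduction of the open set $\Gamma\setminus E$ to a compact exhaustion — is routine.
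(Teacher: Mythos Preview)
Your proof is correct and matches the paper's approach almost exactly: decompose $\Gamma=E\cup(\Gamma\setminus E)$, apply Corollary~\ref{corollary:same_dim} to $E$, apply Astala's distortion bound to $\Gamma\setminus E$, and combine via countable stability. The only cosmetic difference is that the paper uses the exact Astala bound $\mathcal{H}\mathrm{dim}(H(\Gamma\setminus E))\leq 2K/(K+1)$ and imposes $K\leq\mathcal{H}\mathrm{dim}(E)$, whereas you use the linearized form~(\ref{equation:dimension_distortion}); your added care about exhausting the $\sigma$-compact set $\Gamma\setminus E$ and your remark on the order of choosing $K$ versus constructing $E$ are correct refinements that the paper leaves implicit.
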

\begin{proof}
From Corollary \ref{corollary:same_dim} we obtain $\mathcal{H}\mathrm{dim}(E)=\mathcal{H}\mathrm{dim}(H(E))$. If we take $K$ close enough to $1$ so that 
$$ \frac{2 K}{2+(K-1)}\leq K\leq \mathcal{H}\mathrm{dim}(E),$$
we have, 
\begin{align*}
\mathcal{H}\mathrm{dim}(H(\Gamma))&=\max\left\{\mathcal{H}\mathrm{dim}(H(E)),\mathcal{H}\mathrm{dim}(H(\Gamma\setminus E))\right\} \\ 
&=\mathcal{H}\mathrm{dim}(H(E))=\mathcal{H}\mathrm{dim}(E)=\mathcal{H}\mathrm{dim}(\Gamma).\qedhere
\end{align*} \end{proof}

In the conditions of Corollary \ref{corollary:same_dim_curve} justifies that the \textit{model} curve from Theorem \ref{theorem:log-singular_welding} and the actual flexible curve (the one after post-composing with the quasiconformal map $H$ as before) have the same Hausdorff dimension when $s>1$. We will see that for each $s\in(1,2]$ we can find such Jordan curve.

The strategy that we will use to construct the curve is the following outline from \cite[Section 4.12]{Mattila:GMT}, which we condense into the following proposition.

\begin{proposition}\label{proposition:mattila}
Let $s\in(0,2]$ and suppose that for $k\in\N$ we have compact sets $E_{i_{1},\ldots, i_{k}}$, for $i_{j}=1,\ldots,m_{j}$, such that if $d(\cdot)$ denotes the diameter of a set, the following conditions are satisfied:
\begin{enumerate}[label=(\roman*)]
	\item $E_{i_{1},\ldots,i_{k},i_{k+1}}\subset E_{i_{1},\ldots,i_{k}}$.
	\item $d_{k}=\max_{i_{1}\cdots i_{k}} d(E_{i_{1},\ldots,i_{k}})\to 0$ as $k\to\infty$. 
	\item $\sum_{j=1}^{m_{k+1}} d(E_{i_{1},\ldots,i_{k},j})^{s}=d(E_{i_{1},\ldots,i_{k}})^{s}$.
	\item For any ball $B$ with $d(B)\geq d_{k}$, there is a positive constant $c>0$ so that
	$$ \sum_{B\cap E_{i_{1},\ldots,i_{k}}} d(E_{i_{1},\ldots,i_{k}})^{s}\leq c d(B)^{s}.$$
\end{enumerate} 
Then, if we define $$ E=\bigcap_{k\geq1}\bigcup_{i_{1}\cdots i_{k}} E_{i_{1},\ldots, i_{k}},$$ we have $0<H_{s}\left(E\right)<\infty.$ Hence $E$ has Hausdorff dimension $s$.
\end{proposition}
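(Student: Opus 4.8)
The plan is to produce a Frostman measure on $E$ via the standard mass-distribution construction adapted to the tree of sets $\{E_{i_1,\dots,i_k}\}$, and then apply Theorem \ref{theorem:frostman} (together with a covering argument) to pin down the Hausdorff measure from both sides. First I would define a probability measure $\mu$ on $E$ by distributing mass along the tree: set $\mu(E_\emptyset)=d(E_\emptyset)^s$ (after a harmless normalization we may assume $d(E_\emptyset)=1$), and recursively set $\mu(E_{i_1,\dots,i_k,j}) = d(E_{i_1,\dots,i_k,j})^s$. Condition (iii) is exactly what makes this consistent, i.e. $\sum_j \mu(E_{i_1,\dots,i_k,j}) = \mu(E_{i_1,\dots,i_k})$, so by the Kolmogorov/Carathéodory extension this defines a Borel measure $\mu$ supported on $E$ with total mass $1$; condition (ii) guarantees that $E = \bigcap_k \bigcup E_{i_1,\dots,i_k}$ is a nonempty compact set and that the cylinder sets generate the subspace topology, so $\mu$ is genuinely carried by $E$. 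This immediately gives the lower bound: for the covering of $E$ by the sets $\{E_{i_1,\dots,i_k} : i_1\cdots i_k\}$ at level $k$ we get $\sum d(E_{i_1,\dots,i_k})^s = \sum \mu(E_{i_1,\dots,i_k}) = 1$, and since $d_k\to 0$ these are arbitrarily fine covers, whence $H^\infty_s(E) \le H_s(E) \le 1 < \infty$.

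For the lower bound $H_s(E) > 0$, the key is the Frostman-type density estimate $\mu(B) \lesssim d(B)^s$ for all balls $B$, which then yields $H_s(E) \ge \mu(E)/c = 1/c > 0$ by the first half of Theorem \ref{theorem:frostman} (applied with gauge $h(r) = c\,r^s$, or directly by the elementary mass-distribution principle). To verify this, fix a ball $B$ and let $k$ be the unique level with $d_{k} \le d(B) < d_{k-1}$ — such a level exists because $d_k$ is decreasing to $0$ (we may also assume $d(B)$ is at most $d_0$, the larger balls being trivial after adjusting $c$). Every point of $E\cap B$ lies in some $E_{i_1,\dots,i_k}$, and by condition (i) the level-$k$ cylinders are nested inside level-$(k-1)$ ones, so
\begin{equation*}
\mu(B) \le \sum_{E_{i_1,\dots,i_k}\cap B \ne \emptyset} \mu(E_{i_1,\dots,i_k}) = \sum_{E_{i_1,\dots,i_k}\cap B\ne\emptyset} d(E_{i_1,\dots,i_k})^s \le c\, d(B)^s,
\end{equation*}
where the last inequality is precisely hypothesis (iv) (one uses $d(B)\ge d_k$ to be in the regime where (iv) applies). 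This completes both bounds, so $0 < H_s(E) < \infty$, and therefore $\mathcal H\mathrm{dim}(E) = s$ by the definition of Hausdorff dimension.

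The only genuinely delicate point is the bookkeeping in the Frostman estimate: one must be careful that a ball $B$ can meet many level-$k$ cylinders, but condition (iv) is stated exactly so as to control $\sum_{E_{i_1,\dots,i_k}\cap B} d(E_{i_1,\dots,i_k})^s$ uniformly, so there is no circularity. A minor subtlety is matching the scale of $B$ to the right level $k$ and handling the transition between levels — if $B$ straddles two generations one either works at the coarser level or absorbs a bounded multiplicative factor into $c$; since the hypotheses already quote a constant $c$ in (iv), this causes no loss. One should also note that we did not use (iii) with equality in any essential way beyond consistency of $\mu$; an inequality $\sum_j d(E_{i_1,\dots,i_k,j})^s \le d(E_{i_1,\dots,i_k})^s$ would still give $H_s(E)<\infty$ and a nondegenerate $\mu$, but equality is convenient as it makes $\mu$ a probability measure and makes the upper bound sharp. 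The whole argument is the classical one from \cite[Section 4.12]{Mattila:GMT}; the content of the proposition is just that conditions (i)--(iv) are the clean hypotheses under which it runs, and I would present it in that spirit rather than reproving Frostman's lemma.
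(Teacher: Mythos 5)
Your proof is correct and follows the standard mass-distribution / Frostman argument: construct the natural measure $\mu$ with $\mu(E_{i_1,\dots,i_k})=d(E_{i_1,\dots,i_k})^s$ (consistent by (iii)), use the level-$k$ cylinders as covers to get $H_s(E)<\infty$, and use (iv) to get the Frostman bound $\mu(B)\lesssim d(B)^s$ and hence $H_s(E)>0$. The paper does not supply its own proof of this proposition — it states it as a condensation of the outline in \cite[Section 4.12]{Mattila:GMT} — and your reconstruction matches that reference's approach; the only point worth flagging is that the measure construction tacitly requires the sets at each level to be essentially disjoint (as they are in the paper's application, where they are squares with disjoint interiors), an assumption neither you nor the paper states explicitly.
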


Lemma \ref{lemma:split_s} below guarantees that when we construct the model flexible curve $\Gamma$ we can achieve (iii) in Proposition \ref{proposition:mattila}.

\begin{figure}[h]
	\includegraphics[scale=1]{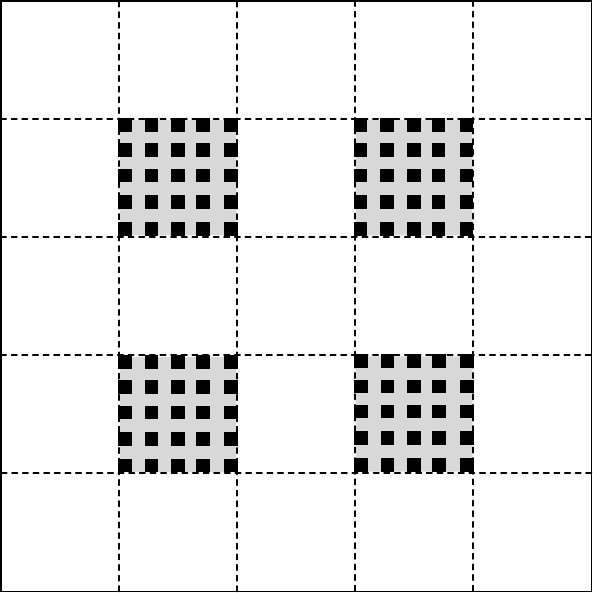}
	\centering
	\caption{Finding $s$-additive squares (the ones that are dark colored) that are well-separated as in Lemma \ref{lemma:split_s} and Lemma \ref{lemma:separation_s}.}
	\label{figure:s-additive_squares}
\end{figure}

\begin{lemma}[Finding $s$-additive squares]\label{lemma:split_s}
	Let $Q=[0,1]^{2}$ and $s\in(1,2)$. For $m\in\{1,2,3,4\}$ fix the squares $R^{m}\subset Q$ with disjoint closures and side-length $1/5$ so that $d(\partial Q, R^{m})=1/5$ and for $i\not=j$, $d(R^{i},R^{j})\geq1/5$. 
	Then there exists $n=n(s)\in\N$, $x=x(s)\in(0,1)$ and a sub-collection of closed squares $\{Q^{m}_{j}\}_{j=1}^{n}\subset Q$, where $m\in\{1,2,3,4\}$, with disjoint interiors so that:
	\begin{enumerate}[label=(\roman*)]
		\item Each $Q_{j}^{m}\subset R^{m}$ is a square with side-length $x$.
		\item We have $1=4n(\sqrt{2}x)^{s}$, that is, the squares $Q_{j}^{m}$ satisfy
		$$ 1^{s}=\sum_{j,m} d(Q_{j}^{m})^{s}=4n 2^{s/2} x^{s}.$$
	\end{enumerate}
	Moreover, we can choose the squares $\{Q_{j}^{m}\}$ to be uniformly distributed in each $R^{m}$. That is, if we define the ratio $P(s)$ between $n=n(s)$ and the total number of squares in $R^{m}$, $$ P(s)\coloneqq \left(\frac{25}{4\cdot 2^{s/2}}\right) x^{2-s},$$
	then for each disk $D(z,r)$ with $z\in R^{m}$ and $r>0$, the number of squares $Q_{j}^{m}$ that intersects $D(z,r)$ is comparable to 
	\begin{equation}\label{equation:expected_number} P(s) \cdot \frac{r^{2}}{x^{2}}.\end{equation}
	In particular, the distance from $Q_{j}^{m}$ to its nearest neighbor is comparable to $x^{s/2}$.
\end{lemma}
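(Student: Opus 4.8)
The plan is to treat (ii) as a Diophantine-type equation in the two parameters $n$ and $x$ and use the extra freedom to make the squares uniformly distributed. First I would fix the combinatorial structure: subdivide each $R^m$ (a square of side $1/5$) into a grid of $q^2$ congruent subsquares of side $x = 1/(5q)$, for a positive integer $q$ to be chosen. Choosing the $Q_j^m$ to be a sub-collection of this grid, uniformly spread out, automatically forces the counting estimate \eqref{equation:expected_number}: a disk $D(z,r)$ with $z\in R^m$ meets on the order of $r^2/x^2$ grid squares, and if we retain a fraction $P(s)$ of them in an arithmetic-progression pattern (say every $\lceil q/\sqrt{n_0}\rceil$-th square in each coordinate for a suitable base count), the number retained inside the disk is comparable to $P(s)\cdot r^2/x^2$. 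The nearest-neighbor spacing is then comparable to $x/\sqrt{P(s)} \simeq x \cdot x^{(s-2)/2} = x^{s/2}$, which is the final assertion; this part is a routine counting argument once the grid is in place.

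The heart of the matter is arranging (ii), i.e.\ $1 = 4 n\, 2^{s/2} x^s$, simultaneously with $n$ being a legitimate sub-count of an $x$-grid inside $R^m$, i.e.\ $n \le (1/(5x))^2$ and $x = 1/(5q)$ for some integer $q$. Solving (ii) for $n$ gives $n = x^{-s}/(4\cdot 2^{s/2})$, so I would instead parametrize by $x$ and ask that $x^{-s}/(4\cdot 2^{s/2})$ be a positive integer. Here is where I use that we are free to pick $x$: the function $x \mapsto x^{-s}/(4\cdot 2^{s/2})$ is continuous, strictly decreasing, and tends to $+\infty$ as $x\to 0^+$, so it takes every sufficiently large integer value; choose $x=x(s)$ to be a value at which it equals some integer $n=n(s)$, and moreover (by taking that integer large, hence $x$ small) we can guarantee $P(s) = (25/(4\cdot 2^{s/2})) x^{2-s} \in (0,1)$ since $2-s>0$, so that $n \le$ (number of $x$-grid squares in $R^m$) $= 1/(25 x^2)$, which is exactly the inequality $P(s)\le 1$. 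There is a minor technical wrinkle: to have $x$ of the form $1/(5q)$ with $q\in\N$ one cannot simultaneously demand $x^{-s}/(4\cdot 2^{s/2})\in\N$ exactly, so I would instead allow the $Q_j^m$ to have side-length $x$ not tied to an exact integer subdivision of $R^m$ — the squares need only have disjoint interiors and lie in $R^m$, and a near-grid packing with spacing $\simeq x$ suffices for the counting bound \eqref{equation:expected_number}, which is stated up to comparability anyway. With that relaxation, only (ii) needs to hold on the nose, and that is achieved by the intermediate value theorem as above.

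I would then assemble the proof in this order: (1) record that $R^1,\dots,R^4$ with the stated separations exist (immediate, since $4$ squares of side $1/5$ fit in $[0,1]^2$ with pairwise gaps and a gap from $\partial Q$, e.g.\ near the four corners); (2) using monotonicity and continuity of $x\mapsto x^{-s}/(4\cdot 2^{s/2})$ together with $s\in(1,2)$, pick $n(s)\in\N$ large and $x(s)\in(0,1)$ with $4n(s)2^{s/2}x(s)^s = 1$ and simultaneously $P(s)<1$; this is the main obstacle and it is exactly the place where the hypothesis $s\in(1,2)$ (so that $0<2-s<1$, giving both $P(s)\to 0$ as $x\to0$ and the exponent $x^{2-s}$ well-defined) is used; (3) place $n(s)$ squares of side $x(s)$ in each $R^m$ in an approximately uniform pattern with pairwise-disjoint interiors — possible since $n(s) \le 1/(25x(s)^2)$ by $P(s)\le 1$ — and verify the disk-counting estimate \eqref{equation:expected_number} by the elementary grid count; (4) deduce the nearest-neighbor spacing $\simeq x^{s/2}$ from $P(s)\cdot r^2/x^2 \simeq 1$ at $r\simeq$ spacing. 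Steps (1), (3), (4) are bookkeeping; step (2) is the only real content, and it is a one-line application of the intermediate value theorem once the algebra of (ii) is untangled.
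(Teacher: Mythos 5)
Your proposal is correct and follows essentially the same route as the paper: solve (ii) for $n = x^{-s}/(4\cdot 2^{s/2})$, use $s\in(1,2)$ (i.e.\ $2-s>0$) so that $nx^2 = x^{2-s}/(4\cdot 2^{s/2})\to 0$ as $x\to 0$, invoke continuity to land on an integer $n$ while keeping $P(s)<1$, then place the squares uniformly and deduce the spacing $d\simeq x^{s/2}$ from the density. The paper's computation of the nearest-neighbor distance uses a $3\times 3$ block ratio $9x^2/(3x+2d)^2\simeq P(s)$ rather than your $x/\sqrt{P(s)}$, but these are the same estimate up to constants.
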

\begin{proof}
Assume that $n\in\N$, $x\in(0,1)$ are taken so that $1=4 n 2^{s/2} x^{s}$. For condition (i) to be satisfied we need
$$ n x^{2} = \frac{x^{2-s}}{4 \cdot 2^{s/2}}\leq 1/25.$$
Since $s\in(1,2)$, it follows that there is an interval of $x$ for which this holds (at the expense of making $n$ large). Thus there is a pair $n=n(s)\in\N$ and $x=x(s)\in(0,1)$ for which (ii) holds. 

To prove the last part, we choose the $Q_{j}^{m}$ to be uniformly distributed within $R^{m}$ (see Figure \ref{figure:s-additive_squares}), as defined in the statement of the Lemma. If $d=d(s)$ is the distance from $Q_{j}^{m}$ to its nearest neighbor, then
$$ \frac{9 x^{2}}{(3 x + 2 d)^{2}}\simeq \left(\frac{25}{4\cdot 2^{s/2}}\right) x^{2-s}.$$
Hence, $x^{s}\simeq (3x +2d)^{2}$. This implies $d\simeq x^{s/2}.$\end{proof}

Observe that if instead of having $[0,1]^{2}$ we have any square $Q$, then there is an affine transformation $L$ that maps $[0,1]^{2}$ to $Q$. Under such affine transformation the value $x=x(s)$ in Lemma \ref{lemma:split_s} gets scaled according to $L$, but the number $n=n(s)$ does not change. In particular, the squares obtained in Lemma \ref{lemma:split_s} under the affine transformation $L$ get mapped to squares satisfying an analogous statement, with condition (ii) being preserved.

As we mentioned before, Lemma \ref{lemma:split_s} will guarantee that condition (iii) from Proposition \ref{proposition:mattila}. The goal now is to see that condition (iv) in Proposition \ref{proposition:mattila} is also satisfied when we take the squares from Lemma \ref{lemma:split_s}. See Figure \ref{figure:s-additive_squares} for an illustration.

\begin{lemma}[$s$-additive squares with large separation]\label{lemma:separation_s}
Let $Q=[0,1]^{2}$ and the squares $R^{m}$ and $Q_{j}^{m}$ as in Lemma \ref{lemma:split_s}. Then, for any disk $D$ of radius $r\geq x(s)$, where $x(s)=x$ is the side-length of each $Q_{j}^{m}$, we have
$$ \sum_{D\cap Q_{j}^{m}\not=\emptyset} d(Q_{j}^{m})^{s}\lesssim d(D)^{s}\simeq r^{s}.$$
\end{lemma}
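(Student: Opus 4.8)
The plan is to estimate the number of squares $Q_j^m$ meeting a disk $D$ of radius $r\geq x$ and then multiply by the common value $d(Q_j^m)^s=(\sqrt2\,x)^s$. By Lemma~\ref{lemma:split_s}, the squares $Q_j^m$ are uniformly distributed inside each $R^m$, so the number of them intersecting $D$ is comparable to $P(s)\,r^2/x^2$, where $P(s)=\bigl(25/(4\cdot 2^{s/2})\bigr)x^{2-s}$; this already covers the case when $D$ is small enough that only one $R^m$ is relevant, and the case of general $D$ only adds a bounded multiplicative factor of $4$ (one for each $R^m$), plus the possibility that $D$ is so large it contains all of $Q$, in which case the left-hand side is just $\sum_{j,m}d(Q_j^m)^s=1\lesssim r^s$ since $r\gtrsim 1$.

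So the main computation is: writing $N_D$ for the number of squares $Q_j^m$ meeting $D$,
\begin{equation}\label{equation:count_bound}
  \sum_{D\cap Q_j^m\neq\emptyset} d(Q_j^m)^s \;\simeq\; N_D\,(\sqrt2\,x)^s \;\lesssim\; P(s)\,\frac{r^2}{x^2}\cdot 2^{s/2}x^s.
\end{equation}
Now substitute $P(s)=\bigl(25/(4\cdot2^{s/2})\bigr)x^{2-s}$ into the right-hand side of \eqref{equation:count_bound}; the factors $2^{s/2}$ cancel and the powers of $x$ combine as $x^{2-s}\cdot x^{-2}\cdot x^s=x^0=1$, leaving a bound of the form $C(s)\,r^2$. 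This is \emph{not} yet $r^s$, but recall $r\geq x$ and, more importantly, when estimating we may also use $r\leq \sqrt2$ (the diameter of $Q$), so that $r^2=r^s\cdot r^{2-s}\leq r^s\cdot 2^{(2-s)/2}$, giving $r^2\lesssim_s r^s$. Hence the sum is $\lesssim_s r^s\simeq d(D)^s$, which is the claim. One should double-check the edge case $x\leq r\leq\sqrt2$ more carefully: for such $r$ the count $N_D$ is genuinely of order $P(s)r^2/x^2$ (the uniform-distribution estimate \eqref{equation:expected_number} from Lemma~\ref{lemma:split_s} applies for all $r>0$, reading "comparable to" as including the value being $\gtrsim 1$ when $r\gtrsim x$), so \eqref{equation:count_bound} is valid throughout this range.

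The only real subtlety is the transition between regimes: when $r$ is comparable to or larger than the side-length $1/5$ of the $R^m$, the disk $D$ may meet several $R^m$, but since there are only four of them, and within each the count is $\lesssim P(s)r^2/x^2$, the total is still $\lesssim P(s)r^2/x^2$. And when $r\gtrsim 1$ the bound $\sum d(Q_j^m)^s=1\lesssim r^s$ is trivial. So I would organize the proof as: (1) recall from Lemma~\ref{lemma:split_s} that all $Q_j^m$ have diameter $\sqrt2\,x$ and that \eqref{equation:expected_number} controls the count; (2) dispose of $r\gtrsim 1$ trivially; (3) for $x\leq r\lesssim 1$, bound $N_D\lesssim P(s)r^2/x^2$ using uniform distribution and the four-$R^m$ remark; (4) plug in $P(s)$ and simplify the powers of $x$ to get $\lesssim_s r^2$; (5) convert $r^2\lesssim_s r^s$ using $r\leq\sqrt2$. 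There is no deep obstacle here — it is a bookkeeping argument — but the step most prone to error is keeping the $s$-dependence of the implicit constants honest, since $P(s)$, $x(s)$, and the conversion factor $r^{2-s}$ all degenerate as $s\to 2$; fortunately for fixed $s\in(1,2)$ everything is a finite constant, which is all Proposition~\ref{proposition:mattila}~(iv) requires.
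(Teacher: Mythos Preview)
Your proof is correct and follows essentially the same approach as the paper: count the squares $Q_j^m$ meeting $D$ via the uniform-distribution estimate $N_D\lesssim P(s)\,r^2/x^2$, multiply by $d(Q_j^m)^s\simeq x^s$ so the powers of $x$ cancel to give $\lesssim r^2$, and then convert $r^2\leq r^s$ using $r\lesssim 1$. The paper splits the two regimes at $r=1/5$ (so that $D$ can meet at most one $R^m$) rather than at $r\simeq 1$ as you do, but this is only a cosmetic difference.
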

\begin{proof}
	Suppose first that $ x\leq r\leq1/5$ Given a disk of radius $r$ contained in $Q$, the extreme case of the inequality in the Lemma is attained when $D\cap R^{m}\not=\emptyset$. In such a case, for $\tilde{m}\not=m$ we have $D\cap R^{\tilde{m}}=\emptyset$.
	Since the density $P(s)$, as defined in Lemma \ref{lemma:split_s}, of the squares $Q_{j}^{m}$ is comparable to $x^{2-s}$, a disk of radius $r$ as above will intersect at most $ x^{2-s} r^{2}/x^{2}= r^{2}/x^{s} $ squares. By using $d(Q_{j}^{m})\simeq x$, we obtain
	$$ \sum_{D\cap Q_{j}^{m}\not=\emptyset} d(Q_{j}^{m})^{s}\lesssim x^{s} r^{2}/x^{s}=r^{2}\leq r^{s}.$$
	This condition is also implied by knowing that the distance from $Q_{j}^{m}$ to its nearest neighbor is comparable to $x^{s/2}$. That is because if we have $m^{2}$ squares in a square of side-length $r$, then $r \simeq m x+m x^{s/2}\simeq m x^{s/2} .$
	
	For $r\geq 1/5$, the extreme case is when all the $Q_{j}^{m}$ are contained in $D$, but in that case,
	$$ \sum_{D\cap Q_{j}^{m}\not=\emptyset} d(Q_{j}^{m})^{s} = 1 \lesssim r^{s}.$$
	This completes the proof of the Lemma. \end{proof}

\begin{figure}[h]
	\includegraphics[scale=1]{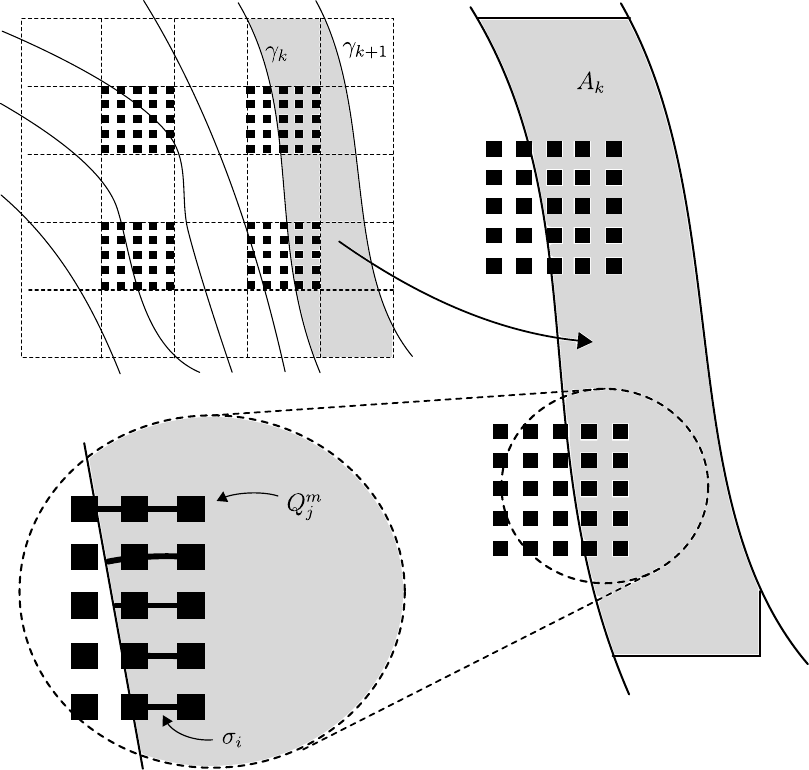}
	\centering
	\caption{In Proposition \ref{proposition:connecting_s} we realize the well-separated $s$-additive squares $Q_{j}^{m}$ from Lemma \ref{lemma:separation_s} as part of the complement of a shape, as in Section \ref{section:shapes}. The squares $Q_{j}^{m}$ are joined by \textit{thin} quadrilaterals $\sigma_{i}$}
	\label{figure:connecting_s-additive_squares}
\end{figure}
	
We now prove that the $s$-additive squares found in Lemma \ref{lemma:split_s} and Lemma \ref{lemma:separation_s} can be located within a partition of $[0,1]^{2}$ by smooth curves, contained in the union of the complement of shapes (as in Section \ref{section:shapes}) and so that the conditions in Corollary \ref{corollary:same_dim_curve} will be satisfied. This is represented in Figure \ref{figure:connecting_s-additive_squares}.
	
\begin{proposition}[Connecting $s$-additive well-separated squares]\label{proposition:connecting_s}
Let $s\in(1,2)$, $\delta>0$, $Q=[0,1]^{2}$ and a finite collection of $N$ smooth simple disjoint curves $\{\gamma_{k}\}$ with endpoints on $\partial Q$ so that:
\begin{enumerate}[label=(\alph*)]
	\item $\gamma_{k}$ is isotopic to $\gamma_{\tilde{k}}$ for any $k,\tilde{k}$.
	\item Each $\gamma_{k}$ intersects the interior of $Q$ and does not intersect the same side of $Q$ twice. 
	\item The $\{\gamma_{k}\}$ are ordered, that is, for $0\leq k<N$ there is a unique component $A_{k}$ of $Q\setminus\{\gamma_{k},\gamma_{k+1}\}$ that does not contain any other $\gamma_{j}$.
	\item $A_{k}$ is a quadrilateral.
\end{enumerate}	
	Then, if we consider the squares $Q_{j}^{m}\subset R^{m}$ from Lemma \ref{lemma:split_s}, which have side-length $x=x(s)$. Then for $x$ small enough there is a finite collection of quadrilaterals  $\{\sigma_{i}\}$ that satisfy:
	\begin{enumerate}[label=(\roman*)]
		\item $\sigma_{i}\subset \overline{A_{k}}$ and it intersects at least one $Q_{j}^{m}$ on one of the sides of $\sigma_{i}$.
		\item Each $\sigma_{i}$ intersects at most two different $Q_{j}^{m}\subset A_{k}$.
		\item The set, $$ \left(\bigcup_{Q_{j}^{m}\cap A_{k}\not=\emptyset} Q_{j}^{m}\right) \cup \bigcup_{\sigma_{i}\cap A_{k}\not=\emptyset} \sigma_{i} $$ bounds a shape, as in Section \ref{section:shapes}, with respect to the quadrilateral $A_{k}$.
		\item There is a covering $\{D_{j}\}$, where each $D_{j}$ is a disk of radius $r_{j}$, of $\cup \sigma_{i}$ so that, $$ \sum_{j} r_{j}^{s}\leq\delta.$$
	\end{enumerate}
	Moreover, given $\eps>0$  the shapes in (iii) can be taken to be $\eps$-admissible provided $d(\gamma_{k}, \gamma_{k+1})$ is small enough.
\end{proposition}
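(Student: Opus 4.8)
The plan is to work inside one quadrilateral $A_k$ at a time (the argument is identical for each $k$, and by hypothesis (a)–(d) the $A_k$ are honest quadrilaterals tiling $Q$), and build the ``thin'' connecting quadrilaterals $\sigma_i$ by hand so that the complement of the union of the $Q_j^m$ that meet $A_k$ together with the $\sigma_i$ inside $A_k$ is a shape in the sense of Section~\ref{section:shapes}. First I would conformally (or bi-Lipschitzly) identify $A_k$ with a long rectangle $[0,T_k]\times[0,1]$, mapping the two sides that come from $\gamma_k,\gamma_{k+1}$ to the vertical sides $\{0\}\times[0,1]$ and $\{T_k\}\times[0,1]$; this is exactly the picture of Figure~\ref{figure:embedding} and Figure~\ref{figure:connecting_s-additive_squares}. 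Under this identification the finitely many squares $Q_j^m$ that meet $A_k$ become finitely many (slightly distorted) ``square-like'' quadrilaterals of diameter $\simeq x$, located in prescribed positions; I must produce two disjoint simple arcs $\Sigma_1$ (from the vertex $0$) and $\Sigma_2$ (from the vertex $i$) to the right side, passing along the boundaries of these $Q_j^m$ and threading between them through thin corridors, so that the region they cut out is a $Q(\gamma_1,\gamma_2)$ as in Lemma~\ref{lemma:generateR}. The corridors between consecutive squares along these arcs are precisely the $\sigma_i$, and I will take each $\sigma_i$ to be a thin quadrilateral of width $w$ (to be chosen tiny) running along a segment between two neighbouring squares.

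The key steps, in order: (1) \emph{Routing.} Fix, once and for all, a simple ``snake'' path in the rectangle that visits, in order, all of the squares $Q_j^m$ meeting $A_k$; the boundary of a thin neighbourhood of this path, minus the squares, is a connected region whose complement (the squares plus the thin corridors $\sigma_i$) is bounded by two arcs $\Sigma_1,\Sigma_2$ as required. That the resulting region is a \emph{shape} — i.e. that $\Sigma_1,\Sigma_2$ are non-intersecting simple arcs from $0$ and $i$ respectively to two distinct points of the right-hand side — is immediate from the construction, and then Lemma~\ref{lemma:generateR} supplies the corresponding $R$ and the conformal embedding $E$; this gives (iii). (2) \emph{Combinatorial constraints (i), (ii).} By construction each $\sigma_i$ lies in $\overline{A_k}$ and has one of the squares $Q_j^m$ glued onto one of its sides, giving (i); and by choosing the snake to be a simple path that alternately hits a square and then a single corridor, each corridor $\sigma_i$ touches at most two squares, giving (ii). (3) \emph{Covering estimate (iv).} Here is where I use that the $\sigma_i$ are thin: each $\sigma_i$ has length $\lesssim x^{s/2}$ (the inter-square distance, by Lemma~\ref{lemma:split_s}) — or at most $\lesssim 1$ for the few long ``entry/exit'' corridors reaching the sides of $A_k$ — and width $w$, so $\sigma_i$ is covered by $\lesssim x^{s/2}/w$ disks of radius $w$; since there are $\lesssim n(s)\cdot(\text{number of }A_k)$ squares total, hence $\lesssim$ that many corridors, the total is $\sum_j r_j^s \lesssim (\text{\#corridors})\cdot (x^{s/2}/w)\cdot w^s = (\text{\#corridors})\, x^{s/2} w^{s-1}$, which tends to $0$ as $w\to 0$ because $s>1$; choosing $w$ small enough gives $\sum_j r_j^s\le\delta$. (Taking $x$ small as permitted only helps.) (4) \emph{$\eps$-admissibility.} Finally, as $d(\gamma_k,\gamma_{k+1})\to 0$ the modulus $T_k=M(A_k)$ of the rectangle $[0,T_k]\times[0,1]$ tends to $\infty$, and since the shape we built contains $[0,2M_n]\times[0,1]$ for any prescribed $M_n$ once $T_k$ is large (the snake can be routed so that a fixed left-hand sub-rectangle is swallowed), Proposition~\ref{proposition:admissible_shape} applies and the shape is $\eps$-admissible with dilatation supported in $[0,M_n]\times[0,1]$.

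I expect the main obstacle to be step (1), the simultaneous topological bookkeeping: one must route the two arcs $\Sigma_1,\Sigma_2$ so that (a) they remain disjoint and simple, (b) they genuinely cut out a planar region whose complement is exactly the union of the prescribed squares $Q_j^m$ with thin quadrilateral corridors $\sigma_i$ — no extra ``pockets'' — and (c) the resulting object still contains a large sub-rectangle $[0,2M]\times[0,1]$ so that Proposition~\ref{proposition:admissible_shape} is applicable. The cleanest way to handle this rigorously is to thicken a spanning tree of the ``square adjacency graph'' into a corridor system and observe its complement is a shape; the disjointness and simplicity of $\Sigma_1,\Sigma_2$ then follow because the complement of a thin neighbourhood of a tree in a disk is connected and simply connected. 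The estimates in steps (3) and (4) are then routine given the separation bound $d(Q_j^m,\text{nearest neighbour})\simeq x^{s/2}$ from Lemma~\ref{lemma:split_s} and the modulus monotonicity used throughout Section~\ref{section:shapes}. Combining this proposition with Lemma~\ref{lemma:separation_s}, Proposition~\ref{proposition:mattila} and Corollary~\ref{corollary:same_dim_curve} then yields flexible curves of every dimension $s\in(1,2)$, completing the proof of Theorem~\ref{theorem:dim_alpha}.
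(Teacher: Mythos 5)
Your proposal follows essentially the same strategy as the paper: join the $s$-additive squares to the boundary of $A_k$ by thin corridors so that their union is the complement of a shape, cover the corridors cheaply using $s>1$, and obtain $\eps$-admissibility from the modulus of the collar blowing up as $d(\gamma_k,\gamma_{k+1})\to0$. The only cosmetic differences are that the paper routes via a forest of branches $\beta_i$ (each hitting disjoint collections of squares, with $d(\beta_i,\partial Q)\geq1/5$) rather than your single snake/spanning tree, and covers the smooth curves directly using $H_s$-nullity for $s>1$ rather than your explicit count $\sum r_j^s\lesssim(\#\text{corridors})\,x^{s/2}w^{s-1}\to0$; both routings and both covering arguments work, so the content is the same.
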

\begin{proof}
Take $x=x(s)$ from Lemma \ref{lemma:split_s} small enough so that no $Q_{j}^{m}$ intersects two different $\gamma_{k}$. Now, given $A_{k}$ and the set of squares $Q_{j}^{m}$ so that their interior intersects with $A_{k}$, join them by considering a finite number of smooth disjoint simple curves $\beta_{i}$ so that,
\begin{enumerate}[label=(\alph*)]
	\item $\beta_{i}$ crosses at least one $Q_{j}^{m}$, and away from the corner of $Q_{j}^{m}$.
	\item Each $\beta_{i}$ only intersects one side of $A_{k}$.
	\item No two different $\beta_{i}, \beta_{l}$ intersect the same $Q_{j}^{m}$.
	\item $d(\beta_{i},\partial Q)\geq1/5$.
\end{enumerate}

 These curves are represented in Figure \ref{figure:connecting_s-additive_squares}.

We can cover the union of the $\beta_{i}$ by finitely many open disks $\{D_{j}\}$, with $D_{j}$ of radius $r_{j}$, so that if $s,\delta$ are as in the statement of the lemma, we have:
 $$ \sum_{j} r_{j}^{s}\leq\delta.$$
Since the disks $D_{j}$ are open, there is a thickening $\sigma_{i}$ of the curves that satisfies conditions (i), (ii), (iii) and (iv). 
 
 Finally, each $A_{k}$ can be conformally mapped to a rectangle, and the set $$A_{k}\cap\{z\in Q\colon d(z,\partial Q)\leq 1/4\}\subset A_{k}$$ bounds a quadrilateral $B_{k}$. Since $R^{m}$ satisfy $d(R^{m},\partial Q)=1/5$, it follows that as $d(\gamma_{k},\gamma_{k+1})$ decreases, the modulus of the quadrilateral $B_{k}$ degenerates. Hence, by Proposition \ref{proposition:admissible_shape}, for $d(\gamma_{k},\gamma_{k+1})$ small enough every shape within $A_{k}$ that contains $B_{k}$ is $\eps$-admissible.
\end{proof}

We are now ready to prove Theorem \ref{theorem:dim_alpha} for $s\in(1,2)$.

\begin{proof}[Proof of Theorem \ref{theorem:dim_alpha}]
The case $s=2$ is Theorem \ref{theorem:positive_area} and the case $s=1$ is Theorem \ref{theorem:dim_1}, hence we suppose $s\in(1,2)$.

We need to see that the sequence of annuli $A_{n}$ in the proof of Theorem \ref{theorem:log-singular_welding} can be taken in such a way that the Jordan curve $\Gamma=\limsup A_{n}$ has Hausdorff dimension $s$ and satisfies the conditions of Corollary \ref{corollary:same_dim_curve}.

To do so, start with $A_{0}=\{1<|z|<4\}$ and consider the quadrilaterals $Q_{k}$ as in Theorem \ref{theorem:psi}. Take a square of side-length $1$ that is contained in the union of $Q_{k}$. Consider the conformal map of $\C$ that sends $\tilde{Q}_{0}$ to $Q=[0,1]^{2}$. We apply Proposition \ref{proposition:connecting_s} with $s$ and $\delta=1/2^{0}$. If the quadrilaterals $Q_{k}$ are thin enough, then we can obtain $\eps_{1}$-admissible shapes by Proposition \ref{proposition:connecting_s} and so that $\eps_{1}$-admissible shapes exist on the remaining $Q_{k}$. This generates the first iteration of sets $\tilde{E}_{i}$ as in Proposition \ref{proposition:mattila}.

We take the shapes in the remaining $Q_{k}$ in such a way that the complement of the shape within any two adjacent $Q_{k}$'s can be covered by open disks of radius $r_{j}$ that satisfy
$$ \sum r_{j}^{s}\leq (1/2^{0})/ (N+2),$$
where $N$ is the total number of $Q_{k}$'s where the shape from Proposition \ref{proposition:connecting_s} has not been added. This defines a set $\tilde{A_{1}}$ so that $\tilde{A}_{1}\setminus \cup_{i} \tilde{E}_{i}$ can be covered by a finite number of disks $D_{j}$ of radius $r_{j}$ satisfying
$$ \sum r_{j}^{s}\leq (1/2^{0}) (N+1)/(N+2).$$

As in the proof of Theorem \ref{theorem:log-singular_welding}, we now need to use Lemma \ref{lemma:control_approx}. When we do so, the sizes will change, but if $t$ in Lemma \ref{lemma:control_approx} is small enough we can guarantee that for a fixed $a_{1}>1$ we obtain an annulus $A_{1}$ and sets $E_{i}\supset \tilde{E}_{i}$ contained in $A_{1}\cap \tilde{Q}_{0}$ so that
$$ \sum d(E_{i})^{s}=a_{1} d(\tilde{Q}_{0})^{s},$$
with $A_{1}\setminus\cup_{i} E_{i}$ admitting a covering by a finite number of disks $D_{j}$ of radius $r_{j}$ that satisfy $$ \sum_{j}r_{j}^{s}\leq 1/2^{0},$$
and so that for any disk $D$ with $d(D)\geq d_{k}$ as in Proposition \ref{proposition:mattila} we have,
$$ \sum_{D\cap E_{i}\not=\emptyset} d(E_{i})^{s}\lesssim a_{1} d(D)^{s}.$$

Suppose that at step $n$ we have obtained a sequence of annuli $\{A_{k}\}_{k\leq n}$ and sets $E_{i_{1},\ldots, i_{n}}$ so that for $k<n$ we have
\begin{equation}\label{equation:Hdim_1} \sum_{j} d(E_{i_{1},\ldots, i_{k},j})^{s}=d(E_{i_{1},\ldots,i_{k}})^{s}\prod_{j=1}^{k} a_{j} \end{equation}
where $a_{j}>1$. For $k\leq n$ the set $A_{k}\setminus\left(\cup E_{i_{1},\ldots,i_{k}}\right)$ can be covered by a finite number of disks $D_{j}$ of radius $r_{j}$ satisfying 
\begin{equation}\label{equation:Hdim_2} \sum (r_{j})^{1+(s-1)/k}\leq 2^{-k},\end{equation} 
and such that for any disk $D$ with radius $d(D)\geq d_{k}$ as in Proposition \ref{proposition:mattila}, we have
\begin{equation}\label{equation:Hdim_3} \sum_{D\cap E_{i_{1},\ldots, i_{k}}\not=\emptyset} d(E_{i_{1},\ldots, i_{k}})^{s}\lesssim \left(\prod_{j=1}^{k} a_{j}\right) d(D)^{s}. \end{equation}

We now use Proposition \ref{proposition:connecting_s} with $s$ and $\delta=1/2^{n+1}$ to each one of the sets $E_{i_{1},\ldots, i_{n}}$. Applying Lemma \ref{lemma:control_approx} for some $a_{n}>1$ as before generates an annulus $A_{n+1}\subset A_{n}$ and sets $E_{i_{1},\cdots, i_{n+1}}$ that satisfy the equations (\ref{equation:Hdim_1}), (\ref{equation:Hdim_2}) and (\ref{equation:Hdim_3}).

If we take the sequence $\{a_{n}\}$ so that $\prod a_{n}\leq 2$, it follows from a minor modification in the proof of Proposition \ref{proposition:mattila} that the set 
$$ E = \bigcap_{k} \bigcup_{i_{1},\ldots, i_{k}} E_{i_{1},\ldots, i_{k}}$$
has Hausdorff dimension $s$. Moreover, (\ref{equation:Hdim_2}) guarantees that if $\Gamma=\cap A_{n}$, then $$ H_{\tilde{s}}(\Gamma\setminus E)=0$$ for all $\tilde{s}\in(1,s)$. Hence $\Gamma\setminus E$ has Hausdorff dimension $1$ and $\Gamma$ has Hausdorff dimension $s$. 

If when we construct the set $E$ we also guarantee that $A_{n+1}\setminus B_{n+1}$, then Corollary \ref{corollary:same_dim_curve} the corresponding flexible curve also has Hausdorff dimension $s$.\end{proof}

\begin{remark}
It is worthwhile mentioning that a different way to prove that the Hausdorff dimension of $\Gamma$ and $H(\Gamma)$ are equal would be by proving that $H$ is bi-Lipschitz on $\Gamma$. However, characterizing which Beltrami coefficients correspond to bi-Lipschitz mappings remains an open problem. It is known that under some conditions the corresponding quasiconformal map is bi-Lipschitz on a subset of our domain. For example, see \cite[Lemma 15.1]{Albrecht-Bishop}.
\end{remark}

\begin{remark}
A modification of the arguments in this section also yields that given a log-singular circle homeomorphism $h$, there is a flexible curve $\gamma$ with welding $h$ that has Hausdorff dimension $2$ but zero area.
\end{remark}



\bibliographystyle{amsalpha}
\bibliography{references}

\end{document}